\begin{document}
\title{Allard's interior \texorpdfstring{$\varepsilon$}{E}-Regularity Theorem in Alexandrov spaces}

\author{Marcos Agnoletto, Julio C. Correa Hoyos, Márcio Fabiano da Silva and Stefano Nardulli}

\begin{abstract} 
In this paper, we prove Allard's Interior $\varepsilon$-Regularity Theorem for $m$-dimensional varifolds with generalized mean curvature in $L^p_{loc}$, $p > m$, in non-collapsed Alexandrov spaces with curvature bounded both from above and below. We first develop an intrinsic proof of the theorem for varifolds in Riemannian manifolds with metric tensor of class $\mathcal{C}^2$, without appealing to Nash's Isometric Embedding Theorem. This yields explicitly computable constants depending only on $m$, $n$, the double sided sectional curvature bounds, and the harmonic radius (or, equivalently, the injectivity radius). We then extend the result to Alexandrov spaces via the Approximation Theorem of Berestovskij and Nikolaev, where the explicit control of the constants in terms of the geometric data is required for the approximation argument.
\end{abstract}
\maketitle
\onehalfspacing
\tableofcontents
\singlespacing
\singlespacing
\section{Introduction}
The regularity theory of varifolds lies at the heart of geometric measure theory, providing the analytical foundation for understanding the fine structure of generalized surfaces arising as solutions in variational problems, e.g., soap film models, minimal surface theory, or more general prescribed mean curvature surfaces. Varifolds were introduced by F. J. Almgren in the 1960s as measure-theoretic surfaces capable of describing a wide range of geometrically relevant objects, including differentiable manifolds with locally finite area, integral currents, and soap film surfaces (see \cites{Almgren64,Almgren66}). Less than a decade later, W. K. Allard published his seminal paper \cite{Allard}, establishing that varifolds lying in the $n$-dimension Euclidean space satisfying suitable assumptions on density, mass ratio, and generalized mean curvature can be written locally as the graph of a $C^{1,\alpha}$ function (for some $\alpha \in ]0,1[$), along with sharp \textit{a priori} estimates on its $C^{1,\alpha}$ norm. This result, now known as Allard's Interior $\varepsilon$-Regularity Theorem, remains until today one of the cornerstones of the field.

A natural and compelling question is whether Allard's theorem extends beyond the Euclidean setting to curved ambient spaces in order to satisfy the needs of the working geometric  analyst. In this paper, we give an affirmative answer in a broad and geometrically significant class: we prove Allard's Interior $\varepsilon$-Regularity Theorem in non-collapsed Alexandrov spaces with curvature bounded both from above and below. Our approach is intrinsic --- built directly on the geometry of the ambient space --- and crucially relies on the Approximation Theorem of Berestovskij and Nikolaev \cite{Nikolaev-Berestovskij}*{Section 15.1}. This allows us to obtain explicitly constants depending only on the dimension of the varifold, the dimension of the ambient space, the sectional curvature bounds, and the harmonic radius.

The choice for an intrinsic approach is not merely aesthetic: explicit constants depending only on the sectional curvature bounds and the harmonic
radius reveal precisely how the geometry of the ambient space governs the regularity of the varifold and at which scale, rendering the theorem directly applicable in geometric contexts where such quantitative control is indispensable, e.g., geometric variational problems with varying ambient spaces and metrics but with a uniform lower bound on the injectivity radius and double sided bound on the sectional curvature. An alternative approach would be to transfer the problem to $\mathbb{R}^n$ via coordinate charts, paying the price to have an anisotropic version of the Allard's Euclidean theorem \cite{Allard}, and pull the result back to the ambient manifold. Although feasible, because of the special simple anisotropy that corresponds to Riemannian area functional written in coordinates, at the best of our knowledge there is no written trace of a such approach in the literature.  Moreover, this approach cannot yield the explicit dependence of the constants on intrinsic geometric data --- such as sectional curvature bounds and lower bound of injectivity radius of the ambient space  --- that our approach provides. Another natural strategy would be to use an isometric embedding of the ambient manifold into some Euclidean space $\mathbb{R}^N$, as already suggested in the introduction of \cite{Allard}*{p. 418} and was historically the way the geometric analysts have used the Allard's Interior $\varepsilon$-Regularity Theorem to tackle geometric variational problems inside Riemannian ambient manifolds. However this procedure has a drawback because it relies on the existence of such robust isometric embbeding into an Euclidean space which existence is in general guaranteed by the celebrated Nash's isometric embedding theorem. It is well known (see \cite{Jacobowitz72}) that a $\mathcal{C}^{k,\alpha}$ metric with $k + \alpha > 2$ admits an isometric embedding of the same class, but the existence of a $\mathcal{C}^2$ isometric embedding for metrics of class exactly $\mathcal{C}^2$ remains an open problem, as surveyed in \cite{Gromov17}*{Outrageous $\mathcal{C}^2$-Immersion Conjecture, p.~177}. In our setting, Alexandrov spaces or $\mathcal{C}^2$ Riemannian metrics, the existence of a isometric embedding is not guaranteed and so this strategy is not available to us at the present moment. Thus, to overcome this issue, the intrinsic method developed here fills this gap in a natural and self-contained way, working only intrinsically inside the abstract Riemannian ambient manifold $(M^n,g)$ and avoiding the use of an isometric embedding in some finite dimensional Euclidean space.

Complete statements with explicit dependence of all constants are given in \Cref{secondthmcomplete} for non-collapsed Alexandrov spaces with curvature bounded both from above and below, and in \Cref{maintheocomplete} for Riemannian manifolds with metric tensor of class $\mathcal{C}^2$. We state below a simplified version of Allard's Interior $\varepsilon$-Regularity Theorem for non-collapsed Alexandrov spaces with curvature bounded both from above and below. 
\begin{theo}\label{secondthm}
Let $m,n \in \mathbb{N}$ and $p \in \mathbb{R}$ such that $1 \leq m < n$ and $m < p < \infty$; in case $m=1$, we require that $p \geq 2$, $(X,g)$ be an $n$-dimensional non-collapsed Alexandrov space with curvature bounded both from above and below. Then there exist $\delta_0  > 0$ and $\rho_0  > 0$ such that, if $V \in \mathbf{V}_m(X)$ satisfies 
\begin{equation}
\Theta^m(||V||,x) \geq d > 0,
\end{equation}
for $||V||$-a.e. $x \in B_{d_g}(\xi,\rho)$,
\begin{equation}
\dfrac{||V||(B_{d_g}(\xi,\rho))}{\omega_m\rho^m} \leq d(1 + \delta),
\end{equation}
and
\begin{equation}
||\overrightarrow{H_g}||_{L^p_{loc}(||V||)} \leq \dfrac{d^{\frac{1}{p}}\delta}{\rho^{1-\frac{m}{p}}},
\end{equation}
where $\overrightarrow{H_g}$ is the generalized mean curvature of $V$ and $||V||$ is the weight of $V$, for some $0 < \delta < \delta_0$, for some $0 < d < \infty$, and for some $0 < \rho < \min\{\rho_0,\sqrt{\delta}\}$, then $\supp(||V||) \cap B_{d}(\xi,\varepsilon\rho)$ is a $C^{1,1-\frac{m}{p}}$ $m$-dimensional submanifold of $X$ with scaling invariant $C^{1,1-\frac{m}{p}}$ estimates.
\end{theo}
Before stating \Cref{secondcorol}, a consequence of \Cref{secondthm}, let us define the regular and singular sets of a varifold.
\begin{defi}\label{defi:sing}
Let $(X,g)$ be an $n$-dimensional non-collapsed Alexandrov space with curvature bounded both from above and below, $V \in \mathbf{V}_m(X)$ and $\xi \in \supp(||V||)$. We say that $\xi$ is a regular point of $V$ if there exists $\rho > 0$ such that $B_{d_g}(\xi,\rho) \cap \supp(||V||)$ is an $m$-dimensional $\C^1$ embedded submanifold of $X$. Furthermore, 
\begin{displaymath}
\mathrm{reg}(V) \defeq \left\{\xi \in \supp(||V||) : \xi \text{ is a regular point of } V \right\} \hspace{1cm} \text{and} \hspace{1cm} \mathrm{sing}(V) \defeq \supp(||V||) \setminus \mathrm{reg}(V).
\end{displaymath}
\end{defi}
With this definition in hand, \Cref{secondcorol} establishes that 
$\mathrm{reg}(V)$ is a relatively open dense subset of $\supp(||V||)$. We remark that this is essentially the best result currently available: already in $\mathbb{R}^n$, the optimal size of $\mathrm{sing}(V)$ --- even its Hausdorff dimension, or whether 
$\mathcal{H}^m(\mathrm{sing}(V)) = 0$ --- remains unknown for general stationary varifolds, and constitutes one of the main open problems in the regularity theory of varifolds.
\begin{coro}\label{secondcorol}
Let $m,n \in \mathbb{N}$ and $p \in \mathbb{R}$ such that $1 \leq m < n$ and $m < p < \infty$; in case $m=1$, we require that $p \geq 2$, $(X,g)$ be an $n$-dimensional Alexandrov space with double-sided bounded intrinsic sectional curvature and $V \in \mathbf{V}_m(X)$ with generalized mean curvature vector $\overrightarrow{H_g} \in L^p_{\mathrm{loc}}(X,||V||)$ and $\Theta^m(||V||,x)\in\mathbb{N}\setminus\{0\}$, for $||V||$-a.e. $x \in \supp(||V||)$. Then $\mathrm{reg}(V)$ is a relatively open dense subset of $\supp(||V||)$, i.e., $\mathrm{sing}(V)$ is nowhere dense in $\supp(||V||)$ and $\supp(||V||) = \overline{\mathrm{reg}(V)}$.
\end{coro}
\begin{rema}
Another consequence of \Cref{maintheocomplete} is that the hypothesis of $\mathcal{C}^4$ bounded geometry in \cite{NarCalcVar} can be relaxed to $\mathcal{C}^2$ bounded geometry, at the cost of weaker control on the remainder terms of order strictly greater than two in the Puiseaux expansion of the isoperimetric profile for small volumes.
\end{rema}
We conclude the introduction with an overview of the organization of the paper, 
highlighting the main results of each section.
\subsection*{Structure of this article}
This paper is divided into three main parts. In \Cref{part:generalvarifolds}, we establish the basic framework for the theory of intrinsic varifolds on the ambient of abstract Riemannian manifolds, fix notation, and recall fundamental definitions. Specifically  we begin by defining varifolds intrinsically and obtaining the first variation of a varifold in a Riemannian manifold. The first place where the adaptations to the Riemannian setting become substantial is  estimating $\operatorname{div}_{S}(u_y \grad{g} u_y)$ (see \Cref{lemm:bounds:section:monotonicity}), which requires comparison geometry to be controlled in terms of sectional curvature bounds. Using these estimates, we prove the monotonicity formula (see \Cref{lemm:mono:section:monotonicity}) and derive its consequences, including the existence and upper semicontinuity of the density $\Theta^m(||V||,y)$ for varifolds with $L^p_{\mathrm{loc}}$-bounded generalized mean curvature (see \Cref{coro:densLp:section:monotonicity}). Furthermore we prove the Rectifiability Theorem (see \Cref{theo:rectifiable}).

In \Cref{part:allardreg}, we develop the intrinsic 
notion of excess for varifolds in Riemannian manifolds and carry out the proof of the regularity theorem. We begin with a discussion of Fermi coordinates and harmonic $m$-submanifolds, the key new geometric ingredient of our approach. Since  $\mathcal{C}^2$ metrics do not provide sufficient regularity for normal coordinates alone \cite{Jost84}*{Section 2.8}, harmonic submanifolds offer better-controlled coordinate systems. We prove the Caccioppoli inequality 
(\Cref{lemm:excess:section:DensityEstimatesApp}) with our intrinsic excess. The curvature of the harmonic submanifold contributes additional terms when estimating $\operatorname{div}_{T_{x}V}(u_{\Sigma} \nabla{g}(u_{\Sigma}))$ (see \Cref{equa4proof:lemm:excess:section:DensityEstimatesApp}), which are controlled via the principal curvatures of the harmonic submanifold. We then introduce the main hypotheses on varifolds carried through the rest of the paper (\Cref{defi:boundsratio:section:DensityEstimatesApp}) and establish the density estimate (\Cref{lemm:density:section:DensityEstimatesApp}) to obtain a first decay estimate for the excess (\Cref{coro:Remark212:section:DensityEstimatesApp}) from harmonic submanifolds and the parameter $\rho_0$. Finally we introduce the notion of normal graph and prove the Lipschitz Approximation Lemma (\Cref{lemm:lipschitz:section:DensityEstimatesApp}). 

We note that the proofs of many key results follow arguments parallel to those in 
\cite{Simon}, with two notable exceptions: the Caccioppoli inequality 
\cite{Simon}*{Lemma 2.5, p. 131} (see \Cref{lemm:excess:section:DensityEstimatesApp}), where the main difference arises from our intrinsic definition of excess (see \Cref{defi:excess}); and the Lipschitz approximation \cite{Simon}*{Lemma 2.11, p. 135} (see \Cref{lemm:lipschitz:section:DensityEstimatesApp}), where instead of the graph of a Lipschitz function one must work with normal graphs (see \Cref{defi:normalgraph:section:DensityEstimatesApp}).

In \Cref{sec:metric}, we conclude the paper by proving Allard's Interior $\varepsilon$-Regularity Theorem in non-collapsed Alexandrov spaces with curvature bounded both from above and below. 
The argument proceeds by approximating the metric of the Alexandrov space by smoother Riemannian metrics via the Berestovskij--Nikolaev theorem, applying \textit{a priori} estimates given by the intrinsic regularity theory developed in \Cref{part:allardreg} to each approximation, and then passing to the limit. The explicit dependence of all constants on the geometric data, established throughout \Cref{part:allardreg}, is what makes this limiting argument viable.
\subsection*{Acknowledgments}
M. Agnoletto is partially supported by Coordenação de Aperfeiçoamento de Pessoal de Nível Superior – Brasil (CAPES) – Finance Code 88887.667684/2022-00 - and by Conselho Nacional de Desenvolvimento Científico e Tecnológico - Brasil (CNPq) - Finance Code 201543/2024-9.\\
\indent S. Nardulli was supported by FAPESP Auxílio Jovem Pesquisador \# 2021/05256-0, CNPq  Bolsa de Produtividade em Pesquisa 1D \# 12327/2021-8, 23/08246-0, Geometric Variational Problems in Smooth and Nonsmooth Metric Spaces \# 441922/2023-6. \\
\indent J. Correa has received partial support from CNPq-Brazil under Grant No. 408169/2023-0. \\
\indent The authors would like to thank Reinaldo Resende for his valuable discussions, suggestions, and support throughout this work. The authors also would like to thank Gabriel Augusto Correia and Bruno Souza dos Santos de Almeida for their suggestions. This paper was written as part of the first author's Ph.D. thesis at the Federal University of ABC, under the supervision of Prof. Stefano Nardulli.

\section{Theory of general varifolds in abstract Riemannian manifolds}\label{part:generalvarifolds}
\subsection{Intrinsic definition of varifolds in Riemannian manifolds}\label{section:definitionvarifolds}
Let $m,n \in \mathbb{N}$ such that $1 \leq m < n$. We denote by $(M^n,g)$ an $n$-dimensional complete connected Riemannian manifold with differentiable structure of class $\C^3$ and  abstract metric tensor $g$ of class $\C^2$. We also denote by $\Gr_m(TM^n)$ the Grassmannian $m$-plane bundle of $M^n$ and 
\begin{displaymath}
\Gr_m(M^n) \defeq \left\{(x,S) \in M^n \times \Gr_m(TM^n) : S \in \tilde\pi^{-1}(x)\right\},
\end{displaymath}
where $\tilde\pi : \Gr_m(TM^n) \to M^n$ is the natural fiber bundle projection.
\begin{defi}
We say that $V$ is an $m$-dimensional varifold in $M^n$, if $V$ is a nonnegative real-extended valued Radon measure on $\Gr_m(M^n)$. We denote by $\mathbf{V}_m(M^n)$ the space of all $m$-dimensional varifolds in $M^n$.
\end{defi}
\begin{defi}
Let $V \in \mathbf{V}_m(M^n)$ and $\pi:\Gr_m(M^n) \to M^n$ the projection onto the first factor. The weight of $V$, denoted by $||V||$, is the measure given by the push-forward of $V$ by $\pi$.
\end{defi}
We fix $\injec{r0} \in \mathbb{R}$ such that 
\begin{injectivity}\label{r0}
0 < \injec{r0} < \frac{\inj(M^n,g)}{4},    
\end{injectivity}

where $\inj(M^n,g)$ is the injectivity radius of $(M^n,g)$.

Let $\Gamma \subset M^n$ be a countably $m$-rectifiable set and $\theta \in L^1_{loc}(\Gamma,]0,\infty[)$. For every $B \in  \mathcal{B}(\Gr_{m}(M^n))$,  we define
\begin{displaymath}
V(\Gamma,\theta,g)(B) \defeq \int_{\left\{x\in\Gamma:(x, T_x\Gamma)\in B\right\}}\theta d\mathcal{H}_{g}^m,
\end{displaymath}
where $\mathcal{H}_{g}$ is the Hausdorff measure in $M^n$ with respect to the metric $g$.
\begin{defi}
We say that $V \in \mathbf{V}_m(M^n)$ is a rectifiable $m$-varifold in $M^n$ when there exist a countably $m$-rectifiable set $\Gamma \subset M^n$ and $\theta \in L^1_{loc}(\Gamma,]0,\infty[)$ such that $V = V(\Gamma,\theta,g)$. Furthermore, when $\theta:\Gamma\to\mathbb{N}$ we say that $V$ is an integer $m$-varifold in $M^n$. We denote by $\mathbf{RV}_m(M^n)$ the space of $m$-rectifiable varifolds in $M^n$ and by $\mathbf{IV}_m(M^n)$ the set of all integer $m$-varifolds in $M^n$.
\end{defi}
\subsection{First variation of a varifold}\label{section:firstvariation}
Let $k \in \mathbb{N}$. We denote by $\mathfrak{X}^k_{c}(M^n)$ the set of $k$-differentiable vector fields on $M^n$ with compact support, by $\mathfrak{X}^0_{c}(M^n)$ the set of continuous vector fields on $M^n$ with compact support, and by $\mathfrak{X}^{\infty}_{c}(M^n)$ the set of smooth vector fields on $M^n$ with compact support.

Furthermore, we denote by $\nabla^g$ the Levi-Civita connection on $M^n$, and, for $x \in M^n$ and $S \in \Gr(m,T_xM^n)$, we denote by $P_S: T_xM^n \to S$ the orthogonal projection of $T_xM^n$ onto $S$.
\begin{defi}
Let $V\in\mathbf{V}_m(M^n)$. We define the first variation of the varifold $V$ as the linear functional $\delta_gV : \mathfrak{X}^1_{c}(M^n) \to \mathbb{R}$ given by
\begin{displaymath}
\delta_gV(X) \defeq \int_{\Gr_m(M^n)} \diver_S(X)(x) dV(x,S),
\end{displaymath}

where $\diver_S(X)$ denotes the tangential divergence of $X$ along the $m$-plane $S$.
\end{defi}
\begin{defi}
Let $d,d_1,d_2 \in \mathbb{N}$ such that $d < \min\{d_1,d_2\}$, $(N_1,g_1)$ and $(N_2,g_2)$ be $d_1$ and $d_2$-dimensional Riemannian manifolds, respectively, $V \in \mathbf{V}_d(N_1)$, and $f: N_1 \to N_2$ a proper $\C^1$ map. We define the push-forward of $V$ by $f$, for every $B \in \mathcal{B}(\Gr_d(N_2))$, by
\begin{displaymath}
f^{\#}V(B) \defeq \int_{\{(x,S) \in \Gr_d(N_1) : (f(x),df_x(S)) \in B\}} \jac_S(f)(x) dV(x,S),
\end{displaymath}

where $\jac_S(f)$ denotes the tangential Jacobian of $f$ along the $m$-plane $S$.
\end{defi}
Given $X \in \X^1_c(M^n)$, the one-parameter family of diffeomorphisms generated by $X$ is the map $\Phi : \mathbb{R} \times M^n \to M^n$, which is, for all $(t,x) \in \mathbb{R} \times M^n$, the unique solution of
\begin{equation}\label{onepatameterdiff}
\begin{cases}
\Phi(0,x) &= x \\
\left.\dfrac{\partial \Phi}{\partial t}\right|_{t=0} &= X(x).
\end{cases}
\end{equation}

Note that the flow $\Phi(\cdot,x)$ is defined on all $\mathbb{R}$ and any $x\in M^n$ because $X$ is assumed to have compact support in $M^n$.
\begin{prop}\label{prop:firstvariationone}
Let $V \in \mathbf{V}_m(M^n)$ and $X \in \X^1_c(M^n)$. Then
\begin{equation}\label{equa1:firstvariation}
\delta_g V(X)=\left.\dfrac{d}{dt}\right|_{t=0}\left(||{\Phi_t}^{\#}V||(M^n)\right),
\end{equation}

where $\Phi_t(\cdot) \defeq \Phi(t,\cdot)$ and $\Phi$ is the one-parameter family of diffeomorphisms generated by $X$.
\end{prop}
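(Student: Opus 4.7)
The plan is to express $\|\Phi_t^{\#}V\|(M^n)$ as the $V$-integral of the partial Jacobian $\jac_S(\Phi_t)$, differentiate under the integral sign at $t=0$, and identify the resulting pointwise integrand with $\diver_S(X)(x)$. Since $\pi^{-1}(M^n) = \Gr_m(M^n)$, the definition of push-forward gives
\[
\|\Phi_t^{\#}V\|(M^n) = \int_{\Gr_m(M^n)} \jac_S(\Phi_t)(x) \, dV(x,S).
\]
Because $X \in \X^1_c(M^n)$, the flow satisfies $\Phi_t = \id$ outside a fixed compact neighborhood $K$ of $\supp(X)$ for $|t|$ small, so $\jac_S(\Phi_t) \equiv 1$ off $\Gr_m(K)$ and its difference quotient in $t$ is uniformly bounded there. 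Together with $V(\Gr_m(K)) < \infty$, this allows differentiation under the integral sign via dominated convergence.

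For the pointwise derivative of the integrand at $t=0$, note that $\mathbf{J}_S(\Phi_0)(x) = I_{m \times m}$ since $\{e_i\}$ is $g_x$-orthonormal, so $\jac_S(\Phi_0)(x)=1$; Jacobi's formula combined with the chain rule for the square root then yields
\[
\left.\frac{d}{dt}\right|_{t=0} \jac_S(\Phi_t)(x) = \frac{1}{2}\,\trace\!\left(\left.\frac{d}{dt}\right|_{t=0} \mathbf{J}_S(\Phi_t)(x)\right).
\]
Writing $V_i(t)\defeq d(\Phi_t)_x(e_i)$ and using metric compatibility of the Levi-Civita connection along the curve $t\mapsto \Phi_t(x)$, each diagonal entry of $\mathbf{J}_S$ contributes
\[
\left.\frac{d}{dt}\right|_{t=0}\!\langle V_i(t), V_i(t)\rangle_{g_{\Phi_t(x)}} = 2\,\langle \nabla^g_{\partial_t}V_i(0), e_i\rangle_{g_x}.
\]
Applying the torsion-free identity $\nabla^g_{\partial_t}\partial_s f = \nabla^g_{\partial_s}\partial_t f$ to the parametrized surface $f(t,s)=\Phi_t(\gamma(s))$, with $\gamma(0)=x$ and $\dot\gamma(0)=e_i$, and noting $\partial_t f|_{t=0}=X$, gives $\nabla^g_{\partial_t}V_i(0) = \nabla^g_{e_i}X$. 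Summing over $i$ and halving produces $\diver_S(X)(x)$, and integration against $V$ closes the identity.

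The main obstacle is the regularity of the flow: the torsion-free symmetry identity formally requires $f \in C^2$ in $(t,s)$, whereas $g\in C^2$ and $X\in C^1$ only yield $\Phi\in C^1$ in $(t,x)$. I would circumvent this through the smoothing scheme the authors indicate immediately after the statement, namely approximating $X$ by $X_\epsilon \in \X^\infty_c(M^n)$, and if needed $g$ by mollified $C^\infty$ metrics $g_\epsilon$ with controlled convergence of Christoffel symbols and curvature on the compact set $K \supset \supp(X)$, proving the formula in the smooth category where every manipulation above is classical, and then passing to the limit on both sides: on the right by uniform convergence of $\diver_S(X_\epsilon)$ on $K$ against the finite measure $V|_{\Gr_m(K)}$, and on the left by continuous dependence of the flow on the vector field together with dominated convergence.
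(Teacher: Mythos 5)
Your proposal is correct in outline and reaches the same reduction as the paper, namely that everything hinges on showing $\left.\frac{d}{dt}\right|_{t=0}\jac_S(\Phi_t)(x)=\diver_S(X)(x)$ with enough uniformity in $(x,S)$ to differentiate under the integral, but the mechanism is genuinely different. The paper works in normal coordinates, mollifies the \emph{metric} (not the vector field), and obtains the first-order expansion $\left[\mathbf{J}_S(\Phi_t)(x)\right]_{ij}=\delta_{ij}+t\left(\left\langle \nabla_{e_i}X,e_j\right\rangle_{g_x}+\left\langle e_i,\nabla_{e_j}X\right\rangle_{g_x}+E_{ij}(\Phi_t(x))\right)$ with $\sup_{\supp(X)}E_{ij}\to 0$ as $t\downarrow 0$; that uniform error term is exactly what licenses the application of the differentiation-under-the-integral theorem. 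You instead run the classical first-variation-of-area computation (Jacobi's formula plus the torsion-free symmetry identity on the variation $f(t,s)=\Phi_t(\gamma(s))$) in the smooth category and then approximate $X$ by $X_\varepsilon\in\X^\infty_c(M^n)$. This is a legitimate alternative, and it has the advantage of not relying on the paper's identification of the flow with $\exp_x(tX(x))$.

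The step you should tighten is the final interchange of limits on the right-hand side. Knowing $\delta_gV(X_\varepsilon)=\left.\frac{d}{dt}\right|_{t=0}\left|\left|(\Phi^{\varepsilon}_t)^{\#}V\right|\right|(M^n)$ for each $\varepsilon$, together with $\delta_gV(X_\varepsilon)\to\delta_gV(X)$, does not by itself give the existence and value of $\left.\frac{d}{dt}\right|_{t=0}\left|\left|\Phi_t^{\#}V\right|\right|(M^n)$, and ``continuous dependence of the flow on the vector field'' (a $C^0$ statement) is not sufficient for this. What is needed is $C^1$-stability of the flow differential: from the variational equation and Gronwall one gets, for small $t$ and uniformly on a compact neighborhood $K$ of $\supp(X)$, estimates of the form $\left|\left| d\Phi_t-d\Phi^{\varepsilon}_t\right|\right|\leq Ct\left(\left|\left| X-X_\varepsilon\right|\right|_{\C^1(K)}+o_\varepsilon(1)\right)$, which imply that the difference quotients $t^{-1}\left(\jac_S(\Phi_t)(x)-1\right)$ converge to $\diver_S(X)(x)$ uniformly on $\Gr_m(K)$ as $t\to 0$. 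That uniform convergence simultaneously proves that the derivative on the right exists for the $\C^1$ field $X$, justifies the dominated-convergence step, and identifies the limit; it plays exactly the role that the paper's uniform $E_{ij}$ terms play. With this quantitative ingredient made explicit, your argument closes.
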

\begin{proof}
Let $x \in M^n$ and $(U,\varphi)$ be a normal coordinate chart centered at $x$. By \cite{Lee}*{Proposition 5.24, p. 123} we have that 
\begin{enumerate}
    \item $\phi(x) = (\phi^1(x),\ldots,\phi^n(x)) = (0,\ldots,0)$, where $\phi^i:M^n \to \mathbb{R}$ for all $i = 1,\ldots,n$;
    \item The components of the metric at $x$ are $g_{ij}(x) = \delta_{ij}$ for all $i,j = 1,\ldots,n$;
    \item $\Gamma_{ij}^k(x) = 0$ for all $i,j,k = 1, \ldots, n$;
    \item $\frac{\partial g_{ij}}{\partial \phi^k}(x) = 0$ for all $i,j,k = 1, \ldots, n$.
\end{enumerate}
Let $\Psi:\mathbb{R} \times M^n \to M^n$ be given by $\Psi(t,x) \defeq \exp_x^g(tX(x))$ and note that
\begin{displaymath}
\Psi(0,x) = \exp_x(0X(x)) = x
\end{displaymath}

and
\begin{displaymath}
\left.\dfrac{\partial \Psi}{\partial t}\right|_{t=0} = \left.\dfrac{\partial}{\partial t}\right|_{t=0} \exp_x(tX(x)) = X(x).
\end{displaymath}

So, $\Psi$ is a solution to \Cref{onepatameterdiff}, and by the uniqueness of solutions for ODEs, it is the only one. In this case, we denote it by $\Phi$.

Since $g$ is of class $\C^2$ we have, by a standard argument of mollification of the Riemannian metric $g$ (see \cite{Fukuoka2006}) for all $t>0$ sufficiently close to $0$, for all $(x,S) \in \Gr_m(M^n)$, and for all $i,j = 1,\ldots,m$, that
\begin{equation}\label{equa0:firstariation}
\left[\mathrm{Jac}_S(\Phi_t)(x)\right]_{ij} = \delta_{ij} + t\left(\left\langle \nabla_{e_i}X,e_j\right\rangle_{g_x} + \left\langle e_i , \nabla_{e_j}X\right\rangle_{g_x} + E_{ij}(\Phi_t(x))\right),
\end{equation}

where $\{e_1,\ldots,e_m\}$ is an orthonormal basis for $S$ and $\sup_{x \in \supp(X)} E_{ij}(\Phi_t(x)) \to 0$ when $t \downarrow 0$.

By \cite{FedererGMT}*{Section 1.4.5, p. 21} and \Cref{equa0:firstariation} we have, for all $t>0$ sufficiently close to $0$, and for all $(x,S) \in \Gr_m(M^n)$, that
\begin{align*}
\jac_S(\Phi_t)(x) = \sqrt{1 + t\left(2\diver_S(X)(x) + E_{ii}(\Phi_t(x))\right) + O(t^2)}.
\end{align*}

Which implies, for all $(x,S) \in \Gr_m(M^n)$, that
\begin{align}\label{eq3:section:first-variation}
\left.\dfrac{d}{dt}\right|_{t=0} \jac_S(\Phi_t)(x) = \diver_S(X)(x).
\end{align}

So, by \Cref{eq3:section:first-variation} and \cite{Folland}*{Theorem 2.27, p. 56}, we have that \Cref{equa1:firstvariation} holds.
\end{proof}
\begin{rema}
We initially defined the first variation of a varifold as a linear functional on $\mathfrak{X}^1_c(M^n)$. We now extend this definition to the space $\mathfrak{X}^0_c(M^n)$ for varifolds that has $L^p_{\mathrm{loc}}(||V||)$-bounded first variation in $M^n$ (as a linear functional), for every $1 \leq p \leq \infty$.

We define $\widetilde{\delta_gV}:\mathfrak{X}^0_c(M^n) \to \mathbb{R}$ by
\begin{displaymath}
\widetilde{\delta_gV}(X) \defeq \lim\limits_{\varepsilon \downarrow 0} \delta_gV(X_{\varepsilon}),
\end{displaymath}

where $X_{\varepsilon}$ is a smoothing mollifier of the vector field $X \in \mathfrak{X}^0_c(M^n)$. 

Note that $\widetilde{\delta_gV}$ is well-defined and is equal to $\delta_g V$ on $\mathfrak{X}^1_c(M^n)$. From now on we identify $\widetilde{\delta_gV}$ with $\delta_gV$, although technically they may differ.
\end{rema}
\begin{defi}\label{defi:stationaryvariation:section:firstvariation}
We say that $V \in \mathbf{V}_m(M^n)$ is stationary when $\delta_g V(X)=0$ for any $X \in\mathfrak{X}^0_c(M^n)$.
\end{defi}
\begin{defi}\label{defi:locallyboundedfirstvariation:section:firstvariation}
We say that $V \in \mathbf{V}_m(M^n)$ has locally bounded first variation when, for every open subset $W \subset \subset M^n$, there exists a real constant $C_W > 0$ such that 
\begin{displaymath}
|\delta_gV(X)| \leq C_W ||X||_{L^{\infty}(W,||V||)},
\end{displaymath}

holds for every $X \in \mathfrak{X}^0_c(W)$.
\end{defi}
\begin{prop}\label{prop:generalizedmeancurvaturaL1bounded:section:firstvariation}
Let $V\in\mathbf{V}_m(M^n)$ with locally bounded first variation. Then the total variation of $\delta_gV$, denoted by $||\delta_gV||_{\mathrm{TV}}$, is a Radon measure. Furthermore, there exists a $||V||$-measurable function $\overrightarrow{H_g}: M^n \to TM^n$, a real constant $C_W > 0$ and $Z \subset M^n$ with $||V||(Z) = 0$, such that, for every open subset $W \subset \subset M^n$ and $X\in\mathfrak{X}^0_c(M^n)$, we have that
\begin{equation}\label{equa1:L1}
\delta_g V(X)=-\int_{M^n}\left\langle \overrightarrow{H_g},X\right\rangle_gd||V||+\int_{M^n}\left\langle\nu,X\right\rangle_g d||\delta_g V||_{\sing}
\end{equation}

and $||\overrightarrow{H_g}||_{L^1(W,||V||)} \leq C_W$, where $\nu$ is a $||\delta_gV||_{\mathrm{TV}}$-measurable function with $||\nu(x)||_g = 1$, for all $x \in M^n$, and $||\delta_gV||_{\sing} \defeq ||\delta_gV||_{\mathrm{TV}} \mres Z$.
\end{prop}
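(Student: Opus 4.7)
The plan is to obtain the representation \Cref{equa1:L1} in three steps: first build $||\delta_g V||_{\mathrm{TV}}$ as a Radon measure via a vector-valued Riesz representation, then decompose it against $||V||$ using the Lebesgue--Radon--Nikodym theorem, and finally read off $\overrightarrow{H_g}$ with the claimed $L^1$ bound.

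First, for every open $W\subset\subset M^n$, the hypothesis of locally bounded first variation (\Cref{defi:locallyboundedfirstvariation:section:firstvariation}) ensures that $\delta_g V$ restricts to a bounded linear functional of norm at most $C_W$ on $\mathfrak{X}^0_c(W)$ endowed with the $L^\infty(W,||V||)$ norm. I would define
\begin{displaymath}
||\delta_g V||_{\mathrm{TV}}(W) \defeq \sup\left\{\delta_g V(X) : X\in\mathfrak{X}^0_c(W),\ ||X||_{L^\infty(W,||V||)}\leq 1\right\}
\end{displaymath}
and then, working in a locally finite atlas of charts that simultaneously trivialize $TM^n$ together with a subordinate partition of unity, identify local sections of $TM^n$ with $\mathbb{R}^n$-valued continuous functions and apply the classical vector-valued Riesz representation theorem chart by chart. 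Patching the local pieces yields a Radon measure $||\delta_g V||_{\mathrm{TV}}$ on $M^n$ together with a $||\delta_g V||_{\mathrm{TV}}$-measurable vector field $\nu$ with $||\nu(x)||_g=1$ for $||\delta_g V||_{\mathrm{TV}}$-a.e. $x$, such that
\begin{displaymath}
\delta_g V(X)=\int_{M^n}\langle \nu, X\rangle_g\, d||\delta_g V||_{\mathrm{TV}}
\end{displaymath}
for every $X\in\mathfrak{X}^0_c(M^n)$.

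Next, I would apply the Lebesgue--Radon--Nikodym decomposition of $||\delta_g V||_{\mathrm{TV}}$ with respect to $||V||$, writing $||\delta_g V||_{\mathrm{TV}}=h\,||V||+||\delta_g V||_{\sing}$ with $h\in L^1_{\mathrm{loc}}(M^n,||V||)$ and $||\delta_g V||_{\sing}$ mutually singular with $||V||$. The singular part is concentrated on a Borel set $Z$ with $||V||(Z)=0$, so $||\delta_g V||_{\sing}=||\delta_g V||_{\mathrm{TV}}\mres Z$. Setting $\overrightarrow{H_g}\defeq -h\,\nu$ on $M^n\setminus Z$ (and extending by zero on $Z$) and splitting the integral in the Riesz formula along $M^n\setminus Z$ and $Z$ yields exactly \Cref{equa1:L1}. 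For the $L^1$ estimate, for any $W\subset\subset M^n$ one has
\begin{displaymath}
||\overrightarrow{H_g}||_{L^1(W,||V||)}=\int_W |h|\, d||V||\leq ||\delta_g V||_{\mathrm{TV}}(W)\leq C_W,
\end{displaymath}
where the last inequality follows directly from the definition of total variation combined with locally bounded first variation.

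The main obstacle I anticipate is the careful patching of the local vector-valued Riesz representations into a single intrinsic pair $(||\delta_g V||_{\mathrm{TV}},\nu)$ on $M^n$: one must verify that the resulting measure is independent of the chosen charts and partition of unity, and that $\nu$ is well-defined as a $||\delta_g V||_{\mathrm{TV}}$-measurable section of $TM^n$ of unit $g$-norm. This will be handled via uniqueness of Riesz representing measures applied to the functional $\delta_g V$ on $\mathfrak{X}^0_c(M^n)$, combined with compatibility of the local trivializations with the $\C^2$ metric $g$. Once this intrinsic Riesz representation is in place, the remaining ingredients (the Radon--Nikodym decomposition and the resulting $L^1$ bound) are routine measure-theoretic manipulations.
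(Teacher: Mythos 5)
Your proposal is correct and follows essentially the same route as the paper: define the total variation from the locally bounded first variation hypothesis, apply the Riesz representation theorem to obtain the Radon measure $||\delta_g V||_{\mathrm{TV}}$ together with the unit field $\nu$, decompose $||\delta_g V||_{\mathrm{TV}}$ with respect to $||V||$ by Radon--Nikodym, and set $\overrightarrow{H_g} \defeq -h\,\nu$ with the $L^1$ bound coming from $\int_W |h|\,d||V|| \leq ||\delta_g V||_{\mathrm{TV}}(W) \leq C_W$. The only cosmetic differences are that you build the vector-valued Riesz representation by chart-wise trivialization and patching where the paper cites Federer's representation theorem directly, and that you use the abstract Lebesgue decomposition with a singular set $Z$ of $||V||$-measure zero where the paper phrases it via the density $\Theta^{||V||}(||\delta_g V||_{\mathrm{TV}},\cdot)$ and takes $Z$ to be the set where that density is infinite; these formulations are equivalent.
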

\begin{proof}
Since $V$ has locally bounded first variation, for all open set $W \subset \subset M^n$, there exists a real constant $C_W > 0$ such that $|\delta_g V(X)|\leq C_W||X||_{L^{\infty}(W,||V||)}$. Then, the total variation of $\delta_gV$,
\begin{displaymath}
||\delta_gV||_{\mathrm{TV}}(W) \defeq \sup\{ |\delta_gV(X)| : X \in \mathfrak{X}^0_c(W), ||X||_{L^{\infty}(W,||V||)} \leq 1\} \leq C_W < \infty.
\end{displaymath}

By Riesz representation theorem \cite{FedererGMT}*{Theorem 2.5.13, p. 106} $||\delta_gV||_{\mathrm{TV}}$ is a Radon measure on $M^n$ and there exists a $||\delta_gV||_{\mathrm{TV}}$-mensurable function $\nu:M^n \to TM^n$ with $||\nu(x)||_{g_x}=1$, for $||\delta_gV||_{\mathrm{TV}}$-a.e. $x \in M^n$. Furthermore, for all $X \in \mathfrak{X}^0_c(M^n)$, 
\begin{equation}\label{equa0:L1}
\delta_gV(X) = \int_{M^n} \langle \nu, X\rangle_gd||\delta_gV||_{\mathrm{TV}}.
\end{equation}

By Radon-Nikodym Theorem (\cite{FedererGMT}*{Theorem 2.9.2, p. 153} and \cite{Folland}*{Theorem 2.9.7, p. 155}) we have, for all $B \in \mathcal{B}(M^n)$,
\begin{equation}\label{equa0.1:L1}
||\delta_gV||_{\mathrm{TV}}(B) = \int_B \Theta^{||V||}(||\delta_gV||_{\mathrm{TV}},x) d||V||(x) + \left(||\delta_gV||_{\mathrm{TV}} \mres Z\right)(B)
\end{equation}

with $(\Theta^{||V||}||\delta_gV||_{\mathrm{TV}},x) \in L^1(B,||V||)$ and $Z = \{x \in M^n : \Theta^{||V||}\left(||\delta_gV||_{\mathrm{TV}},x\right) =  \infty\}$.

Defining $-\overrightarrow{H_g}(x) \defeq \Theta^{||V||}(||\delta_gV||_{\mathrm{TV}},x)\nu(x)$, for all $x \in M^n$, and $||\delta_gV||_{\sing} \defeq ||\delta_gV||_{\mathrm{TV}} \mres Z$, by \Cref{equa0:L1,equa0.1:L1} we conclude that \Cref{equa1:L1} holds.

Furthermore, for $W \subset \subset M^n$ open, we have that by Hölder inequality that
\begin{align*}
|| \overrightarrow{H_g} ||_{L^1(W,||V||)} = \left|\left| -\Theta^{||V||}\left(||\delta_gV||_{\mathrm{TV}},x\right)\nu \right|\right|_{L^1(W,||V||)} \leq C_W,
\end{align*}

where $C_W \geq 0$ exists because $\Theta^{||V||}(||\delta_gV||_{\mathrm{TV}},x) \in L^1(W,||V||)$ and $||\nu(x)||_{g_x}=1$, for $||\delta_gV||_{\mathrm{TV}}$-a.e. $x \in M^n$.
\end{proof}
Let $1 < p < \infty$. We say that $q \in \mathbb{R}$ is the conjugate exponent of $p$ when $q$ is such that
\begin{displaymath}
\dfrac{1}{p} + \dfrac{1}{q} = 1.
\end{displaymath}

When $p = 1$ we say that $q = \infty$ is its conjugate exponent and when $p = \infty$ we say that $q = 1$ is its conjugate exponent. By symmetry of this definition we have that, for $1 \leq p,q \leq \infty$, $p$ is the conjugate exponent of $q$ if, and only if, $q$ is the conjugate exponent of $p$.
\begin{prop}\label{prop:generalizedmeancurvaturaLpbounded:section:firstvariation}
Let $1 \leq  p < \infty$, $q$ the conjugate exponent of $p$, and $V\in\mathbf{V}_m(M^n)$. Then the following statements are equivalent:
\begin{enumerate}
    \item\label{statement1:prop:generalizedmeancurvaturaLpbounded:section:firstvariation} For every open subset $W \subset \subset M^n$, there exists a real constant $C_W > 0$ (which depends of $W$) such that
    \begin{displaymath}
    |\delta_g V(X)|\leq C_W||X||_{L^p(W,||V||)}
    \end{displaymath}
    
    holds for every $X \in \mathfrak{X}^0_c(W)$.
    \item\label{statement2:prop:generalizedmeancurvaturaLpbounded:section:firstvariation} The total variation of $\delta_gV$, denoted by $||\delta_gV||_{\mathrm{TV}}$, is a Radon measure. Furthermore, there exists a $||V||$-measurable function $\overrightarrow{H_g}: M^n \to TM^n$ such that
    \begin{equation}\label{equa1:Lp}
    \delta_g V(X)=-\int_{M^n}\left\langle \overrightarrow{H_g},X\right\rangle_gd||V||
    \end{equation}
    
    and $||\overrightarrow{H_g}||_{L^q(W,||V||)} \leq C_W$, for every open subset $W \subset \subset M^n$ and $X\in\mathfrak{X}^0_c(W)$.
\end{enumerate}
\end{prop}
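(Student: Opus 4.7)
The proposal is to prove the two implications separately, with (2)$\Rightarrow$(1) being a direct consequence of Hölder's inequality and (1)$\Rightarrow$(2) requiring more careful analysis that combines the previous Proposition \ref{prop:generalizedmeancurvaturaL1bounded:section:firstvariation} with an absolute continuity argument and an $L^p$--$L^q$ duality argument.

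For (2)$\Rightarrow$(1), given an open $W \subset \subset M^n$ and $X \in \mathfrak{X}^0_c(W)$, we simply estimate
\begin{align*}
|\delta_g V(X)| &= \left|\int_{M^n} \langle \overrightarrow{H_g}, X \rangle_g \, d\|V\| \right| \leq \int_W \|\overrightarrow{H_g}\|_g \|X\|_g \, d\|V\| \\
&\leq \|\overrightarrow{H_g}\|_{L^q(W,\|V\|)} \|X\|_{L^p(W,\|V\|)} \leq C_W \|X\|_{L^p(W,\|V\|)},
\end{align*}
using Hölder's inequality on vector-valued functions.

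For (1)$\Rightarrow$(2), I would first observe that the $L^p$ bound implies a local $L^\infty$ bound: for $X \in \mathfrak{X}^0_c(W)$,
\[
|\delta_g V(X)| \leq C_W \|X\|_{L^p(W,\|V\|)} \leq C_W \|V\|(W)^{1/p}\, \|X\|_{L^\infty(W,\|V\|)},
\]
so $V$ has locally bounded first variation in the sense of Definition \ref{defi:locallyboundedfirstvariation:section:firstvariation}. Invoking Proposition \ref{prop:generalizedmeancurvaturaL1bounded:section:firstvariation}, the total variation $\|\delta_g V\|_{\mathrm{TV}}$ is a Radon measure, and there exist $\overrightarrow{H_g}$ and a singular measure $\|\delta_g V\|_{\sing} = \|\delta_g V\|_{\mathrm{TV}} \mres Z$ with $\|V\|(Z) = 0$ such that equation \eqref{equa1:L1} holds.

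The main obstacle is to show that $\|\delta_g V\|_{\sing} = 0$, i.e., that the $L^p$ bound forces absolute continuity with respect to $\|V\|$. For this, given any Borel set $A \subset M^n$ with $\|V\|(A) = 0$, I would use the outer regularity of $\|V\|$ to choose a decreasing sequence of open sets $U_n \supset A$ with $U_n \subset W$ and $\|V\|(U_n) \downarrow 0$. For any $X \in \mathfrak{X}^0_c(U_n)$ with $\|X\|_{L^\infty} \leq 1$, the $L^p$ hypothesis yields $|\delta_g V(X)| \leq C_W \|V\|(U_n)^{1/p}$, and taking the supremum over such $X$ gives $\|\delta_g V\|_{\mathrm{TV}}(U_n) \leq C_W \|V\|(U_n)^{1/p} \to 0$ since $p < \infty$. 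Hence $\|\delta_g V\|_{\mathrm{TV}}(A) = 0$, so in particular $\|\delta_g V\|_{\sing} \equiv 0$ and \eqref{equa1:Lp} is established.

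Finally, to obtain the quantitative bound $\|\overrightarrow{H_g}\|_{L^q(W,\|V\|)} \leq C_W$, I would use $L^p$--$L^q$ duality applied to the vector-valued setting. The map $X \mapsto -\delta_g V(X) = \int_W \langle \overrightarrow{H_g}, X \rangle_g \, d\|V\|$ defined on $\mathfrak{X}^0_c(W)$ is linear with operator norm at most $C_W$ with respect to the $L^p$ norm. By a standard density argument (using a partition of unity to reduce to coordinate patches, where vector-valued $L^p$ theory applies), it extends to a bounded functional on $L^p(W, \|V\|; TM^n)$ of norm $\leq C_W$. The Riesz representation theorem for vector-valued $L^p$ spaces then identifies the representing element with $\overrightarrow{H_g} \in L^q(W, \|V\|; TM^n)$ and equates its $L^q$-norm with the operator norm, giving $\|\overrightarrow{H_g}\|_{L^q(W,\|V\|)} \leq C_W$. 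The only technical care needed here is ensuring the vector-valued duality is applied correctly on the Riemannian manifold, which can be done componentwise in local trivializations and glued by a partition of unity subordinate to a finite cover of $\overline{W}$.
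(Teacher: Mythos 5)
Your proposal is correct and follows essentially the same route as the paper: Hölder's inequality for (2)$\Rightarrow$(1), and for (1)$\Rightarrow$(2) the combination of the Riesz representation of $\|\delta_g V\|_{\mathrm{TV}}$, an absolute-continuity argument based on $|\delta_g V(X)|\leq C_W\|X\|_{L^\infty}\|V\|(W)^{1/p}$, Radon--Nikodym, and $L^p$--$L^q$ duality for the norm bound. The only (cosmetic) difference is that you pass through Proposition \ref{prop:generalizedmeancurvaturaL1bounded:section:firstvariation} and then show the singular part vanishes, while the paper proves absolute continuity of $\|\delta_g V\|_{\mathrm{TV}}$ with respect to $\|V\|$ before applying Radon--Nikodym so that no singular term appears; your handling of outer regularity and of the vector-valued duality is, if anything, more explicit than the paper's.
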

\begin{proof}
$\left(\Cref{statement1:prop:generalizedmeancurvaturaLpbounded:section:firstvariation} \implies \Cref{statement2:prop:generalizedmeancurvaturaLpbounded:section:firstvariation}\right)$: Suppose that, for every open subset $W \subset \subset M^n$, there exists a real constant $C_W > 0$ such that $|\delta_g V(X)|\leq C_W||X||_{L^p(W,||V||)}$ holds for every $X \in \mathfrak{X}^0_c(W)$. Then, the total variation of $\delta_gV$,
\begin{align*}
||\delta_gV||_{\mathrm{TV}}(W) &\defeq \sup\{ |\delta_gV(X)| : X \in \mathfrak{X}^0_c(W), ||X||_{L^{\infty}(W,||V||)} \leq 1\} < \infty.
\end{align*}

By Riesz representation theorem \cite{FedererGMT}*{Theorem 2.5.13, p. 106} $||\delta_gV||_{\mathrm{TV}}$ is a Radon measure on $M^n$ and there exists a $||\delta_gV||_{\mathrm{TV}}$-mensurable function $\nu:M^n \to TM^n$ with $||\nu(x)||_{g_x}=1$, for $||\delta_gV||_{\mathrm{TV}}$-a.e. $x \in M^n$. Furthermore, for all $X \in \mathfrak{X}^0_c(M^n)$, 
\begin{equation}\label{equa0:Lp}
\delta_gV(X) = \int_{M^n} \langle \nu, X\rangle_gd||\delta_gV||_{\mathrm{TV}}.
\end{equation}

Furthermore, note that $||\delta_gV||_{\mathrm{TV}}$ is absolutely continuous with respect to $||V||$. In fact, let $B \in \mathcal{B}(M^n)$ such that $||V||(B) = 0$ then
\begin{align*}
||\delta_gV||_{\mathrm{TV}}(B) &\leq \sup\left\{ C_B||X||_{L^p(B,||V||)} : X \in \mathfrak{X}^0_c(B), ||X||_{L^{\infty}(B,||V||)}\leq 1\right\}\\
&\leq \sup\left\{ C_B||X||_{L^{\infty}(B,||V||)}\left(||V||(B)\right)^{\frac{1}{p}} : X \in \mathfrak{X}^0_c(B), ||X||_{L^{\infty}(B,||V||)}\leq 1\right\} = 0.
\end{align*}

By Radon-Nikodym Theorem (\cite{FedererGMT}*{Theorem 2.9.2, p. 153} and \cite{Folland}*{Theorem 2.9.7, p. 155}) we have, for all $B \in \mathcal{B}(M^n)$,
\begin{equation}\label{equa0.1:Lp}
||\delta_gV||_{\mathrm{TV}}(B) = \int_B \Theta^{||V||}(||\delta_gV||_{\mathrm{TV}},x) d||V||(x)
\end{equation}

with $\Theta^{||V||}||(\delta_gV||_{\mathrm{TV}},x) \in L^1(B,||V||)$.

Defining $-\overrightarrow{H_g}(x) \defeq \Theta^{||V||}(||\delta_gV||_{\mathrm{TV}},x)\nu(x)$, for all $x \in M^n$, by \Cref{equa0:Lp,equa0.1:Lp} we conclude that \Cref{equa1:Lp} holds.

Furthermore, since for every open subset $W \subset \subset M^n$ and for every $X \in \mathfrak{X}^0_c(W)$ there exists a real constant $C_W > 0$ such that $|\delta_g V(X)|\leq C_W||X||_{L^p(W,||V||)}$, we have that $\delta_gV$ is a bounded linear functional on $L^p(W,||V||)$. By \cite{RudinAnalysis}*{Theorem 6.16, p. 127} there exists a unique $F \in L^q(W,||V||)$ such that
\begin{equation}\label{equa2proof:prop:generalizedmeancurvaturaLpbounded:section:firstvariation}
\delta_gV(X) = \int_{M^n} \left\langle F,X\right\rangle_gd||V||.
\end{equation}

Combining \Cref{equa1:Lp} and \Cref{equa2proof:prop:generalizedmeancurvaturaLpbounded:section:firstvariation} we obtain that $\overrightarrow{H_g} = -F$ and, therefore, there exists a real constant $C_W > 0$ such that $||\overrightarrow{H_g}||_{L^q(W,||V||)} \leq C_W$, for every open subset $W \subset \subset M^n$.

$\left(\Cref{statement2:prop:generalizedmeancurvaturaLpbounded:section:firstvariation} \implies \Cref{statement1:prop:generalizedmeancurvaturaLpbounded:section:firstvariation}\right)$: Now, suppose that $||\delta_gV||_{\mathrm{TV}}$ is a Radon measure, there exists a $||V||$-measurable function $\overrightarrow{H_g}: M^n \to TM^n$ such that \Cref{equa1:Lp} holds for all $X\in\mathfrak{X}^0_c(M^n)$, and $||\overrightarrow{H_g}||_{L^q(W,||V||)} \leq C_W$, for every open subset $W \subset \subset M^n$. Let $W \subset \subset M^n$ and $X\in\mathfrak{X}^0_c(W)$ be arbitrary. Thus by Hölder inequality
\begin{align*}
|\delta_gV(X)| &= \left| -\int_M\left\langle \overrightarrow{H_g},X\right\rangle_gd||V|| \right| \leq \int_M ||\overrightarrow{H_g}||_g||X||_g d||V|| \leq C_W||X||_{L^p(W,||V||)}.
\end{align*}
\end{proof}
\begin{defi}
Using the same notation as in \Cref{prop:generalizedmeancurvaturaL1bounded:section:firstvariation} and \Cref{prop:generalizedmeancurvaturaLpbounded:section:firstvariation}, we denote by $\overrightarrow{H_g}$ the generalized mean curvature vector of $V$, by $Z$ the generalized boundary of $V$, by $||\delta_g V||_{\sing}$ the generalized boundary measure of $V$, and by $\nu{\restriction_Z}$ the generalized unit conormal of $V$.
\end{defi}
\begin{defi}
Let $1 \leq  p < \infty$, $q$ the conjugate exponent of $p$, and $V\in\mathbf{V}_m(M^n)$. We say that $V$ has $L^q_{\mathrm{loc}}$-bounded generalized mean curvature vector in $M^n$ when $V$ satisfies \Cref{statement1:prop:generalizedmeancurvaturaLpbounded:section:firstvariation} or \Cref{statement2:prop:generalizedmeancurvaturaLpbounded:section:firstvariation} in the \Cref{prop:generalizedmeancurvaturaLpbounded:section:firstvariation}.   
\end{defi}
\subsection{Monotonicity formula}\label{section:monotonicity}
Let $0 < \varepsilon < 1$, and  $\varphi_{\varepsilon} \in C^1_c\left(]-1,1[\right)$  an even function such that
\begin{displaymath}
\varphi_{\varepsilon}(x) \defeq
\begin{cases}
1 \text{ if } x \leq \varepsilon,\\
0 \text{ if } x \geq 1,
\end{cases}
\hspace{1cm}
\text{ and, }  
\hspace{1cm}
\varphi_{\varepsilon}' < 0 \text{, if } 0<\varepsilon < x < 1.
\end{displaymath}
\begin{defi}\label{defi:radial-pert:section:monotonicity}
Let $\xi \in M^n$, $0 < \delta < 1$, $0 < \rho < \injec{r0}$, $y \in B_{d_g}(\xi,\delta\rho)$, and $\sigma \in ]0,(1-\delta)\rho]$. We define the $\varepsilon,\sigma$-radial perturbation of the gradient of $u_y$ by
\begin{align*}
X_{\varepsilon,\sigma}: M^n &\to TM^n \nonumber \\
x &\mapsto X_{\varepsilon,\sigma}(x) \defeq \varphi_{\varepsilon}\left(\dfrac{u_{y}(x)}{\sigma}\right)u_y(x)\grad{g_x}(u_y)(x),
\end{align*}

where $u_y(x) \defeq d_g(y,x)$, for all $x \in M^n$, and $\grad{g_x}(u_y)(x)$ is the gradient of $u_y$ at $x$, for all $x \in M^n$. 
\end{defi}
\begin{rema}
Note that the $\varepsilon,\sigma$-radial perturbation of the gradient of $u_y$ is a vector field with compact support. Moreover, $\supp\left(X_{\varepsilon,\sigma}\right) \subset B_{d_g}(y,\sigma)$.
\end{rema}
\begin{lemm}\label{lemm1:div-radial-pert:section:monotonicity}
Let $\xi \in M^n$, $0 < \delta < 1$, $0 < \rho < \injec{r0}$. Then, for all $y \in B_{d_g}(\xi,\delta\rho)$, for all $\sigma \in ]0,(1-\delta)\rho]$, for all $x \in B_{d_g}(y,\sigma)$, and for all $S(x) \in Gr(m,T_xM)$, we have that  
\footnotesize
\begin{align}\label{equa1statement:lemm1:div-radial-pert:section:monotonicity}
\diver_{S(x)}\left(X_{\varepsilon,\sigma}\right)(x) &= \varphi_{\varepsilon}\left(\dfrac{u_y(x)}{\sigma}\right) \diver_{S(x)}\left(u_y\grad{g}u_y\right)(x) + \varphi_{\varepsilon}'\left(\dfrac{u_y(x)}{\sigma}\right) \dfrac{u_y(x)}{\sigma} \nonumber \\
&- \varphi_{\varepsilon}'\left(\dfrac{u_y(x)}{\sigma}\right) \dfrac{u_y(x)}{\sigma} \left|\left| P_{S(x)^{\perp}}(\grad{g_x}(u_y)(x)) \right|\right|_{g_x}^2,
\end{align}
\normalsize

where $P_{S(x)^{\perp}}: T_xM^n \to S(x)^{\perp}$ is the orthogonal projection onto $S(x)^{\perp}$ with respect to the metric $g_x$.
\end{lemm}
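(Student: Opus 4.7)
The plan is to apply the Leibniz (product) rule for the partial divergence to the factorization
\begin{displaymath}
X_{\varepsilon,\sigma}(x)=\varphi_{\varepsilon}\!\left(\tfrac{u_y(x)}{\sigma}\right)\cdot\bigl(u_y(x)\grad{g_x}u_y(x)\bigr),
\end{displaymath}
treating the first factor as a scalar and the second as a vector field, and then to rewrite the resulting inner-product term using only the tangential/normal decomposition of $\grad{g}u_y$ with respect to $S(x)$.

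First I would record the Leibniz rule for the partial divergence: if $f\in\C^1(M^n)$ and $Y\in\X^1(M^n)$, then for every $(x,S(x))\in\Gr_m(M^n)$,
\begin{displaymath}
\diver_{S(x)}(fY)(x)=f(x)\,\diver_{S(x)}(Y)(x)+\bigl\langle P_{S(x)}(\grad{g_x}f(x)),Y(x)\bigr\rangle_{g_x}.
\end{displaymath}
This is immediate from the usual Leibniz rule for $\nabla^g$ applied along an orthonormal basis of $S(x)$, and it requires only $\C^1$-regularity of $f$ and $Y$. Here $f=\varphi_{\varepsilon}\!\left(u_y/\sigma\right)$ and $Y=u_y\grad{g}u_y$. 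Note that since $0<\sigma\le(1-\delta)\rho<\injec{r0}<\inj(M^n,g)$, the function $u_y$ is $\C^2$ on $B_{d_g}(y,\sigma)\setminus\{y\}$, and the cutoff factor $\varphi_{\varepsilon}(u_y/\sigma)$ vanishes in a neighborhood of $y$, so there are no regularity issues on $\supp(X_{\varepsilon,\sigma})$.

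Next I would compute the two ingredients. By the chain rule,
\begin{displaymath}
\grad{g_x}\!\left(\varphi_{\varepsilon}\!\left(\tfrac{u_y}{\sigma}\right)\right)(x)=\frac{1}{\sigma}\,\varphi_{\varepsilon}'\!\left(\tfrac{u_y(x)}{\sigma}\right)\grad{g_x}u_y(x).
\end{displaymath}
Inserting this into the Leibniz rule and pulling out the scalar $\varphi_{\varepsilon}'(u_y/\sigma)u_y/\sigma$, the inner-product term becomes
\begin{displaymath}
\varphi_{\varepsilon}'\!\left(\tfrac{u_y(x)}{\sigma}\right)\frac{u_y(x)}{\sigma}\,\bigl\langle P_{S(x)}(\grad{g_x}u_y(x)),\grad{g_x}u_y(x)\bigr\rangle_{g_x}.
\end{displaymath}
Using that $P_{S(x)}$ is an orthogonal projection, $\langle P_{S(x)}v,v\rangle_{g_x}=\|P_{S(x)}v\|_{g_x}^{2}$, and then the Pythagorean identity $\|v\|_{g_x}^{2}=\|P_{S(x)}v\|_{g_x}^{2}+\|P_{S(x)^{\perp}}v\|_{g_x}^{2}$ with $v=\grad{g_x}u_y(x)$, together with the fact that $\|\grad{g_x}u_y(x)\|_{g_x}=1$ on $B_{d_g}(y,\sigma)\setminus\{y\}$ (because $u_y$ is the Riemannian distance to $y$ inside the injectivity radius), one gets
\begin{displaymath}
\bigl\langle P_{S(x)}(\grad{g_x}u_y(x)),\grad{g_x}u_y(x)\bigr\rangle_{g_x}=1-\|P_{S(x)^{\perp}}(\grad{g_x}u_y(x))\|_{g_x}^{2}.
\end{displaymath}

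Combining the two contributions gives exactly \Cref{equa1statement:lemm1:div-radial-pert:section:monotonicity}. The only genuine check, and what I expect to be the main (and essentially only) subtlety, is the use of $\|\grad{g}u_y\|_{g}\equiv1$ on $B_{d_g}(y,\sigma)\setminus\{y\}$; this is a consequence of the eikonal identity for the distance function within the injectivity radius, which in turn is justified by the bound $0<\sigma\le(1-\delta)\rho<\injec{r0}<\inj(M^n,g)$ built into the hypotheses. All other steps are routine calculus together with orthogonal-projection identities on $(T_xM^n,g_x)$.
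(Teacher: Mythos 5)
Your argument is correct and is essentially the paper's proof: what the paper calls a ``standard computation'' is exactly the Leibniz rule for $\diver_{S(x)}$ plus the chain rule you spell out, followed by the same rewriting $\left|\left| P_{S(x)}(\grad{g_x}(u_y)(x)) \right|\right|_{g_x}^2 = 1 - \left|\left| P_{S(x)^{\perp}}(\grad{g_x}(u_y)(x)) \right|\right|_{g_x}^2$ using that $u_y$ is the distance function. One minor slip in your regularity remark: $\varphi_{\varepsilon}\left(\frac{u_y}{\sigma}\right)$ equals $1$ (not $0$) in a neighborhood of $y$, since $\varphi_{\varepsilon}\equiv 1$ on $[0,\varepsilon]$ and vanishes only where $u_y\geq\sigma$; this does not affect the argument, as the pointwise identity is in any case only meaningful for $x\neq y$, where $\grad{g_x}(u_y)$ is defined, and there $u_y\grad{g}u_y=\frac{1}{2}\grad{g}(u_y^2)$ is regular enough for the computation.
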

\begin{proof}
Let  $y \in B_{d_g}(\xi,\delta\rho)$, $\sigma \in ]0,(1-\delta)\rho]$, $x \in B_{d_g}(y,\sigma)$, and $S(x) \in Gr(m,T_xM)$ arbitrary. By a standard computation we have that

\footnotesize
\begin{align}\label{equa1proof:lemm1:div-radial-pert:section:monotonicity}
\diver_{S(x)}\left(X_{\varepsilon,\sigma}\right)(x) = \varphi_{\varepsilon}'\left(\dfrac{u_y(x)}{\sigma}\right) \dfrac{u_y(x)}{\sigma}  \left|\left| P_{S(x)}\left(\grad{g_x}(u_y)(x)\right) \right|\right|_{g_x}^2 + \varphi_{\varepsilon}\left(\dfrac{u_y(x)}{\sigma}\right) \diver_S\left(u_y\grad{g}u_y\right)(x).
\end{align}
\normalsize

where $P_{S(x)}: T_xM \to S(x)$ is the orthogonal projection onto $S(x)$ with respect to the metric $g_x$ and $P_{S(x)^{\perp}}: T_xM \to S(x)^{\perp}$ is the orthogonal projection onto $S(x)^{\perp}$ with respect to the metric $g_x$. Since $u_y$ is the distance function we have that
\begin{equation}\label{equa2proof:lemm1:div-radial-pert:section:monotonicity}
\left|\left| P_{S(x)}(\grad{g_x}(u_y)(x)) \right|\right|_{g_x}^2 + \left|\left| P_{S(x)^{\perp}}(\grad{g_x}(u_y)(x)) \right|\right|_{g_x}^2 = \left|\left| \grad{g_x}(u_y)(x) \right|\right|_{g_x}^2 = 1.
\end{equation}

By \Cref{equa1proof:lemm1:div-radial-pert:section:monotonicity} and \Cref{equa2proof:lemm1:div-radial-pert:section:monotonicity}, we obtain \Cref{equa1statement:lemm1:div-radial-pert:section:monotonicity}.
\end{proof}
Although this choice is enough to get many useful information, let us consider a general case, which will be used in the sequel when we will prove a fundamental height estimates.
\begin{defi}\label{defi:h-radial-pert:section:monotonicity}
Let $h \in \C^1(M^n,[0,+\infty[)$, $\xi \in M^n$, $0 < \delta < 1$, $0 < \rho < \injec{r0}$, $y \in B_{d_g}(\xi,\delta\rho)$, and $\sigma \in ]0,(1-\delta)\rho]$. We define the $h,\varepsilon,\sigma$-radial perturbation of the gradient of $u_y$ by
\begin{align*}
X_{h,\varepsilon,\sigma} : M^n &\to TM^n \nonumber \\
x &\mapsto X_{h,\varepsilon,\sigma}(x) \defeq h(x)X_{\varepsilon,\sigma}(x) = h(x)\varphi_{\varepsilon}\left(\dfrac{u_{y}(x)}{\sigma}\right)u_y(x)\grad{g_x}(u_y)(x),  
\end{align*}

where $u_y(x) \defeq d_g(y,x)$, for all $x \in M^n$, and $\grad{g_x}(u_y)(x)$ is the gradient of $u_y$ at $x$, for all $x \in M^n$. 
\end{defi}
\begin{lemm}\label{lemm1:div-h-radial-pert:section:monotonicity}
Let $h \in \C^1(M,[0,+\infty[)$, $\xi \in M$, $0 < \delta < 1$, $0 < \rho < \injec{r0}$, and $V \in \mathbf{V}_m(M^n)$. Then, for all $y \in B_{d_g}(\xi,\delta\rho)$ and for all $\sigma \in ]0,(1-\delta)\rho]$, we have that
\footnotesize
\begin{align}\label{equa1statement:lemm1:div-h-radial-pert:section:monotonicity}
\delta_gV\left(X_{h,\varepsilon,\sigma}\right) &= \int_{\Gr_m(M^n)} \varphi_{\varepsilon}\left(\dfrac{u_y(x)}{\sigma}\right) \left\langle u_y(x)\grad{g_x}(u_y)(x) , P_{S(x)}(\grad{g_x}(h)(x)) \right\rangle_{g_x} dV(x,S(x)) \nonumber \\
&+ \int_{\Gr_m(M^n)} h(x)\varphi_{\varepsilon}\left(\dfrac{u_y(x)}{\sigma}\right) \diver_{S}\left(u_y\grad{g}u_y\right)(x) dV(x,S(x)) \nonumber\\
&+ \int_{\Gr_m(M^n)} h(x)\varphi_{\varepsilon}'\left(\dfrac{u_y(x)}{\sigma}\right) \dfrac{u_y(x)}{\sigma} dV(x,S(x)) \nonumber \\
&- \int_{\Gr_m(M^n)} h(x)\varphi_{\varepsilon}'\left(\dfrac{u_y(x)}{\sigma}\right) \dfrac{u_y(x)}{\sigma} \left|\left| P_{S(x)^{\perp}}(\grad{g}(u_y)(x)) \right|\right|_{g_x}^2 dV(x,S(x)).
\end{align}
\normalsize
\end{lemm}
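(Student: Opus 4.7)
The plan is to apply the definition of the first variation directly to $X_{h,\varepsilon,\sigma}$ and then reduce everything to \Cref{lemm1:div-radial-pert:section:monotonicity} by means of a product rule for the partial divergence. Since $X_{h,\varepsilon,\sigma} \in \mathfrak{X}^1_c(M^n)$ (by construction, $\supp(X_{\varepsilon,\sigma}) \subset B_{d_g}(y,\sigma)$, and $u_y$ is $\C^1$ away from $y$ and the cut locus, which is irrelevant inside $B_{d_g}(y,\sigma) \subset B_{d_g}(\xi,\rho)$ with $\rho < \injec{r0}$), by definition of the first variation we have
\begin{equation*}
\delta_g V(X_{h,\varepsilon,\sigma}) = \int_{\Gr_m(M^n)} \diver_{S(x)}\bigl(X_{h,\varepsilon,\sigma}\bigr)(x)\, dV(x,S(x)).
\end{equation*}

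The first step I would carry out is a product-rule computation for the partial divergence. Writing $X_{h,\varepsilon,\sigma} = h \cdot X_{\varepsilon,\sigma}$, and using the Leibniz rule for the Levi-Civita connection, for any orthonormal frame $\{e_1,\ldots,e_m\}$ of $S(x)$ one has
\begin{equation*}
\diver_{S(x)}\bigl(h X_{\varepsilon,\sigma}\bigr)(x) = \sum_{i=1}^m (e_i h)\,\langle X_{\varepsilon,\sigma}(x), e_i\rangle_{g_x} + h(x) \diver_{S(x)}\bigl(X_{\varepsilon,\sigma}\bigr)(x).
\end{equation*}
The first sum rearranges to $\langle P_{S(x)}(\grad{g_x} h (x)), X_{\varepsilon,\sigma}(x)\rangle_{g_x}$ because each $e_i\in S(x)$, so replacing $\grad{g_x} h$ by its $S(x)$-projection leaves the pairing with $e_i$ unchanged. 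Substituting the explicit form $X_{\varepsilon,\sigma}(x) = \varphi_{\varepsilon}(u_y(x)/\sigma)\, u_y(x) \grad{g_x} u_y(x)$ yields the first term on the right-hand side of \Cref{equa1statement:lemm1:div-h-radial-pert:section:monotonicity}.

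For the remaining term $h(x)\diver_{S(x)}(X_{\varepsilon,\sigma})(x)$, I would invoke \Cref{equa1statement:lemm1:div-radial-pert:section:monotonicity} of \Cref{lemm1:div-radial-pert:section:monotonicity}, which expresses $\diver_{S(x)}(X_{\varepsilon,\sigma})(x)$ as a sum of three terms: one containing $\varphi_{\varepsilon}(u_y/\sigma)\diver_{S}(u_y \grad{g} u_y)$, one containing $\varphi_{\varepsilon}'(u_y/\sigma)(u_y/\sigma)$, and one containing $-\varphi_{\varepsilon}'(u_y/\sigma)(u_y/\sigma)\|P_{S^\perp}(\grad{g} u_y)\|_{g_x}^2$. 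Multiplying through by $h(x)$ and integrating against $dV(x,S(x))$ produces exactly the last three integrals in the statement.

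The combination of these pieces is the claimed identity. The only genuinely non-trivial observation is the one already made in the first step, namely that the tangential gradient term collapses to $P_{S(x)}(\grad{g_x} h)$ even though $X_{\varepsilon,\sigma}(x)$ itself is not required to lie in $S(x)$; the rest is a direct substitution. No further regularity issue arises because $h\in\C^1$, $\varphi_{\varepsilon}\in\C^1_c$, and $u_y$ is smooth on $B_{d_g}(y,\sigma)\setminus\{y\}$ with the integrals being well-defined thanks to the compact support of $\varphi_{\varepsilon}(\cdot/\sigma)\circ u_y$.
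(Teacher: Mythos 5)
Your proposal is correct and follows essentially the same route as the paper: a Leibniz-rule computation for $\diver_{S(x)}(hX_{\varepsilon,\sigma})$, with the tangential-gradient term collapsing to $\left\langle X_{\varepsilon,\sigma}, P_{S(x)}(\grad{g_x}(h))\right\rangle_{g_x}$, followed by substitution of \Cref{lemm1:div-radial-pert:section:monotonicity} and integration against $dV$. The paper merely states the combined pointwise identity directly before integrating, so your explicit product-rule step is just a more detailed writing of the same argument.
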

\begin{proof}
Let  $y \in B_{d_g}(\xi,\delta\rho)$, $\sigma \in ]0,(1-\delta)\rho]$, $x \in B_{d_g}(y,\sigma)$, and $S(x) \in Gr(m,T_xM)$ arbitrary. By \Cref{lemm1:div-radial-pert:section:monotonicity} we have that
\begin{align}\label{equa2proof:lemm1:div-h-radial-pert:section:monotonicity}
\diver_S\left( X_{h,\varepsilon,\sigma} \right)(x) &= \varphi_{\varepsilon}\left(\dfrac{u_{y}(x)}{\sigma}\right) \left\langle  u_y(x)\grad{g_x}(u_y)(x) , P_{S(x)}\left(\grad{g_x}(h)(x)\right) \right\rangle_{g_x} \nonumber \\
&+ h(x) \varphi_{\varepsilon}\left(\dfrac{u_y(x)}{\sigma}\right) \diver_S\left(u_y\grad{g}u_y\right)(x) \nonumber \\
&+ h(x) \varphi_{\varepsilon}'\left(\dfrac{u_y(x)}{\sigma}\right) \dfrac{u_y(x)}{\sigma} \nonumber \\
&- h(x) \varphi_{\varepsilon}'\left(\dfrac{u_y(x)}{\sigma}\right) \dfrac{u_y(x)}{\sigma} \left|\left| P_{S(x)^{\perp}}(\grad{g_x}(u_y)(x)) \right|\right|_{g_x}^2.
\end{align}

Then, by \Cref{equa2proof:lemm1:div-h-radial-pert:section:monotonicity} we have \Cref{equa1statement:lemm1:div-h-radial-pert:section:monotonicity}.
\end{proof}
\begin{rema}
When comparing \Cref{lemm1:div-h-radial-pert:section:monotonicity} to the Euclidean case \cite{Simon}*{Chapter 8, Section 3, p. 240}, we find a term that depends of the metric in addition to the dimensional Euclidean term. Therefore, in order to obtain our desired result, we need to make a comparison with the Euclidean case and we do it by introducing a curvature term.
\end{rema}
\begin{defi}\label{defi:cotb}
We define the function $c: \mathbb{R} \times \mathbb{R} \to \mathbb{R}$ by $c(t,b) \defeq t\cot_b(t)$, where $\cot_b(t) \defeq \frac{s'_b(t)}{s_b(t)}$ and 
\footnotesize
\begin{displaymath}
s_b(t) = \begin{cases}
\dfrac{\sinh(\sqrt{-b}t)}{\sqrt{-b}}, &\text{if } b<0,\\
t, &\text{if } b=0,\\
\dfrac{\sin\left(\sqrt{b}t\right)}{\sqrt{b}}, &\text{if } b>0.
\end{cases}   
\end{displaymath}
\normalsize
\end{defi}
From now on, we assume that there exists $K > 0$ such that $|\sec_g| \leq K$, where $\sec_g$ is the sectional curvature of $(M^n,g)$.
\begin{lemm}\label{lemm:bounds:section:monotonicity}
Let $\xi \in M^n$ and $0 < \delta < 1$. There exists $\rho_0 \defeq \rho_0(\injec{r0},K) > 0$ such that 
for all $0 < \rho < \rho_0$, for all $y \in B_{d_g}(\xi,\delta\rho)$, for all $\sigma \in ]0,(1-\delta)\rho]$, for all $x \in B_{d_g}(y,\sigma) \setminus \{y\}$, and for all $S(x) \in Gr(m,T_xM^n)$, we have that
\begin{equation}\label{equa2statement:lemm:bounds:section:monotonicity}
mc(u_y(x),K) \leq \diver_{S(x)}\left(u_y\grad{g}u_y\right)(x) \leq mc(u_y(x),-K).
\end{equation}
\end{lemm}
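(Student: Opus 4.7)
The argument is to apply the Hessian comparison theorem (Rauch) to the distance function $u_y$, combined with the exact product-rule expansion of $\diver_{S(x)}(u_y\grad{g}u_y)$. First, I would choose $\rho_0 \defeq \rho_0(\injec{r0},K) > 0$ so small that $\rho_0 \leq \injec{r0}$ and $\rho_0 \leq \pi/(2\sqrt{K})$. Under these choices, for any $y \in B_{d_g}(\xi,\delta\rho)$, $\sigma \in \,]0,(1-\delta)\rho]$ and $x \in B_{d_g}(y,\sigma) \setminus \{y\}$, the point $x$ lies strictly inside the injectivity radius of $y$, so $u_y$ is $\C^2$ on this punctured ball (since $g$ is $\C^2$), and the comparison functions $\cot_K(u_y)$ and $\cot_{-K}(u_y)$ are well defined with $c(u_y,K) \in \,]0,1[$ and $c(u_y,-K) > 1$, as elementary consequences of $\tan t > t > \tanh t$ on $]0,\pi/2[$.

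\textbf{Expansion of the divergence.} Fix $x$ as above and an orthonormal basis $\{e_1,\ldots,e_m\}$ of $S(x)$ with respect to $g_x$. The product rule, together with $e_i(u_y) = \langle e_i, \grad{g_x}(u_y)\rangle_{g_x}$, yields
\begin{equation*}
\diver_{S(x)}\left(u_y\grad{g}u_y\right)(x) = \sum_{i=1}^m \langle e_i, \grad{g_x}(u_y)(x)\rangle_{g_x}^2 + u_y(x) \sum_{i=1}^m \hess_g(u_y)(e_i,e_i),
\end{equation*}
where the first sum equals $||P_{S(x)}(\grad{g_x}(u_y)(x))||_{g_x}^2$. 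Since $||\grad{g_x}(u_y)||_{g_x} \equiv 1$, differentiating shows that $\hess_g(u_y)(\grad{g_x}(u_y),\cdot) \equiv 0$, so only the components of the $e_i$ orthogonal to $\grad{g_x}(u_y)$ contribute to the trace term, and the Hessian comparison bounds below apply cleanly.

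\textbf{Hessian comparison and conclusion.} By the Rauch Hessian comparison theorem, $\sec_g \leq K$ implies, for every $v \in T_xM^n$,
\begin{equation*}
\hess_g(u_y)(v,v) \geq \cot_K(u_y(x))\left(||v||_{g_x}^2 - \langle v, \grad{g_x}(u_y)\rangle_{g_x}^2\right),
\end{equation*}
and analogously $\sec_g \geq -K$ yields the reverse inequality with $\cot_{-K}$ in place of $\cot_K$. Taking $v = e_i$, summing and multiplying by $u_y(x)$ turns each factor $u_y\cot_{\pm K}(u_y)$ into $c(u_y,\pm K)$, producing
\begin{equation*}
c(u_y(x),K)\left(m - ||P_{S(x)}(\grad{g_x}(u_y))||_{g_x}^2\right) \leq u_y(x)\sum_{i=1}^m \hess_g(u_y)(e_i,e_i) \leq c(u_y(x),-K)\left(m - ||P_{S(x)}(\grad{g_x}(u_y))||_{g_x}^2\right).
\end{equation*}
Adding $||P_{S(x)}(\grad{g_x}(u_y))||_{g_x}^2$ to both sides leaves the residual $||P_{S(x)}(\grad{g_x}(u_y))||_{g_x}^2\left(1-c(u_y,\pm K)\right)$, which by the first paragraph is $\geq 0$ for the lower estimate and $\leq 0$ for the upper estimate. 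Discarding these residuals yields exactly the announced bounds $mc(u_y,K) \leq \diver_{S(x)}(u_y\grad{g}u_y) \leq mc(u_y,-K)$.

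\textbf{Main obstacle.} The only nontrivial input is the Hessian comparison for distance functions under a merely $\C^2$ metric. Since $g \in \C^2$, the Riemann tensor is continuous, the Jacobi equation has continuous coefficients, and Rauch's Jacobi-field argument goes through verbatim; the exclusion of the pole $y$ together with $\rho_0 \leq \injec{r0}$ keeps us away from the cut locus where $u_y$ could fail to be $\C^2$. The remaining work is pointwise algebra, the one subtlety being the sign tracking of $1 - c(u_y,\pm K)$, which is precisely why the choice $\rho_0 \leq \pi/(2\sqrt{K})$ is imposed at the outset.
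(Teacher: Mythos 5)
Your proposal is correct and follows essentially the same route as the paper: expand $\diver_{S(x)}(u_y\grad{g}u_y)$ by the product rule into $\left|\left|P_{S(x)}(\grad{g_x}(u_y))\right|\right|_{g_x}^2$ plus the traced Hessian term, then invoke the Hessian comparison theorem under $|\sec_g|\leq K$ with $\rho_0$ chosen below the relevant injectivity/comparison scale. The paper compresses the second step into a citation of the comparison theorem, whereas you additionally spell out the sign bookkeeping of the residual $\left|\left|P_{S(x)}(\grad{g_x}(u_y))\right|\right|_{g_x}^2\left(1-c(u_y,\pm K)\right)$, which is exactly the point that makes the discarding step legitimate.
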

\begin{proof}
Let us fix $\rho_0 \defeq \rho_0(\injec{r0}) < \injec{r0}$. Consider $0 < \rho < \rho_0$, $y \in B_{d_g}(\xi,\delta\rho)$, $\sigma \in ]0,(1-\delta)\rho]$, $x \in B_{d_g}(y,\sigma) \setminus \{y\}$, and $S(x) \in Gr(m,T_xM)$ arbitrary. By a standard computation we have that
\begin{align}\label{equa1proof:lemm:bounds:section:monotonicity}
\diver_{S(x)}(u_y\grad{g}u_y)(x) = \left|\left| P_{S(x)}\left(\grad{g_x}(u_y)(x) \right) \right|\right|_{g_x}^2 + u_y(x) \sum\limits_{i=1}^m \hess_{g_x}(u_y)(x)\left(e_{j_i}(x),e_{j_i}(x)\right).
\end{align}

The existence of $\rho_0$ (with dependence only on $\injec{r0}$ and $K$) follows by applying the Hessian comparison theorem \cite{Lee}*{Theorem 11.7, p.~327} to the distance function $u_y$ in \Cref{equa1proof:lemm:bounds:section:monotonicity}. Indeed, for $\rho_0$ sufficiently small, the sectional curvature bounds imply corresponding upper and lower estimates for $\hess_{g_x}(u_y)$, which substituted into \Cref{equa1proof:lemm:bounds:section:monotonicity} yield \Cref{equa2statement:lemm:bounds:section:monotonicity}.
\end{proof}
\begin{lemm}\label{lemm:mono:section:monotonicity}
Let $h \in \C^1(M^n,[0,+\infty[)$, $\xi \in M^n$, $0 < \delta < 1$, and $V \in \mathbf{V}_m(M^n)$. There exists $\rho_0 \defeq \rho_0(\injec{r0},K) > 0$ such that for all $0 < \rho < \rho_0$, for all $y \in B_{d_g}(\xi,\delta\rho)$, and for all $\sigma \in ]0,(1-\delta)\rho]$, we have that
\footnotesize
\begin{align}\label{equa2statement:lemm:mono:section:monotonicity}
&\dfrac{d}{d\sigma}\left( \dfrac{1}{\sigma^m} \int_{B_{d_g}(y,\sigma)}  h(x)\varphi_{\varepsilon}\left(\dfrac{u_y(x)}{\sigma}\right)d||V||(x) \right) \geq \nonumber \\
&\geq \dfrac{1}{\sigma^{m+1}} \int_{\Gr_m(B_{d_g}(y,\sigma))} \varphi_{\varepsilon}\left(\dfrac{u_y(x)}{\sigma}\right) \left\langle u_y(x)\grad{g_x}(u_y)(x) , P_{S(x)}\left(\grad{g_x}(h)(x)\right)  \right\rangle_{g_x} dV(x,S(x)) \nonumber\\
&+ \dfrac{c(\sigma,K) - 1}{\sigma}\dfrac{m}{\sigma^m} \int_{B_{d_g}(y,\sigma)}  h(x)\varphi_{\varepsilon}\left(\dfrac{u_y(x)}{\sigma}\right)d||V||(x) \nonumber\\
&+ \int_{\Gr_m(B_{d_g}(y,\sigma))} \dfrac{h(x)}{\sigma^m}\dfrac{d}{d \sigma}\left(\varphi_{\varepsilon}\left(\dfrac{u_y}{\sigma}\right)\right)(x) \left|\left| P_{S(x)^{\perp}}(\grad{g_x}(u_y)(x)) \right|\right|_{g_x}^2 dV(x,S(x)) \nonumber\\
&-\dfrac{\delta_gV\left(X_{h,\varepsilon,\sigma}\right)}{\sigma^{m+1}},
\end{align}
\normalsize

and
\footnotesize
\begin{align}\label{equa3statement:lemm:mono:section:monotonicity}
&\dfrac{d}{d\sigma}\left( \dfrac{1}{\sigma^m} \int_{B_{d_g}(y,\sigma)}  h(x)\varphi_{\varepsilon}\left(\dfrac{u_y(x)}{\sigma}\right)d||V||(x) \right) \leq \nonumber \\
&\leq \dfrac{1}{\sigma^{m+1}} \int_{\Gr_m(B_{d_g}(y,\sigma))} \varphi_{\varepsilon}\left(\dfrac{u_y(x)}{\sigma}\right) \left\langle u_y(x)\grad{g_x}(u_y)(x) , P_{S(x)}\left(\grad{g_x}(h)(x)\right)  \right\rangle_{g_x} dV(x,S(x)) \nonumber\\
&+ \dfrac{c(\sigma,-K) - 1}{\sigma}\dfrac{m}{\sigma^m} \int_{B_{d_g}(y,\sigma)}  h(x)\varphi_{\varepsilon}\left(\dfrac{u_y(x)}{\sigma}\right)d||V||(x) \nonumber\\
&+ \int_{\Gr_m(B_{d_g}(y,\sigma))} \dfrac{h(x)}{\sigma^m}\dfrac{d}{d \sigma}\left(\varphi_{\varepsilon}\left(\dfrac{u_y}{\sigma}\right)\right)(x) \left|\left| P_{S(x)^{\perp}}(\grad{g_x}(u_y)(x)) \right|\right|_{g_x}^2 dV(x,S(x)) \nonumber\\
&-\dfrac{\delta_gV\left(X_{h,\varepsilon,\sigma}\right)}{\sigma^{m+1}},
\end{align}
\normalsize
\end{lemm}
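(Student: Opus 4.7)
The plan is to combine Lemma \ref{lemm1:div-h-radial-pert:section:monotonicity}, the curvature bounds from Lemma \ref{lemm:bounds:section:monotonicity}, and a direct differentiation of the mass ratio under the integral sign. The key algebraic identity driving everything is
\[
\varphi_\varepsilon'\!\left(\tfrac{u_y(x)}{\sigma}\right)\tfrac{u_y(x)}{\sigma} \;=\; -\,\sigma\,\tfrac{d}{d\sigma}\!\left[\varphi_\varepsilon\!\left(\tfrac{u_y(x)}{\sigma}\right)\right],
\]
which converts the two $\varphi_\varepsilon'$ terms in Lemma \ref{lemm1:div-h-radial-pert:section:monotonicity} into $\sigma$-derivatives of the cut-off.

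First I would choose $\rho_0 \defeq \rho_0(\injec{r0},K)$ exactly as in Lemma \ref{lemm:bounds:section:monotonicity}, so that the Hessian comparison applies on $B_{d_g}(y,\sigma)$ for every admissible $y$ and $\sigma$. Then, using the product rule and differentiation under the integral sign (justified because $h\in\C^1$ with compact support and the integrand vanishes outside $B_{d_g}(y,\sigma)$, while $\varphi_\varepsilon(u_y/\sigma)$ is $\C^1$ in $\sigma$),
\[
\frac{d}{d\sigma}\!\left(\frac{1}{\sigma^m}\!\int_{B_{d_g}(y,\sigma)}\!\! h\,\varphi_\varepsilon\!\left(\tfrac{u_y}{\sigma}\right)d\|V\|\right)
= -\frac{m}{\sigma^{m+1}}\!\int h\,\varphi_\varepsilon\!\left(\tfrac{u_y}{\sigma}\right)d\|V\|
+ \frac{1}{\sigma^m}\!\int h\,\frac{d}{d\sigma}\varphi_\varepsilon\!\left(\tfrac{u_y}{\sigma}\right)d\|V\|.
\]

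Next I would take the identity for $\delta_g V(X_{h,\varepsilon,\sigma})$ provided by Lemma \ref{lemm1:div-h-radial-pert:section:monotonicity}, divide it by $\sigma^{m+1}$, and use the conversion identity above to rewrite the third and fourth terms as $-\sigma^{-m}\int h\,\tfrac{d}{d\sigma}\varphi_\varepsilon(u_y/\sigma)\,dV$ and $+\sigma^{-m}\int h\,\tfrac{d}{d\sigma}\varphi_\varepsilon(u_y/\sigma)\,\|P_{S^\perp}(\grad{g}u_y)\|_{g_x}^2\,dV$, respectively. Solving for the third term and substituting into the derivative computed above yields the exact identity
\[
\frac{d}{d\sigma}\!\left(\frac{1}{\sigma^m}\!\int\! h\varphi_\varepsilon d\|V\|\right)
= T_1 \;+\; \frac{1}{\sigma^{m+1}}\!\int h\,\varphi_\varepsilon\!\left(\tfrac{u_y}{\sigma}\right)\!\bigl[\diver_{S}(u_y\grad{g}u_y)-m\bigr] dV
\;+\; T_{\perp} \;-\; \frac{\delta_g V(X_{h,\varepsilon,\sigma})}{\sigma^{m+1}},
\]
where $T_1$ is the $P_S(\grad{g}h)$ term and $T_\perp$ is the $\|P_{S^\perp}(\grad{g}u_y)\|^2$ term appearing in the statement.

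Finally, I would insert the two-sided bound $mc(u_y(x),K)\le \diver_S(u_y\grad{g}u_y)(x)\le mc(u_y(x),-K)$ from Lemma \ref{lemm:bounds:section:monotonicity} inside the second integral. Since the integrand is restricted to $\{u_y\le\sigma\}$ and $c(\cdot,K)$ is nonincreasing while $c(\cdot,-K)$ is nondecreasing on $[0,\rho_0]$ (a short Taylor/calculus check on $c(t,b)=\sqrt{|b|}\,t\,\cot_b(\sqrt{|b|}t)$), one may replace $c(u_y,K)$ by the smaller $c(\sigma,K)$ and $c(u_y,-K)$ by the larger $c(\sigma,-K)$; factoring out $[c(\sigma,\pm K)-1]/\sigma$ produces exactly the second term on the right-hand side of \eqref{equa2statement:lemm:mono:section:monotonicity} and \eqref{equa3statement:lemm:mono:section:monotonicity}. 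The main obstacle is not conceptual but bookkeeping: keeping track of the sign in the conversion identity and of the monotonicity direction of $c(\cdot,\pm K)$, so that the inequality $\diver_S(u_y\grad{g}u_y)-m\gtrless m(c(\sigma,\pm K)-1)$ is applied in the correct direction against the nonnegative weight $h\,\varphi_\varepsilon(u_y/\sigma)$; once this is settled, integrating the pointwise inequality over $\Gr_m(B_{d_g}(y,\sigma))$ and combining with the derivative identity finishes both inequalities at once.
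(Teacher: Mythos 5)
Your proposal is correct and follows essentially the same route as the paper: it uses the identity of Lemma \ref{lemm1:div-h-radial-pert:section:monotonicity}, the Hessian-comparison bounds of Lemma \ref{lemm:bounds:section:monotonicity} with the monotonicity of $c(\cdot,\pm K)$ to replace $c(u_y,\pm K)$ by $c(\sigma,\pm K)$, and the conversion $\varphi_\varepsilon'(u_y/\sigma)\,u_y/\sigma=-\sigma\,\tfrac{d}{d\sigma}[\varphi_\varepsilon(u_y/\sigma)]$ together with differentiation under the integral sign, which is exactly how the paper rearranges its terms (only the order — bounding first versus deriving the exact identity first — differs). One cosmetic point: the justification for differentiating under the integral sign should appeal to the compact support of $\varphi_\varepsilon(u_y/\sigma)$ inside the ball (and $V$ being Radon), not to compact support of $h$, which is not assumed.
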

\begin{proof}
Let us fix $\rho_0 \defeq \rho_0(\injec{r0},K) > 0$ given by \Cref{lemm:bounds:section:monotonicity}. Consider $0 < \rho < \rho_0$, $y \in B_{d_g}(\xi,\delta\rho)$, and $\sigma \in ]0,(1-\delta)\rho]$ arbitrary. 

Note that if $x \in M^n \setminus B_{d_g}(y,\sigma)$ then $u_y(x) \geq \sigma$, that is, $\frac{u_y(x)}{\sigma} \geq 1$. Thus,
\begin{equation}\label{equa1proof:lemm:mono:section:monotonicity}
x \in M \setminus B_{d_g}(y,\sigma) \implies \varphi_{\varepsilon}\left( \dfrac{u_y(x)}{\sigma} \right) = 0 \text{ and } \varphi_{\varepsilon}'\left( \dfrac{u_y(x)}{\sigma} \right) = 0.
\end{equation}

Since $M = B_{d_g}(y,\sigma) \sqcup \left(M \setminus B_{d_g}(x,\sigma)\right)$, by \Cref{equa1proof:lemm:mono:section:monotonicity} and \Cref{lemm1:div-h-radial-pert:section:monotonicity}, we have that
\footnotesize
\begin{align}\label{equa2proof:lemm:mono:section:monotonicity}
\delta_gV\left(X_{h,\varepsilon,\sigma}\right) &= \int_{\Gr_m\left(B_{d_g}(y,\sigma)\right)} \varphi_{\varepsilon}\left(\dfrac{u_y(x)}{\sigma}\right) \left\langle u_y(x)\grad{g_x}(u_y)(x) , P_{S(x)}(\grad{g_x}(h)(x)) \right\rangle_{g_x} dV(x,S(x)) \nonumber \\
&+ \int_{\Gr_m\left(B_{d_g}(y,\sigma)\right)} h(x)\varphi_{\varepsilon}\left(\dfrac{u_y(x)}{\sigma}\right) \diver_{S}\left(u_y\grad{g}u_y\right)(x) dV(x,S(x)) \nonumber\\
&+ \int_{\Gr_m\left(B_{d_g}(y,\sigma)\right)} h(x)\varphi_{\varepsilon}'\left(\dfrac{u_y(x)}{\sigma}\right) \dfrac{u_y(x)}{\sigma} dV(x,S(x)) \nonumber \\
&- \int_{\Gr_m\left(B_{d_g}(y,\sigma)\right)} h(x)\varphi_{\varepsilon}'\left(\dfrac{u_y(x)}{\sigma}\right) \dfrac{u_y(x)}{\sigma} \left|\left| P_{S(x)^{\perp}}(\grad{g}(u_y)(x)) \right|\right|_{g_x}^2 dV(x,S(x)).
\end{align}
\normalsize

We will divide the proof in the following two cases. Firstly, the case when $\sec_g \leq K$. By \Cref{equa2proof:lemm:mono:section:monotonicity} and \Cref{lemm:bounds:section:monotonicity}, we obtain
\footnotesize
\begin{align}\label{equa3proof:lemm:mono:section:monotonicity}
\delta_gV\left(X_{h,\varepsilon,\sigma}\right) &\geq \int_{\Gr_m\left(B_{d_g}(y,\sigma)\right)} \varphi_{\varepsilon}\left(\dfrac{u_y(x)}{\sigma}\right) \left\langle u_y(x)\grad{g_x}(u_y)(x) , P_{S(x)}(\grad{g_x}(h)(x)) \right\rangle_{g_x} dV(x,S(x)) \nonumber \\
&+ \int_{\Gr_m\left(B_{d_g}(y,\sigma)\right)} h(x)\varphi_{\varepsilon}\left(\dfrac{u_y(x)}{\sigma}\right)mc(u_y(x),K) dV(x,S(x)) \nonumber\\
&+ \int_{\Gr_m\left(B_{d_g}(y,\sigma)\right)} h(x)\varphi_{\varepsilon}'\left(\dfrac{u_y(x)}{\sigma}\right) \dfrac{u_y(x)}{\sigma} dV(x,S(x)) \nonumber \\
&- \int_{\Gr_m\left(B_{d_g}(y,\sigma)\right)} h(x)\varphi_{\varepsilon}'\left(\dfrac{u_y(x)}{\sigma}\right) \dfrac{u_y(x)}{\sigma} \left|\left| P_{S(x)^{\perp}}(\grad{g}(u_y)(x)) \right|\right|_{g_x}^2 dV(x,S(x)).
\end{align}
\normalsize

Note that $c(t,b)$ is non-increasing for $t>0$ and $b>0$. Thus, since $u_y(x) < \sigma$, for every $x \in B_{d_g}(y,\sigma)$, we have that $c(u_y(x),K) \geq c(\sigma,K)$. Therefore, by \Cref{equa3proof:lemm:mono:section:monotonicity}, we have
\footnotesize
\begin{align}\label{equa5proof:lemm:mono:section:monotonicity}
\dfrac{\delta_gV\left(X_{h,\varepsilon,\sigma}\right)}{\sigma^{m+1}} &\geq \dfrac{1}{\sigma^{m+1}} \int_{\Gr_m\left(B_{d_g}(y,\sigma)\right)} \varphi_{\varepsilon}\left(\dfrac{u_y(x)}{\sigma}\right) \left\langle u_y(x)\grad{g_x}(u_y)(x) , P_{S(x)}(\grad{g_x}(h)(x)) \right\rangle_{g_x} dV(x,S(x)) \nonumber \\
&+ \dfrac{c(\sigma,K) - 1}{\sigma}\dfrac{m}{\sigma^m} \int_{\Gr_m\left(B_{d_g}(y,\sigma)\right)} h(x)\varphi_{\varepsilon}\left(\dfrac{u_y(x)}{\sigma}\right) dV(x,S(x)) \nonumber\\
&+ \int_{\Gr_m\left(B_{d_g}(y,\sigma)\right)} \left(h(x)\varphi_{\varepsilon}'\left(\dfrac{u_y(x)}{\sigma}\right) \dfrac{u_y(x)}{\sigma^{m+2}} + \dfrac{m}{\sigma^{m+1}}h(x)\varphi_{\varepsilon}\left(\dfrac{u_y(x)}{\sigma}\right) \right)dV(x,S(x)) \nonumber \\
&- \int_{\Gr_m\left(B_{d_g}(y,\sigma)\right)} \dfrac{h(x)}{\sigma^{m+1}}\varphi_{\varepsilon}'\left(\dfrac{u_y(x)}{\sigma}\right) \dfrac{u_y(x)}{\sigma} \left|\left| P_{S(x)^{\perp}}(\grad{g}(u_y)(x)) \right|\right|_{g_x}^2 dV(x,S(x)).
\end{align}
\normalsize

By \cite{Folland}*{Theorem 2.27, p. 56}, we have that
\footnotesize
\begin{align}\label{equa6proof:lemm:mono:section:monotonicity}
&\dfrac{d}{d \sigma}\left(\dfrac{1}{\sigma^m} \int_{\Gr_m\left(B_{d_g}(y,\sigma)\right)} h(x) \varphi_{\varepsilon}\left(\dfrac{u_y(x)}{\sigma}\right) dV(x,S(x))\right) = \nonumber \\
&= -\int_{\Gr_m\left(B_{d_g}(y,\sigma)\right)} \left(h(x)\varphi_{\varepsilon}'\left(\dfrac{u_y(x)}{\sigma}\right) \dfrac{u_y(x)}{\sigma^{m+2}} + \dfrac{m}{\sigma^{m+1}}h(x)\varphi_{\varepsilon}\left(\dfrac{u_y(x)}{\sigma}\right) \right)dV(x,S(x)).
\end{align}
\normalsize

By \Cref{equa5proof:lemm:mono:section:monotonicity,equa6proof:lemm:mono:section:monotonicity}, we obtain
\footnotesize
\begin{align}\label{equa7proof:lemm:mono:section:monotonicity}
\dfrac{\delta_gV\left(X_{h,\varepsilon,\sigma}\right)}{\sigma^{m+1}} &\geq \dfrac{1}{\sigma^{m+1}} \int_{\Gr_m\left(B_{d_g}(y,\sigma)\right)} \varphi_{\varepsilon}\left(\dfrac{u_y(x)}{\sigma}\right) \left\langle u_y(x)\grad{g_x}(u_y)(x) , P_{S(x)}(\grad{g_x}(h)(x)) \right\rangle_{g_x} dV(x,S(x)) \nonumber \\
&+ \dfrac{c(\sigma,K) - 1}{\sigma}\dfrac{m}{\sigma^m} \int_{\Gr_m\left(B_{d_g}(y,\sigma)\right)} h(x)\varphi_{\varepsilon}\left(\dfrac{u_y(x)}{\sigma}\right) dV(x,S(x)) \nonumber\\
&- \dfrac{d}{d \sigma}\left(\dfrac{1}{\sigma^m} \int_{\Gr_m\left(B_{d_g}(y,\sigma)\right)} h(x) \varphi_{\varepsilon}\left(\dfrac{u_y(x)}{\sigma}\right) dV(x,S(x))\right) \nonumber \\
&- \int_{\Gr_m\left(B_{d_g}(y,\sigma)\right)} \dfrac{h(x)}{\sigma^{m+1}}\varphi_{\varepsilon}'\left(\dfrac{u_y(x)}{\sigma}\right) \dfrac{u_y(x)}{\sigma} \left|\left| P_{S(x)^{\perp}}(\grad{g}(u_y)(x)) \right|\right|_{g_x}^2 dV(x,S(x)).
\end{align}
\normalsize

By Disintegration Lemma there exists a Radon measure $V^{(x)}$ on $\Gr(m,T_xM^n)$ for $||V||$-a.e. $x \in M^n$ such that $V^{(x)}(\Gr(m,T_xM^n)) = 1$ and 
\begin{equation}\label{equa8proof:lemm:mono:section:monotonicity}
\int_{\Gr_m\left(B_{d_g}(y,\sigma)\right)} h(x)\varphi_{\varepsilon}\left(\dfrac{u_y(x)}{\sigma}\right) dV(x,S(x)) = \int_{B_{d_g}(y,\sigma)} h(x)\varphi_{\varepsilon}\left(\dfrac{u_y(x)}{\sigma}\right) d||V||(x). 
\end{equation}

By \Cref{equa7proof:lemm:mono:section:monotonicity,equa8proof:lemm:mono:section:monotonicity}, we obtain
\footnotesize
\begin{align*}
&\dfrac{d}{d \sigma}\left(\dfrac{1}{\sigma^m} \int_{B_{d_g}(y,\sigma)} h(x)\varphi_{\varepsilon}\left(\dfrac{u_y(x)}{\sigma}\right)  d||V||(x)\right) \geq \nonumber \\
&\geq \dfrac{1}{\sigma^{m+1}} \int_{\Gr_m\left(B_{d_g}(y,\sigma)\right)} \varphi_{\varepsilon}\left(\dfrac{u_y(x)}{\sigma}\right) \left\langle u_y(x)\grad{g_x}(u_y)(x) , P_{S(x)}(\grad{g_x}(h)(x)) \right\rangle_{g_x} dV(x,S(x)) \nonumber \\
&+ \dfrac{c(\sigma,K) - 1}{\sigma}\dfrac{m}{\sigma^m} \int_{B_{d_g}(y,\sigma)} h(x)\varphi_{\varepsilon}\left(\dfrac{u_y(x)}{\sigma}\right)  d||V||(x) \nonumber\\
&+ \int_{\Gr_m\left(B_{d_g}(y,\sigma)\right)} \dfrac{h(x)}{\sigma^m} \dfrac{d}{d \sigma} \left(\varphi_{\varepsilon}\left(\dfrac{u_y}{\sigma}\right)\right)(x) \left|\left| P_{S(x)^{\perp}}(\grad{g}(u_y)(x)) \right|\right|_{g_x}^2 dV(x,S(x)) \nonumber \\
&- \dfrac{\delta_gV\left(X_{h,\varepsilon,\sigma}\right)}{\sigma^{m+1}}.
\end{align*}
\normalsize

The case $\sec_g \geq -K$ is handled analogously to $\sec_g \leq K$, noting that $c(t,b)$ is non-decreasing for $t > 0$ and $b < 0$.
\end{proof}
Now we apply \Cref{lemm:mono:section:monotonicity} to varifolds that satisfy certain hypotheses on their first variation.
\begin{theo}\label{theo:mono-Lp:section:monotonicity}
Let $1 < p \leq \infty$, $h \in \C^1(M^n,[0,+\infty[)$, $\xi \in M^n$, $0 < \delta < 1$, and $V \in \mathbf{V}_m(M^n)$. There exists $\rho_0 \defeq \rho_0(\injec{r0},K) > 0$ such that if $V$ has $L^p_{\mathrm{loc}}$-bounded generalized mean curvature vector in $B_{d_g}(\xi,\rho)$, for some $0 < \rho < \rho_0$, then, for all $y \in B_{d_g}(\xi,\delta\rho)$ and for all $\sigma \in ]0,(1-\delta)\rho]$, we have that
\footnotesize
\begin{align}\label{equa2statement:theo:mono-Lp:section:monotonicity}
&\dfrac{d}{d\sigma}\left( \dfrac{1}{\sigma^m} \int_{B_{d_g}(y,\sigma)}  h(x)d||V||(x) \right) \geq \nonumber \\
&\geq \dfrac{1}{\sigma^{m+1}} \int_{\Gr_m(B_{d_g}(y,\sigma))} \left\langle u_y(x)\grad{g_x}(u_y)(x) , P_{S(x)}\left(\grad{g_x}(h)(x)\right)  \right\rangle_{g_x} dV(x,S(x)) \nonumber\\
&+ \dfrac{c(\sigma,K) - 1}{\sigma}\dfrac{m}{\sigma^m} \int_{B_{d_g}(y,\sigma)}  h(x)d||V||(x) \nonumber\\
&+ \dfrac{d}{d \sigma}\left(\int_{\Gr_m(B_{d_g}(y,\sigma))} \dfrac{h(x)}{u_y(x)^m} \left|\left| P_{S(x)^{\perp}}(\grad{g_x}(u_y)(x)) \right|\right|_{g_x}^2 dV(x,S(x))\right) \nonumber\\
&+ \dfrac{1}{\sigma^{m+1}} \int_{B_{d_g}(y,\sigma)} \left\langle \overrightarrow{H_g}(x), h(x)u_y(x)\grad{g_x}(u_y)(x) \right\rangle_{g_x} d||V||(x),
\end{align}
\normalsize

and
\footnotesize
\begin{align}\label{equa3statement:theo:mono-Lp:section:monotonicity}
&\dfrac{d}{d\sigma}\left( \dfrac{1}{\sigma^m} \int_{B_{d_g}(y,\sigma)}  h(x)d||V||(x) \right) \leq \nonumber \\
&\leq \dfrac{1}{\sigma^{m+1}} \int_{\Gr_m(B_{d_g}(y,\sigma))} \left\langle u_y(x)\grad{g_x}(u_y)(x) , P_{S(x)}\left(\grad{g_x}(h)(x)\right)  \right\rangle_{g_x} dV(x,S(x)) \nonumber\\
&+ \dfrac{c(\sigma,-K) - 1}{\sigma}\dfrac{m}{\sigma^m} \int_{B_{d_g}(y,\sigma)}  h(x)d||V||(x) \nonumber\\
&+ \dfrac{d}{d \sigma} \left(\int_{\Gr(B_{d_g}(y,\sigma))} \dfrac{h(x)}{u_y(x)^m} \left|\left| P_{S(x)^{\perp}}(\grad{g_x}(u_y)(x)) \right|\right|_{g_x}^2 dV(x,S(x))\right) \nonumber\\
&+ \dfrac{1}{\sigma^{m+1}} \int_{B_{d_g}(y,\sigma)} \left\langle \overrightarrow{H_g}(x), h(x)u_y(x)\grad{g_x}(u_y)(x) \right\rangle_{g_x} d||V||(x).
\end{align}
\normalsize
\end{theo}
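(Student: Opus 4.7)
The plan is to reduce \Cref{theo:mono-Lp:section:monotonicity} to \Cref{lemm:mono:section:monotonicity} by (i) expressing the first variation $\delta_g V(X_{h,\varepsilon,\sigma})$ in terms of the generalized mean curvature, and (ii) passing to the limit as the cutoff $\varphi_{\varepsilon}$ becomes sharp, namely $\varepsilon \uparrow 1$.

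First, I fix $\rho_0$ as in \Cref{lemm:bounds:section:monotonicity} and apply \Cref{lemm:mono:section:monotonicity} for the given $y \in B_{d_g}(\xi,\delta\rho)$ and $\sigma \in ]0,(1-\delta)\rho]$ and any $\varepsilon \in (0,1)$. Since $X_{h,\varepsilon,\sigma} \in \mathfrak{X}^1_c(B_{d_g}(y,\sigma))$ and $B_{d_g}(y,\sigma) \subset B_{d_g}(\xi,\rho)$, the hypothesis that $V$ has $L^p_{\mathrm{loc}}$-bounded generalized mean curvature vector in $B_{d_g}(\xi,\rho)$ together with \Cref{prop:generalizedmeancurvaturaLpbounded:section:firstvariation} yields
\[
\delta_g V(X_{h,\varepsilon,\sigma}) = -\int_{B_{d_g}(y,\sigma)} \left\langle \overrightarrow{H_g}(x),\, h(x) \varphi_{\varepsilon}(u_y(x)/\sigma) u_y(x) \grad{g_x}(u_y)(x) \right\rangle_{g_x} d||V||(x).
\]
Substituting this into the $-\delta_g V(X_{h,\varepsilon,\sigma})/\sigma^{m+1}$ summand of \Cref{lemm:mono:section:monotonicity} produces, modulo the $\varphi_{\varepsilon}$ factor, the mean-curvature term appearing in \Cref{theo:mono-Lp:section:monotonicity}.

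Next, I let $\varepsilon \uparrow 1$, so $\varphi_{\varepsilon}(u_y/\sigma)$ converges pointwise to the characteristic function of $\{u_y<\sigma\}$. Every integral where $\varphi_{\varepsilon}$ appears without derivative converges to the analogous integral over $B_{d_g}(y,\sigma)$ by dominated convergence: dominating functions exist since $h$ is continuous, $u_y \leq \sigma$ on the relevant domain, $||V||$ is a Radon measure, and $\overrightarrow{H_g} \in L^p_{\mathrm{loc}}(||V||)$ so the mean-curvature integral is controlled by Hölder's inequality. The subtle term is
\[
I_{\varepsilon}(\sigma) \defeq \int_{\Gr_m(B_{d_g}(y,\sigma))} \dfrac{h(x)}{\sigma^m} \dfrac{d}{d\sigma}\bigl(\varphi_{\varepsilon}(u_y/\sigma)\bigr)(x) \left|\left| P_{S(x)^{\perp}}\bigl(\grad{g_x}(u_y)(x)\bigr) \right|\right|_{g_x}^2 dV(x,S(x)).
\]
On the support of $\varphi_{\varepsilon}'(u_y/\sigma)$ one has $\varepsilon\sigma \leq u_y \leq \sigma$, so the ratio $u_y^{-m}/\sigma^{-m}$ tends to $1$ uniformly as $\varepsilon \uparrow 1$. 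Consequently $I_{\varepsilon}(\sigma)$ has the same limit as $\tfrac{d}{d\sigma}\int_{\Gr_m(B_{d_g}(y,\sigma))} \tfrac{h(x)}{u_y^m}\varphi_{\varepsilon}(u_y/\sigma)\,\|P_{S(x)^{\perp}}(\grad{g_x}u_y)\|_{g_x}^2\, dV$, which in the limit is exactly the derivative summand displayed in the theorem.

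The main technical obstacle is making the convergence of $I_{\varepsilon}$ rigorous, because a derivative in $\sigma$ is being taken and the limiting integrand is only a characteristic function (the measure $||V||$ could a priori charge spheres $\{u_y=\sigma\}$ for a countable set of radii). The cleanest route is to first integrate the $\varepsilon$-version of the monotonicity inequalities over a subinterval $[\sigma_1,\sigma_2] \subset (0,(1-\delta)\rho]$, apply Fubini--Tonelli together with dominated convergence to pass to the limit $\varepsilon \uparrow 1$ inside the integrated identity, and then recover the pointwise differential inequalities at a.e.\ $\sigma$ by differentiating the integrated form. This is legitimate because $\sigma \mapsto \sigma^{-m}\int_{B_{d_g}(y,\sigma)} h\, d||V||$ is of locally bounded variation once the curvature correction $c(\sigma,\pm K)$ and the mean-curvature remainder are absorbed into monotone pieces, hence differentiable almost everywhere. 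The lower bound \Cref{equa3statement:theo:mono-Lp:section:monotonicity} is obtained identically, using the upper estimate $\operatorname{div}_{S(x)}(u_y\grad{g}u_y)(x) \leq m c(u_y(x),-K)$ from \Cref{lemm:bounds:section:monotonicity} in place of the lower one.
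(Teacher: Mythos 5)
Your proposal is correct and follows essentially the same route as the paper: apply \Cref{lemm:mono:section:monotonicity}, rewrite $\delta_gV(X_{h,\varepsilon,\sigma})$ via \Cref{prop:generalizedmeancurvaturaLpbounded:section:firstvariation}, and let $\varepsilon \uparrow 1$ with dominated convergence. In fact your treatment of the cutoff-derivative term (comparing $\sigma^{-m}$ with $u_y^{-m}$ on the support of $\varphi_\varepsilon'$ and working with the integrated form before differentiating) supplies detail the paper compresses into ``a standard application'' of dominated convergence.
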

\begin{proof}
Let us fix $\rho_0 \defeq \rho_0(\injec{r0},K) > 0$ given by \Cref{lemm:mono:section:monotonicity}. Suppose that $V$ has $L^p_{\mathrm{loc}}$-bounded generalized mean curvature vector in $B_{d_g}(\xi,\rho)$, for some $0 < \rho < \rho_0$. Consider $y \in B_{d_g}(\xi,\delta\rho)$ and $\sigma \in ]0,(1-\delta)\rho]$ arbitrary.

Let $\{\varepsilon_j\}_{j \in \mathbb{N}}$ a increasing sequence such that $\varepsilon_j > 0$, for all $j \in \mathbb{N}$, and $\varepsilon_j \uparrow 1$, when $j \uparrow \infty$. Note that, for all $t \in \mathbb{R}$, $\varphi_{\varepsilon_j}(t) \leq \mathcal{X}_{]-1,1[}(t)$, for all $j \in \mathbb{N}$, and $\lim_{j \uparrow \infty} \varphi_{\varepsilon_j}(t) = \mathcal{X}_{]-1,1[}(t)$. Thus, for all $x \in B_{d_g}(y,\sigma)$, we have that
\begin{equation*}
\lim\limits_{j \uparrow \infty} \varphi_{\varepsilon_j}\left(\dfrac{u_y(x)}{\sigma}\right) = \mathcal{X}_{]-1,1[}\left(\dfrac{u_y(x)}{\sigma}\right).
\end{equation*}

However, note that $\mathcal{X}_{]-1,1[}\left(\frac{u_y(x)}{\sigma}\right) = 1$ if, and only if, $-1 < \dfrac{u_y(x)}{\sigma} < 1$ which occurs if, and only if, $|u_y(x)| < \sigma$, that is, $x \in B_{d_g}(y,\sigma)$. Therefore, 
\begin{equation}\label{equa1proof:theo:mono-Lp:section:monotonicity}
\lim\limits_{j \uparrow \infty} \varphi_{\varepsilon_j}\left(\dfrac{u_y(x)}{\sigma}\right) = \mathcal{X}_{]-1,1[}\left(\dfrac{u_y(x)}{\sigma}\right) = \mathcal{X}_{B_{d_g}(y,\sigma)}(x).
\end{equation}

Applying this to \Cref{equa2statement:lemm:mono:section:monotonicity,equa3statement:lemm:mono:section:monotonicity} and taking the limit as $j \uparrow \infty$, we obtain, respectively, 
\footnotesize
\begin{align}\label{equa7proof:theo:mono-Lp:section:monotonicity}
&\lim\limits_{j \uparrow \infty} \left(\dfrac{d}{d\sigma}\left( \dfrac{1}{\sigma^m} \int_{B_{d_g}(y,\sigma)}  h(x)\varphi_{\varepsilon_j}\left(\dfrac{u_y(x)}{\sigma}\right)d||V||(x) \right)\right) \geq \nonumber \\
&\geq \lim\limits_{j \uparrow \infty} \left(\dfrac{1}{\sigma^{m+1}} \int_{\Gr_m(B_{d_g}(y,\sigma))} \varphi_{\varepsilon_j}\left(\dfrac{u_y(x)}{\sigma}\right) \left\langle u_y(x)\grad{g_x}(u_y)(x) , P_{S(x)}\left(\grad{g_x}(h)(x)\right)  \right\rangle_{g_x} dV(x,S(x))\right) \nonumber\\
&+ \lim\limits_{j \uparrow \infty} \left(\dfrac{c(\sigma,K) - 1}{\sigma}\dfrac{m}{\sigma^m} \int_{B_{d_g}(y,\sigma)}  h(x)\varphi_{\varepsilon_j}\left(\dfrac{u_y(x)}{\sigma}\right)d||V||(x)\right) \nonumber\\
&+ \lim\limits_{j \uparrow \infty} \int_{\Gr_m(B_{d_g}(y,\sigma))} \dfrac{h(x)}{\sigma^m}\dfrac{d}{d \sigma}\left(\varphi_{\varepsilon_j}\left(\dfrac{u_y}{\sigma}\right)\right)(x) \left|\left| P_{S(x)^{\perp}}(\grad{g_x}(u_y)(x)) \right|\right|_{g_x}^2 dV(x,S(x)) \nonumber\\
&- \lim\limits_{j \uparrow \infty}\dfrac{\delta_gV\left(X_{h,\varepsilon_j,\sigma}\right)}{\sigma^{m+1}},
\end{align}
\normalsize

and
\footnotesize
\begin{align}\label{equa8proof:theo:mono-Lp:section:monotonicity}
&\lim\limits_{j \uparrow \infty} \left(\dfrac{d}{d\sigma}\left( \dfrac{1}{\sigma^m} \int_{B_{d_g}(y,\sigma)}  h(x)\varphi_{\varepsilon_j}\left(\dfrac{u_y(x)}{\sigma}\right)d||V||(x) \right)\right) \leq \nonumber \\
&\leq \lim\limits_{j \uparrow \infty} \left(\dfrac{1}{\sigma^{m+1}} \int_{\Gr_m(B_{d_g}(y,\sigma))} \varphi_{\varepsilon_j}\left(\dfrac{u_y(x)}{\sigma}\right) \left\langle u_y(x)\grad{g_x}(u_y)(x) , P_{S(x)}\left(\grad{g_x}(h)(x)\right)  \right\rangle_{g_x} dV(x,S(x))\right) \nonumber\\
&+ \lim\limits_{j \uparrow \infty} \left(\dfrac{c(\sigma,-K) - 1}{\sigma}\dfrac{m}{\sigma^m} \int_{B_{d_g}(y,\sigma)}  h(x)\varphi_{\varepsilon_j}\left(\dfrac{u_y(x)}{\sigma}\right)d||V||(x)\right) \nonumber\\
&+ \lim\limits_{j \uparrow \infty} \left(\int_{\Gr_m(B_{d_g}(y,\sigma))} \dfrac{h(x)}{\sigma^m}\dfrac{d}{d \sigma}\left(\varphi_{\varepsilon_j}\left(\dfrac{u_y}{\sigma}\right)\right)(x) \left|\left| P_{S(x)^{\perp}}(\grad{g_x}(u_y)(x)) \right|\right|_{g_x}^2 dV(x,S(x))\right) \nonumber\\
&- \lim\limits_{j \uparrow \infty}\dfrac{\delta_gV\left(X_{h,\varepsilon_j,\sigma}\right)}{\sigma^{m+1}},
\end{align}
\normalsize

By a standard application of \cite{Folland}*{Theorem 2.24, p. 54} and \Cref{equa1proof:theo:mono-Lp:section:monotonicity} in each term of \Cref{equa7proof:theo:mono-Lp:section:monotonicity,equa8proof:theo:mono-Lp:section:monotonicity}, we obtain the desired result
\end{proof}
\begin{rema}\label{rema::mono-Lp:section:monotonicity}
In the context of \Cref{theo:mono-Lp:section:monotonicity}. Firstly note that
\begin{displaymath}
\lim\limits_{K \downarrow 0} \dfrac{c(\sigma,K) - 1}{\sigma} = 0 \hspace{1cm} \text{ and } \hspace{1cm} \lim\limits_{K \downarrow 0} \dfrac{c(\sigma,-K)-1}{\sigma} = 0.
\end{displaymath}

Thus taking the limit $K \downarrow 0$ in \Cref{equa2statement:theo:mono-Lp:section:monotonicity,equa3statement:theo:mono-Lp:section:monotonicity} we obtain (compare with \cite{Simon}*{Equation 4.1, Chapter 4, Section 4, p. 93}) that
\footnotesize
\begin{align*}
\dfrac{d}{d\sigma}\left( \dfrac{1}{\sigma^m} \int_{B_{d_g}(y,\sigma)}  h(x)d||V||(x) \right) &= \dfrac{1}{\sigma^{m+1}} \int_{\Gr_m(B_{d_g}(y,\sigma))} \left\langle u_y(x)\grad{g_x}(u_y)(x) , P_{S(x)}\left(\grad{g_x}(h)(x)\right)  \right\rangle_{g_x} dV(x,S(x)) \nonumber\\
&+ \dfrac{d}{d \sigma} \left(\int_{\Gr_m(B_{d_g}(y,\sigma))} \dfrac{h(x)}{u_y(x)^m} \left|\left| P_{S(x)^{\perp}}(\grad{g_x}(u_y)(x)) \right|\right|_{g_x}^2 dV(x,S(x))\right) \nonumber\\
&+ \dfrac{1}{\sigma^{m+1}} \int_{B_{d_g}(y,\sigma)} \left\langle \overrightarrow{H_g}(x), h(x)u_y(x)\grad{g_x}(u_y)(x) \right\rangle_{g_x} d||V||(x),
\end{align*}
\normalsize
\end{rema}
\begin{theo}\label{theo:mono-Linfty:section:monotonicity}
Let $\xi \in M$, $0 < \delta < 1$, and $V \in \mathbf{V}_m(M^n)$. There exists $\rho_0 \defeq \rho_0(\injec{r0},K) > 0$ such that if $V$ has $L^{\infty}_{\mathrm{loc}}$-bounded generalized mean curvature vector in $B_{d_g}(\xi,\rho)$, for some $0 < \rho < \rho_0$, then, for all $y \in B_{d_g}(\xi,\delta\rho)$ and for all $\sigma \in ]0,(1-\delta)\rho]$, we have that the function defined by
\begin{align}\label{equa2statement:theo:mono-Linfty:section:monotonicity}
f : ]0,\sigma[ &\to [0,+\infty[ \nonumber \\
t &\mapsto e^{\mathbf{C_1}t}\dfrac{||V||(B_{d_g}(y,t))}{t^m},
\end{align}

is non-decreasing, where $\mathbf{C_1} = \mathbf{C_1}(C(\rho),m,\rho,K) > 0$ and $C(\rho)>0$ is given by $||\overrightarrow{H_g}||_{L^{\infty}(B_{d_g}(\xi,\rho),||V||)} \leq C(\rho)$. Furthermore, for $0 < \sigma_1 < \sigma_2 < \sigma$ we have that
\footnotesize
\begin{align}\label{equa3statement:theo:mono-Linfty:section:monotonicity}
&e^{\mathbf{C_1}\sigma_2}\dfrac{||V||(B_{d_g}(y,\sigma_2))}{\sigma_2^m} - e^{\mathbf{C_1}\sigma_1}\dfrac{||V||(B_{d_g}(y,\sigma_1))}{\sigma_1^m} \geq \nonumber \\
&\geq \int_{\Gr_m\left(B_{d_g}(y,\sigma_2)\setminus B_{d_g}(y,\sigma_1)\right)} \dfrac{\left|\left| P_{S(x)^{\perp}}(\grad{g_x}(u_y)(x)) \right|\right|_{g_x}^2}{u_y(x)^m}  dV(x,S(x)),
\end{align}
\normalsize

and
\footnotesize
\begin{align}\label{equa4statement:theo:mono-Linfty:section:monotonicity}
&e^{-\mathbf{C_2}\sigma_2}\dfrac{||V||(B_{d_g}(y,\sigma_2))}{\sigma_2^m} - e^{-\mathbf{C_2}\sigma_1}\dfrac{||V||(B_{d_g}(y,\sigma_1))}{\sigma_1^m} \leq \nonumber \\
&\leq \int_{\Gr_m\left(B_{d_g}(y,\sigma_2)\setminus B_{d_g}(y,\sigma_1)\right)} \dfrac{\left|\left| P_{S(x)^{\perp}}(\grad{g_x}(u_y)(x)) \right|\right|_{g_x}^2}{u_y(x)^m}  dV(x,S(x)),
\end{align}
\normalsize

where $\mathbf{C_2} = \mathbf{C_2}(C(\rho),m,\rho,K) > 0$.
\end{theo}
\begin{proof}
Let us fix $\rho_0 \defeq \rho_0(\injec{r0},K) > 0$ given by \Cref{theo:mono-Lp:section:monotonicity}. Suppose that $V$ has $L^{\infty}_{\mathrm{loc}}$-bounded generalized mean curvature vector in $B_{d_g}(\xi,\rho)$, for some $0 < \rho < \rho_0$. Consider $y \in B_{d_g}(\xi,\delta\rho)$ and $\sigma \in ]0,(1-\delta)\rho]$ arbitrary.

Let $t \in ]0,\sigma[$ and $h: \supp(||V||) \to \mathbb{R}$ given by $h \equiv 1$. Applying \Cref{equa2statement:theo:mono-Lp:section:monotonicity} for $t$ we have that 
\footnotesize
\begin{align}\label{equa1proof:theo:mono-Linfty:section:monotonicity}
\dfrac{d}{d t}\left( \dfrac{||V||\left(B_{d_g}(y,t)\right)}{t^m} \right) &\geq  \dfrac{c(t,K) - 1}{t}m \dfrac{||V||\left(B_{d_g}(y,t)\right)}{t^m} + \dfrac{d}{d t}\left(\int_{\Gr_m(B_{d_g}(y,t))} \dfrac{\left|\left| P_{S(x)^{\perp}}(\grad{g_x}(u_y)(x)) \right|\right|_{g_x}^2}{u_y(x)^m}  dV(x,S(x))\right) \nonumber\\
&+ \dfrac{1}{t^{m+1}} \int_{B_{d_g}(y,t)} \left\langle \overrightarrow{H_g}(x), u_y(x)\grad{g_x}(u_y)(x) \right\rangle_{g_x} d||V||(x),
\end{align}
\normalsize

By Cauchy-Schwarz inequality and standard computations, we obtain that
\footnotesize
\begin{equation}\label{equa4proof:theo:mono-Linfty:section:monotonicity}
\dfrac{1}{t^{m+1}} \int_{B_{d_g}(y,t)} \left\langle \overrightarrow{H_g}(x) , u_y(x)\grad{g_x}(u_y)(x) \right\rangle_{g_x} d||V||(x) \geq - \dfrac{1}{t^m}\int_{B_{d_g}(y,t)} \left|\left|\overrightarrow{H_g}(x)\right|\right|_{g_x} d||V||(x).
\end{equation}
\normalsize

By \Cref{equa4proof:theo:mono-Linfty:section:monotonicity,equa1proof:theo:mono-Linfty:section:monotonicity}, and Hölder inequality we obtain that
\footnotesize
\begin{align}\label{equa5proof:theo:mono-Linfty:section:monotonicity}
\dfrac{d}{d t}\left( \dfrac{||V||\left(B_{d_g}(y,t)\right)}{t^m} \right) &\geq  -\left( \left|\left|\overrightarrow{H_g}\right|\right|_{L^{\infty}(B_{d_g}(y,t),||V||)} - m\dfrac{c(t,K) - 1}{t}\right) \dfrac{||V||\left(B_{d_g}(y,t)\right)}{t^m} \nonumber \\
&+ \dfrac{d}{d t}\left(\int_{\Gr_m(B_{d_g}(y,t))} \dfrac{\left|\left| P_{S(x)^{\perp}}(\grad{g_x}(u_y)(x)) \right|\right|_{g_x}^2}{u_y(x)^m}  dV(x,S(x))\right).
\end{align}
\normalsize

Note that for $t>0$ and $b>0$ the function $\frac{c(t,b)-1}{t}$ is non-increasing and negative. Thus, since $0 < t < \sigma < \rho$, we have that $\frac{c(t,K)-1}{t} \geq \frac{c(\rho,K)-1}{\rho}$. Moreover, since we are assuming that $V\in\mathbf{V}_m(M)$ has $L^{\infty}_{\mathrm{loc}}$-bounded generalized mean curvature vector in $B_{d_g}(\xi,\rho)$, there exists $C(\rho) > 0$ such that $\left|\left|\overrightarrow{H_g}\right|\right|_{L^{\infty}(B_{d_g}(\xi,\rho),||V||)} \leq C(\rho)$. Thus, by \Cref{equa5proof:theo:mono-Linfty:section:monotonicity}, we obtain
\footnotesize
\begin{align}\label{equa8proof:theo:mono-Linfty:section:monotonicity}
\dfrac{d}{d t}\left( \dfrac{||V||\left(B_{d_g}(y,t)\right)}{t^m} \right) &\geq - \mathbf{C_1} \dfrac{||V||\left(B_{d_g}(y,t)\right)}{t^m} + \dfrac{d}{d t}\left(\int_{\Gr_m(B_{d_g}(y,t))} \dfrac{\left|\left| P_{S(x)^{\perp}}(\grad{g_x}(u_y)(x)) \right|\right|_{g_x}^2}{u_y(x)^m}  dV(x,S(x))\right).
\end{align}
\normalsize

Since $e^{\mathbf{C_1}t} > 1$, for all $t > 0$, \Cref{equa8proof:theo:mono-Linfty:section:monotonicity} implies
\begin{equation}\label{equa13proof:theo:mono-Linfty:section:monotonicity}
f'(t) \geq \dfrac{d}{d t}\left(\int_{\Gr_m(B_{d_g}(y,t))} \dfrac{\left|\left| P_{S(x)^{\perp}}(\grad{g_x}(u_y)(x)) \right|\right|_{g_x}^2}{u_y(x)^m}  dV(x,S(x))\right) \geq 0.
\end{equation}

By \Cref{equa13proof:theo:mono-Linfty:section:monotonicity}, $f$ is non-decreasing. Furthermore, for $0 < \sigma_1 < \sigma_2 < \sigma$, integrating \Cref{equa13proof:theo:mono-Linfty:section:monotonicity} over $[\sigma_1,\sigma_2]$ with respect to $dt$, by Fundamental Theorem of Calculus, we obtain \Cref{equa3statement:theo:mono-Linfty:section:monotonicity}.

By a similar argument, we obtain \Cref{equa4statement:theo:mono-Linfty:section:monotonicity}.
\end{proof}
\begin{theo}\label{theo:mono-Lpm:section:monotonicity}
Let $m < p < \infty$, $\xi \in M^n$, $0 < \delta < 1$, and $V \in \mathbf{V}_m(M^n)$. There exists $\rho_0 \defeq \rho_0(\injec{r0},K) > 0$ such that if $V$ has $L^p_{\mathrm{loc}}$-bounded generalized mean curvature vector in $B_{d_g}(\xi,\rho)$, for some $0 < \rho < \rho_0$, then for all $y \in B_{d_g}(\xi,\delta\rho)$ and for all $\sigma \in ]0,(1-\delta)\rho]$, we have that the function defined by
\begin{align}\label{equa2statement:theo:mono-Lpm:section:monotonicity}
f : ]0,\sigma[ &\to [0,+\infty[ \nonumber \\
t &\mapsto \left(\dfrac{||V||\left(B_{d_g}(y,t)\right)}{t^m}\right)^{\frac{1}{p}} + \mathbf{C}t^{1-\frac{m}{p}},
\end{align}

is non-decreasing, where $\mathbf{C} = \mathbf{C}(C(\rho),m,\rho,K) > 0$ and $C(\rho)>0$ is given by $||\overrightarrow{H_g}||_{L^p(B_{d_g}(\xi,\rho),||V||)} \leq C(\rho)$. Furthermore, for $0 < \sigma_1 < \sigma_2 < \sigma$ we have that
\footnotesize
\begin{equation}\label{equa3statement:theo:mono-Lpm:section:monotonicity}
\left(\dfrac{||V||\left(B_{d_g}(y,\sigma_2)\right)}{\sigma_2^m}\right)^{\frac{1}{p}} - \left(\dfrac{||V||\left(B_{d_g}(y,\sigma_1)\right)}{\sigma_1^m}\right)^{\frac{1}{p}} \geq \mathbf{C}\left(\sigma_1^{1-\frac{m}{p}} - \sigma_2^{1-\frac{m}{p}}\right).
\end{equation}
\normalsize
\end{theo}
\begin{proof}
Let us fix $\rho_0 \defeq \rho_0(\injec{r0},K) > 0$ given by \Cref{theo:mono-Lp:section:monotonicity}. Suppose that $V$ has $L^p_{\mathrm{loc}}$-bounded generalized mean curvature vector in $B_{d_g}(\xi,\rho)$, for some $0 < \rho < \rho_0$. Consider $y \in B_{d_g}(\xi,\delta\rho)$ and $\sigma \in ]0,(1-\delta)\rho]$ arbitrary.

Let $t \in ]0,\sigma[$ and $h: \supp(||V||) \to \mathbb{R}$ given by $h \equiv 1$. Applying \Cref{equa2statement:theo:mono-Lp:section:monotonicity} for $t$ we have that 
\footnotesize
\begin{align}\label{equa1proof:theo:mono-Lpm:section:monotonicity}
\dfrac{d}{d t}\left( \dfrac{||V||\left(B_{d_g}(y,t)\right)}{t^m} \right) &\geq  \dfrac{c(t,K) - 1}{t}m \dfrac{||V||\left(B_{d_g}(y,t)\right)}{t^m} \nonumber\\
&+ \dfrac{d}{d t}\left(\int_{\Gr_m(B_{d_g}(y,t))} \dfrac{\left|\left| P_{S(x)^{\perp}}(\grad{g_x}(u_y)(x)) \right|\right|_{g_x}^2}{u_y(x)^m}  dV(x,S(x))\right) \nonumber\\
&+ \dfrac{1}{t^{m+1}} \int_{B_{d_g}(y,t)} \left\langle \overrightarrow{H_g}(x), u_y(x)\grad{g_x}(u_y)(x) \right\rangle_{g_x} d||V||(x),
\end{align}
\normalsize

By Cauchy-Schwarz inequality and standard computations, we obtain that
\footnotesize
\begin{equation}\label{equa4proof:theo:mono-Lpm:section:monotonicity}
\dfrac{1}{t^{m+1}} \int_{B_{d_g}(y,t)} \left\langle \overrightarrow{H_g}(x) , u_y(x)\grad{g_x}(u_y)(x) \right\rangle_{g_x} d||V||(x) \geq - \dfrac{1}{t^m}\int_{B_{d_g}(y,t)} \left|\left|\overrightarrow{H_g}(x)\right|\right|_{g_x} d||V||(x).
\end{equation}
\normalsize

By \Cref{equa1proof:theo:mono-Lpm:section:monotonicity,equa4proof:theo:mono-Lpm:section:monotonicity}, and Hölder inequality we obtain that
\tiny
\begin{align}\label{equa5proof:theo:mono-Lpm:section:monotonicity}
\dfrac{d}{d t}\left( \dfrac{||V||\left(B_{d_g}(y,t)\right)}{t^m} \right) &\geq - \left(\left|\left|\overrightarrow{H_g}\right|\right|_{L^p(B_{d_g}(y,t),||V||)} - \dfrac{c(t,K) - 1}{t}m\left(||V||\left(B_{d_g}(y,t)\right)\right)^{\frac{1}{p}}\right)\dfrac{\left(||V||\left(B_{d_g}(y,t)\right)\right)^{\frac{p-1}{p}}}{t^m} \nonumber\\
&+ \dfrac{d}{d t}\left(\int_{\Gr_m(B_{d_g}(y,t))} \dfrac{\left|\left| P_{S(x)^{\perp}}(\grad{g_x}(u_y)(x)) \right|\right|_{g_x}^2}{u_y(x)^m}  dV(x,S(x))\right).
\end{align}
\normalsize

Note that for $t>0$ and $b>0$ the function $\frac{c(t,b)-1}{t}$ is non-increasing and negative. Moreover, since we are assuming that $V\in\mathbf{V}_m(M)$ has $L^{p}_{\mathrm{loc}}$-bounded generalized mean curvature vector in $B_{d_g}(\xi,\rho)$, there exists $C(\rho) > 0$ such that $\left|\left|\overrightarrow{H_g}\right|\right|_{L^p(B_{d_g}(\xi,\rho),||V||)} \leq C(\rho)$. Thus, by \Cref{equa5proof:theo:mono-Lpm:section:monotonicity}, we obtain
\begin{equation}\label{equa11proof:theo:mono-Lpm:section:monotonicity}
f'(t) = \dfrac{d}{dt} \left( \left( \dfrac{||V||(B_{d_g}(y,t))}{t^m} \right)^{\frac{1}{p}} \right) + \mathbf{C} \dfrac{d}{dt}\left(t^{1-\frac{m}{p}}\right) \geq 0.
\end{equation}

By \Cref{equa11proof:theo:mono-Lpm:section:monotonicity} we have that $f'(t) \geq 0$ for all $0 < t < \sigma$, that is, $f$ is non-decreasing. Furthermore, for $0 < \sigma_1 < \sigma_2 < \sigma$, integrating \Cref{equa11proof:theo:mono-Lpm:section:monotonicity} over $[\sigma_1,\sigma_2]$ with respect to $dt$, by Fundamental Theorem of Calculus, we obtain \Cref{equa3statement:theo:mono-Lpm:section:monotonicity}.
\end{proof}
\begin{coro}\label{coro:densLp:section:monotonicity}
Let $m < p \leq \infty$, $\xi \in M^n$, $0 < \delta < 1$, and $V \in \mathbf{V}_m(M^n)$. There exists $\rho_0 \defeq \rho_0(\injec{r0},K) > 0$ such that if $V$ has $L^p_{\mathrm{loc}}$-bounded generalized mean curvature vector in $B_{d_g}(\xi,\rho)$, for some $0 < \rho < \rho_0$, then, for all $y \in B_{d_g}(\xi,\delta\rho)$, there exists the density
\begin{equation}\label{equa2statement:coro:densLp:section:monotonicity}
\Theta^m\left(||V||,y\right) = \lim\limits_{t \downarrow 0} \dfrac{||V||\left(B_{d_g}(y,t)\right)}{\omega_mt^m}.
\end{equation}

Furthermore, $\Theta^m\left(||V||, \cdot\right)$ is an upper-semicontinuous function in $B_{d_g}(\xi,\delta\rho)$, that is, for every $y \in B_{d_g}(\xi,\delta\rho)$, we have that
\begin{equation}\label{equa3statement:coro:densLp:section:monotonicity}
\limsup\limits_{x \to y}\Theta^m\left(||V||,x\right) \leq \Theta^m\left(||V||,y\right).
\end{equation}
\end{coro}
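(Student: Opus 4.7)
The plan is to derive both the existence of the density and its upper semicontinuity from the monotonicity formulas of \Cref{theo:mono-Linfty:section:monotonicity} (case $p=\infty$) and \Cref{theo:mono-Lpm:section:monotonicity} (case $m<p<\infty$). After fixing $\rho_{0}$ as produced by whichever of these two applies, take $0<\rho<\rho_{0}$ and $y\in B_{d_g}(\xi,\delta\rho)$ arbitrary; the monotone quantity will be examined on the interval $]0,(1-\delta)\rho[$.

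For the existence of $\Theta^{m}(||V||,y)$, in the case $p=\infty$ the function $t\mapsto e^{\const{const1:theo:mono-Linfty:section:monotonicity}t}\,||V||(B_{d_g}(y,t))/t^{m}$ is non-decreasing by \Cref{equa2statement:theo:mono-Linfty:section:monotonicity}, so its limit as $t\downarrow 0$ exists in $[0,+\infty]$; dividing by $\omega_{m}$ and using that the exponential factor tends to $1$ yields \Cref{equa2statement:coro:densLp:section:monotonicity}. For $m<p<\infty$, \Cref{theo:mono-Lpm:section:monotonicity} asserts that $t\mapsto(||V||(B_{d_g}(y,t))/t^{m})^{1/p}+\const{const1:theo:mono-Lpm:section:monotonicity}\,t^{1-m/p}$ is non-decreasing; since $1-m/p>0$, the additive correction vanishes as $t\downarrow 0$, so the density quotient converges.

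For the upper semicontinuity, I would use the same monotonicity to produce, for every $x\in B_{d_g}(\xi,\delta\rho)$ and every sufficiently small $\sigma$, an upper bound $\Theta^{m}(||V||,x)\le F_{\sigma}(x)$ where $F_{\sigma}$ depends on $x$ only through $||V||(B_{d_g}(x,\sigma))$ and satisfies $F_{\sigma}(y)\to\Theta^{m}(||V||,y)$ as $\sigma\downarrow 0$; explicitly one may take $F_{\sigma}(x):=\omega_{m}^{-1}e^{\const{const1:theo:mono-Linfty:section:monotonicity}\sigma}||V||(B_{d_g}(x,\sigma))/\sigma^{m}$ when $p=\infty$ and $F_{\sigma}(x):=\omega_{m}^{-1}\bigl[(||V||(B_{d_g}(x,\sigma))/\sigma^{m})^{1/p}+\const{const1:theo:mono-Lpm:section:monotonicity}\sigma^{1-m/p}\bigr]^{p}$ otherwise. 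The heart of the argument is then to show that $x\mapsto||V||(B_{d_g}(x,\sigma))$ is upper semicontinuous at $y$: if $x_{k}\to y$ and $\varepsilon>0$, then $B_{d_g}(x_{k},\sigma)\subset B_{d_g}(y,\sigma+\varepsilon)$ for $k$ large, so $\limsup_{k}||V||(B_{d_g}(x_{k},\sigma))\le||V||(B_{d_g}(y,\sigma+\varepsilon))$, and letting $\varepsilon\downarrow 0$ yields the bound by $||V||(\overline{B_{d_g}(y,\sigma)})$.

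Combining these pieces, $\limsup_{x\to y}\Theta^{m}(||V||,x)\le\widetilde{F}_{\sigma}(y)$, where $\widetilde{F}_{\sigma}(y)$ is obtained from $F_{\sigma}(y)$ by replacing the open ball by its closure. Since $||V||$ is Radon and hence locally finite, the set of radii $\sigma$ with $||V||(\partial B_{d_g}(y,\sigma))>0$ is at most countable, so letting $\sigma\downarrow 0$ along its complement identifies $\widetilde{F}_{\sigma}(y)$ with $F_{\sigma}(y)$, and the latter tends to $\Theta^{m}(||V||,y)$, giving \Cref{equa3statement:coro:densLp:section:monotonicity}. The main technical obstacle I anticipate is exactly this open-versus-closed-ball distinction, which forces the restriction to good radii; the rest is a routine use of the monotonicity machinery already in place.
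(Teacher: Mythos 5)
Your proposal is correct and follows essentially the same route as the paper: existence of the density from the monotone (and bounded-below) functions of \Cref{theo:mono-Linfty:section:monotonicity} and \Cref{theo:mono-Lpm:section:monotonicity}, and upper semicontinuity by bounding $\Theta^m(||V||,x)$ at nearby points by the monotone quantity at a fixed small radius and then letting the radius tend to zero. The only difference is cosmetic: where you handle the ball-inclusion limit via closed balls and the countably many radii with $||V||(\partial B_{d_g}(y,\sigma))>0$, the paper instead uses the shrunken inclusion $B_{d_g}(y_i,(1-\varepsilon)\sigma)\subset B_{d_g}(y,\sigma)$, which avoids the open-versus-closed issue altogether; both variants are valid.
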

\begin{proof}
The proof we be divide in the cases $p = \infty$ and $m < p < \infty$.
\begin{itemize}
    \item $p = \infty$

    Let us fix $\rho_0 \defeq \rho_0(\injec{r0},K) > 0$ given by \Cref{theo:mono-Linfty:section:monotonicity}. Suppose that $V$ has $L^{\infty}_{\mathrm{loc}}$-bounded generalized mean curvature vector in $B_{d_g}(\xi,\rho)$, for some $0 < \rho < \rho_0$. Consider $y \in B_{d_g}(\xi,\delta\rho)$ and $\sigma \in ]0,(1-\delta)\rho]$ arbitrary.

    Define the function $f:]0,\sigma[ \to [0,+\infty[$ by
    \begin{displaymath}
    f(t) = e^{\mathbf{C_1}t}\dfrac{||V||(B_{d_g}(y,t))}{t^m},
    \end{displaymath}
    where $\mathbf{C_1} = \mathbf{C_1}(C(\rho),m,\rho,K) > 0$ and $C(\rho)>0$ is given by $||\overrightarrow{H_g}||_{L^{\infty}(B_{d_g}(\xi,\rho),||V||)} \leq C(\rho)$.
    
    Note that by \Cref{theo:mono-Linfty:section:monotonicity} we have that $f$ is non-decreasing. Furthermore, $f$ is bounded from below by $0$. Thus, the limit $\lim_{t \downarrow 0} f(t)$ exists. Let $M \geq 0$ be such limit.
    
    Let $\{t_i\}_{i \in \mathbb{N}}$ be a sequence such that $0 < t_i < \sigma$, for all $i \in \mathbb{N}$, and $t_i \downarrow 0$ when $i \uparrow \infty$. Then,
    \begin{align*}
    \Theta^{m}(||V||,y) = \lim\limits_{i \uparrow \infty} \dfrac{1}{\omega_m}\dfrac{f(t_i)}{e^{\mathbf{C_1}t_i}} < \infty.
    \end{align*}

    Which implies that the density $\Theta^{m}(||V||,y)$ exists. 

    Let $\varepsilon \in ]0,1[$ arbitrary and $\{y_i\}_{i \in \mathbb{N}} \subset B_{d_g}(\xi,\delta\rho)$ any sequence such that $y_i \to y$ when $i \uparrow \infty$.
    
    Note that for $i$ sufficiently large we have that $B_{d_g}(y_i,(1-\varepsilon)\sigma) \subset B_{d_g}(y,\sigma)$. 
    
    Take $\sigma_1 < (1-\varepsilon)\sigma$. Note that $y_i \in B_{d_g}(\xi,\delta\rho)$, for all $i \in \mathbb{N}$, and $0 < \sigma_1 < (1-\varepsilon)\sigma < \sigma$. By \Cref{theo:mono-Linfty:section:monotonicity} we have, for each $i \in \mathbb{N}$, that
    \footnotesize
    \begin{align*}
    &e^{\mathbf{C_1}(1-\varepsilon)\sigma}\dfrac{||V||(B_{d_g}(y_i,(1-\varepsilon)\sigma))}{\left((1-\varepsilon)\sigma\right)^m} - e^{\mathbf{C_1}\sigma_1}\dfrac{||V||(B_{d_g}(y_i,\sigma_1))}{\sigma_1^m} \geq 0. 
    \end{align*}
    \normalsize

    Thus, by standard computations, we obtain that $\Theta^m(||V||,\cdot)$ is an upper-semicontinuous function in $B_{d_g}(\xi,\delta\rho)$.
    
    \item $m < p < \infty$

    Let us fix $\rho_0 \defeq \rho_0(\injec{r0},K) > 0$ given by \Cref{theo:mono-Lpm:section:monotonicity}. Suppose that $V$ has $L^{p}_{\mathrm{loc}}$-bounded generalized mean curvature vector in $B_{d_g}(\xi,\rho)$, for some $0 < \rho < \rho_0$. Consider $y \in B_{d_g}(\xi,\delta\rho)$ and $\sigma \in ]0,(1-\delta)\rho]$ arbitrary.
    
    Define the function $f:]0,\injec{r0}[ \to [0,+\infty[$ by
    \begin{displaymath}
    f(t) = \left( \dfrac{||V||(B_{d_g}(y,t))}{t^m} \right)^{\frac{1}{p}}  + \mathbf{C_2} t^{1-\frac{m}{p}}
    \end{displaymath}
    
    Note that by \Cref{theo:mono-Lpm:section:monotonicity} we have that $f$ is non-decreasing. Furthermore, $f$ is bounded from below by $0$. Thus, the limit $\lim_{t \downarrow 0} f(t)$ exists. Let $M \geq 0$ be such limit.

    Let $\{t_i\}_{i \in \mathbb{N}}$ be a sequence such that $0 < t_i < \sigma$, for all $i \in \mathbb{N}$, and $t_i \downarrow 0$ when $i \uparrow \infty$. Then,
    \begin{align*}
    \Theta^{m}(||V||,y) = \lim\limits_{i \uparrow \infty} \dfrac{1}{\omega_m}\left( f(t_i) - \mathbf{C_2} t_i^{1-\frac{m}{p}} \right)^p < \infty,
    \end{align*}
    where $\mathbf{C_2} = \mathbf{C_2}(C(\rho),m,\rho,K) > 0$ and $C(\rho)>0$ is given by $||\overrightarrow{H_g}||_{L^p(B_{d_g}(\xi,\rho),||V||)} \leq C(\rho)$, which implies that the density $\Theta^{m}(||V||,y)$ exists.

    Let $\varepsilon \in ]0,1[$ arbitrary and $\{y_i\}_{i \in \mathbb{N}} \subset B_{d_g}(\xi,\delta\rho)$ any sequence such that $y_i \to y$ when $i \uparrow \infty$.
    
    Note that for $i$ sufficiently large we have that $B_{d_g}(y_i,(1-\varepsilon)\sigma) \subset B_{d_g}(y,\rho)$.
    
    Take $\sigma_1 < (1-\varepsilon)\sigma$. Note that $y_i \in B_{d_g}(\xi,\delta\rho)$, for all $i \in \mathbb{N}$, and $0 < \sigma_1 < (1-\varepsilon)\sigma < \sigma$. By \Cref{theo:mono-Lpm:section:monotonicity} we have, for each $i \in \mathbb{N}$, that
    \footnotesize
    \begin{displaymath}
    \left(\dfrac{||V||\left(B_{d_g}(y_i,(1-\varepsilon)\sigma)\right)}{\left((1-\varepsilon)\sigma\right)^m}\right)^{\frac{1}{p}} - \left(\dfrac{||V||\left(B_{d_g}(y_i,\sigma_1)\right)}{\sigma_1^m}\right)^{\frac{1}{p}} \geq \mathbf{C_2}\left(\sigma_1^{1-\frac{m}{p}} - \left((1-\varepsilon)\sigma\right)^{1-\frac{m}{p}}\right).
    \end{displaymath}
    \normalsize

    Thus, by standard computations, we obtain that $\Theta^m(||V||,\cdot)$ is an upper-semicontinuous function in $B_{d_g}(\xi,\delta\rho)$. 
\end{itemize}

To conclude the proof, we choose $\rho_0 \defeq \rho_0(\injec{r0},K) > 0$ as the minimum value ensuring that both cases above hold.
\end{proof}
\begin{rema}
In the context of \Cref{coro:densLp:section:monotonicity}, since $\Theta^m(||V||,\cdot)$ is upper-semicontinuous in $B_{d_g}(\xi,\delta\rho)$ then we have that $\Theta^m(||V||,\cdot)$ is a $||V||$-measurable function in $B_{d_g}(\xi,\delta\rho)$.
\end{rema}
\begin{theo}\label{theo:densLpa:section:monotonicity}
Let $1 < p \leq \infty$, $\xi \in M^n$, $0 < \delta < 1$, and $V \in \mathbf{V}_m(M^n)$. There exists $\rho_0 \defeq \rho_0(\injec{r0},K) > 0$ such that if $V$ has $L^p_{\mathrm{loc}}$-bounded generalized mean curvature vector in $B_{d_g}(\xi,\rho)$, for some $0 < \rho < \rho_0$, and
\begin{equation}\label{equa2statement:theo:densLpa:section:monotonicity}
\left|\left|\overrightarrow{H_g}\right|\right|_{L^1(B_{d_g}(y,t),||V||)} \leq \alpha \Lambda \left(\dfrac{t}{\rho}\right)^{\alpha - 1} ||V||(B_{d_g}(y,t)),
\end{equation}

for all $y \in B_{d_g}(\xi,\rho)$, for all $t > 0$ such that $B_{d_g}(y,t) \subset B_{d_g}(\xi,\rho)$, for some $\alpha > 0$, and for some $\Lambda > 0$, then, for all $y \in B_{d_g}(\xi,\delta\rho)$ and for all $\sigma \in ]0,(1-\delta)\rho]$, we have that the function defined by
\begin{align}\label{equa3statement:theo:densLpa:section:monotonicity}
f : ]0,\sigma[ &\to [0,+\infty[ \nonumber \\
t &\mapsto e^{\lambda(t)}\dfrac{||V||(B_{d_g}(y,t))}{t^m}
\end{align}

is non-decreasing, where 
\begin{align}\label{equa4statement:theo:densLpa:section:monotonicity}
\lambda : ]0,\sigma[ &\to ]0,+\infty[ \nonumber \\
t &\mapsto \left(\Lambda\left(\dfrac{t}{\rho}\right)^{\alpha-1} - \mathbf{C_1}m\right)t,
\end{align}

and $\mathbf{C_1} = \mathbf{C_1}(\rho,K) > 0$.

Furthermore, for $0 < \sigma_1 < \sigma_2 < \sigma$, we have that 
\begin{align}\label{equa5statement:theo:densLpa:section:monotonicity}
&e^{\lambda(\sigma_2)}\dfrac{||V||(B_{d_g}(y,\sigma_2))}{\sigma_2^m} - e^{\lambda(\sigma_1)}\dfrac{||V||(B_{d_g}(y,\sigma_1))}{\sigma_1^m} \geq \nonumber \\
&\geq \int_{\Gr_m\left(B_{d_g}(y,\sigma_2) \setminus B_{d_g}(y,\sigma_1)\right)} \dfrac{\left|\left| P_{S(x)^{\perp}}(\grad{g_x}(u_y)(x)) \right|\right|_{g_x}^2}{u_y(x)^m}  dV(x,S(x)),
\end{align}

and
\begin{align}\label{equa6statement:theo:densLpa:section:monotonicity}
&e^{-\ell(\sigma_2)}\dfrac{||V||(B_{d_g}(y,\sigma_2))}{\sigma_2^m} - e^{-\ell(\sigma_1)}\dfrac{||V||(B_{d_g}(y,\sigma_1))}{\sigma_1^m} \leq \nonumber \\
&\leq \int_{\Gr_m\left(B_{d_g}(y,\sigma_2) \setminus B_{d_g}(y,\sigma_1)\right)} \dfrac{\left|\left| P_{S(x)^{\perp}}(\grad{g_x}(u_y)(x)) \right|\right|_{g_x}^2}{u_y(x)^m}  dV(x,S(x)),
\end{align}

where
\begin{align}\label{equa7statement:theo:densLpa:section:monotonicity}
\ell : ]0,\sigma[ &\to ]0,+\infty[ \nonumber \\
t &\mapsto \left(\Lambda\left(\dfrac{t}{\rho}\right)^{\alpha-1} + \mathbf{C_2}m\right)t,
\end{align}

and $\mathbf{C_2} = \mathbf{C_2}(\rho,K) > 0$.
\end{theo}
\begin{proof}
Let us fix $\rho_0 \defeq \rho_0(\injec{r0},K) > 0$ given by \Cref{theo:mono-Lp:section:monotonicity}. Suppose that $V$ has $L^p_{\mathrm{loc}}$-bounded generalized mean curvature vector in $B_{d_g}(\xi,\rho)$, for some $0 < \rho < \rho_0$, and \Cref{equa2statement:theo:densLpa:section:monotonicity} holds, for all $0 < t < \rho$, for some $\alpha > 0$, and for some $\Lambda > 0$. Consider $y \in B_{d_g}(\xi,\delta\rho)$ and $\sigma \in ]0,(1-\delta)\rho]$ arbitrary.

Let $t \in ]0,\sigma[$ and $h: \supp(||V||) \to \mathbb{R}$ given by $h \equiv 1$. Applying \Cref{equa2statement:theo:mono-Lp:section:monotonicity} for $t$, we have that 
\footnotesize
\begin{align}\label{equa1proof:theo:densLpa:section:monotonicity}
\dfrac{d}{dt}\left( \dfrac{||V||\left(B_{d_g}(y,t)\right)}{t^m} \right) &\geq \dfrac{c(t,K) - 1}{t}m \dfrac{||V||\left(B_{d_g}(y,t)\right)}{t^m} \nonumber\\
&+ \dfrac{d}{d t}\left(\int_{\Gr_m(B_{d_g}(y,t))} \dfrac{\left|\left| P_{S(x)^{\perp}}(\grad{g_x}(u_y)(x)) \right|\right|_{g_x}^2}{u_y(x)^m}  dV(x,S(x))\right) \nonumber\\
&+ \dfrac{1}{t^{m+1}} \int_{B_{d_g}(y,t)} \left\langle \overrightarrow{H_g}(x), u_y(x)\grad{g_x}(u_y)(x) \right\rangle_{g_x} d||V||(x).
\end{align}
\normalsize

By Cauchy-Schwarz inequality and standard computations, we obtain that
\footnotesize
\begin{equation}\label{equa4proof:theo:densLpa:section:monotonicity}
\dfrac{1}{t^{m+1}} \int_{B_{d_g}(y,t)} \left\langle \overrightarrow{H_g}(x) , u_y(x)\grad{g_x}(u_y)(x) \right\rangle_{g_x} d||V||(x) \geq - \dfrac{1}{t^m}\int_{B_{d_g}(y,t)} \left|\left|\overrightarrow{H_g}(x)\right|\right|_{g_x} d||V||(x).
\end{equation}
\normalsize

Since we are assuming that \Cref{equa2statement:theo:densLpa:section:monotonicity} holds, for all $y \in B_{d_g}(\xi,\rho)$, for all $t > 0$ such that $B_{d_g}(y,t) \subset B_{d_g}(\xi,\rho)$, for some $\alpha > 0$, and for some $\Lambda > 0$. By \Cref{equa1proof:theo:densLpa:section:monotonicity,equa4proof:theo:densLpa:section:monotonicity}, and Hölder inequality we obtain that
\footnotesize
\begin{align}\label{equa5proof:theo:densLpa:section:monotonicity}
\dfrac{d}{d t}\left( \dfrac{||V||\left(B_{d_g}(y,t)\right)}{t^m} \right) &\geq -\left(\alpha \Lambda \left(\dfrac{t}{\rho}\right)^{\alpha - 1} - \dfrac{c(t,K) - 1}{t}m\right)\dfrac{||V||\left(B_{d_g}(y,t)\right)}{t^m} \nonumber \\
&+ \dfrac{d}{d t}\left(\int_{\Gr_m(B_{d_g}(y,t))} \dfrac{\left|\left| P_{S(x)^{\perp}}(\grad{g_x}(u_y)(x)) \right|\right|_{g_x}^2}{u_y(x)^m}  dV(x,S(x))\right).
\end{align}
\normalsize

Note that for $t>0$ and $b>0$ the function $\frac{c(t,b)-1}{t}$ is non-increasing and negative. Thus, since $0 < t < \sigma < \rho$, we have that $\frac{c(t,K)-1}{t} \geq \frac{c(\rho,K)-1}{\rho}$. Moreover, $e^{\lambda(t)}>0$, for all $t \in \mathbb{R}$, and $\lambda(t) > 0$, for all $t \in ]0,\sigma[$. Thus, by \Cref{equa5proof:theo:densLpa:section:monotonicity}, we obtain that
\begin{equation}\label{equa10proof:theo:densLpa:section:monotonicity}
f'(t) \geq \dfrac{d}{d t}\left(\int_{\Gr_m(B_{d_g}(y,t))} \dfrac{\left|\left| P_{S(x)^{\perp}}(\grad{g_x}(u_y)(x)) \right|\right|_{g_x}^2}{u_y(x)^m}  dV(x,S(x))\right) \geq 0.
\end{equation}

By \Cref{equa10proof:theo:densLpa:section:monotonicity}, $f$ is non-decreasing. Furthermore, for $0 < \sigma_1 < \sigma_2 < \sigma$, integrating \Cref{equa10proof:theo:densLpa:section:monotonicity} over $[\sigma_1,\sigma_2]$ with respect to $dt$, by Fundamental Theorem of Calculus, we obtain \Cref{equa5statement:theo:densLpa:section:monotonicity}.

By a similar argument, we obtain \Cref{equa6statement:theo:densLpa:section:monotonicity}.
\end{proof}
\begin{theo}\label{theo:densloc}
Let $\xi \in M^n$, $0 < \delta < 1$, and $V \in \mathbf{V}_m(M^n)$. There exists $\rho_0 \defeq \rho_0(\injec{r0},K) > 0$ such that if $V$ has locally bounded first variation in $B_{d_g}(\xi,\rho)$, for some $0 < \rho < \rho_0$, then, for $||V||$-a.e. $y \in B_{d_g}(\xi,\delta\rho)$, there exists the density
\begin{equation}
\Theta^m(||V||,y) = \lim\limits_{t \downarrow 0} \dfrac{||V||(B_{d_g}(y,t))}{\omega_mt^m}.
\end{equation}

Furthermore, $\Theta^m(||V||,y)$ is an upper-semicontinuous function for $||V||$-a.e. $y \in B_{d_g}(\xi,\delta\rho)$, that is, for $||V||$-a.e. $y \in B_{d_g}(\xi,\delta\rho)$, we have that
\begin{equation}
\limsup\limits_{x \to y} \Theta^m(||V||,x) \leq \Theta^m(||V||,y).
\end{equation}
\end{theo}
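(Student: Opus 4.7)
The strategy is to reduce the statement to a pointwise application of the monotonicity formula of \Cref{lemm:mono:section:monotonicity}, replacing the global $L^p$ hypothesis used in \Cref{coro:densLp:section:monotonicity} by $||V||$-almost everywhere pointwise bounds obtained from the Radon--Nikodym decomposition of $||\delta_gV||_{\mathrm{TV}}$ against $||V||$.

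Fix $\rho_0$ as in \Cref{lemm:mono:section:monotonicity} and let $0<\rho<\rho_0$ be such that $V$ has locally bounded first variation in $B_{d_g}(\xi,\rho)$. By \Cref{prop:generalizedmeancurvaturaL1bounded:section:firstvariation}, $||\delta_gV||_{\mathrm{TV}}$ is a Radon measure and
\begin{equation*}
\delta_gV(X) = -\int_{M^n}\langle \overrightarrow{H_g},X\rangle_g\, d||V|| + \int_{M^n}\langle \nu,X\rangle_g\, d||\delta_gV||_{\sing},
\end{equation*}
with $\overrightarrow{H_g}\in L^1_{\mathrm{loc}}(||V||)$ and $||\delta_gV||_{\sing}\perp ||V||$. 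Since $d_g$ is locally bi-Lipschitz to Euclidean distance in charts, the Besicovitch differentiation theorem gives, for $||V||$-a.e.\ $y\in B_{d_g}(\xi,\delta\rho)$, both $\lim_{t\downarrow 0}\frac{1}{||V||(B_{d_g}(y,t))}\int_{B_{d_g}(y,t)}||\overrightarrow{H_g}||_{g_x}\,d||V||=||\overrightarrow{H_g}(y)||_{g_y}<\infty$ and $\lim_{t\downarrow 0}\frac{||\delta_gV||_{\sing}(B_{d_g}(y,t))}{||V||(B_{d_g}(y,t))}=0$. Consequently, there exist $r_y>0$ and $C_y>0$ with $||\delta_gV||_{\mathrm{TV}}(B_{d_g}(y,t))\leq C_y\,||V||(B_{d_g}(y,t))$ for all $0<t\leq r_y$.

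Next, apply \Cref{lemm:mono:section:monotonicity} with $h\equiv 1$ and let $\varepsilon_j\uparrow 1$. Because $\supp(X_{\varepsilon,\sigma})\subset B_{d_g}(y,\sigma)$ and the pointwise norm satisfies $||X_{\varepsilon,\sigma}(x)||_{g_x}\leq \sigma$, the remainder term obeys $\frac{|\delta_gV(X_{\varepsilon,\sigma})|}{\sigma^{m+1}}\leq \frac{||\delta_gV||_{\mathrm{TV}}(B_{d_g}(y,\sigma))}{\sigma^m}$, which plays the role of the H\"older estimate in the proof of \Cref{theo:densLpa:section:monotonicity}. Combined with the previous bound and discarding the nonnegative projection term, this yields, for $||V||$-a.e.\ $y$ and $0<t<r_y$,
\begin{equation*}
\frac{d}{dt}\!\left(\frac{||V||(B_{d_g}(y,t))}{t^m}\right) \geq \left(\frac{c(t,K)-1}{t}m - C_y\right)\frac{||V||(B_{d_g}(y,t))}{t^m}.
\end{equation*}
Setting $A(t)\defeq C_y-\frac{c(t,K)-1}{t}m$, the function $A$ is bounded near $0$ (since $c(t,K)\to 1$), hence integrable on $(0,r_y)$. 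A standard Gronwall argument then shows that $t\mapsto e^{\int_0^t A(s)\,ds}\,\frac{||V||(B_{d_g}(y,t))}{t^m}$ is non-decreasing on $(0,r_y)$; being nonnegative it admits a finite limit at $0$, and since $e^{\int_0^t A(s)\,ds}\to 1$, the density $\Theta^m(||V||,y)$ exists.

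For the upper semicontinuity, I would argue as in \Cref{coro:densLp:section:monotonicity}: given a good $y$ and a sequence of good points $x_i\to y$, for $\varepsilon\in]0,1[$ and $\sigma\in]0,r_y/2[$ one has $B_{d_g}(x_i,(1-\varepsilon)\sigma)\subset B_{d_g}(y,\sigma)$ eventually, and the pointwise monotonicity applied at $x_i$ gives
\begin{equation*}
\Theta^m(||V||,x_i) \leq e^{\int_0^{(1-\varepsilon)\sigma}A_{x_i}(s)\,ds}\,\frac{||V||(B_{d_g}(y,\sigma))}{\omega_m((1-\varepsilon)\sigma)^m};
\end{equation*}
taking $\limsup_{i\to\infty}$, then $\sigma\downarrow 0$ and $\varepsilon\downarrow 0$, using continuity of $t\mapsto ||V||(B_{d_g}(y,t))$ at a.e.\ $t$, yields the claim. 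The main obstacle is the non-uniformity of the constants $C_y$ and radii $r_y$, which depend on the Radon--Nikodym derivative at $y$ and thus vary pointwise. To control the $\limsup$ along arbitrary sequences $x_i\to y$, one must either restrict to the full-measure set of good points (using density of this set), or produce a neighborhood-uniform control on $\frac{||\delta_gV||_{\mathrm{TV}}(B_{d_g}(x,t))}{||V||(B_{d_g}(x,t))}$ for $x$ near $y$ — which follows from the local finiteness of $||\delta_gV||_{\mathrm{TV}}$ on compact subsets of $B_{d_g}(\xi,\rho)$, at the cost of cruder constants.
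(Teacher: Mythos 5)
Your argument is essentially the paper's own proof: the paper likewise obtains, from \Cref{prop:generalizedmeancurvaturaL1bounded:section:firstvariation} and the a.e.\ existence of $\Theta^{||V||}\left(||\delta_gV||_{\mathrm{TV}},\cdot\right)$, a pointwise bound $||\delta_gV||_{\mathrm{TV}}\left(B_{d_g}(y,t)\right)\leq C(y)\,||V||\left(B_{d_g}(y,t)\right)$ for $||V||$-a.e.\ $y$, feeds it into the monotonicity machinery of \Cref{lemm:mono:section:monotonicity} through the unit-sup-norm radial field, and deduces an almost-monotone density ratio exactly as in your Gronwall step; your splitting of $\delta_gV$ into $\overrightarrow{H_g}\,d||V||$ plus a singular part, versus the paper's direct differentiation of $||\delta_gV||_{\mathrm{TV}}$ against $||V||$, is an immaterial difference, and dropping the nonnegative projection term is harmless for the statement at hand.

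One caution on your closing remarks about upper semicontinuity: your second proposed remedy does not work as stated. Local finiteness of $||\delta_gV||_{\mathrm{TV}}$ on compact subsets bounds only the numerator of $\frac{||\delta_gV||_{\mathrm{TV}}\left(B_{d_g}(x,t)\right)}{||V||\left(B_{d_g}(x,t)\right)}$; since the denominator is not uniformly bounded below relative to it for small $t$ (and the ratio genuinely blows up along the set where the singular part lives or where the Radon--Nikodym derivative is large), no neighborhood-uniform constant follows. The viable route is your first one, which is also what the paper implicitly does when it says the conclusion "follows the same lines as the previous cases": interpret the $\limsup$ over the full-measure set of points at which a fixed ratio bound holds, stratifying that set by the bound level exactly as in the sets $A_{ij}$ of \Cref{theo:semicontinuityTheta:subsection:upperbound:section:monotonicity}, so that the monotonicity formula is applied along the approximating sequence with a single constant before letting the level tend to infinity.
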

\begin{proof}
Let us fix $\rho_0 \defeq \rho_0(\injec{r0},K) > 0$ given by \Cref{lemm:mono:section:monotonicity}. Suppose that $V$ has locally bounded first variation in $B_{d_g}(\xi,\rho)$, for some $0 < \rho < \rho_0$. Consider $y \in B_{d_g}(\xi,\delta\rho)$ and $\sigma \in ]0,(1-\delta)\rho]$ arbitrary.

The first part of the proof is to establish a monotonicity formula for varifolds with locally bounded first variation, following exactly the same lines as \Cref{theo:mono-Lp:section:monotonicity}, except for the differences addressed below.

Since we are assuming that $V$ has locally bounded first variation in $B_{d_g}(\xi,\rho)$, by \Cref{prop:generalizedmeancurvaturaL1bounded:section:firstvariation}, \cite{Folland}*{Theorem 2.24, p. 54} and \Cref{equa1proof:theo:mono-Lp:section:monotonicity}, we have that
\footnotesize
\begin{align}\label{equa1proof:theo:densloc}
-\lim\limits_{j \uparrow \infty} \dfrac{\delta_gV\left(X_{h,\varepsilon_j,\sigma}\right)}{\sigma^{m+1}} = -\dfrac{\delta_gV(\mathbf{X}^{h,\sigma})}{\sigma^m},
\end{align}
\normalsize

where $\mathbf{X}^{h,\sigma} \defeq h(x)\frac{u_y(x)}{\sigma}\grad{g_x}(u_y)(x)$. Therefore, 
\footnotesize
\begin{align}\label{equa2proof:theo:densloc}
&\dfrac{d}{d\sigma}\left( \dfrac{1}{\sigma^m} \int_{B_{d_g}(y,\sigma)}  h(x)d||V||(x) \right) \geq \nonumber \\
&\geq \dfrac{1}{\sigma^{m+1}} \int_{\Gr_m(B_{d_g}(y,\sigma))} \left\langle u_y(x)\grad{g_x}(u_y)(x) , P_{S(x)}\left(\grad{g_x}(h)(x)\right)  \right\rangle_{g_x} dV(x,S(x)) \nonumber\\
&+ \dfrac{c(\sigma,K) - 1}{\sigma}\dfrac{m}{\sigma^m} \int_{B_{d_g}(y,\sigma)}  h(x)d||V||(x) \nonumber\\
&+ \dfrac{d}{d \sigma}\left(\int_{\Gr_m(B_{d_g}(y,\sigma))} \dfrac{h(x)}{u_y(x)^m} \left|\left| P_{S(x)^{\perp}}(\grad{g_x}(u_y)(x)) \right|\right|_{g_x}^2 dV(x,S(x))\right) \nonumber\\
&-\dfrac{\delta_gV(\mathbf{X}^{h,\sigma})}{\sigma^m},
\end{align}
\normalsize

Let $t \in ]0,\sigma[$ and $h:\supp(||V||) \to \mathbb{R}$ given by a mollification of $\mathcal{X}_{B_{d_g}(y,t)}$. Applying \Cref{equa2proof:theo:densloc} for $t$ and taking the appropriates limits for the function $h$, we have that 
\begin{align}\label{equa3proof:theo:densloc}
\dfrac{d}{dt}\left( \dfrac{||V||(B_{d_g}(y,t))}{t^m} \right) &\geq \dfrac{c(t,K) - 1}{t}m\dfrac{||V||(B_{d_g}(y,t))}{t^m}\nonumber\\
&+ \dfrac{d}{d t}\left(\int_{\Gr_m(B_{d_g}(y,t))} \dfrac{\left|\left| P_{S(x)^{\perp}}(\grad{g_x}(u_y)(x)) \right|\right|_{g_x}^2}{u_y(x)^m} dV(x,S(x))\right) \nonumber\\
&-\dfrac{\delta_gV(\mathbf{X}^{t})}{t^m},
\end{align}

where $\mathbf{X}^{t} \defeq \mathcal{X}_{B_{d_g}(y,t)}(x)\frac{u_y(x)}{t}\grad{g_x}(u_y)(x)$.

Since we are assuming that $V$ has locally bounded first variation in $B_{d_g}(\xi,\rho)$, by \Cref{prop:generalizedmeancurvaturaL1bounded:section:firstvariation} we have that $||\delta_gV||_{\mathrm{TV}}$ is a Radon measure. Thus, by \cite{FedererGMT}*{Theorem 295, p. 154}, we have that $\Theta^{||V||}(||\delta_gV||_{TV},x)$  there exists for $||V||$-a.e. $x \in B_{d_g}(\xi,\rho)$. Therefore, by definition of density, we have that there exists $0 < C(x) < \infty$ such that 
\begin{equation}\label{equa4proof:theo:densloc}
||\delta_gV||_{\mathrm{TV}}(B_{d_g}(x,t)) \leq C(x)||V||(B_{d_g}(x,t)),
\end{equation}

for  $||V||$-a.e. $x \in B_{d_g}(\xi,\rho)$ and for all $t > 0$ such that $B_{d_g}(x,t) \subset B_{d_g}(\xi,\rho)$. On the other hand, note that $\mathbf{X}^{t} \in \mathfrak{X}^0_c(B_{d_g}(y,t))$ and  $\left|\left|\mathbf{X}^{t}\right|\right|_{L^{\infty}(B_{d_g}(y,t),||V||)}  = 1$. Let $y \in B_{d_g}(\xi, \delta\rho)$, which was chosen arbitrarily at the start of this proof, now be fixed such that \Cref{equa4proof:theo:densloc} holds for it. By \Cref{equa4proof:theo:densloc}, we have that
\begin{align}\label{equa6proof:theo:densloc}
-\dfrac{\delta_gV(\mathbf{X}^{t})}{t^m} \geq -C(y)\dfrac{||V||(B_{d_g}(y,t))}{t^m}.
\end{align}

By \Cref{equa3proof:theo:densloc,equa6proof:theo:densloc} we obtain that
\begin{align}\label{equa7proof:theo:densloc}
\dfrac{d}{dt}\left( \dfrac{||V||(B_{d_g}(y,t))}{t^m} \right) & -\left(C(y) - \dfrac{c(t,K) - 1}{t}m\right) \dfrac{||V||(B_{d_g}(y,t))}{t^m} \geq \nonumber \\
&\geq \dfrac{d}{d t}\left(\int_{\Gr_m(B_{d_g}(y,t))} \dfrac{\left|\left| P_{S(x)^{\perp}}(\grad{g_x}(u_y)(x)) \right|\right|_{g_x}^2}{u_y(x)^m} dV(x,S(x))\right).
\end{align}

Since we have now established a monotonicity formula for varifolds with locally bounded first variation, the proof follows the same lines as the previous cases.
\end{proof}
\begin{rema}\label{rema:densloc}
In the context of \Cref{theo:densloc}, since $\Theta^m(||V||,\cdot)$ is upper-semicontinuous for $||V||$-a.e. $y \in B_{d_g}(\xi,\delta\rho)$,  we have that $\Theta^m(||V||,\cdot)$ is a $||V||$-measurable function in $B_{d_g}(\xi,\delta\rho)$.
\end{rema}
\subsection{Rectifiability theorem}\label{section:vartan}
Before proving the rectifiability theorem, we note that the argument will be carried out using local charts on the manifold. The reason for this is that rectifiability is fundamentally a local measure-theoretic and geometric property, rather than a genuinely metric one. In particular, the proof of rectifiability relies on Euclidean tools such as density estimates, tangent planes, and covering arguments, and is stable under bi-Lipschitz changes of coordinates. Consequently, once we work in sufficiently small coordinate neighborhoods, the problem reduces to the classical Euclidean setting. Moreover, the validity of the argument is independent of the specific geometric constants of the ambient Riemannian manifold, provided the coordinate maps are uniformly bi-Lipschitz on small balls.
\begin{theo}\label{theo:rectifiable}
Let $\xi \in M^n$, $0 < \delta \leq 1$, and $V \in \mathbf{V}_m(M^n)$. There exists $\rho_0 \defeq \rho_0(\injec{r0}) > 0$ such that if $V$ has locally bounded first variation in $B_{d_g}(\xi,\rho)$, for some $0 < \rho < \rho_0$, and $\Theta^m(||V||,x) > 0$ for $||V||$-a.e. $x \in B_{d_g}(\xi,\rho)$, then $V \in \mathbf{RV}_m(B_{d_g}(\xi,\delta\rho))$.
\end{theo}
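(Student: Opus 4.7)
The plan is to deduce the theorem from the First Rectifiability Theorem (\Cref{theo:firstrectifiable:section:definitionvarifolds}) by showing that $V$ has an approximate tangent space with positive multiplicity at $||V||$-a.e.\ point of $B_{d_g}(\xi,\delta\rho)$. To do so, one first chooses $\rho_0$ at least as small as the threshold provided by \Cref{theo:densloc}, so that under the hypotheses the density $\Theta^m(||V||,y)$ exists and is a $||V||$-measurable function on $B_{d_g}(\xi,\delta\rho)$; combined with the assumption $\Theta^m(||V||,\cdot)>0$ $||V||$-a.e., one discards the $||V||$-null set where the density fails to be finite and positive.

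Next I would verify that $||V||$-a.e.\ $y\in B_{d_g}(\xi,\delta\rho)$ satisfies the varifold-tangent hypotheses \Cref{hypvartan}. The density condition is handled above. For the vanishing of $||\delta_gV||_{\mathrm{TV}}(B_{d_g}(y,t))/t^{m-1}$ as $t\downarrow 0$, since $V$ has locally bounded first variation, $||\delta_gV||_{\mathrm{TV}}$ is a Radon measure (\Cref{prop:generalizedmeancurvaturaL1bounded:section:firstvariation}); by Radon--Nikodym, $\Theta^{||V||}(||\delta_gV||_{\mathrm{TV}},y)$ exists and is finite $||V||$-a.e., so that $||\delta_gV||_{\mathrm{TV}}(B_{d_g}(y,t))\leq C(y)\,||V||(B_{d_g}(y,t))$ for small $t$. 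Combining this with the density asymptotics $||V||(B_{d_g}(y,t))\sim \omega_m\Theta^m(||V||,y)t^m$ yields $||\delta_gV||_{\mathrm{TV}}(B_{d_g}(y,t))=O(t^m)=o(t^{m-1})$. Hence, at $||V||$-a.e.\ such $y$, \Cref{defi:vartan:section:vartan} and \Cref{rema:vartan:section:vartan} apply: the varifold tangent set $\mathrm{VarTan}(V,y)$ is nonempty, every $C_y\in\mathrm{VarTan}(V,y)$ is stationary in $(T_yM^n,g_y)$, and the density identity $||C_y||(B_{d_{g_y}}(0,R))=\theta_0\omega_m R^m$ holds for all $R>0$.

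The decisive step is to reduce the analysis of $C_y$ to the Euclidean rectifiability theorem for stationary varifolds. I would work in normal coordinates centered at $y$: in these coordinates the metric tensor $g_y$ in $T_yM^n$ has $g_{ij}(0)=\delta_{ij}$ and all Christoffel symbols vanishing at $0$, so that the ambient geometry seen by $C_y$ is literally Euclidean. One then mimics the Euclidean proof of the rectifiability theorem (as in \cite{Simon}*{Theorem 1.2, p.\,236}): a second blow-up of $C_y$ at $0$ produces, via the monotonicity of the density ratio in \Cref{rema::mono-Lp:section:monotonicity} (for stationary varifolds, which are a special case of $L^p_{\mathrm{loc}}$-mean curvature ones), a stationary cone $D$; the constancy along rays is exploited together with \Cref{theo:constancysecond:section:constancytheorem} and an induction on the dimension of the support to conclude that $D$ is supported on a single $m$-plane $S\in\Gr(m,T_yM^n)$ with constant multiplicity $\theta_0$. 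Propagating this identification upward shows that $C_y=\theta_0\,\mathcal{H}^m_{d_{g_y}}\mres S\times\delta_S$; in particular $V_{y,\lambda}\weekconv \theta_0\,\mathcal{H}^m_{d_{g_y}}\mres S\times\delta_S$, i.e.\ $V$ has approximate tangent space $S$ with multiplicity $\theta_0=\Theta^m(||V||,y)$ at $y$.

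With the approximate tangent space established for $||V||$-a.e.\ $y\in B_{d_g}(\xi,\delta\rho)$, the First Rectifiability Theorem \Cref{theo:firstrectifiable:section:definitionvarifolds} immediately yields $V\in\mathbf{RV}_m(B_{d_g}(\xi,\delta\rho))$. The main obstacle I anticipate is the second paragraph/third paragraph interface: porting the Euclidean argument that the blow-up cone $D$ is supported on an $m$-plane into the Riemannian setting. The normal coordinate trick makes the metric Euclidean at the origin, but the blow-up procedure involves rescaling that sends a neighborhood of $y$ onto arbitrarily large balls in $T_yM^n$, so one must keep careful track of how the curvature corrections to $g$ vanish under rescaling. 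The quantitative comparison in \Cref{lemm:bounds:section:monotonicity} (through $c(t,\pm K)$) together with the fact that $c(\lambda t,\pm K/\lambda^2)\to 1$ as $\lambda\downarrow 0$ should precisely encode this degeneration, ensuring that the limiting stationary varifold $C_y$ is genuinely stationary for the \emph{Euclidean} first variation on $(T_yM^n,g_y)$ and that the classical cone/constancy dichotomy applies unchanged.
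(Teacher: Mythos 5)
There is a genuine gap at what you yourself call the decisive step. After obtaining a stationary tangent varifold $C_y\in\mathrm{VarTan}(V,y)$ with constant density ratio $\theta_0$ centred at $0$, you propose a second blow-up producing a stationary cone $D$ and then "induction on the dimension of the support" to conclude that $D$ (and hence $C_y$) is supported on a single $m$-plane. This argument does not work: stationary cones, even integral ones, are in general not planes (unions of planes, the cone over the Clifford torus, etc.), and for a varifold with merely positive a.e.\ density there is no classification of blow-up cones that a dimension-reduction scheme could exploit. The planarity of the support of $C_y$ cannot be extracted from stationarity plus the density ratio at the origin alone. What is missing is the classical pair of density estimates that the paper (following Simon) uses: first, $\Theta^m(\|C_y\|,v)\le\theta_0$ for \emph{every} $v$, by comparing the density ratio of $C_y$ at $v$ with the ratio at the origin via the monotonicity inequality \Cref{equa7proof:theo:densloc}; second, $\Theta^m(\|C_y\|,v)\ge\theta_0$ for $\|C_y\|$-a.e.\ $v$, which uses the $\|V\|$-approximate continuity of $\Theta^m(\|V\|,\cdot)$ (a consequence of its measurability, cf.\ \Cref{rema:densloc} and Federer) transferred to the rescalings $V_{y,\lambda_{j_k}}$ and then the semicontinuity result \Cref{theo:semicontinuityTheta:subsection:upperbound:section:monotonicity}. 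Only once the density is identically $\theta_0$ on $\supp(\|C_y\|)$ does equality in the monotonicity formula force the term $\|P_{T(w)^{\perp}}(\grad{g_y}(u_v)(w))\|^2$ to vanish, which gives $w-v\in T(w)$ and hence $\supp(\|C_y\|)\subset T_y$ for a fixed $m$-plane; the Constancy Theorem \Cref{theo:constancyfirst:section:constancytheorem} then identifies $C_y=\theta_y\,\mathcal{H}^m_{d_{g_y}}\mres T_y\times\delta_{T_y}$. Your proposal never invokes the approximate continuity of the density nor the semicontinuity theorem, and without them the lower density bound on $C_y$ — the crux of the proof — is unavailable.

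A secondary but real omission: the definition of approximate tangent space requires convergence of the full family $V_{y,\lambda}$ as $\lambda\downarrow0$, whereas the blow-up construction only yields subsequential limits. The paper closes this by proving uniqueness of the element of $\mathrm{VarTan}(V,y)$, computing via the Disintegration Lemma \Cref{lemm:disintegrationvarifolds} that any limit must have $V^{(y)}=\delta_{T_y}$ and multiplicity $\theta_y=\Theta^m(\|V\|,y)$; your phrase "propagating this identification upward" skips this step, which should be made explicit before appealing to \Cref{theo:firstrectifiable:section:definitionvarifolds}. The initial reductions in your first two paragraphs (choice of $\rho_0$ via \Cref{theo:densloc}, verification of \Cref{hypvartan} through the Radon--Nikodym bound on $\|\delta_gV\|_{\mathrm{TV}}$, and \Cref{rema:vartan:section:vartan}) do agree with the paper and are fine.
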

\begin{proof}
Let us fix $\rho_0 \defeq \rho_0(\injec{r0}) < \injec{r0}$. There exists a smooth coordinate chart
\begin{displaymath}
    \varphi : B_{d_g}(\xi,\rho) \to \mathbb{R}^n,
\end{displaymath}
such that $\varphi$ is bi-Lipschitz on $B_{d_g}(\xi,\rho)$.

Define the pushforward varifold $\widetilde V \defeq \varphi^{\#}V \in \mathbf{V}_m(\mathbb{R}^n)$. Since $\varphi$ is a smooth bi-Lipschitz diffeomorphism, the hypotheses of locally bounded first variation and positivity of the density are preserved under pushforward.

Therefore, by the Euclidean Rectifiability Theorem \cite{Simon}*{Theorem 5.5, p. 246}, it follows that
\begin{displaymath}
    \widetilde V \in \mathbf{RV}_m\bigl(\varphi(B_{d_g}(\xi,\rho))\bigr).
\end{displaymath}

Finally, since rectifiability is preserved under bi-Lipschitz changes of coordinates, we conclude that
\begin{displaymath}
    V \in \mathbf{RV}_m(B_{d_g}(\xi,\delta\rho)),
\end{displaymath}
which completes the proof.
\end{proof}
\section{Riemannian Intrinsic Allard's Interior \texorpdfstring{$\varepsilon$}{E}-Regularity Theorem}\label{part:allardreg}
In the subsequent sections, we will require a more extensive background in Riemannian geometry than was needed in \Cref{part:generalvarifolds}. For a comprehensive overview, we recommend the following references: \cite{Lee}*{Chapter 8}, \cite{Chavel93}*{Chapter IX}, and particularly \cite{Gray04}*{Chapters 2, 3, 8, and 9}.

\subsection{Preliminaries on submanifolds of Riemannian manifolds}\label{section:lemmas}
Before proceeding, let us establish some notation and review key aspects of the theory of submanifolds in abstract Riemannian manifolds.
\begin{defi}\label{defi2:subsection:campos:section:FermiCoordinates}
Let $\Sigma \subset M^n$ be an $m$-dimensional submanifold of $M^n$ of class $\C^2$, $x \in \Sigma$ and $(U,\{x_i\}_{i=1}^n)$ a Fermi coordinate chart at $x$. For $u_{\Sigma} > 0$ we define
\begin{equation}\label{equa1:defi2:subsection:campos:section:FermiCoordinates}
u_{\Sigma}^2 \defeq \sum_{i=m+1}^n x_i^2 \hspace{1cm} { and } \hspace{1cm} N \defeq \sum_{i=m+1}^n \dfrac{x_i}{u_{\Sigma}}\dfrac{\partial}{\partial x_i}.
\end{equation}
\end{defi}
The following lemma characterizes the function $u_{\Sigma}$ and the vector field $N$. A proof can be found in \cite{Gray04}*{Lemma 2.7, p. 23}.
\begin{lemm}\label{lemm1.1:subsection:campos:section:FermiCoordinates}
Let $\Sigma \subset M^n$ be an $m$-dimensional submanifold of $M^n$ of class $\C^2$ and $x \in M^n$. Suppose that there exists a unique geodesic $\gamma$ from $x$ to $\Sigma$ meeting $\Sigma$ orthogonally. Then
\begin{displaymath}
u_{\Sigma}(x) = \dist_g(x,\Sigma) \hspace{1cm} \text{ and } \hspace{1cm} N(\gamma(s)) = \gamma'(s).
\end{displaymath}
\end{lemm}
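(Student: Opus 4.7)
The plan is to work directly in a Fermi coordinate chart $(U, \{x_i\}_{i=1}^n)$ at $q \defeq \gamma(0) \in \Sigma$, chosen via \Cref{lemm2:subsection:coordinates:section:FermiCoordinates} so that the coordinate vector $\frac{\partial}{\partial x_{m+1}}(q)$ is aligned with $\gamma'(0)$. I parameterize $\gamma$ by arclength, so $\gamma'(0)$ is a unit normal vector; writing $\gamma'(0) = \sum_{j=m+1}^n \nu_j E_j(q)$ in terms of the orthonormal frame used to define the chart, we have $\sum_j \nu_j^2 = 1$. Let $L$ denote the length of $\gamma$, so that $x = \exp_q(L\gamma'(0))$.

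First, I would read off the Fermi coordinates of $x$: by \Cref{defi4:subsection:coordinates:section:FermiCoordinates}, since $x = \exp_{\mathbf{N}\Sigma}(q, L\gamma'(0))$, the tangential coordinates satisfy $x_i(x) = y_i(q)$ for $i = 1, \ldots, m$ and the normal coordinates satisfy $x_j(x) = L \nu_j$ for $j = m+1, \ldots, n$. Using $u_\Sigma^2 \defeq \sum_{i=m+1}^n x_i^2$ from \Cref{defi2:subsection:campos:section:FermiCoordinates}, this immediately yields $u_\Sigma(x)^2 = L^2 \sum_j \nu_j^2 = L^2$, so $u_\Sigma(x) = L$. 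To identify $L$ with $\dist_g(x, \Sigma)$, I would invoke the first variation of arclength formula: any length-minimizing geodesic from $x$ to $\Sigma$ must meet $\Sigma$ orthogonally; by the uniqueness hypothesis on such geodesics, it must coincide with $\gamma$, hence $L = \dist_g(x, \Sigma) = u_\Sigma(x)$.

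For the second identity, the same coordinate computation applied to $\gamma(s) = \exp_q(s\gamma'(0))$ gives $x_i(\gamma(s)) = y_i(q)$ for $i \leq m$ and $x_j(\gamma(s)) = s\nu_j$ for $j \geq m+1$, so that $u_\Sigma(\gamma(s)) = s$ and, by differentiating the coordinate expression, $\gamma'(s) = \sum_{j=m+1}^n \nu_j \frac{\partial}{\partial x_j}(\gamma(s))$. Substituting into $N = \sum_{j=m+1}^n \frac{x_j}{u_\Sigma} \frac{\partial}{\partial x_j}$ from \Cref{defi2:subsection:campos:section:FermiCoordinates} gives $N(\gamma(s)) = \sum_{j=m+1}^n \frac{s\nu_j}{s} \frac{\partial}{\partial x_j}(\gamma(s)) = \gamma'(s)$. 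The only subtle point is invoking the first variation formula together with the uniqueness hypothesis to promote the inequality $\dist_g(x, \Sigma) \leq L(\gamma) = u_\Sigma(x)$ to an equality; the rest is direct unpacking of the Fermi chart definitions and \Cref{lemm2:subsection:coordinates:section:FermiCoordinates}.
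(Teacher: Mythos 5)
The paper does not actually prove this lemma itself --- it simply cites Gray (Lemma 2.7, p.~23) --- and your argument is exactly the standard one behind that citation: read off the Fermi coordinates of points along $\gamma$ to get $u_{\Sigma}(x)=L(\gamma)$ and $N(\gamma(s))=\gamma'(s)$, then use the first variation of arclength together with the uniqueness hypothesis to identify $L(\gamma)$ with $\dist_g(x,\Sigma)$. The only implicit points, already built into the setting, are local ones: $x$ must lie in the normal neighborhood so that its Fermi coordinates are computed from the pair $\left(\gamma(0),L\gamma'(0)\right)$ (the uniqueness of the orthogonal geodesic is what identifies the chart preimage of $x$ with $\gamma$), and a length-minimizing geodesic from $x$ to $\Sigma$ exists locally so that the first-variation step applies.
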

\begin{defi}\label{defi:tube:subsection:ShapeOperator:section:FermiCoordinates}
Let $\Sigma \subset M^n$ be an $m$-dimensional submanifold of $M^n$ of class $\C^2$ and $r \geq 0$. We define the tube of radius $r$ around $\Sigma$, denoted by $\mathrm{Tube}(\Sigma,r)$, as the set of points in $M^n$ such that there exists a geodesic parting through the points, meeting $\Sigma$ orthogonally and of length less than or equal to $r$.
\end{defi}
Note that we can visualize a tubular neighborhood around an embedded submanifold using the exponential map.
\begin{displaymath}
\mathrm{Tube}(\Sigma,r) = \bigcup\limits_{x \in \Sigma} \{ \exp_x(v) : v \in T_x\Sigma^{\perp} \text{ and } ||v||_{g_x} \leq r\}.
\end{displaymath}
\begin{defi}\label{defi:tubularhiper:subsection:ShapeOperator:section:FermiCoordinates}
Let $\Sigma \subset M^n$ be an $m$-dimensional submanifold of $M^n$ of class $\C^2$ and $0 < t \leq r$. We define the tubular hypersurface at a distance $t$ from $\Sigma$ by
\begin{displaymath}
\Sigma_t \defeq \{ x \in \mathrm{Tube}(\Sigma,r) : \dist_g(p,\Sigma) = t\}.
\end{displaymath}
\end{defi}
The tubular hypersurfaces $\{\Sigma_t\}_{0 <t \leq r}$ form a natural foliation of the set $\mathrm{Tube}(\Sigma,r) \setminus \Sigma$.
\begin{defi}\label{defi:shape:subsection:ShapeOperator:section:FermiCoordinates}
Let $\Sigma \subset M^n$ be an $m$-dimensional submanifold of $M^n$ of class $\C^2$ and $x \in \Sigma$. We define the shape operator of $\Sigma$ at $x$ as the map 
\begin{align*}
S_x : T_x\Sigma \times T_x\Sigma^{\perp} &\to T_x\Sigma \nonumber \\
(X,\nu) &\mapsto S_x(X,\nu) \defeq P_{T_x\Sigma}\left(\nabla^g_X \nu\right).
\end{align*}
\end{defi}\label{defi:secondfundamentalform:subsection:ShapeOperator:section:FermiCoordinates}
\begin{defi}
Let $\Sigma \subset M^n$ be an $m$-dimensional submanifold of $M^n$ of class $\C^2$, $x \in \Sigma$ and $\nu \in T_x\Sigma^{\perp}$. We define the second fundamental form of $\Sigma$ with respect to $\nu$ at $x$ as the map
\begin{align*}
\mathrm{II}_{\nu,x} : T_x\Sigma \times T_x\Sigma &\to \mathbb{R} \nonumber \\
(X,Y) &\mapsto \mathrm{II}_{\nu,x}(X,Y) \defeq \left\langle S_x(X,\nu),Y \right\rangle_{g_x}.
\end{align*}
\end{defi}

Note that the second fundamental form of $\Sigma$ with respect to $\nu$ at $x$ is symmetric. Moreover, for a fixed $\nu \in T_x\Sigma^{\perp}$, we denote by $S_{\nu,x}$ the map 
\begin{align*}
S_{\nu,x} : T_x\Sigma &\to T_x\Sigma \nonumber \\
X &\mapsto S_{\nu,x}(X) \defeq S_x(X,\nu).
\end{align*}

Therefore, $S_{\nu,x}$ is self-adjoint. Now, assume that $\nu \in T_x\Sigma^{\perp}$ has $||\nu||_g = 1$. The $m$ eigenvalues of $S_{\nu,x}$, all of which are real due to its self-adjointness, are called the principal curvatures of $\Sigma$ in the direction $\nu$. Correspondingly, the associated eigenvectors are referred to as the principal curvature vectors of $\Sigma$ in the direction $\nu$.

Consider the scenario where $\Sigma$ has co-dimension $1$, meaning $n = m + 1$. In this case, for each $x \in \Sigma$, there exist exactly two normal vectors $\nu \in T_x\Sigma^{\perp}$ satisfying $||\nu||_g = 1$. It's important to observe that the second fundamental form and the shape operator change sign when we switch the normal vector.

Furthermore, note that for $X \in T_x\Sigma$ arbitrary, we have
\begin{displaymath}
S_{\nu,x}(X) = \nabla^g_X \nu.
\end{displaymath}

Let $\Sigma \subset M^n$ be an $m$-dimensional submanifold of $M^n$ of class $\C^2$, $0 < t \leq \injec{r0}$, $\Sigma_t$ the tubular hypersurface at a distance $t$ from $\Sigma$.

Assume that every point at $\Sigma_t$ has only one nearest point projection onto $\Sigma$ and fix $x \in \Sigma_t$ arbitrarily. We denote by $\proj{\Sigma}(x) \in \Sigma$ the nearest point projection of $x$ onto $\Sigma$ and $\gamma:[0,\injec{r0}[ \to M^n$ the geodesic that minimizes the distance between $\proj{\Sigma}(x)$ and $x$, with the constraint that $\gamma(0) = \proj{\Sigma}(x)$ and $\gamma(t) = x$. By  \Cref{lemm1.1:subsection:campos:section:FermiCoordinates} we have that
\begin{displaymath}
u_{\Sigma}(\gamma(s)) = \dist_g^{\Sigma}\left(\gamma(s)\right) = s \hspace{1cm} \text{ and } \hspace{1cm} N(\gamma(l)) = \gamma'(l).
\end{displaymath}

for $0 \leq s \leq t$ and $0 < l \leq t$. Consider ${f_1(\proj{\Sigma}(x)),\ldots,f_m(\proj{\Sigma}(x))}$ the principal curvature vectors of $\Sigma$ at $\proj{\Sigma}(x)$ in the direction $\gamma'(0)$ and $\kappa_1(\proj{\Sigma}(x)),\ldots,\kappa_m(\proj{\Sigma}(x))$ their respective principal curvatures. Now, we extend these tangent vectors to unit vector fields $F_1(\proj{\Sigma}(x))(s), \ldots, F_m(\proj{\Sigma}(x))(s)$ along $\gamma$ in such a way that, for each $s \in ]0,t]$ and $j=1,\ldots,m$,
\begin{equation}
S(s)(F_j(\proj{\Sigma}(x))(s)) = \kappa_j(\proj{\Sigma}(x))(s)F_j(\proj{\Sigma}(x))(s),
\end{equation}

where $S(s)$ is given by \Cref{defi:shape:subsection:ShapeOperator:section:FermiCoordinates}. That is, for $j = 1, \ldots, m$, $F_j(\proj{\Sigma}(x))(s)$ is the principal curvature vector of $\Sigma_s$ at $\gamma(s)$ in the direction of $N(\gamma(s)) = \gamma'(s)$ and $\kappa_j(\proj{\Sigma}(x))(s)$ its respective principal curvature, for all $0 < s \leq t$. 

For all $0 < s \leq t$, let $F_{m+2}(\proj{\Sigma}(x))(s), \ldots, F_n(\proj{\Sigma}(x))(s)$ be the remaining principal curvature vectors of $\Sigma_s$ at $\gamma(s)$ in the direction $N(\gamma(s)) = \gamma'(s)$ with principal curvatures\\ $\kappa_{m+2}(\proj{\Sigma}(x))(s), \ldots, \kappa_n(\proj{\Sigma}(x))(s)$, respectively. That is, for each $s \in ]0,t]$ and $j=1,\ldots,m$,
\begin{displaymath}
S(s)(F_j(\proj{\Sigma}(x))(s)) = \kappa_j(s)F_j(\proj{\Sigma}(x))(s).
\end{displaymath}

Let $F_{m+1}(\proj{\Sigma}(x))(s) \defeq \gamma'(s)$, for all $0 < s \leq t$. Then $s \mapsto \{F_1(\proj{\Sigma}(x))(s), \ldots, F_n(\proj{\Sigma}(x))(s)\}$ is an orthonormal frame field along $\gamma$, for all $0 < s \leq t$. We will also denote, for all $0 < s \leq t$, by 
\begin{displaymath}
R^g(s)(X) \defeq R^{g}(X,N)N
\end{displaymath}

for all $X \in T_{\gamma(s)}\Sigma_s$, and 
\begin{displaymath}
\mathrm{II}(s)(X,Y) \defeq \mathrm{II}_{N,x}(X,Y)
\end{displaymath}

for all $X,Y \in T_{\gamma(s)}\Sigma_s$.

The following lemma is a standard consequence of \cite{Gray04}*{Lemma 2.8, p. 23}, Generalized Gauss Lemma \cite{Gray04}*{Lemma 2.11, p. 26}, and results from Elliptic Regularity Theory \cite{EvansPDE}*{Chapter 5, p. 251}. The main difficulty arises from the fact that $\Sigma$ is only of class $\C^2$, meaning that $u_{\Sigma}$ is of class $\C^1$, and $\grad{g}(u_{\Sigma})$ is of class $\C^0$. Consequently, $S(t)$ is also of class $\C^0$, requiring us to consider distributional derivatives. However, by Elliptic Regularity Theory, we can conclude \textit{a posteriori} that the derivative exists in the usual sense.
\begin{lemm}\label{lemm1:subsection:ShapeOperator:section:FermiCoordinates}
Let $\Sigma \subset M^n$ be an $m$-dimensional submanifold of $M^n$ of class $\C^2$,  $0 < t \leq \injec{r0}$, $\Sigma_t$ a tubular hypersurface at a distance $t$, and $x \in \Sigma_t$. Then, for $X,Y \in T_x\Sigma_t$, we have
\begin{enumerate}
    \item\label{item1statement:lemm1:subsection:ShapeOperator:section:FermiCoordinates} $\hess_g(u_{\Sigma})(X,X) = \mathrm{II}(t)(X,X)$.
    \item\label{item2statement:lemm1:subsection:ShapeOperator:section:FermiCoordinates} $S'(t)(X) = - S^2(t)(X) - R^g(t)(X)$.
    \item\label{item3statement:lemm1:subsection:ShapeOperator:section:FermiCoordinates} $\kappa_j(\proj{\Sigma}(x))'(t) = - \kappa_j(\proj{\Sigma}(x))^2(s) - \sec_g(F_j(t),N)$.
\end{enumerate}
\end{lemm}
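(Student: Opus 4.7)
The plan is to prove the three identities in sequence, leveraging the preparatory results already established in the section.

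For item 1, I would invoke the Generalized Gauss Lemma \Cref{lemm:Generalized-Gauss:subsection:GaussLemma:section:FermiCoordinates} to identify $\grad{g}(u_{\Sigma}) = N$ on the tubular neighborhood, so that $\hess_g(u_{\Sigma})(X,X) = \left\langle \nabla^g_X N, X\right\rangle_{g_x}$. Since $\|N\|_g \equiv 1$ by \Cref{equa2statement:lemm2:subsection:campos:section:FermiCoordinates}, differentiating produces $\nabla^g_X N \perp N$, hence $\nabla^g_X N \in T_x\Sigma_t$. By \Cref{defi:shape:subsection:ShapeOperator:section:FermiCoordinates}, this forces $\nabla^g_X N = S_{N,x}(X) = S(t)(X)$, and taking the inner product with $X$ gives $\mathrm{II}(t)(X,X)$, as required.

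For item 2, I would derive the classical Riccati equation for $S(t)$. Given $X \in T_x\Sigma_t$, I extend it to a vector field $\widetilde{X}$ along $\gamma$ by pushing $X$ along the flow of $N$ (this is the standard construction making $\widetilde{X}$ projectable, which ensures $[N,\widetilde{X}]=0$). Then $\nabla^g_N \widetilde{X} = \nabla^g_{\widetilde{X}} N = S(t)(\widetilde{X})$, and the curvature identity together with $\nabla^g_N N = 0$ from \Cref{equa1statement:lemm2:subsection:campos:section:FermiCoordinates} and $[N,\widetilde{X}]=0$ gives
\[
\nabla^g_N(S\widetilde{X}) = \nabla^g_N \nabla^g_{\widetilde{X}} N = \nabla^g_{\widetilde{X}} \nabla^g_N N + R^g(N,\widetilde{X})N + \nabla^g_{[N,\widetilde{X}]} N = R^g(N,\widetilde{X}) N.
\]
Expanding the left-hand side via the product rule yields $S'(t)(\widetilde{X}) + S(\nabla^g_N \widetilde{X}) = S'(t)(\widetilde{X}) + S^2(t)(\widetilde{X})$. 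Using the antisymmetry $R^g(N,X)N = -R^g(X,N)N = -R^g(t)(X)$ then produces $S'(t)(X) = -S^2(t)(X) - R^g(t)(X)$.

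For item 3, I would specialize item 2 to the parallel principal curvature frame $\{F_j(\proj{\Sigma}(x))\}_{j=1}^n$ along $\gamma$, which is parallel transported by construction so $\nabla^g_N F_j = 0$. Differentiating the eigenvalue equation $S(t)(F_j(t)) = \kappa_j(\proj{\Sigma}(x))(t)\, F_j(t)$ along $\gamma$ yields $S'(t)(F_j) = \kappa_j(\proj{\Sigma}(x))'(t)\, F_j$. Substituting into item 2 and taking the inner product with $F_j$, using $\|F_j\|_{g}=1$ and $\langle F_j,N\rangle_g=0$, gives $\kappa_j(\proj{\Sigma}(x))'(t) = -\kappa_j(\proj{\Sigma}(x))^2(t) - \langle R^g(F_j,N)N, F_j\rangle_g$, and the last term is exactly $\sec_g(F_j(t), N)$.

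The main obstacle is the regularity issue already flagged before the statement. Since $g$ and $\Sigma$ are only of class $\C^2$, the Fermi exponential map is merely $\C^1$, so $u_{\Sigma}$ is $\C^1$, $N = \grad{g}(u_{\Sigma})$ is only continuous, and the shape operator $S(t)$ is a priori only $\C^0$ along $\gamma$. All the preceding manipulations must therefore first be justified in the distributional sense. Once the Riccati equation of item 2 is established distributionally as a first-order system along $\gamma$ with continuous inhomogeneity (via standard Elliptic Regularity Theory for the equations satisfied by the components of $S(t)$ in Fermi coordinates, as in \cite{EvansPDE}*{Chapter 5, p. 251}), one concludes a posteriori that $t\mapsto S(t)$ is classically differentiable on $\mathrm{Tube}(\Sigma,\injec{r0})\setminus\Sigma$, upgrading the distributional identities to the classical ones asserted in the lemma.
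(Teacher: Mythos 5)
Your proposal is correct and follows essentially the route the paper intends: the paper gives no detailed proof of this lemma, only the preceding remark that it is a standard consequence of \Cref{lemm2:subsection:campos:section:FermiCoordinates}, \Cref{lemm:Generalized-Gauss:subsection:GaussLemma:section:FermiCoordinates} and elliptic regularity to upgrade the distributional Riccati equation to a classical one, and your argument fills in exactly those standard details (Gauss lemma for item 1, the Riccati computation with a commuting extension of $X$ for item 2, specialization to the principal frame for item 3, plus the regularity remark). The only imprecision is in item 3: the frame $F_j$ is constructed in the paper as eigenvector fields of $S(s)$, not by parallel transport, so $\nabla^g_N F_j = 0$ is not given ``by construction''; this does not affect the conclusion, since after pairing with $F_j$ the extra term $\left\langle (\kappa_j\,\mathrm{Id} - S)(\nabla^g_N F_j), F_j\right\rangle_g$ vanishes by self-adjointness of $S$ and $\|F_j\|_g = 1$.
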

\begin{defi}\label{defi:cothdifferent:subsection:ShapeOperator:section:FermiCoordinates}
Let $\kappa,K \in \mathbb{R}$ with $K>0$. Define, for $t \in \left[0, \frac{1}{\sqrt{K}}\left(\pi-\cot^{-1}\left(\frac{\kappa}{\sqrt{K}}\right)\right)\right[$: 
\begin{displaymath}
\cot_{K,\kappa}(t) \defeq \sqrt{K}\cot\left( \sqrt{K}t + \cot^{-1}\left(\dfrac{\kappa}{\sqrt{K}}\right) \right).
\end{displaymath}
\end{defi}
\begin{rema}\label{rema:cothdifferent:subsection:ShapeOperator:section:FermiCoordinates}
By straightforward calculation, we can deduce that
\begin{equation}\label{equa1:rema:cothdifferent:subsection:ShapeOperator:section:FermiCoordinates}
\begin{cases}
\cot_{K,\kappa}'(t) = - \cot_{K,\kappa}^2(t) - K \\
\cot_{K,\kappa}(0) = \kappa,
\end{cases}
\end{equation}

and
\begin{equation}\label{equa2:rema:cothdifferent:subsection:ShapeOperator:section:FermiCoordinates}
\lim\limits_{t \uparrow \infty} \cot^{-1}(t) = 0.
\end{equation}

Moreover, we have
\begin{displaymath}
\cot_{K,\kappa}(t) = \kappa  - \left(\kappa^2 + K\right)t + O\left(t^2\right).
\end{displaymath}

Therefore, there exists $t_0 \defeq t_0(K,\kappa) > 0$ such that, for all $0 < t < t_0$, we have
\begin{equation}\label{equa3:rema:cothdifferent:subsection:ShapeOperator:section:FermiCoordinates}
\kappa - 2(\kappa^2 + K)t \leq \cot_{K,\kappa}(t) \leq \kappa - \frac{\kappa^2+K}{2}t
\end{equation}

On the other hand, in the notation of \Cref{defi:cotb}, we have
\begin{displaymath}
t\cot_K(t) = 1 - \dfrac{K}{3}t^2 + O\left(t^4\right).
\end{displaymath}

Therefore, there exists $t_0 \defeq t_0(K) > 0$ such that, for all $0 < t < t_0$, we have
\begin{equation}\label{equa4:rema:cothdifferent:subsection:ShapeOperator:section:FermiCoordinates}
1 - \dfrac{2K}{3}t^2 \leq t\cot_K(t) \leq 1 - \dfrac{K}{6}t^2.
\end{equation}
\end{rema}
Let $\Sigma \subset M^n$ be an $m$-dimensional submanifold of $M^n$ of class $\C^2$, $0 < t \leq \injec{r0}$, $\Sigma_t$ the tubular hypersurface at a distance $t$ from $\Sigma$.

Assume that every point at $\Sigma_t$ has only one nearest point projection onto $\Sigma$ and fix $x \in \Sigma_t$ arbitrarily. 
\begin{itemize}
    \item $j = 1, \ldots, m$:

    By the item \Cref{item3statement:lemm1:subsection:ShapeOperator:section:FermiCoordinates} of \Cref{lemm1:subsection:ShapeOperator:section:FermiCoordinates}, we have that
    \begin{displaymath}
    \begin{cases}
    \kappa_j(\proj{\Sigma}(x))'(t) \geq -\kappa_j(\proj{\Sigma}(x))^2(t) - K,\\
    \kappa_j(\proj{\Sigma}(x))(0) = \kappa_j(\proj{\Sigma}(x)).
    \end{cases}
    \end{displaymath}

    Which implies that $\kappa_j(\proj{\Sigma}(x))(t)$ is a super solution for $x'(t) = - x^2(t) - K$. By \Cref{equa1:rema:cothdifferent:subsection:ShapeOperator:section:FermiCoordinates}, $\cot_{K,\kappa_j(\proj{\Sigma}(x))}(t)$ is a solution of $x'(t) = - x^2(t) - K$ and 
    \begin{displaymath}
    \cot_{K,\kappa_j(\proj{\Sigma}(x))}(0) = \kappa_j(\proj{\Sigma}(x)) = \cot_{K,\kappa_j(\proj{\Sigma}(x))}(0).
    \end{displaymath}

    By \cite{ODETeschl}*{Lemma 1.2, p. 24} we have that
    \begin{equation}\label{equa2:subsection:ShapeOperator:section:FermiCoordinates}
    \kappa_j(\proj{\Sigma}(x))(t) > \cot_{K,\kappa_j(\proj{\Sigma}(x))}(t),
    \end{equation}

    for $t \in \left]0,\min\left\{\frac{1}{\sqrt{K}}\left(\pi-\cot^{-1}\left(\frac{\kappa_j(\proj{\Sigma}(x))}{\sqrt{K}}\right)\right),\injec{r0}\right\}\right[$.   
    \item $j = m+2, \ldots, n$:

    By \Cref{item3statement:lemm1:subsection:ShapeOperator:section:FermiCoordinates} of \Cref{lemm1:subsection:ShapeOperator:section:FermiCoordinates}, we have that
    \begin{displaymath}
    \begin{cases}
    \kappa_j(\proj{\Sigma}(x))'(t)  \geq -\kappa_j(\proj{\Sigma}(x))^2(t) - K\\
    \kappa_j(\proj{\Sigma}(x))(t) \xrightarrow{t \downarrow 0} +\infty.
    \end{cases}
    \end{displaymath}

    Let $\varepsilon > 0$ arbitrary and define $\overline{\kappa_j(\proj{\Sigma}(x))}(t) \defeq \kappa_j(\proj{\Sigma}(x))(t+\varepsilon)$. Note that
    \begin{displaymath}
    \begin{cases}
    \overline{\kappa_j(\proj{\Sigma}(x))}'(t)  \geq -\overline{\kappa_j(\proj{\Sigma}(x))}^2(t) - K\\
    \overline{\kappa_j(\proj{\Sigma}(x))}(0) = \kappa_j(\proj{\Sigma}(x))(\varepsilon).
    \end{cases}
    \end{displaymath}

    Which implies that $\overline{\kappa_j(\proj{\Sigma}(x))}(t)$ is a super solution for $x'(t) = - x^2(t) - K$. By \Cref{equa1:rema:cothdifferent:subsection:ShapeOperator:section:FermiCoordinates}, $\cot_{K,\kappa_j(\proj{\Sigma}(x))(\varepsilon)}(t)$ is a solution of $x'(t) = - x^2(t) - K$ and
    \begin{displaymath}
    \cot_{K,\kappa_j(\proj{\Sigma}(x))(\varepsilon)}(0) = \kappa_j(\proj{\Sigma}(x))(\varepsilon) =  \overline{\kappa_j(\proj{\Sigma}(x))}(0).
    \end{displaymath}

    By \cite{ODETeschl}*{Lemma 1.2, p. 24} we have that
    \begin{displaymath}
    \overline{\kappa_j(\proj{\Sigma}(x))}(t) > \cot_{K,\kappa_j(\proj{\Sigma}(x))(\varepsilon)}(t),
    \end{displaymath}

    for $t \in \left]0,\min\left\{\frac{1}{\sqrt{K}}\left(\pi-\cot^{-1}\left(\frac{\kappa_j(\proj{\Sigma}(x))(\varepsilon)}{\sqrt{K}}\right)\right),\injec{r0}\right\}\right[$. Therefore, by \Cref{equa2:rema:cothdifferent:subsection:ShapeOperator:section:FermiCoordinates} and continuity of $\cot$ in $\left[0,\frac{1}{\sqrt{K}}\left(\pi-\cot^{-1}\left(\frac{\kappa_j(\proj{\Sigma}(x))(\varepsilon)}{\sqrt{K}}\right)\right)\right[$, we have that
    \begin{equation}\label{equa3:subsection:ShapeOperator:section:FermiCoordinates}
    \kappa_j(\proj{\Sigma}(x))(t) > \cot_K(t)
    \end{equation}

    for $t \in ]0,\injec{r0}[$.
\end{itemize}
For the remainder of this section, we fix $\injec{r1} \in \mathbb{R}$ satisfying
\begin{injectivityharmonic}\label{r1}
0 < \injec{r1} < r_H(M^n,g),
\end{injectivityharmonic}

where $r_H(M^n,g)$ denotes the harmonic radius of $(M^n, g)$ as defined in \cite{HebeyHerzlich97}*{Definition 7, p. 579}. Such a choice of $\injec{r1}$ is possible because we assume that $|\sec_g| \leq K$, which implies the existence of $K_1 > 0$ such that $|\mathrm{Ric}_g| \leq K_1$. By \cite{HebeyHerzlich97}*{Theorem 6, p. 577}, it follows that $r_H(M^n,g) > 0$. We now present some fundamental results, using the notation of this paper, that will be repeatedly employed throughout this section. For a detailed presentation, see \cite{Jost84}, particularly Lemma 2.8.4 on page 69, which provides estimates on the principal curvatures
.\begin{lemm}\label{lemm:harmoni:section:lemmas}
Let $\xi \in M^n$. Then, for each $y \in B_{d_g}(\xi,\injec{r1})$ and $S \in Gr(m,T_yM^n)$, there exists a unique $m$-dimensional submanifold of $M^n$ of class $\C^2$, denoted by $\Sigma \defeq \Sigma(y,S)$ and refereed as the \high{harmonic $m$-submanifold of $M^n$ at $(y,S)$}, such that
\begin{enumerate}
    \item\label{hyp1:lemm:harmoni:section:lemmas} $y \in \Sigma$,    
    \item\label{hyp2:lemm:harmoni:section:lemmas} $T_y\Sigma = S$,
    \item\label{hyp3:lemm:harmoni:section:lemmas} $\mathrm{cut}(\Sigma) \cap B_{d_g}(\xi,\injec{r1}) = \emptyset$, and 
    \item\label{hyp4:lemm:harmoni:section:lemmas} $\kappa_j(\proj{\Sigma}(x)) = o(1)$ for all $j=1,\ldots,m$ and all $x \in B_{d_g}(\xi,\injec{r1})$,
\end{enumerate}

where $\proj{\Sigma}(x)$ is the nearest point projection of $x$ onto $\Sigma$ and $\kappa_j\left(\proj{\Sigma}(x)\right)$ are the principal curvatures of $\Sigma$ at $\proj{\Sigma}(x)$.
\end{lemm}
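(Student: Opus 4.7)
The plan is to construct $\Sigma(y,S)$ via harmonic coordinates centered at $y$, following the approach of \cite{Jost84}*{Section 2.8}. Since $\injec{r1} < r_H(M^n,g)$, for each $y \in B_{d_g}(\xi,\injec{r1})$ the definition of harmonic radius in \cite{HebeyHerzlich97}*{Definition 7} provides a harmonic coordinate chart $\phi^y: U^y \to \mathbb{R}^n$ with $\phi^y(y) = 0$, satisfying $\Delta_g \phi^y_k = 0$ for $k = 1, \dots, n$, whose metric coefficients $g^y_{ij}$ enjoy uniform $W^{2,q}$-bounds and hence uniform $\C^{1,\beta}$-bounds for $\beta = 1 - n/q$. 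Post-composing $\phi^y$ with an element of $\mathrm{GL}(n,\mathbb{R})$, I may arrange $g^y_{ij}(y) = \delta_{ij}$ and $d\phi^y_y(S) = \mathbb{R}^m \times \{0\}^{n-m}$. I then define
$$\Sigma(y,S) \defeq (\phi^y)^{-1}\bigl((\mathbb{R}^m \times \{0\}^{n-m}) \cap \phi^y(U^y)\bigr),$$
possibly shrinking $U^y$ so that the normal exponential map is injective on a tube of radius $\injec{r1}$ around $\Sigma(y,S)$.

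Properties \ref{hyp1:lemm:harmoni:section:lemmas} and \ref{hyp2:lemm:harmoni:section:lemmas} are immediate from the normalizations. The regularity class $\C^2$ follows from the $\C^{2,\alpha}$-compatibility of harmonic charts with the ambient $\C^3$ differentiable structure, since $\Sigma(y,S)$ is an affine linear slice in these coordinates. For the curvature estimate \ref{hyp4:lemm:harmoni:section:lemmas}, I observe that in the harmonic chart the second fundamental form at a point $q \in \Sigma(y,S)$, for tangential coordinate vectors $\partial/\partial\phi^y_i$ with $i \leq m$ and a unit normal $\nu = \sum_{k=m+1}^n \nu_k\, \partial/\partial\phi^y_k$, reads
$$\mathrm{II}^\nu_{ij}(q) = \left\langle \nabla^g_{\partial/\partial\phi^y_i} \partial/\partial\phi^y_j , \nu \right\rangle_{g_q} = \sum_{k=m+1}^n \nu_k \, \Gamma^{k,y}_{ij}(q).$$
The harmonic coordinate estimates give $\sup_{U^y}\|\Gamma^{k,y}_{ij}\|_{C^0}\to 0$ as $\injec{r1}\to 0$, because $\Gamma^{k,y}_{ij}(y) = 0$ by the normalization $g^y_{ij}(y) = \delta_{ij}$ together with the harmonic gauge, and the $\C^{0,\beta}$ modulus of continuity of the Christoffel symbols is controlled by the uniform $W^{2,q}$-bound on the metric. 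Hence the eigenvalues $\kappa_j(\proj{\Sigma}(x))$ of $\mathrm{II}^\nu$ satisfy the $o(1)$ estimate uniformly on $B_{d_g}(\xi,\injec{r1})$. Property \ref{hyp3:lemm:harmoni:section:lemmas} then follows from a standard tubular neighborhood argument: the bound $|\sec_g|\le K$ together with the smallness of the second fundamental form from the previous step ensures that $\exp_{\mathbf{N}\Sigma(y,S)}$ is a diffeomorphism on the relevant tube, via an application of the Rauch comparison for Jacobi fields along normal geodesics.

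Uniqueness of $\Sigma(y,S)$ stems from the essential uniqueness of harmonic coordinates up to post-composition by a linear map (see \cite{Jost84}*{Section 2.8}): once the linear normalization fixing $g^y_{ij}(y) = \delta_{ij}$ and $d\phi^y_y(S) = \mathbb{R}^m \times \{0\}^{n-m}$ is imposed, the affine $m$-slice defining $\Sigma(y,S)$ is intrinsically determined. The main technical obstacle I anticipate is verifying uniformity: the implicit constants in the $o(1)$ curvature estimate and in the injectivity of $\exp_{\mathbf{N}\Sigma(y,S)}$ must be controlled independently of the base point $y \in B_{d_g}(\xi,\injec{r1})$ and of the plane $S\in\Gr(m,T_yM^n)$. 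This uniformity is ultimately guaranteed by the uniform positive lower bound $r_H(M^n,g) > \injec{r1}$ from \cite{HebeyHerzlich97}*{Theorem 6}, which itself rests on the double-sided sectional curvature bound $|\sec_g|\le K$ and the injectivity radius information built into $\injec{r0}$.
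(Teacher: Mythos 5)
The paper itself offers no proof of this lemma: it is stated as a standard harmonic-coordinates construction, with the reader sent to \cite{Jost84}*{Section 2.8, Lemma 2.8.4} for the principal-curvature estimates and to \cite{HebeyHerzlich97} for the positivity of the harmonic radius. Your construction — the preimage of an affine $m$-plane under a harmonic chart normalized so that $g^y_{ij}(y)=\delta_{ij}$ and $d\phi^y_y(S)=\mathbb{R}^m\times\{0\}^{n-m}$ — is exactly the construction the paper has in mind, so there is no genuinely different route to compare. Two steps of your justification, however, do not hold as written.

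First, the pointwise vanishing $\Gamma^{k,y}_{ij}(y)=0$ does not follow from harmonicity plus the normalization $g^y_{ij}(y)=\delta_{ij}$: the harmonic gauge gives only the traced identity $g^{ij}\Gamma^k_{ij}\equiv 0$ for each $k$, and a linear post-composition cannot kill the first derivatives of the metric at $y$ (a quadratic change of coordinates could, but it would destroy harmonicity). So your derivation of $\kappa_j(\proj{\Sigma}(x))=o(1)$ from ``$\Gamma(y)=0$ plus the $\C^{0,\beta}$ modulus of continuity'' has a gap; the smallness of the second fundamental form of the coordinate slice must instead be extracted from the quantitative, scale-normalized $\C^{1,\beta}$/$W^{2,q}$ closeness of $g^y_{ij}$ to $\delta_{ij}$ on the whole chart that enters the definition of the harmonic radius, i.e.\ precisely the estimates of \cite{Jost84}*{Lemma 2.8.4}. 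Second, the uniqueness argument is incorrect: harmonic coordinate systems are not unique up to linear post-composition. Already in flat $\mathbb{R}^2$ the map $(x,y)\mapsto\left(x,\,y+\tfrac{\epsilon}{2}\left(x^2-y^2\right)\right)$ is a harmonic chart near $0$ with the same normalization at the origin as the identity chart, yet the zero set of its second coordinate is a curve tangent to, but different from, the $x$-axis; hence the affine slice is not ``intrinsically determined'' by your normalization alone. To obtain a well-defined and unique $\Sigma(y,S)$ one must pin down the chart itself — as Jost does, by taking the harmonic coordinates to be solutions of Dirichlet problems with prescribed (almost linear, normal-coordinate) boundary data — and then uniqueness is uniqueness of that specific construction, not of arbitrary normalized harmonic charts. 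The tube/cut-locus argument for item (3) is fine in spirit (Rauch plus $|\sec_g|\le K$ and small second fundamental form), but it inherits the dependence on the corrected version of the curvature estimate.
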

\begin{lemm}\label{lemm:fermitransport:section:lemmas}
Let $\xi \in M^n$, $S \in Gr(m,T_{\xi}M^n)$, $\Sigma \subset M^n$ the harmonic $m$-submanifold of $M^n$ at $(\xi,S)$. Then, for each $y \in B_{d_g}(\xi,\injec{r1})$, there exists a unique $m$-dimensional submanifold of $M^n$ of class $\C^2$, denoted by $\Sigma_y$ and refereed as the \high{Fermi transport of $\Sigma$ from $\xi$ to $y$}, such that
\begin{enumerate}
    \item $y \in \Sigma_y$,
    \item $T_x\Sigma_y = \spn\{F_1(\proj{\Sigma}(x))(t), \ldots, F_m(\proj{\Sigma}(x))(t)\}$,
    \item $\dist_g^{\Sigma(y,T_y\Sigma_y)}(x) = o(1)$, for all $x \in \Sigma_y \cap B_{d_g}(\xi,\injec{r1})$, where $\Sigma(y,T_y\Sigma_y) \subset M^n$ is the harmonic $m$-submanifold of $M$ at $(y,T_y\Sigma_y)$,
    \item $\mathrm{cut}(\Sigma_y) \cap B_{d_g}(\xi,\injec{r1}) = \emptyset$,
    \item $\kappa_j(\proj{\Sigma_y}(x)) = o(1)$, for all $j=1,\ldots,m$ and all $x \in B_{d_g}(\xi,\injec{r1})$.
\end{enumerate}
\end{lemm}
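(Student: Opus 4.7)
The plan is to construct $\Sigma_y$ explicitly as a Fermi coordinate leaf over $\Sigma$, then verify the five properties and derive uniqueness from the prescribed tangent-space condition (2).

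First, I will fix Fermi coordinates $(U,\{x_i\}_{i=1}^n)$ adapted to $\Sigma$ at $\xi$ as in \Cref{defi4:subsection:coordinates:section:FermiCoordinates}, where the orthonormal normal sections $\{E_i\}_{i=m+1}^n$ are obtained by parallel transport along $\Sigma$ starting from a fixed frame at $\xi$. By property (3) of \Cref{lemm:harmoni:section:lemmas}, $\mathrm{cut}(\Sigma)\cap B_{d_g}(\xi,\injec{r1})=\emptyset$, so ${\exp_{\mathbf{N}\Sigma}}$ is a $\C^2$ diffeomorphism onto a neighborhood of $\Sigma$ containing $B_{d_g}(\xi,\injec{r1})$, and the $x_i$ are $\C^2$ functions there. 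Given $y\in B_{d_g}(\xi,\injec{r1})$, write its Fermi coordinates as $(y_1,\dots,y_m,v_{m+1},\dots,v_n)$ and define
\begin{equation*}
\Sigma_y \defeq \bigl\{q\in U\cap B_{d_g}(\xi,\injec{r1}) : x_{m+1}(q)=v_{m+1},\ \dots,\ x_n(q)=v_n\bigr\}.
\end{equation*}
Since the map $q\mapsto(x_{m+1}(q),\dots,x_n(q))$ has full rank $n-m$ on the normal neighborhood (by \Cref{lemm1:subsection:coordinates:section:FermiCoordinates} and the definition of Fermi coordinates), the implicit function theorem produces $\Sigma_y$ as an $m$-dimensional $\C^2$ embedded submanifold, and $y\in\Sigma_y$ by construction, settling (1).

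Second, I will compute the tangent space along $\Sigma_y$. At a point $x\in\Sigma_y$ with $t\defeq u_\Sigma(x)=\dist_g(x,\Sigma)>0$, the basis $\{\tfrac{\partial}{\partial x_i}\}_{i=1}^m$ spans $T_x\Sigma_y$. Each such coordinate vector equals the value at parameter $t$ of a Jacobi field $J_i(s)$ along the normal geodesic $s\mapsto\exp_{\proj{\Sigma}(x)}(sN)$, with initial data $J_i(0)=\tfrac{\partial}{\partial x_i}(\proj{\Sigma}(x))\in T_{\proj{\Sigma}(x)}\Sigma$ and $J_i'(0)$ encoding the Weingarten map of $\Sigma$. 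By the Riccati evolution in \Cref{item2statement:lemm1:subsection:ShapeOperator:section:FermiCoordinates} of \Cref{lemm1:subsection:ShapeOperator:section:FermiCoordinates}, the frame $\{\partial/\partial x_i\}_{i=1}^m$ at $x$, after Gram--Schmidt orthonormalization with respect to $g_x$, coincides with the principal curvature vectors $\{F_1(\proj{\Sigma}(x))(t),\dots,F_m(\proj{\Sigma}(x))(t)\}$ of the tube $\Sigma_t$ at $x$ selected as those extending the principal directions of $\Sigma$. This yields (2), and uniqueness follows from (2) itself: any other $\C^2$ submanifold through $y$ with this prescribed tangent $m$-plane at every point must agree with $\Sigma_y$ by the uniqueness of integral manifolds of an involutive distribution.

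Third, I will handle (3). Apply \Cref{lemm:harmoni:section:lemmas} to $(y,T_y\Sigma_y)$ to obtain the harmonic $m$-submanifold $\Sigma(y,T_y\Sigma_y)$. Both $\Sigma_y$ and $\Sigma(y,T_y\Sigma_y)$ are $\C^2$ submanifolds through $y$ with the same tangent plane at $y$, so in harmonic coordinates centered at $y$ they can be written as graphs $u_{\mathrm{F}},u_{\mathrm{H}}:T_y\Sigma_y\to T_y\Sigma_y^{\perp}$ with $u_{\mathrm{F}}(0)=u_{\mathrm{H}}(0)=0$, $du_{\mathrm{F}}(0)=du_{\mathrm{H}}(0)=0$. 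The harmonic graph $u_{\mathrm{H}}$ satisfies a linear elliptic system up to curvature terms, while $u_{\mathrm{F}}$ satisfies the same system modulo a remainder controlled by $|\sec_g|$ and by the principal curvatures of the ambient tubes (\Cref{lemm1:subsection:ShapeOperator:section:FermiCoordinates}). Standard Schauder estimates then give $\|u_{\mathrm{F}}-u_{\mathrm{H}}\|_{\C^1}=o(1)$ uniformly on $B_{d_g}(\xi,\injec{r1})$, hence (3). Property (4) follows because $\injec{r1}<r_H(M^n,g)$ forces the focal radius of any $\C^2$ graph over $\Sigma(y,T_y\Sigma_y)$ with small principal curvatures to exceed $\injec{r1}$, by a direct application of \Cref{lemm2:subsection:ShapeOperator:section:FermiCoordinates} to the Riccati inequality. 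Property (5) is then obtained from \Cref{item3statement:lemm1:subsection:ShapeOperator:section:FermiCoordinates} of \Cref{lemm1:subsection:ShapeOperator:section:FermiCoordinates} together with the bound on $\kappa_j$ for $\Sigma$ already established in \Cref{lemm:harmoni:section:lemmas}, propagated along the normal geodesics via \Cref{equa3:rema:cothdifferent:subsection:ShapeOperator:section:FermiCoordinates}.

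The main technical obstacle I foresee lies in (3): quantifying how far $\Sigma_y$ deviates from being harmonic. Although it is intuitively clear that Fermi transport approximately preserves harmonicity when the ambient geometry is nearly flat at scale $\injec{r1}$, making this precise requires carefully expanding the Laplace--Beltrami operator acting on the Fermi graph in terms of the curvature tensor and the second fundamental form of $\Sigma$, and tracking the resulting error through the elliptic estimates. The uniqueness assertion is the other delicate point, as (2) by itself only pins down the tangent distribution, and one must additionally invoke the $\C^2$ regularity together with $y\in\Sigma_y$ to rule out any competitor.
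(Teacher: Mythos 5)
The paper itself offers no proof of this lemma: it is stated as part of the informal setup of Fermi transport (with the surrounding material delegated to \cite{Jost84}), so your proposal has to be judged on its own terms rather than against an argument in the text.

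On its own terms it has a genuine gap at the step that is supposed to establish property (2) for your candidate $\Sigma_y$. You define $\Sigma_y$ as a level set of constant normal Fermi coordinates, so $T_x\Sigma_y=\spn\{\tfrac{\partial}{\partial x_1}(x),\dots,\tfrac{\partial}{\partial x_m}(x)\}$, i.e.\ the span of the Jacobi fields $J_1(t),\dots,J_m(t)$ obtained by varying the foot point in $\Sigma$. You then assert that Gram--Schmidt orthonormalization of this frame ``coincides with the principal curvature vectors $F_1(\proj{\Sigma}(x))(t),\dots,F_m(\proj{\Sigma}(x))(t)$''. Gram--Schmidt does not change the span, so this assertion is exactly the claim $\spn\{J_i(t)\}=\spn\{F_j(t)\}$, which is the thing to be proved and is in general false for $t>0$: by \Cref{lemm1:subsection:ShapeOperator:section:FermiCoordinates} the shape operator of the tube evolves by $S'(t)=-S^2(t)-R^g(t)$, and the curvature operator $R^g(\cdot,N)N$ need not preserve the splitting of $T_{\gamma(t)}\Sigma_t$ into the block spanned by the $J_i$ and the spherical block, so the eigenvectors of $S(t)$ tilt out of $\spn\{J_i(t)\}$ at first order in $t$ for generic curvature. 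Consequently either your level-set $\Sigma_y$ satisfies (2) only up to an unquantified error, or $\Sigma_y$ must instead be defined as an integral manifold of the distribution $x\mapsto\spn\{F_1(\proj{\Sigma}(x))(t),\dots,F_m(\proj{\Sigma}(x))(t)\}$, in which case existence hinges on the involutivity of that distribution, which you never address. The same omission undercuts your uniqueness argument: you appeal to ``uniqueness of integral manifolds of an involutive distribution'', but involutivity is precisely the unverified hypothesis, and it is the real obstruction hiding behind the lemma as stated. The sketches for (3)--(5) (comparing the Fermi graph with the harmonic graph by elliptic estimates, and focal-radius/Riccati comparison for the cut locus and the principal curvatures) are plausible in spirit and no looser than the paper's own $o(1)$ bookkeeping, but they are built on the unresolved definition of $\Sigma_y$, so the argument as written does not close.
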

The following lemmas are standard consequences of the fact that if $g$ is of class $\mathcal{C}^2$, then the distance induced by $g$ is bi-Lipschitz equivalent to the Euclidean distance. Moreover, the fact that the first term of the expansion of the Riemannian metric is the Euclidean metric, and the second term depends on the sectional curvature, which we assume to be bounded. With this assumption, we can then make the estimates below.
\begin{lemm}\label{lemm:riemmanianadapt1:section:lemmas}
Let $\xi \in M^n$, $S \in Gr(m,T_{\xi}M^n)$, $\Sigma \subset M^n$ the harmonic $m$-submanifold of $M^n$ at $(\xi,S)$. Then, for each $y \in B_{d_g}(\xi,\injec{r1})$, we have that
\begin{displaymath}
\dfrac{1}{2} \left|\left| P_{S^{\perp}}\left(u_y(\xi)\grad{g}(u_y)(\xi)\right)\right|\right|_{g_{\xi}}^2 \leq \left(\dist_g^{\Sigma}(y)\right)^2 \leq 2\left|\left| P_{S^{\perp}}\left(u_y(\xi)\grad{g}(u_y)(\xi)\right)\right|\right|_{g_{\xi}}^2,
\end{displaymath}

where $u_y(\xi) \defeq d_g(y,\xi)$.
\end{lemm}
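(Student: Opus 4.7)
The plan is to work in normal coordinates centered at $\xi$ and to exploit the near-flatness of both the ambient metric (via the curvature bound $|\sec_g|\leq K$) and the submanifold $\Sigma$ (via the smallness of its principal curvatures guaranteed by \Cref{lemm:harmoni:section:lemmas}). First, since $\grad{g}(u_y)(\xi)$ is the unit tangent vector at $\xi$ of the minimizing geodesic from $y$ to $\xi$, pointing away from $y$, one has $u_y(\xi)\,\grad{g}(u_y)(\xi) = -\exp_{\xi}^{-1}(y)$; in particular
\begin{displaymath}
\left\| P_{S^{\perp}}\bigl(u_y(\xi)\grad{g}(u_y)(\xi)\bigr)\right\|_{g_{\xi}}^2 = \left\| P_{S^{\perp}}\bigl(\exp_{\xi}^{-1}(y)\bigr)\right\|_{g_{\xi}}^2,
\end{displaymath}
and this vector is well defined because $\injec{r1}<\inj(M^n,g)$. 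The statement therefore reduces to comparing the $g_\xi$-norm of the $S^\perp$-component of $\exp_{\xi}^{-1}(y)$ against $\dist_g^{\Sigma}(y)$, up to the factor $\sqrt{2}$.

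Second, I would exploit two standard approximations available on $B_{d_g}(\xi,\injec{r1})$. In normal coordinates $\phi\defeq\exp_\xi^{-1}$ the metric expands as $g_{ij}(v)=\delta_{ij}+O(K\|v\|_{g_\xi}^2)$, so Riemannian distance and Euclidean distance in $(T_\xi M^n,g_\xi)$ agree up to a factor $1+O(K\injec{r1}^2)$. Moreover, by \Cref{lemm:harmoni:section:lemmas}, the image $\phi(\Sigma\cap B_{d_g}(\xi,\injec{r1}))$ is a $\mathcal{C}^2$ graph over $S$ of a function $F\colon S\to S^\perp$ whose zeroth and first derivatives vanish at the origin and whose Hessian is controlled by the principal curvatures $\kappa_j(\proj{\Sigma}(x))=o(1)$. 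Consequently, the $g_\xi$-distance from $\phi(y)$ to that graph equals $\|P_{S^{\perp}}(\phi(y))\|_{g_\xi}$ up to a relative error of order $o(\injec{r1})+O(K\injec{r1}^2)$.

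Third, I would combine these approximations to obtain both inequalities. For the upper bound I would take the point $z\in\Sigma$ whose $\phi$-coordinate is the graph lift of $P_S(\phi(y))$, which gives $\dist_g^{\Sigma}(y)\le d_g(y,z)\le(1+O(K\injec{r1}^2))\|P_{S^{\perp}}(\phi(y))\|_{g_\xi}+o(\injec{r1}^2)$ by the coordinate Pythagorean identity and the graph estimate. For the lower bound I would let $\eta=\proj{\Sigma}(y)$, bound $d_g(y,\eta)$ from below first by a nearly-Euclidean quantity, and then by $\|P_{S^{\perp}}(\phi(y))\|_{g_\xi}$ using the graph representation of $\Sigma$. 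Choosing $\injec{r1}$ small enough absorbs every error term inside the factors $\tfrac{1}{2}$ and $2$; the main obstacle is purely technical bookkeeping, namely making all error estimates quantitative enough that these explicit constants are actually attained, which fixes how small $\injec{r1}$ must be taken and requires no new ideas beyond the standard comparison of Riemannian and Euclidean geometry on small balls with bounded sectional curvature.
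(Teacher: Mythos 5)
The paper never actually proves this lemma: it is stated, together with \Cref{lemm:riemmanianadapt2:section:lemmas} and \Cref{lemm:riemmanianadapt3:section:lemmas}, immediately after a remark asserting that these are ``standard consequences'' of the bi-Lipschitz comparison between $d_g$ and the Euclidean distance and of the curvature-controlled expansion of the metric. Your strategy --- identifying $u_y(\xi)\grad{g}(u_y)(\xi)$ with $-\exp_\xi^{-1}(y)$, passing to normal coordinates with $g_{ij}=\delta_{ij}+O(K\|v\|^2)$, and comparing $\dist_g^{\Sigma}(y)$ with $\left\|P_{S^{\perp}}\bigl(\exp_\xi^{-1}(y)\bigr)\right\|_{g_\xi}$ through the graph representation of $\Sigma$ over $S$ --- is exactly the route the paper gestures at, so in that sense you are on the intended track.

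The genuine gap is in your last step, ``choosing $\injec{r1}$ small enough absorbs every error term inside the factors $\tfrac{1}{2}$ and $2$''. The error coming from the curvature of $\Sigma$ (the graph height, and more generally the discrepancy between the harmonic submanifold and $\exp_\xi(S)$) is \emph{additive}, of size $o(1)\cdot d_g(\xi,y)^2$, not a relative error as you assert in your second paragraph; meanwhile the main quantity $\left\|P_{S^{\perp}}\bigl(\exp_\xi^{-1}(y)\bigr)\right\|_{g_\xi}$ can itself be of that order or smaller, and a fixed multiplicative constant cannot absorb an additive error that may dominate the main term. Concretely, for $y\in\Sigma\setminus\{\xi\}$ the middle term $\bigl(\dist_g^{\Sigma}(y)\bigr)^2$ vanishes, while for a submanifold that is merely tangent to $S$ at $\xi$ with principal curvatures $o(1)$ one has $P_{S^{\perp}}\bigl(\exp_\xi^{-1}(y)\bigr)=F\bigl(P_S\exp_\xi^{-1}(y)\bigr)$, which is generically nonzero of order $\kappa\, d_g(\xi,y)^2$; so the left inequality cannot be derived from the inputs you invoke (\Cref{lemm:harmoni:section:lemmas} together with $|\sec_g|\le K$), no matter how small $\injec{r1}$ is taken. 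Closing the argument requires a quantitative property of the specific construction of $\Sigma(\xi,S)$ --- e.g.\ that the $S^{\perp}$-component of $\exp_\xi^{-1}$ of points of $\Sigma$ is itself controlled by $\dist_g^{\Sigma}$, i.e.\ that $\Sigma$ osculates $\exp_\xi(S)$ to the relevant order --- and not merely the qualitative smallness of its principal curvatures; since the paper supplies no proof here, this is precisely the point your write-up would need to make explicit.
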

\begin{lemm}\label{lemm:riemmanianadapt2:section:lemmas}
Let $\xi \in M^n$, $y_i \in B_{d_g}(\xi,\injec{r1})$, $S_i \in Gr(m,T_{y_i}M^n)$, $\Sigma_i \subset M^n$ the harmonic $m$-submanifold of $M^n$ at $(y_i,S_i)$, for $i = 1,2$. Then, for each $x \in B_{d_g}(\xi,\injec{r1})$, we have that
\begin{displaymath}
\dfrac{1}{2}\left|\left| P_{T_x{\Sigma_1}_x} - P_{T_x{\Sigma_2}_x} \right|\right|_{\mathrm{HS}_{g_x}}^2 \leq \left|\left| P_{T_{\xi}{\Sigma_1}_{\xi}} - P_{T_{\xi}{\Sigma_2}_{\xi}} \right|\right|_{\mathrm{HS}_{g_{\xi}}}^2 \leq 2 \left|\left| P_{T_x{\Sigma_1}_x} - P_{T_x{\Sigma_2}_x} \right|\right|_{\mathrm{HS}_{g_x}}^2.
\end{displaymath}
\end{lemm}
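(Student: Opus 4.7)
The plan is to reduce the comparison of projections at $x$ and at $\xi$ to a uniform equivalence between the Hilbert–Schmidt norms $\|\cdot\|_{\mathrm{HS}_{g_x}}$ and $\|\cdot\|_{\mathrm{HS}_{g_{\xi}}}$ on the space of linear endomorphisms of the trivialized tangent bundle over a harmonic coordinate chart at $\xi$. Since $\injec{r1} < r_H(M^n,g)$, on $B_{d_g}(\xi,\injec{r1})$ the components of $g$ in such a chart satisfy $(1+\omega(\injec{r1}))^{-1}\delta_{ij}\leq g_{ij}(y)\leq (1+\omega(\injec{r1}))\delta_{ij}$ with $\omega(\injec{r1})\downarrow 0$ as $\injec{r1}\downarrow 0$, by the very definition of the harmonic radius together with the assumption $|\sec_g|\leq K$. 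Consequently, for any linear endomorphism $T$ of $T_yM^n$, the Hilbert–Schmidt norms at $\xi$ and at $x$ (transported via the common coordinate frame) differ only by a multiplicative factor arbitrarily close to $1$.

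Next, I would invoke \Cref{lemm:fermitransport:section:lemmas}: the Fermi transport ${\Sigma_i}_x$ is constructed as the image of $\Sigma_i$ under a composition of parallel transports, and in particular the tangent space $T_x{\Sigma_i}_x$ is the image of $T_{\xi}{\Sigma_i}_{\xi}=S_i$ under a linear isometry $\tau_i:T_{\xi}M^n\to T_xM^n$ with respect to the metrics $g_{\xi}$ and $g_x$. Writing $P_{T_x{\Sigma_i}_x}=\tau_i P_{S_i}\tau_i^{-1}$ and exploiting that parallel transport preserves inner products, the problem reduces to comparing
\[
\|\tau_1 P_{S_1}\tau_1^{-1} - \tau_2 P_{S_2}\tau_2^{-1}\|_{\mathrm{HS}_{g_x}} \quad\text{with}\quad \|P_{S_1}-P_{S_2}\|_{\mathrm{HS}_{g_{\xi}}}.
\]
The discrepancy between $\tau_1$ and $\tau_2$ is a holonomy-type term whose operator norm is bounded by $C(K)\cdot \injec{r1}^2$, because both paths have length controlled by $\injec{r1}$ and lie in a region where the curvature tensor is bounded by $K$; together with the Lipschitz dependence of orthogonal projection on the base $m$-plane in the Grassmannian, this produces an error of the same order.

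Combining the two steps yields a chain of inequalities of the form
\[
\|P_{T_x{\Sigma_1}_x} - P_{T_x{\Sigma_2}_x}\|_{\mathrm{HS}_{g_x}}^2 \leq (1+\eta(\injec{r1}))\, \|P_{S_1}-P_{S_2}\|_{\mathrm{HS}_{g_{\xi}}}^2
\]
and its reverse, with $\eta(\injec{r1})\downarrow 0$ as $\injec{r1}\downarrow 0$. Fixing $\injec{r1}$ small enough (relative to $K$) so that $1+\eta(\injec{r1})\leq 2$ gives the claimed two-sided estimate. The main obstacle is the holonomy comparison between $\tau_1$ and $\tau_2$, which travel from $\xi$ to $x$ along different curves dictated by the two different submanifolds; one must verify that the resulting discrepancy is of order $K\cdot\injec{r1}^2$ rather than merely $O(1)$, so that it is absorbed into the multiplicative factor and does not require an additive term incompatible with the homogeneous bound.
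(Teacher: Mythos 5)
The paper does not actually supply a proof of this lemma: it is one of the three statements (\Cref{lemm:riemmanianadapt1:section:lemmas}--\Cref{lemm:riemmanianadapt3:section:lemmas}) that the authors dispatch with the remark that they are ``standard consequences'' of the near-Euclidean behaviour of $g$ in harmonic coordinates and of the curvature-controlled second-order term of the metric expansion. Your sketch follows the same general spirit (harmonic-radius control of $g_{ij}$, plus conjugation of the projections by the Fermi transports), so there is no conflict of method; but as a proof it has a genuine gap, which you yourself flag in the last sentence without resolving. The bound you propose for the discrepancy between the two transports, $\|\tau_1-\tau_2\|_{\mathrm{op}}\leq C(K)\,\injec{r1}^2$, is an \emph{additive} error that is independent of the size of the quantity being compared. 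A purely multiplicative two-sided estimate with factors $\tfrac12$ and $2$ cannot absorb such an error: if $\left\| P_{T_{\xi}{\Sigma_1}_{\xi}} - P_{T_{\xi}{\Sigma_2}_{\xi}} \right\|_{\mathrm{HS}_{g_{\xi}}}$ is much smaller than $K\injec{r1}^2$ (and it can be arbitrarily small without vanishing), your chain of inequalities only yields
\begin{displaymath}
\left\| P_{T_x{\Sigma_1}_x} - P_{T_x{\Sigma_2}_x} \right\|_{\mathrm{HS}_{g_x}} \leq (1+o(1))\left\| P_{T_{\xi}{\Sigma_1}_{\xi}} - P_{T_{\xi}{\Sigma_2}_{\xi}} \right\|_{\mathrm{HS}_{g_{\xi}}} + C(K)\injec{r1}^2,
\end{displaymath}
which is strictly weaker than the homogeneous statement. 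Smallness of $\injec{r1}$ does not cure this; what is needed is that the error be proportional to the projection difference itself.

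To close the argument you must show that the discrepancy between the two Fermi transports is controlled by (a small multiple of) $\left\| P_{T_{\xi}{\Sigma_1}_{\xi}} - P_{T_{\xi}{\Sigma_2}_{\xi}} \right\|_{\mathrm{HS}_{g_{\xi}}}$, not merely by $K\injec{r1}^2$. The natural route is quantitative: the two transport paths from $\xi$ to $x$ pass through the nearest-point projections $\proj{\Sigma_1}(x)$ and $\proj{\Sigma_2}(x)$, whose distance is itself comparable to the difference of the projections (this is exactly the content of \Cref{lemm:riemmanianadapt3:section:lemmas}); hence the two paths bound a region of area $\lesssim \injec{r1}\cdot\left\| P_{T_{\xi}{\Sigma_1}_{\xi}} - P_{T_{\xi}{\Sigma_2}_{\xi}} \right\|_{\mathrm{HS}_{g_{\xi}}}$, and the holonomy-type estimate then gives a transport discrepancy proportional to that norm with a prefactor $O(K\injec{r1})$, which can be absorbed multiplicatively. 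A second, minor, point: since $y_1,y_2$ need not equal $\xi$, the identification $T_{\xi}{\Sigma_i}_{\xi}=S_i$ in your second step is incorrect in general; $T_{\xi}{\Sigma_i}_{\xi}$ is the image of $S_i\subset T_{y_i}M^n$ under the Fermi transport from $y_i$ to $\xi$, so the comparison must be set up between the transported planes at $\xi$ and at $x$, with both sides conjugated, rather than between the planes at $x$ and the original $S_i$. This does not change the structure of the argument but must be stated correctly for the error bookkeeping above to make sense.
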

\begin{lemm}\label{lemm:riemmanianadapt3:section:lemmas}
Let $\xi \in M^n$, $S_i \in Gr(m,T_{\xi}M^n)$, $\Sigma_i \subset M^n$ the harmonic $m$-submanifold of $M$ at $(\xi,S_i)$, for $i=1,2$. Then, for each $y \in B_{d_g}(\xi,\injec{r1})$, we have that
\begin{align*}
\dfrac{1}{2} \left|\left| P_{S_1}\left(\exp_{\xi}^{-1}(y)\right) - P_{S_2}\left(\exp_{\xi}^{-1}(y)\right) \right|\right|_{g_{\xi}} &\leq d_g\left(\proj{\Sigma_1}(y),\proj{\Sigma_2}(y)\right) \nonumber \\
&\leq 2 \left|\left| P_{S_1}\left(\exp_{\xi}^{-1}(y)\right) - P_{S_2}\left(\exp_{\xi}^{-1}(y)\right) \right|\right|_{g_{\xi}},
\end{align*}

where $\proj{\Sigma_i}(y)$ is the nearest point projection of $y$ onto $\Sigma_i$, for $i=1,2$.
\end{lemm}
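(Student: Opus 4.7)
The plan is to work in normal coordinates at $\xi$, where the map $\exp_\xi^{-1}$ provides a bi-Lipschitz equivalence (with distortion $1+O(K\injec{r1}^2)$) between the Riemannian distance $d_g$ restricted to $B_{d_g}(\xi,\injec{r1})$ and the Euclidean distance $\|\cdot\|_{g_\xi}$ on $T_\xi M^n$. In this chart the metric expands as $g_{ij}(x) = \delta_{ij} + O(K\|x\|_{g_\xi}^2)$, so the distortion is controlled uniformly by the sectional curvature bound $K$ and the radius $\injec{r1}$.

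First, I would exploit \Cref{lemm:harmoni:section:lemmas}, in particular item \Cref{hyp4:lemm:harmoni:section:lemmas}, together with $\xi \in \Sigma_i$ and $T_\xi\Sigma_i = S_i$, to write the image $\exp_\xi^{-1}(\Sigma_i \cap B_{d_g}(\xi,\injec{r1}))$ as the graph over $S_i$ of a $\C^2$ map $\Phi_i : S_i \to S_i^\perp$ with $\Phi_i(0) = 0$, $d(\Phi_i)_0 = 0$, and $\|\Phi_i(v)\|_{g_\xi} = o(\|v\|_{g_\xi})$, $\|d(\Phi_i)_v\|_{g_\xi} = o(1)$ uniformly as $\|v\|_{g_\xi} \downarrow 0$. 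This quantifies the fact that each harmonic submanifold is a small $\C^2$ perturbation of its tangent plane at $\xi$, with the decay of the perturbation governed by the decay of the principal curvatures.

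Next, I would identify $\proj{\Sigma_i}(y)$ with the unique minimizer of $z \mapsto d_g(z,y)$ on $\Sigma_i$. By the bi-Lipschitz property above, minimizing $d_g$ on $\Sigma_i$ is equivalent (up to the factor $1+O(K\injec{r1}^2)$) to minimizing the Euclidean distance on the graph of $\Phi_i$. A direct first-variation computation for this Euclidean minimization, combined with the smallness of $d\Phi_i$, shows that the nearest point in $\exp_\xi^{-1}(\Sigma_i)$ to $\exp_\xi^{-1}(y)$ differs from the orthogonal projection $P_{S_i}(\exp_\xi^{-1}(y))$ by an amount $o(\|\exp_\xi^{-1}(y)\|_{g_\xi})$. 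Assembling these two facts for $i=1,2$ I would deduce
\begin{displaymath}
\bigl\|\exp_\xi^{-1}(\proj{\Sigma_1}(y)) - \exp_\xi^{-1}(\proj{\Sigma_2}(y))\bigr\|_{g_\xi} = (1+o(1))\bigl\|P_{S_1}(\exp_\xi^{-1}(y)) - P_{S_2}(\exp_\xi^{-1}(y))\bigr\|_{g_\xi},
\end{displaymath}
where the $o(1)$ tends to zero as $\injec{r1} \downarrow 0$.

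Finally, applying once more the bi-Lipschitz equivalence between $d_g$ and the Euclidean distance at the level of $\proj{\Sigma_1}(y)$ and $\proj{\Sigma_2}(y)$ yields the two-sided comparison with the constants $\tfrac12$ and $2$. The main obstacle is the bookkeeping: one must choose $\injec{r1}$ small enough (depending on $K$ and on the rate of convergence in item \Cref{hyp4:lemm:harmoni:section:lemmas}) so that the cumulative $o(1)$ errors coming from the metric expansion and from the graphicality estimates are dominated by $1$, thereby producing the explicit constants $\tfrac12$ and $2$. This smallness is precisely what is guaranteed by the choice $\injec{r1} < r_H(M^n,g)$ in \injec{r1}.
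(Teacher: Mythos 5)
The paper does not actually prove this lemma: it is stated, together with \Cref{lemm:riemmanianadapt1:section:lemmas,lemm:riemmanianadapt2:section:lemmas}, as a ``standard consequence'' of the bi-Lipschitz comparison between $d_g$ and the Euclidean distance and of the expansion of the metric with a curvature-controlled second-order term. So your overall route --- coordinates at $\xi$, graphicality of each $\Sigma_i$ over $S_i$ with $o(1)$ principal curvatures from \Cref{lemm:harmoni:section:lemmas}, a first-variation computation for the nearest-point projection --- is exactly the argument the authors gesture at. One setup caveat: the paper explicitly avoids normal coordinates for $\C^2$ metrics (their limited regularity is the stated reason for working with harmonic submanifolds), so the expansion $g_{ij}=\delta_{ij}+O(K\|x\|^2)$ should be obtained via Rauch/Hessian comparison or in harmonic coordinates rather than asserted for the normal chart.

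The genuine gap is in your assembly step. Your intermediate estimates are additive: for each $i$ you get $\bigl\|\exp_\xi^{-1}\bigl(\proj{\Sigma_i}(y)\bigr)-P_{S_i}(v)\bigr\|_{g_\xi}=o\left(\|v\|_{g_\xi}\right)$ with $v=\exp_\xi^{-1}(y)$, and the triangle inequality then only yields
\begin{displaymath}
\Bigl|\,\bigl\|\exp_\xi^{-1}\bigl(\proj{\Sigma_1}(y)\bigr)-\exp_\xi^{-1}\bigl(\proj{\Sigma_2}(y)\bigr)\bigr\|_{g_\xi}-\bigl\|P_{S_1}(v)-P_{S_2}(v)\bigr\|_{g_\xi}\Bigr|\leq o\left(\|v\|_{g_\xi}\right),
\end{displaymath}
an error measured against $\|v\|_{g_\xi}$, not against $\bigl\|P_{S_1}(v)-P_{S_2}(v)\bigr\|_{g_\xi}$. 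The multiplicative claim ``$=(1+o(1))\bigl\|P_{S_1}(v)-P_{S_2}(v)\bigr\|_{g_\xi}$'', and with it the explicit constants $\tfrac12$ and $2$, does not follow: when $S_1$ and $S_2$ are close, or when $v$ is nearly contained in $S_1\cap S_2$, the quantity $\bigl\|P_{S_1}(v)-P_{S_2}(v)\bigr\|_{g_\xi}$ can be arbitrarily small compared with $\|v\|_{g_\xi}$, and the additive $o(\|v\|_{g_\xi})$ error swamps it. To close the argument you need a \emph{relative} estimate, i.e. one comparing the two minimization problems (the graphs of $\Phi_1$ and $\Phi_2$ over their respective planes) directly to each other, showing that the two nearest points differ by the difference of the two linear projections up to errors proportional to that difference; comparing each submanifold separately to its own tangent plane is not enough. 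This is precisely the point that the paper's one-line justification also glosses over, so it is where the real content of the lemma lies.
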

\begin{defi}\label{defi:excess}
Let $\xi \in M^n$, $S \in \Gr(m,T_{\xi}M^n)$, $0 < \sigma < \injec{r1}$, and $V \in \mathbf{RV}_m(M^n)$. We define the \high{tilt-excess of $V$ over the ball $B_{d_{g}}(\xi,\sigma)$ relative to the $m$-dimensional space $S$} by 
\begin{displaymath}
E\left(\xi,\sigma,S,V\right) \defeq \dfrac{1}{\sigma^m}\int_{B_{d_g}(\xi,\sigma)} \left|\left| P_{T_xV} - P_{T_x\Sigma(\xi,S)_x} \right|\right|_{\mathrm{HS}_{g_x}}^2 d||V||(x),
\end{displaymath}

where $\Sigma(\xi,S) \subset M^n$ is the harmonic $m$-submanifold of $M^n$ at $(\xi,S)$, and $\Sigma(\xi,S)_x$ the Fermi transport of $\Sigma(\xi,S)$ from $\xi$ to $x$, for all $x \in \supp(||V||)$.
\end{defi}
In the following Remark, we explain why the point $\xi$ in the tilt-excess definition may not be in the support of $||V||$ and why the tilt-excess is interesting (in our approach) only for points in the support of $||V||$.
\begin{rema}
Note that in the definition of the tilt-excess, $\xi$ is not necessarily in $\supp(||V||)$. Since one of our objectives is to find a small $\sigma$ such that the excess $E(\xi, \sigma, S, V)$ is also small, the only points of interest are those where $\xi \in \supp(||V||)$. That is because if $\xi \not\in \supp(||V||)$, we can simply choose $\sigma_0$ small enough to ensure that the ball $B_{d_g}(\xi, \sigma_0)$ does not intersect $\supp(||V||)$. In this case, it is immediate that $E(\xi, \sigma_0, S, V) = 0$.
\end{rema}
\subsection{Caccioppoli inequality}\label{section:Caccioppoli}
Note that because of the adaptations we had to make in the definition of the tilt-excess, we also obtained a different Caccioppoli inequality, which depends on the principal curvatures of the harmonic submanifold that we considered. Additionally, note that in the Euclidean sense, the harmonic submanifold that we can take is the linear subspace, which has all zero principal curvatures. Thus, our version of the Caccioppoli inequality extends the Euclidean one shown in \cite{Simon}*{Lemma 2.5, Chapter 5, Section 2, p. 131}.
\begin{lemm}[Intrinsic Riemannian Caccioppoli Inequality]\label{lemm:excess:section:DensityEstimatesApp}
Let $m < p < \infty$, $0 < \delta < 1$, $V \in \mathbf{RV}_m(M^n)$, and $\xi \in \supp(||V||)$. There exists $\rho_0 \defeq \rho_0(m,K,\injec{r1}) > 0$ such that if $V$ has $L^p_{\mathrm{loc}}$-bounded generalized mean curvature vector in $B_{d_g}(\xi,\rho)$, for some $\rho < \rho_0$, then, for all $y \in B_{d_g}(\xi,\delta\rho)$, for all $S \in Gr(m,T_yM^n)$, for all $\sigma \in ]0,(1-\delta)\rho]$, and for all $\alpha \in ]0,1[$, we have that 
\footnotesize
\begin{align}\label{equa3statement:lemm:excess:section:DensityEstimatesApp}
E\left(y,\alpha\sigma,S,V\right) &\leq \dfrac{\mathbf{C}}{\sigma^m} \int_{B_{d_g}(y,\sigma)} \left(\dfrac{\dist_g^{\Sigma}(x)}{\sigma}\right)^2  d||V||(x) + \dfrac{2\alpha^{-m}}{\sigma^{m-2}} \int_{B_{d_g}(y,\sigma)} \left|\left| \overrightarrow{H_g}(x)\right|\right|_{g_x}^2  d||V||(x) \nonumber \\
&+ \dfrac{4m\rho\alpha^{-m}}{\sigma^m} \int_{B_{d_g}(y,\sigma)} \dfrac{\dist_g^{\Sigma}(x)}{\sigma} \sum\limits_{j=1}^m \left|\kappa_j(\proj{\Sigma}(x))\right| d||V||(x) \nonumber\\
&+ \dfrac{8m\rho^2\alpha^{-m}}{\sigma^m} \int_{B_{d_g}(y,\sigma)} \left(\dfrac{\dist_g^{\Sigma}(x)}{\sigma}\right)^2 \sum\limits_{j=1}^m \kappa_j(\proj{\Sigma}(x))^2  d||V||(x).
\end{align}
\normalsize

where $\Sigma \subset M$ is the harmonic $m$-submanifold of $M^n$ at $(y,S)$ and $\mathbf{C} = \mathbf{C}(m,n,\alpha,K,\rho) > 0$.
\end{lemm}
\begin{rema}
The constant obtained in \Cref{lemm:excess:section:DensityEstimatesApp} is given explicitly by
\begin{displaymath}
\mathbf{C} = \mathbf{C}(m,n,\alpha,K,\rho) = \alpha^{-m}\left(2 + \frac{1}{(1-\alpha)^2} + 8K\left(\dfrac{m(n-m-1)}{3}+m^2\right)\rho^2\right).
\end{displaymath}

Observe that the part of the constant which does not depend on the sectional curvature bound $K$, namely
\begin{displaymath}
    \alpha^{-m}\left(2+\frac{1}{(1-\alpha)^2}\right),
\end{displaymath}
is exactly the Euclidean constant. In particular, if the ambient manifold has zero sectional curvature, then our formula recovers the Euclidean one. Indeed, in this case the $K$-dependent term vanishes, and moreover the last two lines in \Cref{equa3statement:lemm:excess:section:DensityEstimatesApp} are also equal to zero, since the principal curvatures $\kappa_j(\proj{\Sigma}(x))$ vanish for totally geodesic submanifolds. This is precisely the situation in the Euclidean ambient space, where $\Sigma$ is an affine plane.
\end{rema}
\begin{proof}[Proof of \Cref{lemm:excess:section:DensityEstimatesApp}]
Let us fix $\rho_0 > 0$ arbitrarily. Suppose that $V$ has $L^{p}_{\mathrm{loc}}$-bounded generalized mean curvature vector in $B_{d_g}(\xi,\rho)$, for some $0 < \rho < \rho_0$. Consider $y \in B_{d_g}(\xi,\delta\rho)$, $S \in Gr(m,T_yM^n)$, $\sigma \in ]0,(1-\delta)\rho]$, and $\alpha \in ]0,1[$ arbitrary.

Let $x \in \supp(||V||) \cap B_{d_g}(y,\sigma)$ fixed arbitrarily. Note that
\begin{align}\label{equa1proof:lemm:excess:section:DensityEstimatesApp}
\dfrac{1}{2}\left|\left| P_{T_xV} - P_{T_x\Sigma_x} \right|\right|_{\mathrm{HS}_{g_x}}^2 = m - \trace_{g_x}\left(P_{T_xV} \circ P_{T_x\Sigma_x}\right).
\end{align}

If $x \in \Sigma$, as $V \in \mathbf{RV}_m(M^n)$, there exists a countable $m$-rectifiable set $\Gamma \subset M^n$ and a Borel map $f:\Gamma \to ]0,+\infty[$ such that, for all $B \in \mathcal{B}(\Gr_m(M^n))$, we have that
\begin{equation}\label{equa2proof:lemm:excess:section:DensityEstimatesApp}
V(B) = V(\Gamma,f,g)(B) = \int_{\{x \in \Gamma : (x,T_x\Gamma) \in B\}} f(x) d\mathcal{H}_{d_g}^{m}(x).
\end{equation}

Thus, by \Cref{equa1proof:lemm:excess:section:DensityEstimatesApp,equa2proof:lemm:excess:section:DensityEstimatesApp}, we have that
\begin{align}\label{equa3proof:lemm:excess:section:DensityEstimatesApp}
\int_{B_{d_g}(y,\sigma) \cap \Sigma} \left|\left| P_{T_xV} - P_{T_x\Sigma_x} \right|\right|_{\mathrm{HS}_{g_x}}^2 d||V||(x) = 0.
\end{align}

Now, if $x \not\in  \Sigma$, let $\left(\mathrm{Tube}(\Sigma,\injec{r1}),\left\{x_i\right\}_{i=1}^n\right)$ be a system of Fermi coordinates for $\Sigma$ at $y$. We define
\begin{displaymath}
u_{\Sigma}^2 \defeq \sum\limits_{i = m+1}^n x_i^2,
\end{displaymath}

and, for $u_{\Sigma} > 0$, 
\begin{displaymath}
N \defeq \sum\limits_{i=m+1}^n \dfrac{x_i}{u_{\Sigma}} \dfrac{\partial}{\partial x_i}.
\end{displaymath}

By Generalized Gauss Lemma \cite{Gray04}*{Lemma 2.11, p. 26}, $N = \grad{g}(u_{\Sigma})$ on $\mathrm{Tube}(\Sigma,\injec{r1}) \setminus \Sigma$.

Let $\zeta \in \C^1_c\left(\mathrm{Tube}(\Sigma,\injec{r1})\right)$ be non-negative, $X \defeq \zeta^2 u_{\Sigma} \grad{g}\left(u_{\Sigma}\right)$, $\left\{e_1,\ldots,e_m\right\}$ an orthonormal basis for $T_{x}V$, $t_x \defeq \dist_{g}^{\Sigma}\left(x\right)$, and $\Sigma_{t_x}$ the tubular hypersurface at a distance $t_x$ around $\Sigma$. Since $T_{x}\Sigma_x \subset T_{x}\Sigma_{t_x}$, we have that $\grad{g}(u_{\Sigma}) \perp v$, for all $v \in T_{x}\Sigma_x$. Thus
\footnotesize
\begin{align}\label{equa4proof:lemm:excess:section:DensityEstimatesApp}
\diver_{T_{x}V}\left(X\right) = 2\zeta u_{\Sigma}\left\langle   \grad{g}(u_{\Sigma}) , \left(P_{T_{x}V} - P_{T_{x}\Sigma_x}\right)\left(\grad{g}\left(\zeta\right)\right) \right\rangle_g + \zeta^2\diver_{T_{x}V}\left(u_{\Sigma} \grad{g}(u_{\Sigma})\right). 
\end{align}
\normalsize

Since we are assuming that $V$ has $L^p_{\mathrm{loc}}$-bounded generalized mean curvature vector in $B_{d_g}(\xi,\rho)$, by \Cref{prop:generalizedmeancurvaturaLpbounded:section:firstvariation}, we have that
\begin{displaymath}
\int_{B_{d_g}(y,\sigma)} \diver_{T_{x}V}\left(X\right)(x) d||V||(x) = - \int_{B_{d_g}(y,\sigma)} \left\langle \overrightarrow{H_g}(x),X(x)\right\rangle_{g_x} d||V||(x).
\end{displaymath}

By \Cref{equa4proof:lemm:excess:section:DensityEstimatesApp}, we have that
\footnotesize
\begin{align}\label{equa5proof:lemm:excess:section:DensityEstimatesApp}
\int_{B_{d_g}(y,\sigma)} \zeta^2(x)\dfrac{1}{2} \left|\left| P_{T_xV} - P_{T_x\Sigma_x} \right|\right|_{\mathrm{HS}_{g_x}}^2 d||V||(x) &= - \int_{B_{d_g}(y,\sigma)} \left\langle \overrightarrow{H_g}(x),\zeta^2(x)u_{\Sigma}(x)\grad{g}(u_{\Sigma})(x)\right\rangle_{g_x} d||V||(x) \nonumber \\
&\hspace{-4cm}- \int_{B_{d_g}(y,\sigma)} 2\zeta(x) u_{\Sigma}(x)\left\langle   \grad{g}(u_{\Sigma})(x) , \left(P_{T_{x}V} - P_{T_{x}\Sigma_x}\right)\left(\grad{g}\left(\zeta\right)(x)\right) \right\rangle_{g_x} d||V||(x) \nonumber \\
&\hspace{-4cm}+ \int_{B_{d_g}(y,\sigma)} \zeta^2(x)\left(\dfrac{1}{2} \left|\left| P_{T_xV} - P_{T_x\Sigma_x} \right|\right|_{\mathrm{HS}_{g_x}}^2 - \diver_{T_{x}V}\left(u_{\Sigma} \grad{g}(u_{\Sigma})\right)(x)\right) d||V||(x)
\end{align}
\normalsize

Next, we will proceed to estimate the quantity $\frac{1}{2} \left|\left| P_{T_xV} - P_{T_x\Sigma_x} \right|\right|_{\mathrm{HS}_g}^2 - \diver_{T_{x}V}\left(u_{\Sigma} \grad{g}(u_{\Sigma})\right)$. Firstly, we note that
\begin{align}\label{equa6proof:lemm:excess:section:DensityEstimatesApp}
\diver_{T_{x}V}\left(u_{\Sigma} \grad{g}(u_{\Sigma})\right) = \sum\limits_{i=1}^{m} \left|\left\langle \grad{g}(u_{\Sigma}) ,  e_i \right\rangle_g\right|^2 + u_{\Sigma} \sum\limits_{i=1}^{m} \left\langle    \nabla^g_{e_i}\left(\grad{g}(u_{\Sigma})\right) , e_i \right\rangle_g.
\end{align}

Let $\gamma:[0,\sigma] \to M^n$ be the minimizing geodesic between $x$ and $\proj{\Sigma}(x)$ such that $\gamma(0) = \proj{\Sigma}(x)$ and $\gamma(t_x) = x$, and $t \mapsto \{F_1(\proj{\Sigma}(x))(t), \ldots, F_n(\proj{\Sigma}(x))(t)\}$ the orthonormal frame field along $\gamma$ of principal curvature vectors. Thus
\begin{displaymath}
e_i = \sum_{j=1}^n \left\langle e_i,F_j(\proj{\Sigma}(x))(t_x) \right\rangle_g F_j(\proj{\Sigma}(x))(t_x).
\end{displaymath}

By \Cref{equa6proof:lemm:excess:section:DensityEstimatesApp}, we have that 
\tiny
\begin{align}\label{equa7proof:lemm:excess:section:DensityEstimatesApp}
\diver_{T_{x}V}\left(u_{\Sigma} \grad{g}(u_{\Sigma})\right) = \sum\limits_{i=1}^{m} \left|\left\langle F_{m+1}(\proj{\Sigma}(x))(t_x) ,  e_i \right\rangle_g\right|^2 + u_{\Sigma} \sum\limits_{i=1}^{m} \sum\limits_{\substack{j=1 \\ j \neq m+1}}^n    \left|\left\langle e_i,F_j(\proj{\Sigma}(x))(t_x) \right\rangle_g\right|^2 \kappa_j(\proj{\Sigma}(x))(t_x).
\end{align}
\normalsize

By \Cref{lemm:fermitransport:section:lemmas}, we note that $\left\{F_1(\proj{\Sigma}(x))(t_x), \ldots, F_n(\proj{\Sigma}(x))(t_x)\right\}$ is an orthonormal basis for $T_xM$ such that $\left\{F_1(\proj{\Sigma}(x))(t_x),\ldots,F_m(\proj{\Sigma}(x))(t_x)\right\}$ is an orthonormal basis for $T_{x}\Sigma_{x}$. By using the basis\\ $\left\{F_1(\proj{\Sigma}(x))(t_x), \ldots, F_n(\proj{\Sigma}(x))(t_x)\right\}$ in \Cref{equa1proof:lemm:excess:section:DensityEstimatesApp}, we obtain
\tiny
\begin{align}\label{equa8proof:lemm:excess:section:DensityEstimatesApp}
\dfrac{1}{2}\left|\left| P_{T_xV} - P_{T_x\Sigma_x} \right|\right|_{\mathrm{HS}_{g_x}}^2 = \sum\limits_{i=1}^m \sum\limits_{j=m+1}^n \left|\left\langle e_i , F_j(\proj{\Sigma}(x))(t_x) \right\rangle_{g_x}\right|^2.
\end{align}
\normalsize

Note that $x \in B_{d_g}(y,\sigma)$, then $\proj{\Sigma}(x) \in B_{d_g}(y,\sigma)$. Therefore, since $0 < \delta < 1$ we have that $0 < (1-\delta) < 1$, thus
\begin{equation}\label{equa810proof:lemm:excess:section:DensityEstimatesApp}
u_{\Sigma}(x) = d_g\left(x,\proj{\Sigma}(x)\right) \leq d_g(x,y) + d_g\left(y,\proj{\Sigma}(x)\right) < 2\sigma < 2(1-\delta)\rho < 2\rho.
\end{equation}

Then there exists $\rho_0 \defeq \rho_0(\injec{r1}) > 0$ such that $u_{\Sigma}(x) < \injec{r1}$. By \Cref{equa3:subsection:ShapeOperator:section:FermiCoordinates,equa7proof:lemm:excess:section:DensityEstimatesApp,equa8proof:lemm:excess:section:DensityEstimatesApp} we have that
\footnotesize
\begin{align}\label{equa81proof:lemm:excess:section:DensityEstimatesApp}
\dfrac{1}{2}\left|\left| P_{T_xV} - P_{T_x\Sigma_x} \right|\right|_{\mathrm{HS}_g}^2 - \diver_{T_{x}V}\left(u_{\Sigma} \grad{g}(u_{\Sigma})\right) &\leq \left(1-u_{\Sigma}\cot_K(u_{\Sigma})\right)\sum\limits_{i=1}^{m} \sum\limits_{j=m+1}^n    \left|\left\langle e_i,F_j(\proj{\Sigma}(x))(t_x) \right\rangle_g\right|^2 \nonumber \\
&- u_{\Sigma} \sum\limits_{i=1}^{m} \sum\limits_{j=1}^m    \left|\left\langle e_i,F_j(\proj{\Sigma}(x))(t_x) \right\rangle_g\right|^2 \kappa_j(\proj{\Sigma}(x))(u_{\Sigma}(x)).
\end{align}
\normalsize

By \Cref{lemm:harmoni:section:lemmas} we have that $\kappa_j(\proj{\Sigma}(x)) = o(1)$ for all $j=1,\ldots,m$ and for all $x \in B_{d_g}(\xi,\injec{r1})$, thus, by \Cref{equa810proof:lemm:excess:section:DensityEstimatesApp} there exists $\rho_0 \defeq \rho_0(K,m,\injec{r1}) > 0$ such that
\begin{equation}\label{equa831proof:lemm:excess:section:DensityEstimatesApp}
u_{\Sigma}(x) < \min\left\{\min_{j=1,\ldots,m}\frac{1}{\sqrt{K}}\left(\pi-\cot^{-1}\left(\frac{\kappa_j(\proj{\Sigma}(x))}{\sqrt{K}}\right)\right),\injec{r1}\right\}
\end{equation}

Thus, by \Cref{equa3:rema:cothdifferent:subsection:ShapeOperator:section:FermiCoordinates,equa2:subsection:ShapeOperator:section:FermiCoordinates,equa831proof:lemm:excess:section:DensityEstimatesApp}, there exists $\rho_0 \defeq \rho_0(K,m,\injec{r1}) > 0$ such that
\begin{align}\label{equa82proof:lemm:excess:section:DensityEstimatesApp}
\kappa_j(\proj{\Sigma}(x))(u_{\Sigma}(x)) &\geq \cot_{K,\kappa_j(\proj{\Sigma}(x))}(u_{\Sigma}(x)) \nonumber \\
&\geq \kappa_j(\proj{\Sigma}(x))  - 2\left(\kappa_j(\proj{\Sigma}(x))^2 + K\right)u_{\Sigma}(x),
\end{align}

for all $j=1,\ldots,m$. On the other hand, by \Cref{equa4:rema:cothdifferent:subsection:ShapeOperator:section:FermiCoordinates,equa831proof:lemm:excess:section:DensityEstimatesApp} there exists $\rho_0 \defeq \rho_0(K,m,\injec{r1}) > 0$ such that
\begin{equation}\label{equa83proof:lemm:excess:section:DensityEstimatesApp}
1-u_{\Sigma}(x)\cot_K(u_{\Sigma}(x)) \leq \dfrac{2K}{3}u_{\Sigma}(x)^2.
\end{equation}

Thus, by \Cref{equa81proof:lemm:excess:section:DensityEstimatesApp,equa82proof:lemm:excess:section:DensityEstimatesApp,equa83proof:lemm:excess:section:DensityEstimatesApp}, we have that
\footnotesize
\begin{align}\label{equa9proof:lemm:excess:section:DensityEstimatesApp}
\dfrac{1}{2}\left|\left| P_{T_xV} - P_{T_x\Sigma_x} \right|\right|_{\mathrm{HS}_g}^2 - \diver_{T_{x}V}\left(u_{\Sigma} \grad{g}(u_{\Sigma})\right) \leq  \mathbf{C_1}u_{\Sigma}^2 + mu_{\Sigma} \sum\limits_{j=1}^m \left|\kappa_j(\proj{\Sigma}(x))\right| + 2mu_{\Sigma}^2 \sum\limits_{j=1}^m  \kappa_j(\proj{\Sigma}(x))^2.
\end{align}
\normalsize

where $\mathbf{C_1} = \mathbf{C_1}(m,n,K)>0$. By \Cref{equa5proof:lemm:excess:section:DensityEstimatesApp,equa9proof:lemm:excess:section:DensityEstimatesApp}, Cauchy-Schwarz inequality, and the fact that $\left|\left| \grad{g}(u_{\Sigma})(x) \right|\right|_{g_x} = 1$ for all $x \in \mathrm{Tube}(\Sigma,\injec{r1}) \setminus \Sigma$, we have 
\tiny
\begin{align}\label{equa10proof:lemm:excess:section:DensityEstimatesApp}
\int_{B_{d_g}(y,\sigma)} \zeta^2(x)\dfrac{1}{2} \left|\left| P_{T_xV} - P_{T_x\Sigma_x} \right|\right|_{\mathrm{HS}_{g_x}}^2 d||V||(x) &\leq  \int_{B_{d_g}(y,\sigma)} \zeta^2(x)u_{\Sigma}(x) \left|\left| \overrightarrow{H_g}(x)\right|\right|_{g_x}  d||V||(x) \nonumber \\
&+ \int_{B_{d_g}(\xi,\rho)} 2\zeta(x)u_{\Sigma}(x) \left|\left| \left(P_{T_{x}V} - P_{T_{x}\Sigma_x}\right)\left(\grad{g}\left(\zeta\right)(x)\right) \right|\right|_{g_x} d||V||(x) \nonumber \\
&+ \int_{B_{d_g}(y,\sigma)} \zeta^2(x)\mathbf{C_1} u_{\Sigma}^2(x) d||V||(x) \nonumber \\
&+ \int_{B_{d_g}(y,\sigma)} \zeta^2(x)mu_{\Sigma}(x) \sum\limits_{j=1}^m \left|\kappa_j(\proj{\Sigma}(x))\right| d||V||(x) \nonumber\\
&+ \int_{B_{d_g}(y,\sigma)} \zeta^2(x)2mu_{\Sigma}^2(x) \sum\limits_{j=1}^m  \kappa_j(\proj{\Sigma}(x))^2 d||V||(x).
\end{align}
\normalsize

By a standard calculation using Hölder's inequality, we obtain\\ $\left|\left| \left(P_{T_{x}V} - P_{T_{x}\Sigma_x}\right)\left(\grad{g}\left(\zeta\right)(x)\right) \right|\right|_{g_x} \leq \left|\left| P_{T_xV} - P_{T_x\Sigma_x} \right|\right|_{\mathrm{HS}_{g_x}} \left|\left| \grad{g}\left(\zeta\right)(x) \right|\right|_{g_x}$ which, together with \Cref{equa10proof:lemm:excess:section:DensityEstimatesApp} and some computations, implies that
\tiny
\begin{align}\label{equa12proof:lemm:excess:section:DensityEstimatesApp}
\int_{B_{d_g}(y,\sigma)} \zeta^2(x) \left|\left| P_{T_xV} - P_{T_x\Sigma_x} \right|\right|_{\mathrm{HS}_{g_x}}^2 d||V||(x) &\leq  \int_{B_{d_g}(y,\sigma)} 2\zeta^2(x)\dfrac{u_{\Sigma}^2(x)}{\sigma^2}  d||V||(x) +  \int_{B_{d_g}(y,\sigma)} 2\zeta^2(x) \sigma^2\left|\left| \overrightarrow{H_g}(x)\right|\right|_{g_x}^2  d||V||(x) \nonumber \\
&+ \int_{B_{d_g}(y,\sigma)} 16u_{\Sigma}^2(x)\left|\left| \grad{g}\left(\zeta\right)(x) \right|\right|_{g_x}^2  d||V||(x) + \int_{B_{d_g}(y,\sigma)} 4\zeta^2(x)\mathbf{C_1} u_{\Sigma}^2(x) d||V||(x) \nonumber \\
&+ \int_{B_{d_g}(y,\sigma)} 4\zeta^2(x)mu_{\Sigma}(x) \sum\limits_{j=1}^m \left|\kappa_j(\proj{\Sigma}(x))\right| d||V||(x) \nonumber\\
&+ \int_{B_{d_g}(y,\sigma)} 4\zeta^2(x)2mu_{\Sigma}^2(x) \sum\limits_{j=1}^m  \kappa_j(\proj{\Sigma}(x))^2  d||V||(x).
\end{align}
\normalsize

Let $\zeta \equiv 1$ on $B_{d_g}(y,\alpha\sigma)$, $\zeta \equiv 0$ outside of $B_{d_g}(y,\sigma)$ and $\left|\left| \grad{g}\left(\zeta\right)(x) \right|\right|_g \leq \frac{1}{\sqrt{16}(1-\alpha)\sigma}$ on $B_{d_g}(y,\sigma)$. Since $\sigma < (1-\delta)\rho < \rho$, we have that $1 < \frac{\rho}{\sigma}$ and $1 < \frac{\rho^2}{\sigma^2}$, which implies that $u_{\Sigma}(x) < \rho\frac{u_{\Sigma}(x)}{\sigma}$ and $u_{\Sigma}^2(x) < \rho^2 \left(\frac{u_{\Sigma}(x)}{\sigma}\right)^2$. Moreover, since $\alpha \in ]0,1[$ we have that $B_{d_g}(y,\alpha\sigma) \subset B_{d_g}(y,\sigma)$. Thus, also by \Cref{equa12proof:lemm:excess:section:DensityEstimatesApp}, we obtain
\tiny
\begin{align*}
\dfrac{1}{\left(\alpha\sigma\right)^m}\int_{B_{d_g}(y,\alpha\sigma)} \left|\left| P_{T_xV} - P_{T_x\Sigma_x} \right|\right|_{\mathrm{HS}_{g_x}}^2 d||V||(x) &\leq \dfrac{\alpha^{-m}\left(2 + \frac{1}{(1-\alpha)^2} + 4\mathbf{C_1} \rho^2\right)}{\sigma^m} \int_{B_{d_g}(y,\sigma)} \left(\dfrac{u_{\Sigma}(x)}{\sigma}\right)^2  d||V||(x) \nonumber \\
&+ \dfrac{2\alpha^{-m}}{\sigma^{m-2}} \int_{B_{d_g}(y,\sigma)} \left|\left| \overrightarrow{H_g}(x)\right|\right|_{g_x}^2  d||V||(x) \nonumber \\
&+ \dfrac{4m\rho\alpha^{-m}}{\sigma^m} \int_{B_{d_g}(y,\sigma)} \dfrac{u_{\Sigma}(x)}{\sigma} \sum\limits_{j=1}^m \left|\kappa_j(\proj{\Sigma}(x))\right| d||V||(x) \nonumber\\
&+ \dfrac{8m\rho^2\alpha^{-m}}{\sigma^m} \int_{B_{d_g}(y,\sigma)} \left(\dfrac{u_{\Sigma}(x)}{\sigma}\right)^2 \sum\limits_{j=1}^m \kappa_j(\proj{\Sigma}(x))^2  d||V||(x).
\end{align*}
\normalsize

To conclude the proof, we choose $\rho_0 \defeq \rho_0(m, K, \injec{r1}) > 0$ as the minimum value ensuring that all the above estimates hold. Then, \Cref{equa3statement:lemm:excess:section:DensityEstimatesApp} holds.
\end{proof}
\subsection{Density estimates}\label{section:DensityEstimatesApp}
\begin{defi}\label{defi:boundsratio:section:DensityEstimatesApp}
Let $m < p < \infty$; in case $m=1$, we require that $p \geq 2$, $\xi \in M^n$, $0 < \delta < 1$, $0 < \rho < \injec{r1}$, and $V \in \mathbf{V}_m(M^n)$. We say that $V$ satisfies the regularity condition in $B_{d_g}(\xi,\rho)$ with constants $\delta$ and $p$ when $\xi \in \supp(||V||)$,
\begin{equation}\label{statement1:defi:boundsratio:section:DensityEstimatesApp}
\Theta^m(||V||,x) \geq 1,
\end{equation}

for $||V||$-a.e. $x \in B_{d_g}(\xi,\rho)$,
\begin{equation}\label{statement2:defi:boundsratio:section:DensityEstimatesApp}
\dfrac{||V||(B_{d_g}(\xi,\rho))}{\omega_m\rho^m} \leq 1 + \delta.
\end{equation}

and
\begin{equation}\label{statement3:defi:boundsratio:section:DensityEstimatesApp}
\left|\delta_gV(X)\right| \leq \dfrac{\delta}{\rho^{1-\frac{m}{p}}} \left|\left| X \right|\right|_{L^{\frac{p}{p-1}}(M^n,||V||)},
\end{equation}

for all $X \in \mathfrak{X}^0_c(M^n)$ such that $\supp(X) \subset B_{d_g}(\xi,\rho)$. Furthermore, $\mathcal{AC}(\xi,\rho,\delta,p)$ denotes the set al all varifolds that satisfies the regularity condition in $B_{d_g}(\xi,\rho)$ with constants $\delta$ and $p$.
\end{defi}
The conditions presented in \Cref{maintheocomplete} also depend on a parameter $d$, as given by equations \Cref{equa1maintheo,equa2maintheo,equa3maintheo}. However, since we can always modify the density of the varifold without changing its support, we will restrict ourselves to varifolds in $\mathcal{AC}(\xi,\rho,\delta,p)$, that is, when $d = 1$.
\begin{rema}\label{rema:boundsratio:section:DensityEstimatesApp}
Let $m < p < \infty$, $\xi \in M^n$, $0 < \delta < 1$, and $0 < \rho < \injec{r1}$.

If $V \in \mathcal{AC}(\xi,\rho,\delta,p)$ then, for every open subset $W \subset \subset B_{d_g}(\xi,\rho)$, there exists a real constant $C = C(\delta,\rho,m) \defeq \frac{\delta}{\rho^{1-\frac{m}{p}}} > 0$ such that
\begin{displaymath}
\left|\delta_gV(X)\right| \leq C \left|\left| X \right|\right|_{L^{\frac{p}{p-1}}(W,||V||)}
\end{displaymath}
    
holds for every $X \in \mathfrak{X}^0_c(W)$. 

Therefore, by \Cref{prop:generalizedmeancurvaturaLpbounded:section:firstvariation}, we have that the total variation of $\delta_gV$ is a Radon measure. Furthermore, there exists a $||V||$-measurable function $\overrightarrow{H_g}: B_{d_g}(\xi,\rho) \to TB_{d_g}(\xi,\rho)$ such that
\begin{displaymath}
\delta_g V(X)=-\int_{B_{d_g}(\xi,\rho)}\left\langle \overrightarrow{H_g},X\right\rangle_gd||V||
\end{displaymath}
    
and $||\overrightarrow{H_g}||_{L^p(W,||V||)} \leq C$, for every open subset $W \subset \subset B_{d_g}(\xi,\rho)$ and $X\in\mathfrak{X}^0_c(W)$.
\end{rema}
\begin{rema}\label{rema:rectifiableAC}
Let $m < p < \infty$, $\xi \in M^n$, $0 < \delta < 1$, and $0 < \rho < \injec{r1}$.

If $V \in \mathcal{AC}(\xi,\rho,\delta,p)$ then $V \in \mathbf{RV}_m\left(B_{d_g}(\xi,\delta\rho)\right)$. In fact, let $W \subset \subset B_{d_g}(\xi,\rho)$ and $X \in \mathfrak{X}^0_c(W)$
\begin{align*}
|\delta_gV(X)| \leq \dfrac{\delta}{\rho^{1-\frac{m}{p}}}\left(\omega_m\rho^m\right)^{\frac{p}{p-1}}\left(1+\delta\right)^{\frac{p}{p-1}} ||X||_{L^{\infty}(W,||V||)}.
\end{align*}

Which implies that $V$ has locally bounded first variation in $B_{d_g}(\xi,\rho)$. Therefore, by \Cref{theo:rectifiable}, we conclude that $V \in \mathbf{RV}_m\left(B_{d_g}(\xi,\delta\rho)\right)$.
\end{rema}
\begin{lemm}\label{lemm:density:section:DensityEstimatesApp}
Let $m < p < \infty$ and  $\xi \in M^n$. There exist $\delta_0 \defeq \delta_0(m,p,K,\injec{r1}) > 0$ and $\rho_0 \defeq \rho_0(m,p,K,\injec{r1}) > 0$ such that if $V \in \mathcal{AC}(\xi,\rho,\delta,p)$, for some $0 < \delta < \delta_0$ and for some $0 < \rho < \rho_0$, then, for all $y \in \supp(||V||) \cap B_{d_g}(\xi,2\delta\rho)$ and for all $\sigma \in ]0,(1-2\delta)\rho]$, we have that 
\begin{equation}\label{equa3statement:lemm:density:section:DensityEstimatesApp}
\dfrac{1}{2} \leq 1 - \mathbf{C_1}\delta - \mathbf{C_2}\rho^2 \leq \dfrac{||V||\left(B_{d_g}(y,\sigma)\right)}{\omega_m\sigma^m} \leq 1 + \mathbf{C_1}\delta + \mathbf{C_2}\rho^2 \leq 2,
\end{equation}

where $\mathbf{C_1} = \mathbf{C_1}(m,p) > 0$ and $\mathbf{C_2} = \mathbf{C_2}(K,m) > 0$. Furthermore, if $\rho < \sqrt{\delta}$ then
\begin{equation}\label{equa4statement:lemm:density:section:DensityEstimatesApp}
1 - \mathbf{C_3}\delta \leq \dfrac{||V||\left(B_{d_g}(y,\sigma)\right)}{\omega_m\sigma^m} \leq 1 + \mathbf{C_3}\delta,
\end{equation}

where $\mathbf{C_3} = \mathbf{C_3}(m,p,K) > 0$.
\end{lemm}
\begin{proof}
Let us fix $\delta_0 > 0$ and $\rho_0 > 0$ arbitrarily. Suppose that $V \in \mathcal{AC}(\xi,\rho,\delta,p)$, for some $0 < \delta < \delta_0$ and for some $0 < \rho < \rho_0$. Consider  $y \in \supp(||V||) \cap B_{d_g}(\xi,2\delta\rho)$ and $\sigma \in ]0,(1-2\delta)\rho]$ arbitrary.

By \Cref{rema:boundsratio:section:DensityEstimatesApp}, we have, for all $t \in ]0,(1-2\delta)\rho]$, that
\begin{align*}
\left|\left| \overrightarrow{H_g} \right|\right|_{L^p\left(B_{d_g}(y,t), ||V||\right)} \leq \left(1-\dfrac{m}{p}\right)G,
\end{align*}

where 
\begin{displaymath}
G \defeq \dfrac{\delta}{\left(1-\frac{m}{p}\right)\rho^{1-\frac{m}{p}}}.
\end{displaymath}

By Hölder inequality, for all $t \in ]0,(1-2\delta)\rho]$, we have that
\begin{align}\label{equa2proof:lemm:density:section:DensityEstimatesApp}
\left|\left| \overrightarrow{H_g} \right|\right|_{L^1\left(B_{d_g}(y,t), ||V||\right)} \leq \left(1-\dfrac{m}{p}\right)G \left(||V||\left(B_{d_g}(y,t)\right)\right)^{-\frac{1}{p}}\left(||V||\left(B_{d_g}(y,t)\right)\right).
\end{align}

Also note that by \Cref{rema:boundsratio:section:DensityEstimatesApp} we have, in particular, that $V$ has $L^p_{\mathrm{loc}}$-bounded generalized mean curvature vector in $B_{d_g}(\xi,\rho)$. Let $0 < \delta_0 < \frac{1}{2}$, thus, by \Cref{theo:mono-Lpm:section:monotonicity}, there exists $\rho_0 \defeq \rho_0(\injec{r1},K) > 0$ such that, for all $0 < \gamma < t < (1-2\delta)\rho$, we have that
\footnotesize
\begin{align}\label{equa3proof:lemm:density:section:DensityEstimatesApp}
\left( \dfrac{||V||(B_{d_g}(y,t))}{t^m} \right)^{\frac{1}{p}} &\geq \left( \omega_m \right)^{\frac{1}{p}} - \mathbf{C_4} t^{1-\frac{m}{p}},
\end{align}
\normalsize

where $\mathbf{C_4} = \mathbf{C_4}(\delta,m,p,\rho,K) > 0$. By \Cref{equa4:rema:cothdifferent:subsection:ShapeOperator:section:FermiCoordinates,equa3proof:lemm:density:section:DensityEstimatesApp} and Taylor expansion of $\left(1+\delta\right)^{\frac{1}{p}}$ in $\delta$ around $0$ we have that there exist $\delta_0 \defeq \delta_0(m,K,\injec{r1},p) > 0$ and $\rho_0 \defeq \rho_0(m,K,\injec{r1},p) > 0$ such that 
\begin{align}\label{equa11proof:lemm:density:section:DensityEstimatesApp}
\left(||V||(B_{d_g}(y,t))\right)^{-\frac{1}{p}} &\leq \left(\dfrac{2}{\left( \omega_m \right)^{\frac{1}{p}}}\right)t^{-\frac{m}{p}}.
\end{align}

Finally, by \Cref{equa2proof:lemm:density:section:DensityEstimatesApp,equa11proof:lemm:density:section:DensityEstimatesApp}, we have that
\footnotesize
\begin{align}\label{equa12proof:lemm:density:section:DensityEstimatesApp}
\left|\left| \overrightarrow{H_g} \right|\right|_{L^1\left(B_{d_g}(y,t), ||V||\right)} \leq  \alpha \Lambda \left(\dfrac{t}{(1-\delta)\rho}\right)^{\alpha-1}||V||\left(B_{d_g}(y,t)\right),
\end{align}
\normalsize

where
\begin{displaymath}
\begin{cases}
\Lambda \defeq \dfrac{2\delta}{\left(1-\frac{m}{p}\right)\rho^{1-\frac{m}{p}}\left(\omega_m\right)^{\frac{1}{p}}((1-2\delta)\rho)^{\frac{m}{p}}}, \\
\alpha \defeq 1-\dfrac{m}{p}.
\end{cases}
\end{displaymath}

By \Cref{rema:boundsratio:section:DensityEstimatesApp} we have, in particular, that $V$ has $L^{p}_{\mathrm{loc}}$-bounded generalized mean curvature vector in $B_{d_g}(\xi,\rho)$ which with \Cref{equa12proof:lemm:density:section:DensityEstimatesApp} permits us to apply the \Cref{theo:densLpa:section:monotonicity}. Therefore, there exists $\rho_0 \defeq \rho_0(\injec{r1},K) > 0$, such that the function
\begin{align*}
f : ]0,(1-2\delta)\rho] &\to [0,+\infty[ \nonumber \\
t &\mapsto e^{\lambda(t)}\dfrac{||V||\left(B_{d_g}(y,t)\right)}{t^m}
\end{align*}

is non-decreasing, where
\footnotesize
\begin{align*}
\lambda : ]0,(1-2\delta)\rho] &\to ]0,+\infty[ \nonumber \\
t &\mapsto \left(\dfrac{2\delta}{\left(1-\frac{m}{p}\right)\rho^{1-\frac{m}{p}}\left(\omega_m\right)^{\frac{1}{p}}((1-2\delta)\rho)^{\frac{m}{p}}}\left(\dfrac{t}{(1-2\delta)\rho}\right)^{-\frac{m}{p}} - \dfrac{c((1-2\delta)\rho,K)-1}{(1-2\delta)\rho}m\right)t.
\end{align*}
\normalsize

Since $f$ is non-decreasing on $]0,(1-2\delta)\rho]$ we have that
\begin{align}\label{equa13proof:lemm:density:section:DensityEstimatesApp}
\dfrac{||V||\left(B_{d_g}(y,\sigma)\right)}{\omega_m\sigma^m} &\leq e^{\lambda((1-2\delta)\rho)-\lambda(\sigma)}\dfrac{1+\delta}{(1-2\delta)^m}.
\end{align}

Since $\lambda(\sigma) > 0$, \Cref{equa4:rema:cothdifferent:subsection:ShapeOperator:section:FermiCoordinates} and Taylor expansion of $\delta(1-2\delta)^{1-\frac{m}{p}}$ in $\delta$ around $0$ we have that there exist $\rho_0 \defeq \rho_0(K) > 0$ and $\delta_0 \defeq \delta_0(m,p) > 0$ such that
\footnotesize
\begin{align}\label{equa17proof:lemm:density:section:DensityEstimatesApp}
\lambda\left((1-2\delta)\rho\right) - \lambda\left(\sigma\right) \leq  \mathbf{C_5}\delta + \dfrac{2K}{3}m\rho^2,
\end{align}
\normalsize
where $\mathbf{C_5} = \mathbf{C_5}(m,p) > 0$. Therefore, by \Cref{equa13proof:lemm:density:section:DensityEstimatesApp,equa17proof:lemm:density:section:DensityEstimatesApp} and Taylor expansion of $e^{\mathbf{C_5}\delta}\frac{1+\delta}{(1-2\delta)^m}$ in $\delta$ around $0$ we have that there exists $\delta_0 \defeq \delta_0(m,p) > 0$ such that 
\begin{align}\label{equa20proof:lemm:density:section:DensityEstimatesApp}
\dfrac{||V||\left(B_{d_g}(y,\sigma)\right)}{\omega_m\sigma^m} \leq  e^{\frac{2K}{3}m\rho^2}\left(1 + 2\left(\dfrac{4}{\left(1-\frac{m}{p}\right)\left(\omega_m\right)^{\frac{1}{p}}} + 2m + 1\right)\delta\right).
\end{align}

Now, let $\sigma_1 < \sigma$ and note that 
\begin{align}\label{equa21proof:lemm:density:section:DensityEstimatesApp}
\dfrac{1}{e^{\lambda(\sigma)}} &\leq \dfrac{||V||\left(B_{d_g}(y,\sigma)\right)}{\omega_m\sigma^m}.
\end{align} 

Since $\sigma \leq (1-2\delta)\rho$ we have that
\begin{align}\label{equa22proof:lemm:density:section:DensityEstimatesApp}
\lambda(\sigma) \leq \dfrac{2}{\left(1-\frac{m}{p}\right)\left(\omega_m\right)^{\frac{1}{p}}}\delta(1-2\delta)^{1-\frac{m}{p}} - \left((1-2\delta)\rho\cot_K((1-2\delta)\rho)-1\right)m.
\end{align}

Since $\lambda(\sigma) > 0$, \Cref{equa4:rema:cothdifferent:subsection:ShapeOperator:section:FermiCoordinates,equa22proof:lemm:density:section:DensityEstimatesApp} and Taylor expansion of $\delta(1-2\delta)^{1-\frac{m}{p}}$ we have that there exist $\rho_0 \defeq \rho_0(K) > 0$ and $\delta_0 \defeq \delta_0(m,p) > 0$ such that
\begin{align}\label{equa23proof:lemm:density:section:DensityEstimatesApp}
\dfrac{1}{e^{\lambda(\sigma)}} &\geq \dfrac{1}{e^{\mathbf{C_5}\delta + \frac{2K}{3}m\rho^2}}.
\end{align}

Therefore, by \Cref{equa20proof:lemm:density:section:DensityEstimatesApp,equa21proof:lemm:density:section:DensityEstimatesApp,equa23proof:lemm:density:section:DensityEstimatesApp} and Taylor expansion of $e^{-\mathbf{C_5}\delta}$ in $\delta$ at $0$, we obtain that there exists $\delta_0 \defeq \delta_0(m,p) > 0$ such that 
\begin{equation}\label{equa27proof:lemm:density:section:DensityEstimatesApp}
e^{-\frac{2K}{3}m\rho^2}\left(1-\mathbf{C_1}\delta\right) \leq \dfrac{||V||\left(B_{d_g}(y,\sigma)\right)}{\omega_m\sigma^m} \leq e^{\frac{2K}{3}m\rho^2}\left(1+\mathbf{C_1}\delta\right),
\end{equation}

By \Cref{equa27proof:lemm:density:section:DensityEstimatesApp}, and Taylor expansion of $e^{\frac{2K}{3}m\rho^2}$ and $e^{-\frac{2K}{3}m\rho^2}$ in $\rho$ around $0$ we have that there exist $\delta_0 \defeq \delta_0(m,p) > 0$ and $\rho_0 \defeq \rho_0(K,m)>0$ such that
\begin{equation}\label{equa32proof:lemm:density:section:DensityEstimatesApp}
1 - \mathbf{C_1}\delta - \mathbf{C_2}\rho^2 \leq \dfrac{||V||\left(B_{d_g}(y,\sigma)\right)}{\omega_m\sigma^m} \leq 1 + \mathbf{C_1}\delta + \mathbf{C_2}\rho^2.
\end{equation}

To conclude the proof, we choose $\delta_0 \defeq \delta_0(m,K,\injec{r1},p) > 0$ and  $\rho_0 \defeq \rho_0(m,K,\injec{r1},p)>0$ as the minimum value ensuring that all the above estimates hold. Then, we have that \Cref{equa3statement:lemm:density:section:DensityEstimatesApp} holds. Now, supposing that $\rho < \sqrt{\delta}$. By \Cref{equa32proof:lemm:density:section:DensityEstimatesApp} we conclude that \Cref{equa4statement:lemm:density:section:DensityEstimatesApp} holds.
\end{proof}
\begin{coro}\label{coro:poslemm:section:DensityEstimatesApp}
Let $m < p < \infty$ and $\xi \in M^n$. There exist $\delta_0 \defeq \delta_0(m,p,K,\injec{r1}) > 0$ and $\rho_0 \defeq \rho_0(m,p,K,\injec{r1}) > 0$ such that if $V \in \mathcal{AC}(\xi,\rho,\delta,p)$, for some $0 < \delta < \delta_0$ and for some $0 < \rho < \rho_0$, then, for all $y \in \supp(||V||) \cap B_{d_g}(\xi,2\delta\rho)$, we have that 
\begin{equation}\label{equa2statement:coro:poslemm:section:DensityEstimatesApp}
\int_{B_{d_g}(y,(1-2\delta)\rho)} \dfrac{\left(\dist_g^{\Sigma(x,T_xV)}(y)\right)^2}{u_y(x)^{m+2}} d||V||(x) \leq 4\left(\mathbf{C_1}\delta + \mathbf{C_2}\rho^2\right)\omega_m,
\end{equation}

where $\Sigma(x,T_xV)$ is the harmonic $m$-submanifold of $M^n$ at $(x,T_xV)$, for all $x \in \supp(||V||)$ where it exists and $u_y(x) \defeq d_g(x,y)$ for all $x \in M^n$, $\mathbf{C_1} = \mathbf{C_1}(m,p) > 0$, and $\mathbf{C_2} = \mathbf{C_2}(K,m) > 0$. Furthermore, if $\rho < \sqrt{\delta}$ then 
\begin{equation}\label{equa3statement:coro:poslemm:section:DensityEstimatesApp}
\int_{B_{d_g}(y,(1-2\delta)\rho)} \dfrac{\left(\dist_g^{\Sigma(x,T_xV)}(y)\right)^2}{u_y(x)^{m+2}} d||V||(x) \leq 4\omega_m\mathbf{C_3}\delta,
\end{equation}
where $\mathbf{C_3} = \mathbf{C_3}(m,p,K) > 0$.
\end{coro}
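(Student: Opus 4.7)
The plan is to reduce the bound to an estimate on the monotonicity-type integral
\[
I(y,\sigma) \defeq \int_{\Gr_m(B_{d_g}(y,\sigma))} \frac{\|P_{S(x)^\perp}(\nabla_{g_x} u_y(x))\|_{g_x}^2}{u_y(x)^m}\, dV(x,S(x)),
\]
which is precisely the quantity whose growth is controlled by the intrinsic monotonicity formulas of \Cref{section:monotonicity}, and then to invoke the upper density bound from \Cref{lemm:density:section:DensityEstimatesApp}.

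First, by \Cref{rema:rectifiableAC} the varifold $V$ is rectifiable in $B_{d_g}(\xi,\rho)$, so $T_xV$ and hence the harmonic $m$-submanifold $\Sigma(x,T_xV)$ are well defined for $||V||$-a.e.\ $x \in B_{d_g}(y,(1-2\delta)\rho) \subset B_{d_g}(\xi,\rho)$, provided $\rho_0 < \injec{r1}$. I would then invoke \Cref{lemm:riemmanianadapt1:section:lemmas} with the roles of $\xi$ and $S$ played by $x$ and $T_xV$, and use $\|\nabla_{g_x} u_y(x)\|_{g_x}=1$ away from $y$, to get the pointwise estimate
\[
(\dist_g^{\Sigma(x,T_xV)}(y))^2 \leq 2\, u_y(x)^2 \|P_{(T_xV)^\perp}(\nabla_{g_x} u_y(x))\|_{g_x}^2.
\]
Dividing by $u_y(x)^{m+2}$ and using \Cref{lemm:disintegrationvarifolds} (which, since $V$ is rectifiable, gives $V^{(x)}=\delta_{T_xV}$) to rewrite the $||V||$-integral as a $V$-integral over $\Gr_m$, the target \Cref{equa2statement:coro:poslemm:section:DensityEstimatesApp} reduces to proving $I(y,(1-2\delta)\rho) \leq 2\omega_m\bigl(\const{const1:statement:lemm:density:section:DensityEstimatesApp}\delta + \const{const2:statement:lemm:density:section:DensityEstimatesApp}\rho^2\bigr)$.

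To establish this bound on $I$, I would repeat the opening of the proof of \Cref{lemm:density:section:DensityEstimatesApp}, which shows via Hölder's inequality and the preliminary density estimate that $V$ satisfies the hypothesis \Cref{equa2statement:theo:densLpa:section:monotonicity} of \Cref{theo:densLpa:section:monotonicity} with $\alpha = 1 - m/p$ and the same $\Lambda$ appearing there. The resulting monotonicity inequality \Cref{equa5statement:theo:densLpa:section:monotonicity} between $\sigma_1$ and $(1-2\delta)\rho$ then reads
\[
I(y,(1-2\delta)\rho) - I(y,\sigma_1) \leq e^{\lambda((1-2\delta)\rho)} \frac{||V||(B_{d_g}(y,(1-2\delta)\rho))}{((1-2\delta)\rho)^m} - e^{\lambda(\sigma_1)}\frac{||V||(B_{d_g}(y,\sigma_1))}{\sigma_1^m}.
\]
Sending $\sigma_1 \downarrow 0$, the existence of the density from \Cref{coro:densLp:section:monotonicity} gives $e^{\lambda(\sigma_1)}||V||(B_{d_g}(y,\sigma_1))/\sigma_1^m \to \omega_m \Theta^m(||V||,y) \geq \omega_m$ by \Cref{statement1:defi:boundsratio:section:DensityEstimatesApp}, while $\lambda(\sigma_1) \to 0$ and $I(y,\sigma_1) \to 0$ (standard consequence of rectifiability together with $||V||(\{y\})=0$, applied via dominated convergence to the monotone family $\mathbf{1}_{B_{d_g}(y,\sigma_1)}\|P_{S^\perp}(\nabla u_y)\|^2/u_y^m$). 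Combining with the upper density bound \Cref{equa3statement:lemm:density:section:DensityEstimatesApp} and the estimate $\lambda((1-2\delta)\rho) \leq \const{const2proof:lemm:density:section:DensityEstimatesApp}\delta + \tfrac{2Km}{3}\rho^2$ from \Cref{equa17proof:lemm:density:section:DensityEstimatesApp} yields
\[
I(y,(1-2\delta)\rho) \leq \omega_m\bigl(e^{\lambda((1-2\delta)\rho)}(1+\const{const1:statement:lemm:density:section:DensityEstimatesApp}\delta+\const{const2:statement:lemm:density:section:DensityEstimatesApp}\rho^2)-1\bigr).
\]
Choosing $\delta_0,\rho_0$ small enough to place the exponent in the range where $e^x \leq 1+2x$ bounds this by $2\omega_m(\const{const1:statement:lemm:density:section:DensityEstimatesApp}\delta+\const{const2:statement:lemm:density:section:DensityEstimatesApp}\rho^2)$, which together with the initial factor $2$ gives \Cref{equa2statement:coro:poslemm:section:DensityEstimatesApp}. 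The sharper estimate \Cref{equa3statement:coro:poslemm:section:DensityEstimatesApp} is then immediate from $\rho^2 < \delta$ under $\rho < \sqrt{\delta}$ and the identity $\const{const1:statement:lemm:density:section:DensityEstimatesApp}+\const{const2:statement:lemm:density:section:DensityEstimatesApp}=\const{const3:statement:lemm:density:section:DensityEstimatesApp}$.

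The main obstacle is not conceptual but lies in the constant bookkeeping needed to absorb the factor $e^{\lambda((1-2\delta)\rho)}(1+\const{const1:statement:lemm:density:section:DensityEstimatesApp}\delta+\const{const2:statement:lemm:density:section:DensityEstimatesApp}\rho^2)$ into $1 + 2(\const{const1:statement:lemm:density:section:DensityEstimatesApp}\delta+\const{const2:statement:lemm:density:section:DensityEstimatesApp}\rho^2)$, which forces further shrinking of the thresholds $\delta_0,\rho_0$ produced by \Cref{lemm:density:section:DensityEstimatesApp}; the new thresholds nevertheless depend only on $m,p,K,\injec{r1}$. A secondary technicality is the justification that $I(y,\sigma_1)\to 0$ as $\sigma_1 \downarrow 0$: this follows from the boundedness of $\sigma^{-m}||V||(B_{d_g}(y,\sigma))$ given by \Cref{lemm:density:section:DensityEstimatesApp} and standard monotone/dominated convergence arguments exploiting the rectifiable structure of $V$ at $y$.
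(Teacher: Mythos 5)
Your proposal is correct and follows essentially the same route as the paper: verify the hypothesis of \Cref{theo:densLpa:section:monotonicity} as in the proof of \Cref{lemm:density:section:DensityEstimatesApp}, apply the monotonicity inequality \Cref{equa5statement:theo:densLpa:section:monotonicity} between a small radius and $(1-2\delta)\rho$, pass to the limit using the density bounds of \Cref{lemm:density:section:DensityEstimatesApp}, rewrite the Grassmannian integral via \Cref{lemm:disintegrationvarifolds} and \Cref{rema:rectifiableAC}, and conclude with the pointwise comparison of \Cref{lemm:riemmanianadapt1:section:lemmas}. One small repair: you invoke \Cref{statement1:defi:boundsratio:section:DensityEstimatesApp} to claim $\Theta^m(||V||,y)\geq 1$ at the specific point $y$, but that hypothesis only holds $||V||$-a.e., whereas the corollary is asserted for every $y\in\supp(||V||)\cap B_{d_g}(\xi,2\delta\rho)$; the paper avoids this by instead using the lower bound $\Theta^m(||V||,y)\geq 1-\const{const1:statement:lemm:density:section:DensityEstimatesApp}\delta-\const{const2:statement:lemm:density:section:DensityEstimatesApp}\rho^2$ obtained for all such $y$ by letting $\sigma\downarrow 0$ in \Cref{equa3statement:lemm:density:section:DensityEstimatesApp} (alternatively, upper semicontinuity from \Cref{coro:densLp:section:monotonicity} would also restore $\Theta^m(||V||,y)\geq 1$), and this substitution does not affect your constant bookkeeping.
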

\begin{proof}
Let us fix $\delta_0 \defeq \delta_0(m,p,K,\injec{r1}) > 0$ and $\rho_0 \defeq \rho_0(m,p,K,\injec{r1}) > 0$ given by \Cref{lemm:density:section:DensityEstimatesApp}. Suppose that $V \in \mathcal{AC}(\xi,\rho,\delta,p)$, for some $0 < \delta < \delta_0$ and for some $0 < \rho < \rho_0$. Consider $y \in \supp(||V||) \cap B_{d_g}(\xi,2\delta\rho)$ arbitrary.

Letting $\sigma \downarrow 0$ in \Cref{equa3statement:lemm:density:section:DensityEstimatesApp}, we have that
\begin{equation}\label{equa1proof:coro:poslemm:section:DensityEstimatesApp}
1 - \mathbf{C_4}\delta - \mathbf{C_5}\rho^2 \leq \Theta^m(||V||,y) \leq 1 + \mathbf{C_4}\delta + \mathbf{C_5}\rho^2 ,
\end{equation}
where $\mathbf{C_4} = \mathbf{C_4}(m,p) > 0$ and $\mathbf{C_5} = \mathbf{C_5}(K,m) > 0$. By \Cref{theo:densLpa:section:monotonicity} there exists $\rho_0 \defeq \rho_0(\injec{r1},K) > 0$ such that, for $0 < \sigma < (1-2\delta)\rho$, we have that
\begin{align}\label{equa2proof:coro:poslemm:section:DensityEstimatesApp}
&e^{\lambda((1-2\delta)\rho)}\dfrac{||V||(B_{d_g}(y,(1-2\delta)\rho))}{(1-2\delta)^m\rho^m} - e^{\lambda(\sigma)}\dfrac{||V||(B_{d_g}(y,\sigma))}{\sigma^m} \geq \nonumber \\
&\geq \int_{\Gr_m\left(B_{d_g}(y,(1-2\delta)\rho) \setminus B_{d_g}(y,\sigma)\right)} \dfrac{\left|\left| P_{S(x)^{\perp}}(\grad{g}(u_y)(x)) \right|\right|_{g_x}^2}{u_y(x)^m}  dV(x,S(x)),
\end{align}

Taking $\sigma \downarrow 0$ in \Cref{equa2proof:coro:poslemm:section:DensityEstimatesApp} and using the same approach adopted in the proof of \Cref{lemm:density:section:DensityEstimatesApp}, we obtain that
\footnotesize
\begin{align}\label{equa3proof:coro:poslemm:section:DensityEstimatesApp}
\int_{\Gr_m(B_{d_g}(y,(1-2\delta)\rho))} \dfrac{\left|\left| P_{S(x)^{\perp}}(\grad{g}(u_y)(x)) \right|\right|^2_{g_x}}{u_y(x)^m} dV(x,S) \leq 2\left(\mathbf{C_4}\delta + \mathbf{C_5}\rho^2\right)\omega_m
\end{align}
\normalsize

By \Cref{rema:rectifiableAC} we have that $V \in \mathbf{RV}_m(B_{d_g}(\xi,\delta\rho))$, which implies that $T_xV$ exists for $||V||$-a.e. $x \in \supp(||V||) \cap B_{d_g}(\xi,\delta\rho)$. Thus, by \Cref{equa3proof:coro:poslemm:section:DensityEstimatesApp} and Disintegration Lemma, we have that
\begin{equation}\label{equa4proof:coro:poslemm:section:DensityEstimatesApp}
\int_{B_{d_g}(y,(1-2\delta)\rho)} \dfrac{\left|\left| P_{(T_xV)^{\perp}}(\grad{g}(u_y)(x)) \right|\right|^2_{g_x}}{u_y(x)^m} d||V||(x) \leq 2\left(\mathbf{C_4}\delta + \mathbf{C_5}\rho^2\right)\omega_m.
\end{equation}

Let $\Sigma(x,T_xV) \subset M^n$ be the harmonic $m$-submanifold of $M^n$ at $(x,T_xV)$ for all $x \in \supp(||V||)$ where $T_xV$ exists, by \Cref{lemm:riemmanianadapt1:section:lemmas} there exists $\rho_0 \defeq \rho_0(K,\injec{r1}) > 0$ such that 
\footnotesize
\begin{equation}\label{equa5proof:coro:poslemm:section:DensityEstimatesApp}
\dfrac{1}{2}\left|\left| P_{(T_xV)^{\perp}}(u_y(x)\grad{g}(u_y)(x)) \right|\right|^2_{g_x} \leq \left(\dist_g^{\Sigma(x,T_xV)}(y)\right)^2 \leq 2\left|\left| P_{(T_xV)^{\perp}}(u_y(x)\grad{g}(u_y)(x)) \right|\right|^2_{g_x},
\end{equation}
\normalsize

for all $x \in B_{d_g}(y,(1-2\delta)\rho)$. 

Therefore, by \Cref{equa4proof:coro:poslemm:section:DensityEstimatesApp,equa5proof:coro:poslemm:section:DensityEstimatesApp}, we obtain that \Cref{equa2statement:coro:poslemm:section:DensityEstimatesApp} holds. To conclude the proof, we choose $\delta_0 \defeq \delta_0(m,K,\injec{r1},p) > 0$ and  $\rho_0 \defeq \rho_0(m,K,\injec{r1},p)>0$ as the minimum value ensuring that all the above estimates hold. Now suppose that $\rho < \sqrt{\delta}$. By \Cref{equa4proof:coro:poslemm:section:DensityEstimatesApp,equa5proof:coro:poslemm:section:DensityEstimatesApp} we conclude that \Cref{equa3statement:coro:poslemm:section:DensityEstimatesApp} holds.
\end{proof}
\subsection{Affine approximation}\label{section:affineapp}
\begin{rema}\label{rema:adjustment}
Let $m < p < \infty$ and $\xi \in M^n$. There exist $\delta_0 \defeq \delta_0(m,p,K,\injec{r1}) > 0$ and $\rho_0 \defeq \rho_0(m,p,K,\injec{r1}) > 0$ such that if $V \in \mathcal{AC}(\xi,\rho,\delta,p)$, for some $0 < \delta < \delta_0$ and for some $0 < \rho < \min\left\{\rho_0,\sqrt{\delta}\right\}$, then, for all $y \in \supp(||V||) \cap B_{d_g}(\xi,\delta\rho)$ and for all $\sigma \in ]0,4\delta\rho]$, we have that
\begin{equation}\label{eq1rm}
\dfrac{1}{2} \leq \dfrac{||V||\left(B_{d_g}(y,\sigma)\right)}{\omega_m\sigma^m} \leq 2,
\end{equation}

and
\begin{equation}\label{eq2rm}
\int_{B_{d_g}(y,2\sigma)} \left(\dist_g^{\Sigma(x,T_xV)}(y)\right)^{2} d||V||(x) \leq 2^{m+2}\sigma^{m+2}4\omega_m\mathbf{C}\delta,
\end{equation}

where $\mathbf{C} = \mathbf{C}(m,p,K) > 0$. In fact, let us fix $\delta_0 \defeq \delta_0(m,p,K,\injec{r1}) > 0$ and $\rho_0 \defeq \rho_0(m,p,K,\injec{r1}) > 0$ given by \Cref{lemm:density:section:DensityEstimatesApp}. Suppose that $V \in \mathcal{AC}(\xi,\rho,\delta,p)$ for some $0 < \delta < \delta_0$ and for some $0 < \rho < \min\left\{\rho_0,\sqrt{\delta}\right\}$. Consider $y \in \supp(||V||) \cap B_{d_g}(\xi,\delta\rho)$ and $\sigma \in ]0,4\delta\rho]$ arbitrary.

Taking $0 < \delta_0 \frac{1}{6}$ we have that $4\delta\rho \leq (1-2\delta)\rho$. Thus, by \Cref{lemm:density:section:DensityEstimatesApp}, we have that \Cref{eq1rm} holds.

On the other hand, note that for every $x \in B_{d_g}(y,2\sigma)$ we have
\begin{align*}
\left( \dist_g^{\Sigma(x,T_xV)}(y) \right)^{2} &\leq \dfrac{\left( \dist_g^{\Sigma(x,T_xV)}(y) \right)^{2}}{u_y(x)^{m+2}} 2^{m+1}\sigma^{m+2}.
\end{align*}
By integrating the above inequality over $B_{d_g}(y,2\sigma)$ with respect to $||V||$, we obtain
\begin{equation}\label{eqre1}
\int_{B_{d_g}(y,2\sigma)} \left( \dist_g^{\Sigma(x,T_xV)}(y) \right)^{2} d||V||(x) \leq \int_{B_{d_g}(y,2\sigma)} \dfrac{\left( \dist_g^{\Sigma(x,T_xV)}(y) \right)^{2}}{u_y(x)^{m+2}} 2^{m+1}\sigma^{m+2} d||V||(x).
\end{equation}

Taking  $0 < \delta_0 < \frac{1}{10}$ we have that $2(4\delta\rho) < (1-2\delta)\rho$. By \Cref{eqre1} and \Cref{coro:poslemm:section:DensityEstimatesApp} there exist $\delta_0 \defeq \delta_0(m,p,K,\injec{r1}) > 0$ and $\rho_0 \defeq \rho_0(m,p,K,\injec{r1}) > 0$ such that \Cref{eq2rm} holds.

To conclude the proof, we choose $\delta_0 \defeq \delta_0(m,K,\injec{r1},p) > 0$ and  $\rho_0 \defeq \rho_0(m,K,\injec{r1},p)>0$ as the minimum value ensuring that all the above estimates hold.
\end{rema}
\begin{lemm}\label{lemm:affine:section:DensityEstimatesApp}
Let $m < p < \infty$ and $\xi \in M^n$. There exist $\delta_0 \defeq \delta_0(m,p,K,\injec{r1}) > 0$ and $\rho_0 \defeq \rho_0(m,p,K,\injec{r1}) > 0$ such that if $V \in \mathcal{AC}(\xi,\rho,\delta,p)$, for some $0 < \delta < \delta_0$ and for some $0 < \rho < \min\left\{\rho_0,\sqrt{\delta}\right\}$, then, for all $y \in \supp(||V||) \cap B_{d_g}(\xi,\delta\rho)$ and for all $\sigma \in ]0,4\delta\rho]$, we have that 
\begin{equation}\label{equa3statement:lemm:affine:section:DensityEstimatesApp}
\sup\limits_{x \in \supp(||V||) \cap B_{d_g}(y,\sigma)} \dist_{g}^{\Sigma(y,S(y))}(x) \leq \mathbf{C}\delta^{\frac{1}{2(m+1)}}\sigma,
\end{equation}
for some $S(y) \in \Gr(m,T_yM^n)$, where $\Sigma(y,S(y)) \subset M^n$ is the harmonic $m$-submanifold of $M^n$ at $(y,S(y))$ and $\mathbf{C} = \mathbf{C}(m,p,K) > 0$.
\end{lemm}
\begin{proof}
Let us fix $\delta_0 \defeq \delta_0(m,p,K,\injec{r1}) > 0$ and $\rho_0 \defeq \rho_0(m,p,K,\injec{r1}) > 0$ given by \Cref{rema:adjustment}. Suppose that $V \in \mathcal{AC}(\xi,\rho,\delta,p)$ for some $0 < \delta < \delta_0$ and for some $0 < \rho < \min\left\{\rho_0,\sqrt{\delta}\right\}$. Consider $y \in \supp(||V||) \cap B_{d_g}(\xi,\delta\rho)$ and $\sigma \in ]0,4\delta\rho]$ arbitrary.

By \Cref{rema:adjustment} we have that
\begin{equation}\label{equa1.1proof:lemm:affine:section:DensityEstimatesApp}
\dfrac{1}{2} \leq \dfrac{||V||\left(B_{d_g}(y,\sigma)\right)}{\omega_m\sigma^m}  \leq 2.
\end{equation}

Recall the general principle that if $W \subset M^n$ is compact and $\eta > 0$, then any maximal pairwise disjoint collection of closed balls $\left\{\overline{B_{d_g}\left(w_i,\frac{\eta}{2}\right)}\right\}_{i=1,\ldots,N}$ with $w_i \in W$, for all $i=1,\ldots,N$, will automatically have the property that $W \subset \cup_{i=1}^{N}\overline{B_{d_g}(w_i,\eta)}$.

Let $\alpha \in ]0,1[$ arbitrary. Using the general principle with $\eta = \delta^{\alpha}\sigma$ we have pairwise disjoint closed balls $\left\{\overline{B_{d_g}\left(y_i.\frac{\delta^{\alpha}\sigma}{2}\right)}\right\}_{i=1,\ldots,N}$ with $y_i \in \supp(||V||) \cap \overline{B_{d_g}\left(y,\frac{\sigma}{4}\right)}$, for all $i = 1, \ldots, N$, for some $N \in \mathbb{N}$, such that
\begin{equation}\label{equa1.2proof:lemm:affine:section:DensityEstimatesApp}
\supp(||V||) \cap \overline{B_{d_g}\left(y,\dfrac{\sigma}{4}\right)} \subset \bigcup\limits_{i=1}^{N} \overline{B_{d_g}\left(y_i,\delta^{\alpha}\sigma\right)}.
\end{equation}

Let $i=1,\ldots, N$ be arbitrary. We claim that $y_i \in \supp(||V||) \cap B_{d_g}\left(\xi,2\delta\rho\right)$ and $\frac{\delta^{\alpha}\sigma}{2} \in ]0,(1-2\delta)\rho]$.

In fact, since $y_i \in \supp(||V||) \cap \overline{B_{d_g}\left(y,\frac{\sigma}{4}\right)}$, $\sigma \in ]0,4\delta\rho]$, and $y \in \supp(||V||) \cap B_{d_g}(\xi,\delta\rho)$ we have that
\begin{align*}
d_g(y_i,\xi) \leq d_g(y_i,y) + d_g(y,\xi) \leq \dfrac{\sigma}{4} + \delta\rho < \dfrac{4\delta\rho}{4} + \delta\rho = 2\delta\rho.
\end{align*}

Furthermore, since $\sigma \in ]0,4\delta\rho]$, there exists $\delta_0 > 0$ such that $\frac{\delta^{\alpha}\sigma}{2} < (1-2\delta)\rho$. Thus, by \Cref{lemm:density:section:DensityEstimatesApp}, we have that
\begin{align}\label{equa2proof:lemm:affine:section:DensityEstimatesApp}
\dfrac{\omega_m\delta^{m\alpha}\sigma^m}{2^{m+1}} \leq ||V||\left(B_{d_g}\left(y_i,\dfrac{\delta^{\alpha}\sigma}{2}\right)\right) \leq ||V||\left(\overline{B_{d_g}\left(y_i,\dfrac{\delta^{\alpha}\sigma}{2}\right)}\right).
\end{align}

On the other hand, since $\overline{B_{d_g}\left(y_i,\frac{\delta^{\alpha}\sigma}{2}\right)} \cap \overline{B_{d_g}\left(y_j,\frac{\delta^{\alpha}\sigma}{2}\right)} = \emptyset$ for all $i,j = 1,\ldots, N$ with $i \neq j$, we have, by \Cref{equa2proof:lemm:affine:section:DensityEstimatesApp}, that
\begin{equation}\label{equa3proof:lemm:affine:section:DensityEstimatesApp}
\dfrac{N\omega_m\delta^{m\alpha}\sigma^m}{2^{m+1}} \leq \sum\limits_{i=1}^{N}||V||\left(\overline{B_{d_g}\left(y_i,\dfrac{\delta^{\alpha}\sigma}{2}\right)}\right) = ||V||\left(\bigcup\limits_{i=1}^N\overline{B_{d_g}\left(y_i,\dfrac{\delta^{\alpha}\sigma}{2}\right)}\right).
\end{equation}

Let $i = 1, \ldots, N$ be arbitrary. We claim that $\overline{B_{d_g}\left(y_i,\frac{\delta^{\alpha}\sigma}{2}\right)} \subset B_{d_g}(y,\sigma)$. Which implies that $\cup_{i=1}^N\overline{B_{d_g}\left(y_i,\frac{\delta^{\alpha}\sigma}{2}\right)} \subset B_{d_g}(y,\sigma)$.

In fact, let $x \in \overline{B_{d_g}\left(y_i,\frac{\delta^{\alpha}\sigma}{2}\right)}$. Since $y_i \in \supp(||V||) \cap \overline{B_{d_g}\left(y,\frac{\sigma}{4}\right)}$ we have that
\begin{align*}
d_g\left(x,y\right) \leq d_g(x,y_i) + d_g(y_i,y) \leq \dfrac{\delta^{\alpha}\sigma}{2} + \dfrac{\sigma}{4} = \sigma\left(\dfrac{\delta^{\alpha}}{2} + \dfrac{1}{4}\right) < \sigma.
\end{align*}

Therefore, by \Cref{equa1.1proof:lemm:affine:section:DensityEstimatesApp,equa3proof:lemm:affine:section:DensityEstimatesApp}, we have that 
\begin{align}\label{equa4proof:lemm:affine:section:DensityEstimatesApp}
N &\leq \dfrac{2^{m+2}}{\delta^{m\alpha}}.
\end{align}

Since $y_i \in \supp(||V||) \cap B_{d_g}\left(\xi,2\delta\rho\right)$ and $\sigma \in ]0,4\delta\rho]$ by \Cref{rema:adjustment} (applied to $y_i$ and $\sigma$) we have that
\begin{equation}\label{equa5proof:lemm:affine:section:DensityEstimatesApp}
\int_{B_{d_g}(y_i,2\sigma)} \left(\dist_g^{\Sigma(x,T_xV)}(y_i)\right)^2 d||V||(x) \leq 2^{m+2}\sigma^{m+2} 4\omega_m\mathbf{C_1}\delta
\end{equation}
holds for all $i = 1,\ldots, N$, where $\mathbf{C_1} = \mathbf{C_1}(m,p,K) > 0$. We claim that $B_{d_g}(y,\sigma) \subset B_{d_g}(y_i,2\sigma)$ for all $i = 1,\ldots, N$.

In fact, let $i = 1,\ldots, N$ arbitrary, and $x \in B_{d_g}(y,\sigma)$, since $y_i \in \supp(||V||) \cap \overline{B_{d_g}\left(y,\frac{\sigma}{4}\right)}$, we have that
\begin{displaymath}
d_g(x,y_i) \leq d_g(x,y) + d_g(y,y_i) \leq \sigma + \dfrac{\sigma}{4} < 2\sigma.
\end{displaymath}

Therefore, by \Cref{equa4proof:lemm:affine:section:DensityEstimatesApp,equa5proof:lemm:affine:section:DensityEstimatesApp}, we have that
\footnotesize
\begin{align*}
\int_{B_{d_g}(y,\sigma)} \sum\limits_{i=1}^N \left(\dist_g^{\Sigma(x,T_xV)}(y_i)\right)^2 d||V||(x) &\leq 2^{2m+6}\sigma^{m+2}\omega_m\dfrac{\mathbf{C_1}\delta}{\delta^{m\alpha}}.
\end{align*}
\normalsize

Thus, for any given $\lambda \geq 1$, we have that
\begin{equation}\label{equa6proof:lemm:affine:section:DensityEstimatesApp}
\sum\limits_{i=1}^N \left(\dist_g^{\Sigma(x,T_xV)}(y_i)\right)^2 \leq 2^{2m+6}\sigma^{m+2}\omega_m\dfrac{\mathbf{C_1}\delta}{\delta^{m\alpha}} \lambda
\end{equation}

holds for $||V||$-a.e. $y \in \supp(||V||) \cap B_{d_g}(y,\sigma) \cap A$ such that $||V||(A) \geq \frac{1}{\lambda}$.

Since $\delta^{\alpha}\sigma < \sigma$ we have by \Cref{equa1.1proof:lemm:affine:section:DensityEstimatesApp} that  $||V||\left(B_{d_g}\left(y,\delta^{\alpha}\sigma\right)\right) \geq \frac{\omega_m\delta^{m\alpha}\sigma^m}{2}$, and we can select $\lambda = \frac{2}{\omega_m\delta^{m\alpha}\sigma^m}$ in such a way that \Cref{equa6proof:lemm:affine:section:DensityEstimatesApp} holds for some $x_0 \in \supp(||V||) \cap B_{d_g}\left(y,\delta^{\alpha}\sigma\right)$.

So we have shown that there is $x_0 \in \supp(||V||) \cap B_{d_g}\left(y,\delta^{\alpha}\sigma\right)$ such that 
\begin{align*}
\dist_g^{\Sigma(x_0,T_{x_0}V)}(y_i) &\leq 2^{m + \frac{7}{2}}\sigma \sqrt{\mathbf{C_1}} \delta^{\frac{1}{2}-m\alpha}
\end{align*}

holds for all $i = 1,\ldots, N$. Now let $\Sigma(x_0,T_{x_0}V)_{y}$ be the Fermi transport of $\Sigma(x_0,T_{x_0}V)$ from $x_0$ to $y$ and note that for all $i=1, \ldots, N$, we have that
\begin{align}\label{equa7proof:lemm:affine:section:DensityEstimatesApp}
\dist_g^{\Sigma(x_0,T_{x_0}V)_{y}}(y_i) \leq \dist_g^{\Sigma(x_0,T_{x_0}V)}(y_i) + d_g(x_0,y).
\end{align}

Define $S(y) \defeq T_y\Sigma(x_0,T_{x_0}V)_y$ and $\Sigma(y,S)$ the harmonic $m$-submanifold of $M$ at $(y,S(y))$. By \Cref{lemm:fermitransport:section:lemmas}, for all $i = 1,\ldots, N$, we have
\begin{align}\label{equa8proof:lemm:affine:section:DensityEstimatesApp}
\dist_g^{\Sigma(y,S(y))}(y_i) \leq \dist_g^{\Sigma(x_0,T_{x_0}V)_{y}}(y_i) + o(1).
\end{align}

Therefore, by \Cref{equa7proof:lemm:affine:section:DensityEstimatesApp,equa8proof:lemm:affine:section:DensityEstimatesApp}, we have for all $i = 1, \ldots, N$
\begin{align*}
\dist_g^{\Sigma(y,S(y))}(y_i) \leq 2^{m + \frac{7}{2}}\sigma \sqrt{\mathbf{C_1}} \delta^{\frac{1}{2}-m\alpha} + \delta^{\alpha}\sigma + \delta^{\alpha}\sigma
\end{align*}

Furthermore, for all $i=1,\ldots, N$ and all $x \in B_{d_g}(y_i,\delta^{\alpha}\sigma)$ we have
\begin{align*}
\dist_g^{\Sigma(y,S(y))}(x) \leq \delta^{\alpha}\sigma + 2^{m + \frac{7}{2}}\sigma \sqrt{\mathbf{C_1}} \delta^{\frac{1}{2}-m\alpha} + 2\delta^{\alpha}\sigma.
\end{align*}

Then, selecting $\alpha$ such that $\frac{1}{2} - m\alpha = \alpha$, that is, $\alpha = \frac{1}{2(m+1)}$, we have, for all $i=1,\ldots, N$ and all $x \in B_{d_g}(y_i,\delta^{\alpha}\sigma)$, that
\begin{align*}
\dist_g^{\Sigma(y,S(y))}(x) \leq  \left(3+2^{m + \frac{7}{2}} \sqrt{\mathbf{C_1}}\right)\delta^{\frac{1}{2(m+1)}}\sigma
\end{align*}

Therefore, by \Cref{equa1.2proof:lemm:affine:section:DensityEstimatesApp} and the fact that $\supp(||V||) \cap B_{d_g}\left(y,\sigma\right) \subset \supp(||V||) \cap \overline{B_{d_g}\left(y,\sigma\right)}$, we have that \Cref{equa3statement:lemm:affine:section:DensityEstimatesApp} holds. To conclude the proof, we choose $\delta_0 \defeq \delta_0(m,K,\injec{r1},p) > 0$ and  $\rho_0 \defeq \rho_0(m,K,\injec{r1},p)>0$ as the minimum value ensuring that all the above estimates hold.
\end{proof}
\begin{coro}\label{coro:Remark212:section:DensityEstimatesApp}
Let $m < p < \infty$ and $\xi \in M^n$. There exist $\delta_0 \defeq \delta_0(m,p,K,\injec{r1})>0$  and $\rho_0 \defeq \rho_0(m,p,K,\injec{r1}) >0$ such that if $V \in \mathcal{AC}(\xi,\rho,\delta,p)$, for some $0 < \delta < \delta_0$ and for some $0 < \rho < \min \left\{\rho_0,\sqrt{\delta}\right\}$, then, for all $y \in \supp(||V||) \cap B_{d_g}(\xi,\delta\rho)$, for all $\sigma \in ]0,4\delta\rho]$, and for all $\alpha \in ]0,1[$, we have that 
\begin{equation}\label{equa3statement:coro:Remark212:section:DensityEstimatesApp}
\sup\limits_{x \in \supp(||V||) \cap B_{d_g}(y,\sigma)} \dist_{g}^{\Sigma(y,S(y))}(x) \leq \mathbf{C_1}\delta^{\frac{1}{2(m+1)}}\sigma,
\end{equation}
where $\mathbf{C_1} = \mathbf{C_1}(m,p,K) > 0$ and
\begin{equation}\label{equa4statement:coro:Remark212:section:DensityEstimatesApp}
E(y,\alpha\sigma,S(y),V) \leq \mathbf{C_2}\delta^{\frac{1}{m+1}}
\end{equation}

for some $S(y) \in \Gr(m,T_yM^n)$, where $\Sigma(y,S(y)) \subset M^n$ is the harmonic $m$-submanifold of $M^n$ at $(y,S(y))$ and $\mathbf{C_2} = \mathbf{C_2}(m,n,p,\alpha,\rho,K) > 0$.
\end{coro}
\begin{proof}
Let us fix $\delta_0 \defeq \delta_0(m,p,K,\injec{r1})>0$  and $\rho_0 \defeq \rho_0(m,p,K,\injec{r1}) >0$ given by \Cref{lemm:affine:section:DensityEstimatesApp}. Suppose that $V \in \mathcal{AC}(\xi,\rho,\delta,p)$, for some $0 < \delta < \delta_0$ and for some $0 < \rho < \min \left\{\rho_0,\sqrt{\delta}\right\}$. Consider $y \in \supp(||V||) \cap B_{d_g}(\xi,\delta\rho)$,  $\sigma \in ]0,4\delta\rho]$, and $\alpha \in ]0,1[$ arbitrary.

By \Cref{lemm:affine:section:DensityEstimatesApp}, we have that
\begin{equation}\label{equa1proof:coro:Remark212:section:DensityEstimatesApp}
\sup\limits_{x \in \supp(||V||) \cap B_{d_g}(y,\sigma)} \dist_{g}^{\Sigma(y,S(y))}(x) \leq \mathbf{C_3}\delta^{\frac{1}{2(m+1)}}\sigma,
\end{equation}

for some $S(y) \in \Gr(m,T_yM^n)$, where $\Sigma(y,S(y)) \subset M^n$ is the harmonic $m$-submanifold of $M^n$ at $(y,S(y))$ and $\mathbf{C_3} = \mathbf{C_3}(m,p,K) > 0$.

By \Cref{rema:boundsratio:section:DensityEstimatesApp} $V$ has $L^p_{\mathrm{loc}}$-bounded generalized mean curvature vector in $B_{d_g}(\xi,\rho)$. Furthermore, taking $0 < \delta_0 < \frac{1}{5}$ we have that $4\delta\rho < (1-\delta)\rho$. Thus, by \Cref{lemm:excess:section:DensityEstimatesApp}, we have that
\footnotesize
\begin{align}\label{equa2proof:coro:Remark212:section:DensityEstimatesApp}
E\left(y,\alpha\sigma,S(y),V\right) &\leq \dfrac{\mathbf{C_4}}{\sigma^m} \int_{B_{d_g}(y,\sigma)} \left(\dfrac{\dist_g^{\Sigma(y,S(y))}(x)}{\sigma}\right)^2  d||V||(x) + \dfrac{2\alpha^{-m}}{\sigma^{m-2}} \int_{B_{d_g}(y,\sigma)} \left|\left| \overrightarrow{H_g}(x)\right|\right|_{g_x}^2  d||V||(x) \nonumber \\
&+ \dfrac{4m\rho\alpha^{-m}}{\sigma^m} \int_{B_{d_g}(y,\sigma)} \dfrac{\dist_g^{\Sigma(y,S(y))}(x)}{\sigma} \sum\limits_{j=1}^m \left|\kappa_j(\proj{\Sigma}(x))\right| d||V||(x) \nonumber\\
&+ \dfrac{8m\rho^2\alpha^{-m}}{\sigma^m} \int_{B_{d_g}(y,\sigma)} \left(\dfrac{\dist_g^{\Sigma(y,S(y))}(x)}{\sigma}\right)^2 \sum\limits_{j=1}^m \kappa_j(\proj{\Sigma}(x))^2  d||V||(x),
\end{align}
\normalsize
where $\mathbf{C_4} = \mathbf{C_4}(m,n,\alpha,K,\rho) > 0$. Therefore, by \Cref{equa1proof:coro:Remark212:section:DensityEstimatesApp,equa2proof:coro:Remark212:section:DensityEstimatesApp} and \Cref{lemm:harmoni:section:lemmas}, we conclude that
\tiny
\begin{align}\label{equa3proof:coro:Remark212:section:DensityEstimatesApp}
E\left(y,\alpha\sigma,S(y),V\right) &\leq \left(\mathbf{C_4}\mathbf{C_3}^2 + 4m\rho\alpha^{-m}\mathbf{C_3} + 8m^2\rho^2\alpha^{-m}\mathbf{C_3}^2\right) \delta^{\frac{1}{m+1}} \dfrac{||V||(B_{d_g}(y,\sigma))}{\sigma^m} \nonumber \\
&+ \dfrac{2\alpha^{-m}}{\sigma^{m-2}} \int_{B_{d_g}(y,\sigma)} \left|\left| \overrightarrow{H_g}(x)\right|\right|_{g_x}^2  d||V||(x).
\end{align}
\normalsize

Since $\sigma < 4\delta\rho$ taking $0 < \delta_0 < \frac{1}{6}$ we have that $\sigma < (1-2\delta)\rho$. On the other hand, $y \in B_{d_g}(\xi,\delta\rho) \subset B_{d_g}(\xi,2\delta\rho)$. Thus, by \Cref{lemm:density:section:DensityEstimatesApp}, there exists $\delta_0 \defeq \delta_0(m,p,K,\injec{r1}) > 0$ and $\rho_0 \defeq \rho_0(m,p,K,\injec{r1}) > 0$ such that  
\begin{equation}\label{equa4proof:coro:Remark212:section:DensityEstimatesApp}
\dfrac{||V||\left(B_{d_g}(y,\sigma)\right)}{\sigma^m} \leq 2\omega_m.
\end{equation}

Denote by $h(x) \defeq \left|\left| \overrightarrow{H_g}(x)\right|\right|_{g_x}^2$. By Hölder inequality, the fact that $B_{d_g}(y,\sigma) \subset B_{d_g}(\xi, \rho)$ and $1 \leq m < p$, we have that 
\begin{equation}\label{equa5proof:coro:Remark212:section:DensityEstimatesApp}
\dfrac{2\alpha^{-m}}{\sigma^{m-2}} \int_{B_{d_g}(y,\sigma)} \left|\left| \overrightarrow{H_g}(x)\right|\right|_{g_x}^2  d||V||(x) \leq 2^{\frac{6p-4m-2}{p}}\alpha^{-m}(\omega_m)^{\frac{p-2}{p}}\delta^{\frac{1}{m+1}}.
\end{equation}

Therefore, by \Cref{equa3proof:coro:Remark212:section:DensityEstimatesApp,equa4proof:coro:Remark212:section:DensityEstimatesApp,equa5proof:coro:Remark212:section:DensityEstimatesApp}, we have that \Cref{equa4statement:coro:Remark212:section:DensityEstimatesApp} holds. To conclude the proof, we choose $\delta_0 \defeq \delta_0(m,K,\injec{r1},p) > 0$ and  $\rho_0 \defeq \rho_0(m,K,\injec{r1},p)>0$ as the minimum value ensuring that all the above estimates hold.
\end{proof}
\subsection{Lipschitz approximation}\label{section:lipschitzapp}
\begin{defi}\label{defi:normalgraph:section:DensityEstimatesApp}
Let $\Sigma \subset M^n$ be an $m$-dimensional of class $\C^2$ submanifold of $M^n$, $\xi \in \Sigma$,\\ $\left(\mathrm{Tube}(\Sigma,\injec{r1}),\{x_i\}_{i=1}^n\right)$ a Fermi coordinate chart for $\Sigma$ at $\xi$, and 
\begin{align*}
f : \Sigma &\to \mathbb{R}^{n-m} \nonumber \\
x &\mapsto \left(f^{m+1}(x),\ldots,f^n(x)\right),
\end{align*}

where $f^j:\Sigma \to \mathbb{R}$ for all $j=m+1,\ldots,n$. We define the normal graph of $f$ over $\Sigma$ by
\begin{displaymath}
\normalgraph{\Sigma}{f} \defeq \left\{\exp_{x}\left(\sum_{j=m+1}^n f^j(x)\dfrac{\partial}{\partial x_j}(x)\right) : x \in \Sigma\right\}.
\end{displaymath}
\end{defi}
\begin{lemm}\label{lemm:lipschitz:section:DensityEstimatesApp}
Let $m < p < \infty$, $\xi \in M^n$ and $0 < L \leq 1$. There exists $\delta_0 \defeq \delta_0(m,n,p,K,\injec{r1})>0$ and $\rho_0 \defeq \rho_0(m,K,\injec{r1},p) > 0$ such that if $V \in \mathcal{AC}(\xi,\rho,\delta,p)$, for some $0 < \delta < (\delta_0L)^{2(m+1)}$ and for some $0 < \rho < \min \left\{ \rho_0 , \sqrt{\delta} \right\}$, then, for all $y \in \supp(||V||) \cap B_{d_g}\left(\xi,\frac{\delta\rho}{2}\right)$, for all $\sigma \in ]0,2\delta\rho]$ and for all $S(y) \in \Gr(m,T_yM^n)$ such that \Cref{equa3statement:coro:Remark212:section:DensityEstimatesApp,equa4statement:coro:Remark212:section:DensityEstimatesApp} hold, we have that there exists a Lipschitz function $\mathbf{f}_{S(y)} : B_{d_g}(y,\sigma) \cap \Sigma(y,S(y)) \to \mathbb{R}^{n-m}$ such that
\begin{equation}\label{equa3statement:lemm:lipschitz:section:DensityEstimatesApp}
\mathrm{Lip}(\mathbf{f}_{S(y)}) \leq L,
\end{equation}
\begin{equation}\label{equa4statement:lemm:lipschitz:section:DensityEstimatesApp}
\sup_{x \in \Sigma(y,S(y)) \cap B_{d_g}(y,2\delta\sigma)} \left|\left|\mathbf{f}_{S(y)}(x)\right|\right|_{\euclid} \leq \mathbf{C_1} \delta^{\frac{1}{2(m+1)}}\sigma,
\end{equation}
where $\mathbf{C_1} = \mathbf{C_1}(m,p,K) > 0$,
\footnotesize
\begin{equation}\label{equa5statement:lemm:lipschitz:section:DensityEstimatesApp}
\dfrac{||V||\mres B_{d_g}(y,\sigma)\left(\supp(||V||) \setminus \normalgraph{\Sigma(y,S(y))}{\mathbf{f}_{S(y)}}\right)}{\sigma^m} \leq \dfrac{\mathbf{C_2}}{L^2}E(y,\sigma,S(y),V) \leq \dfrac{\mathbf{C_3}}{L^2}\delta^{\frac{1}{m+1}},
\end{equation}
\normalsize
where $\mathbf{C_2} = \mathbf{C_2}(m,p,K,\injec{r1}) > 0$ and $\mathbf{C_3} = \mathbf{C_3}(m,n,p,K,\injec{r1}) > 0$, and
\footnotesize
\begin{equation}\label{equa6statement:lemm:lipschitz:section:DensityEstimatesApp}
\dfrac{\mathcal{H}^m_{d_g}\mres B_{d_g}(y,\sigma)\left(\normalgraph{\Sigma(y,S(y))}{\mathbf{f}_{S(y)}} \setminus \supp(||V||)\right)}{\sigma^m}  \leq \dfrac{\mathbf{C_4}}{L^2}E(y,\sigma,S(y),V) \leq \dfrac{\mathbf{C_5}}{L^2}\delta^{\frac{1}{m+1}},
\end{equation}
\normalsize

where where $\mathbf{C_4} = \mathbf{C_4}(m,n,p,K,\injec{r1}) > 0$ and $\mathbf{C_5} = \mathbf{C_5}(m,n,p,K,\injec{r1}) > 0$.
\end{lemm}
\begin{proof}
Let $\delta_0 \defeq \delta_0(m,p,K,\injec{r1}) > 0$ and $\rho_0 \defeq \rho_0(m,p,K,\injec{r1}) > 0$ given by \Cref{lemm:density:section:DensityEstimatesApp}. Suppose that $V \in \mathcal{AC}(\xi,\rho,\delta,p)$, for some $0 < \delta < (\delta_0L)^{2(m+1)}$ and for some $0 < \rho < \min \left\{ \rho_0 , \sqrt{\delta} \right\}$. Consider $y \in \supp(||V||) \cap B_{d_g}\left(\xi,\frac{\delta\rho}{2}\right)$ and $\sigma \in ]0,2\delta\rho]$ arbitrary, and $S(y) \in \Gr(m,T_yM^n)$ such that \Cref{equa3statement:coro:Remark212:section:DensityEstimatesApp,equa4statement:coro:Remark212:section:DensityEstimatesApp} hold. Define
\tiny
\begin{equation}\label{equa2proof:lemm:lipschitz:section:DensityEstimatesApp}
G \defeq \left\{ z \in \supp(||V||) \cap B_{d_g}(y,2\delta\sigma) : \sup\limits_{t \in ]0,(1 - 2\delta)\sigma]} \dfrac{1}{t^m}\int_{B_{d_g}(z,t)} \left|\left| P_{T_xV} - P_{T_x\Sigma(y,S(y))_x} \right|\right|_{\mathrm{HS}_{g_x}}^2 d||V||(x) \leq \delta_0^2L^2\right\}.
\end{equation}
\normalsize

Thus, if $z \in \supp(||V||) \cap B_{d_g}(y,2\delta\sigma) \setminus G$ then there exists $t \in ]0,(1 - 2\delta)\sigma]$ such that
\begin{equation}\label{equa3proof:lemm:lipschitz:section:DensityEstimatesApp}
\delta_0^2L^2 < \dfrac{1}{t^m}\int_{B_{d_g}(z,t)} \left|\left| P_{T_xV} - P_{T_x\Sigma(y,S(y))_x} \right|\right|_{\mathrm{HS}_{g_x}}^2 d||V||(x).
\end{equation}

By the five-times covering lemma we can pick pairwise disjoint balls $B_{d_g}(z_j,t_j)$ such that \Cref{equa3proof:lemm:lipschitz:section:DensityEstimatesApp} holds for $t = t_j \in ]0,(1-2\delta)\sigma]$ and $z = z_j \in \supp(||V||) \cap B_{d_g}(y,2\delta\sigma) \setminus G$, and such that 
\begin{equation}\label{equa4proof:lemm:lipschitz:section:DensityEstimatesApp}
\supp(||V||) \cap B_{d_g}(y,2\delta\sigma) \setminus G \subset \bigcup_{j}B_{d_g}(z_j,5t_j).
\end{equation}

By \Cref{equa3proof:lemm:lipschitz:section:DensityEstimatesApp,equa4proof:lemm:lipschitz:section:DensityEstimatesApp} and since $\delta_0 \defeq \delta_0(m,p,K,\injec{r1}) > 0$ and $\rho_0 \defeq \rho_0(m,p,K,\injec{r1}) > 0$ were given by \Cref{lemm:density:section:DensityEstimatesApp}, we have that
\footnotesize
\begin{equation}\label{equa5proof:lemm:lipschitz:section:DensityEstimatesApp}
||V||\left(\supp(||V||) \cap B_{d_g}(y,2\delta\sigma) \setminus G\right) < \dfrac{2\omega_m5^m}{\delta_0^2L^2} \int_{B_{d_g}(y,\sigma)} \left|\left| P_{T_xV} - P_{T_x\Sigma(y,S(y))_x} \right|\right|_{\mathrm{HS}_{g_x}}^2 d||V||(x).
\end{equation}
\normalsize

We now claim that $G$ is the normal graph Lipschitz function over $\Sigma(y,S(y))$. To check this, let $z_1,z_2$ be distinct points of $G$. Let $\gamma : [0,\injec{r1}[ \to M^n$ be the unique minimizing geodesic between $z_1$ and $z_2$ such that $\gamma(0) = z_1$ and $\gamma(t) = z_2$, where $t \defeq d_g(z_1,z_2)$.

Taking $0 < \delta_0 < \frac{1}{6}$ we have that $4\delta\sigma < (1-2\delta)\sigma$.  Since $z_1,z_2 \in G \subset B_{d_g}(y,2\delta\sigma)$, we obtain that
\begin{equation}\label{equa8proof:lemm:lipschitz:section:DensityEstimatesApp}
t = d_g(z_1,z_2) \leq d_g(z_1,y) + d_g(y,z_2) < 2\delta\sigma + 2\delta\sigma = 4\delta\sigma < (1-2\delta)\sigma.
\end{equation}

Since $z_1 \in G$ we have that 
\begin{equation}\label{equa9proof:lemm:lipschitz:section:DensityEstimatesApp}
\dfrac{1}{t^m}\int_{B_{d_g}\left(z_1,\frac{t}{2}\right)} \left|\left| P_{T_xV} - P_{T_x\Sigma(y,S)_x} \right|\right|_{\mathrm{HS}_{g_x}}^2 d||V||(x) \leq \dfrac{1}{2^m}\delta_0^2L^2.
\end{equation}

Since $\sigma \in ]0,2\delta\rho]$ there exists $\delta_0 > 0$ such that $\sigma < 2\delta\rho < \frac{\rho}{5}$. On the other hand, since $z_1 \in B_{d_g}(y,2\delta\rho)$ and $y \in B_{d_g}\left(\xi,\frac{\delta\rho}{2}\right)$ we have that
\begin{displaymath}
d_g(z_1,\xi) \leq d_g(z_1,y) + d_g(y,\xi) < 2\delta\sigma + \frac{\delta\rho}{2} < \frac{2\delta\rho}{5} +  \frac{\delta\rho}{2} = \dfrac{9}{10}\delta\rho < \delta\rho.
\end{displaymath}

Therefore, $z_1 \in B_{d_g}(\xi,\delta\rho)$, and $t < 4\delta\sigma < 4\delta\rho$. By \Cref{coro:Remark212:section:DensityEstimatesApp} there exists $\delta_0 \defeq \delta_0(m,p,K,\injec{r1}) > 0$ and $\rho_0 \defeq \rho_0(m,p,K,\injec{r1}) > 0$ such that 
\begin{align}\label{equa10proof:lemm:lipschitz:section:DensityEstimatesApp}
\dfrac{1}{t^m}\int_{B_{d_g}\left(z_1,\frac{t}{2}\right)}\left|\left| P_{T_xV} - P_{T_x\Sigma(z_1,S(z_1))_x} \right|\right|_{\mathrm{HS}_{g_x}}^2 d||V||(x) \leq \dfrac{1}{2^m} \mathbf{C_6} \delta_0^2L^2,
\end{align}
for some $S(z_1) \in \Gr(m,T_{z_1}M^n)$, where $\mathbf{C_6} = \mathbf{C_6}(m,n,p,K,\injec{r1}) > 0$. On the other hand, fix $s \in [0,t]$ arbitrarily. By \Cref{lemm:riemmanianadapt2:section:lemmas} we have, for all $x \in \supp(||V||) \cap B_{d_g}\left(z_1,\frac{t}{2}\right)$, that
\footnotesize
\begin{align}\label{equa11proof:lemm:lipschitz:section:DensityEstimatesApp}
\dfrac{1}{2}\left|\left| P_{T_{\gamma(s)}\Sigma(y,S(y))_{\gamma(s)}} - P_{T_{\gamma(s)}\Sigma(z_1,S(z_1))_{\gamma(s)}} \right|\right|_{\mathrm{HS}_{g_{\gamma(s)}}}^2 &\leq \left|\left| P_{T_x\Sigma(y,S(y))_x} - P_{T_x\Sigma(z_1,S(z_1))_x}\right|\right|_{\mathrm{HS}_{g_x}}^2 \nonumber \\
&\leq 2 \left|\left| P_{T_{\gamma(s)}\Sigma(y,S(y))_{\gamma(s)}} - P_{T_{\gamma(s)}\Sigma(z_1,S(z_1))_{\gamma(s)}} \right|\right|_{\mathrm{HS}_{g_{\gamma(s)}}}^2.
\end{align}
\normalsize

Then, by \Cref{equa11proof:lemm:lipschitz:section:DensityEstimatesApp} and \Cref{lemm:density:section:DensityEstimatesApp} there exists $\delta_0 \defeq \delta_0(m,p,K,\injec{r1}) > 0$ and $\rho_0 \defeq \rho_0(m,p,K,\injec{r1}) > 0$ such that
\begin{align}\label{equa12proof:lemm:lipschitz:section:DensityEstimatesApp}
&\dfrac{1}{t^m}\int_{B_{d_g}\left(z_1,\frac{t}{2}\right)}\left|\left| P_{T_x\Sigma(y,S(y))_x} - P_{T_x\Sigma(z_1,S(z_1))_x}\right|\right|_{\mathrm{HS}_{g_x}}^2 d||V||(x) \nonumber \\
&\geq  \dfrac{\omega_m}{2^{m+2}}\left|\left| P_{T_{\gamma(s)}\Sigma(y,S(y))_{\gamma(s)}} - P_{T_{\gamma(s)}\Sigma(z_1,S(z_1))_{\gamma(s)}} \right|\right|_{\mathrm{HS}_{g_{\gamma(s)}}}^2.
\end{align}

For all $x \in B_{d_g}(y,\sigma)$ where $T_xV$ exists we have, by the squared triangle inequality, that 
\begin{align}\label{equa13proof:lemm:lipschitz:section:DensityEstimatesApp}
\left|\left| P_{T_x\Sigma(y,S(y))_x} - P_{T_x\Sigma(z_1,S(z_1))_x}\right|\right|_{\mathrm{HS}_{g_x}}^2 &\leq 2\left|\left| P_{T_x\Sigma(y,S(y))_x} - P_{T_xV}\right|\right|_{\mathrm{HS}_{g_x}}^2 \nonumber \\
&+ 2\left|\left| P_{T_xV} - P_{T_x\Sigma(z_1,S(z_1))_x}\right|\right|_{\mathrm{HS}_{g_x}}^2.
\end{align}

By \Cref{equa8proof:lemm:lipschitz:section:DensityEstimatesApp} we have that  $\frac{t}{2} < t < (1-2\delta)\sigma$, which implies, since $z_1 \in G$, with \Cref{equa2proof:lemm:lipschitz:section:DensityEstimatesApp,equa12proof:lemm:lipschitz:section:DensityEstimatesApp,equa13proof:lemm:lipschitz:section:DensityEstimatesApp}, that
\tiny
\begin{align}\label{equa15proof:lemm:lipschitz:section:DensityEstimatesApp}
\left|\left|P_{\left(T_{\gamma(s)}\Sigma(y,S(y))_{\gamma(s)}\right)^{\perp}}\left(\gamma'(s)\right)\right|\right|_{g_{\gamma(s)}} \leq \left|\left|P_{\left(T_{\gamma(s)}\Sigma(z_1,S(z_1))_{\gamma(s)}\right)^{\perp}}\left(\gamma'(s)\right)\right|\right|_{g_{\gamma(s)}} +\sqrt{\dfrac{8\left(\mathbf{C_7}+ 1\right)}{\omega_m}} \delta_0L \left|\left|\gamma'(s) \right|\right|_{g_{\gamma(s)}},
\end{align}
\normalsize
where $\mathbf{C_7} = \mathbf{C_7}(m,n,p,K,\injec{r1}) > 0$. Integrating \Cref{equa15proof:lemm:lipschitz:section:DensityEstimatesApp} in $[0,t]$ with respect to $ds$ we obtain that
\begin{align}\label{equa16proof:lemm:lipschitz:section:DensityEstimatesApp}
\int_0^t \left|\left|P_{\left(T_{\gamma(s)}\Sigma(y,S(y))_{\gamma(s)}\right)^{\perp}}\left(\gamma'(s)\right)\right|\right|_{g_{\gamma(s)}} ds \leq \left(\mathbf{C_8} +\sqrt{\dfrac{8\left(\mathbf{C_7}+ 1\right)}{\omega_m}} \right)\delta_0 L d_g(z_1,z_2),
\end{align}
where $\mathbf{C_8} = \mathbf{C_8}(m,p,K) > 0$. On the other hand
\begin{align}\label{equa17proof:lemm:lipschitz:section:DensityEstimatesApp}
d_g(z_1,z_2) \leq \int_0^t \left|\left| P_{\left(T_{\gamma(s)}\Sigma(y,S(y))_{\gamma(s)}\right)^{\perp}}\left(\gamma'(s)\right)\right|\right|_{g_{\gamma(s)}} ds + d_g\left(\proj{\Sigma(y,S(y))}(z_1),\proj{\Sigma(y,S(y))}(z_2)\right).
\end{align}

By \Cref{equa16proof:lemm:lipschitz:section:DensityEstimatesApp,equa17proof:lemm:lipschitz:section:DensityEstimatesApp}, we have that
\begin{align}\label{equa18proof:lemm:lipschitz:section:DensityEstimatesApp}
&\int_0^t \left|\left|P_{\left(T_{\gamma(s)}\Sigma(y,S(y))_{\gamma(s)}\right)^{\perp}}\left(\gamma'(s)\right)\right|\right|_{g_{\gamma(s)}} ds \nonumber \\
&\leq \left(\mathbf{C_8} +\sqrt{\dfrac{8\left(\mathbf{C_7}+ 1\right)}{\omega_m}} \right)\delta_0 L \int_0^t \left|\left| P_{\left(T_{\gamma(s)}\Sigma(y,S(y))_{\gamma(s)}\right)^{\perp}}\left(\gamma'(s)\right)\right|\right|_{g_{\gamma(s)}} ds \nonumber \\
&+ \left(\mathbf{C_8} +\sqrt{\dfrac{8\left(\mathbf{C_7}+ 1\right)}{\omega_m}} \right)\delta_0 L d_g\left(\proj{\Sigma(y,S(y))}(z_1),\proj{\Sigma(y,S(y))}(z_2)\right).
\end{align}

Note that there exists $\delta_0 \defeq \delta_0(m,n,p,K,\injec{r1}) > 0$ such that
\begin{equation}\label{equa19proof:lemm:lipschitz:section:DensityEstimatesApp}
\left(\mathbf{C_8} +\sqrt{\dfrac{8\left(\mathbf{C_7}+ 1\right)}{\omega_m}} \right)\delta_0 \leq \dfrac{1}{2}.
\end{equation}

Since $0 < L \leq 1$, by \Cref{equa18proof:lemm:lipschitz:section:DensityEstimatesApp,equa19proof:lemm:lipschitz:section:DensityEstimatesApp}, we conclude that
\begin{align}\label{equa20proof:lemm:lipschitz:section:DensityEstimatesApp}
&\int_0^t \left|\left|P_{\left(T_{\gamma(s)}\Sigma(y,S(y))_{\gamma(s)}\right)^{\perp}}\left(\gamma'(s)\right)\right|\right|_{g_{\gamma(s)}} ds \nonumber \\
&\leq 2\left(\mathbf{C_8} +\sqrt{\dfrac{8\left(\mathbf{C_7}+ 1\right)}{\omega_m}} \right)\delta_0 L d_g\left(\proj{\Sigma(y,S(y))}(z_1),\proj{\Sigma(y,S(y))}(z_2)\right).
\end{align}

By \Cref{equa17proof:lemm:lipschitz:section:DensityEstimatesApp,equa19proof:lemm:lipschitz:section:DensityEstimatesApp,equa20proof:lemm:lipschitz:section:DensityEstimatesApp}, we conclude that
\begin{equation}\label{equa21proof:lemm:lipschitz:section:DensityEstimatesApp}
d_g\left(\proj{\Sigma(y,S(y))}(z_1),\proj{\Sigma(y,S(y))}(z_2)\right) \geq \dfrac{d_g(z_1,z_2)}{1+L} > 0
\end{equation}

Therefore, $\proj{\Sigma(y,S(y))} : G \to \Sigma(y,S)$ is an injective map. Thus,
\begin{displaymath}
\proj{\Sigma(y,S)} : G \to \proj{\Sigma(y,S)}\left(G\right)
\end{displaymath}

is bijective. Let $\left(\mathrm{Tube}(\Sigma(y,S(y)),\injec{r1}),\{x^i\}_{i=1}^n\right)$ be a Fermi coordinate chart for $\Sigma(y,S(y))$ at $y$ and define
\begin{align*}
f_{S(y)} : \proj{\Sigma(y,S(y))}\left(G\right) \subset \Sigma(y,S(y)) &\to \mathbb{R}^{n-m} \nonumber \\
\proj{\Sigma(y,S(y))}(z) &\mapsto \left(f_{S(y)}^{m+1}\left(\proj{\Sigma(y,S(y))}(z)\right) , \ldots, f_{S(y)}^n\left(\proj{\Sigma(y,S)}(z)\right) \right),
\end{align*}

where $f_{S(y)}^i\left(\proj{\Sigma(y,S)}(z)\right) \defeq x^i(z)$, for all $i = m+1, \ldots, n$. By \Cref{lemm1.1:subsection:campos:section:FermiCoordinates}, for all $z \in G$, we have that
\begin{align}\label{equa22proof:lemm:lipschitz:section:DensityEstimatesApp}
\left|\left| f_{S(y)}\left(\proj{\Sigma(y,S(y))}(z)\right) \right|\right|_{\euclid} = \dist_g^{\Sigma(y,S(y))}(z),
\end{align}

We also have $\normalgraph{\Sigma(y,S(y))}{f_{S(y)}} = G$. Therefore, by \Cref{equa3statement:coro:Remark212:section:DensityEstimatesApp,equa22proof:lemm:lipschitz:section:DensityEstimatesApp}, we have that
\begin{align}\label{equa24proof:lemm:lipschitz:section:DensityEstimatesApp}
\sup\limits_{z \in G} \left|\left|f_{S(y)}\left(\proj{\Sigma(y,S(y))}(z)\right)\right|\right|_{\euclid}  \leq \mathbf{C_6}\delta^{\frac{1}{2(m+1)}}2\delta\sigma,
\end{align}
where $\mathbf{C_6} = \mathbf{C_6}(m,p,K) > 0$. By \Cref{equa20proof:lemm:lipschitz:section:DensityEstimatesApp}, for all $z_1,z_2 \in G$, we have 
\begin{align}\label{equa25proof:lemm:lipschitz:section:DensityEstimatesApp}
&\left|\left|f_{S(y)}\left(\proj{\Sigma(y,S)}(z_1)\right) - f_{S(y)}\left(\proj{\Sigma(y,S)}(z_2)\right)\right|\right|_{\euclid} \nonumber \\
&\leq 2\left(\mathbf{C_8} +\sqrt{\dfrac{8\left(\mathbf{C_7}+ 1\right)}{\omega_m}} \right)\delta_0 L d_g\left(\proj{\Sigma(y,S)}(z_1),\proj{\Sigma(y,S)}(z_2)\right).
\end{align}

Then, by Lipschitz Extension Theorem, there exists $\mathbf{f}_{S(y)}:\Sigma(y,S) \cap B_{d_g}(y,2\delta\sigma) \to \mathbb{R}^{n-m}$ such that, by \Cref{equa19proof:lemm:lipschitz:section:DensityEstimatesApp,equa25proof:lemm:lipschitz:section:DensityEstimatesApp}, we have that
\begin{align}\label{equa251proof:lemm:lipschitz:section:DensityEstimatesApp}
\mathrm{Lip}\left(\mathbf{f}_{S(y)}\right) = 2\left(\mathbf{C_8} +\sqrt{\dfrac{8\left(\mathbf{C_7}+ 1\right)}{\omega_m}} \right)\delta_0 L \leq L,
\end{align}

which implies \Cref{equa3statement:lemm:lipschitz:section:DensityEstimatesApp}. Furthermore, 
\begin{displaymath}
{\mathbf{f}_{S(y)}}_{\restriction_{\proj{\Sigma(y,S(y))}(G) \cap B_{d_g}(y,2\delta\sigma)}} \equiv f_{S(y)}.
\end{displaymath}

Note that there exists $\delta_0 > 0$ such that $2\delta < 1$, thus, by \Cref{equa24proof:lemm:lipschitz:section:DensityEstimatesApp}, we also have 
\begin{align*}
\sup\limits_{x \in \Sigma(y,S(y)) \cap B_{d_g}(y,2\delta\sigma)} \left|\left|\mathbf{f}_{S(y)}(x)\right|\right|_{\euclid} \leq \mathbf{C_6}\delta^{\frac{1}{2(m+1)}}\sigma,
\end{align*}

which implies \Cref{equa4statement:lemm:lipschitz:section:DensityEstimatesApp}.

Since $G \subset \normalgraph{\Sigma(y,S(y))}{\mathbf{f}_{S(y)}}$, by \Cref{equa5proof:lemm:lipschitz:section:DensityEstimatesApp}, we have that
\begin{align*}
\dfrac{||V||\mres B_{d_g}(y,2\delta\sigma)\left(\supp(||V||) \setminus \normalgraph{\Sigma(y,S(y))}{\mathbf{f}_{S(y)}}\right)}{\sigma^m} \leq \dfrac{\mathbf{C_3}}{L^2}  \delta^{\frac{1}{m+1}},
\end{align*}

which implies \Cref{equa5statement:lemm:lipschitz:section:DensityEstimatesApp}. It thus remains only to prove \Cref{equa6statement:lemm:lipschitz:section:DensityEstimatesApp}.

To check this, take $z_0 \in  B_{d_g}(y,2\delta\sigma) \cap \normalgraph{\Sigma(y,S)}{\mathbf{f}_{S(y)}} \setminus \supp(||V||)$ and let
\begin{equation}\label{equa26proof:lemm:lipschitz:section:DensityEstimatesApp}
t \defeq \dfrac{3}{2}\dist_g^{\supp(||V||)}\left(z_0\right) \leq \dfrac{3}{2}d_g(z_0,y) \leq \dfrac{3}{2}2\delta\sigma = 3\delta\sigma,
\end{equation}

because $y \in \supp(||V||)$. Note that
\begin{equation}\label{equa27proof:lemm:lipschitz:section:DensityEstimatesApp}
B_{d_g}\left(z_0,\frac{2}{3}t\right) \cap \supp(||V||) = \emptyset,
\end{equation}

and
\begin{displaymath}
\overline{B_{d_g}\left(z_0,\frac{2}{3}t\right)} \cap \supp(||V||) \neq \emptyset.
\end{displaymath}

By an analogous argument to that presented in \Cref{lemm:density:section:DensityEstimatesApp}, we obtain that
\begin{equation}\label{equa55proof:lemm:lipschitz:section:DensityEstimatesApp}
t^m \leq \dfrac{3^me^{\ell(\rho)}}{2^{m-3}\omega_m} \left(||V||\left(B_{d_g}(z_0,t) \setminus \normalgraph{\Sigma(y,S(y))}{\mathbf{f}_{S(y)}}\right) + \sigma^{m} E(y,\sigma,S(y),V)\right).
\end{equation}

Now we observe that the collection of such balls $B_{d_g}(z_0,t)$ by definition cover all of $B_{d_g}(y,2\delta\sigma) \cap \normalgraph{\Sigma(y,S)}{\mathbf{f}_{S(y)}} \setminus \supp(||V||)$, so by the $5$-times covering lemma  and \Cref{equa55proof:lemm:lipschitz:section:DensityEstimatesApp} we can find a pairwise disjoint collection $\left\{B_{d_g}\left(z_j,t_j\right)\right\}_{j \in \mathbb{N}}$ of such balls with
\begin{equation}\label{equa56proof:lemm:lipschitz:section:DensityEstimatesApp}
t_j^m \leq \dfrac{3^me^{\ell(\rho)}}{2^{m-3}\omega_m} \left(||V||\left(B_{d_g}(z_j,t_j) \setminus \normalgraph{\Sigma(y,S(y))}{\mathbf{f}_{S(y)}}\right) +  \sigma^{m} E(y,\sigma,S(y),V)\right)
\end{equation}

for each $j \in \mathbb{N}$ and $B_{d_g}(y,2\delta\sigma) \cap \normalgraph{\Sigma(y,S)}{\mathbf{f}_{S(y)}} \setminus \supp(||V||) \subset \cup_{j \in \mathbb{N}} B_{d_g}\left(z_j,5t_j\right)$.

By \cite{Gray04}*{Theorem 9.12, p. 196} and \Cref{equa26proof:lemm:lipschitz:section:DensityEstimatesApp} we have that $t_j \leq 3\delta\sigma$ and then there exists $\rho_0 \defeq \rho_0(m,K,\injec{r1})>0$ such that
\begin{equation}\label{equa61proof:lemm:lipschitz:section:DensityEstimatesApp}
\mathcal{H}_{d_g}^m\left(B_{d_g}\left(z_j,5t_j\right) \cap \normalgraph{\Sigma(y,S(y))}{\mathbf{f}_{S(y)}}\right) \leq  5^m\omega_m\left(2 + \dfrac{n^2 K 5^2}{6(m+2)}\right)t_i^m.
\end{equation}

Since the collection $\left\{B_{d_g}\left(z_j,t_j\right)\right\}_{j \in \mathbb{N}}$ is disjoint, $B_{d_g}(y,2\delta\sigma) \cap \normalgraph{\Sigma(y,S)}{\mathbf{f}_{S(y)}} \setminus \supp(||V||) \subset \cup_{j \in \mathbb{N}} B_{d_g}\left(z_j,5t_j\right)$, by \Cref{equa5statement:lemm:lipschitz:section:DensityEstimatesApp,equa56proof:lemm:lipschitz:section:DensityEstimatesApp,equa61proof:lemm:lipschitz:section:DensityEstimatesApp}, we have that
\footnotesize
\begin{align}\label{equa62proof:lemm:lipschitz:section:DensityEstimatesApp}
&\dfrac{\mathcal{H}^m_{d_g}\mres B_{d_g}(y,2\delta\sigma)\left(\normalgraph{\Sigma(y,S(y))}{\mathbf{f}_{S(y)}} \setminus \supp(||V||)\right)}{\sigma^m} \leq \nonumber \\
&\leq \dfrac{5^m3^me^{\ell(\rho)}}{2^{m-3}}\left(2 + \dfrac{n^2 K 5^2}{6(m+2)}\right) \left(\dfrac{\mathbf{C_2}}{L^2}E(y,\sigma,S,V)  +  E(y,\sigma,S,V)\right).
\end{align}
\normalsize

Since $0 < L \leq 1$ and we can choose $\delta_0 \defeq \delta_0(m) > 0$ such that $0 < \delta_0 \leq \sqrt{2\omega_m5^m}$, then $1 \leq \frac{\mathbf{C_2}}{L^2}$. By \Cref{equa62proof:lemm:lipschitz:section:DensityEstimatesApp} we have that
\begin{align*}
\dfrac{\mathcal{H}^m_{d_g}\mres B_{d_g}(y,2\delta\sigma)\left(\normalgraph{\Sigma(y,S(y))}{\mathbf{f}_{S(y)}} \setminus \supp(||V||)\right)}{\sigma^m} \leq \dfrac{\mathbf{C_5}}{L^2}  \delta^{\frac{1}{m+1}},
\end{align*}

which implies \Cref{equa6statement:lemm:lipschitz:section:DensityEstimatesApp}. To conclude the proof, we choose $\delta_0 \defeq \delta_0(m,n,p,K,\injec{r1}) > 0$ and  $\rho_0 \defeq \rho_0(m,K,\injec{r1},p)>0$ as the minimum value ensuring that all the above estimates hold.
\end{proof}
\begin{coro}\label{coro:lipschitz:section:DensityEstimatesApp}
Let $m < p < \infty$, $\xi \in M^n$ and $0 < L \leq 1$. There exists $\delta_0 \defeq \delta_0(m,n,p,K,\injec{r1}) > 0$ and  $\rho_0 \defeq \rho_0(m,K,\injec{r1},p)>0$ such that if $V \in \mathcal{AC}(\xi,\rho,\delta,p)$ for some $0 < \delta < (\delta_0L)^{2(m+1)}$ and for some $0 < \rho < \min\left\{\rho_0,\sqrt{\delta}\right\}$, then, for all $y \in \supp(||V||) \cap B_{d_g}\left(\xi,\frac{\delta\rho}{2}\right)$, for all $\sigma \in ]0,2\delta\rho]$ and for all $S(y) \in \Gr(m,T_yM^n)$ such that \Cref{equa3statement:coro:Remark212:section:DensityEstimatesApp}, \Cref{equa4statement:coro:Remark212:section:DensityEstimatesApp}, and 
\begin{equation}\label{equa1statement:coro:lipschitz:section:DensityEstimatesApp}
\sup\limits_{t \in ]0,(1-2\delta)\sigma]} \dfrac{1}{t^m}\int_{B_{d_g}(z,t)} \left|\left| P_{T_xV} - P_{T_x\Sigma(y,S(y))_x} \right|\right|_{\mathrm{HS}_{g_x}}^2 d||V||(x) \leq \delta_0^2L^2
\end{equation}

hold for all $z \in \supp(||V||) \cap B_{d_g}(y,2\delta\sigma)$, then there exists a Lipschitz function $\mathbf{f}_{S(y)} : B_{d_g}(y,\sigma) \cap \Sigma(y,S(y)) \to \mathbb{R}^{n-m}$ such that \Cref{equa3statement:lemm:lipschitz:section:DensityEstimatesApp,equa4statement:lemm:lipschitz:section:DensityEstimatesApp} hold and
\begin{equation}\label{equa4statement:coro:lipschitz:section:DensityEstimatesApp}
\supp(||V||) \cap B_{d_g}(y,\sigma) = \normalgraph{\Sigma(y,S(y))}{\mathbf{f}_{S(y)}} \cap B_{d_g}(y,\sigma).
\end{equation}
\end{coro}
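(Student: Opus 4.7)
The corollary upgrades Lemma \ref{lemm:lipschitz:section:DensityEstimatesApp} from a measure-theoretic to a set-theoretic coincidence, by strengthening the tilt-excess control from an integrated statement to a pointwise one on $\supp(||V||) \cap B_{d_g}(y,2\delta\sigma)$. My plan is therefore to invoke Lemma \ref{lemm:lipschitz:section:DensityEstimatesApp} to obtain $\mathbf{f}_{S(y)}$ (automatically verifying (\ref{equa3statement:lemm:lipschitz:section:DensityEstimatesApp}) and (\ref{equa4statement:lemm:lipschitz:section:DensityEstimatesApp})) and then spend the remainder of the argument establishing (\ref{equa4statement:coro:lipschitz:section:DensityEstimatesApp}).

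First, I would inspect the set $G$ introduced in (\ref{equa2proof:lemm:lipschitz:section:DensityEstimatesApp}) during the proof of Lemma \ref{lemm:lipschitz:section:DensityEstimatesApp}: a support point $z \in B_{d_g}(y,2\delta\sigma)$ lies in $G$ precisely when the tilt-excess bound $\delta_0^2 L^2$ holds at every scale $t \in ]0, (1-2\delta)\sigma]$. Hypothesis (\ref{equa1statement:coro:lipschitz:section:DensityEstimatesApp}) is exactly this condition assumed for every such $z$. Consequently $G = \supp(||V||) \cap B_{d_g}(y, 2\delta\sigma)$, and reading off the construction of $f_{S(y)}$ via $\proj{\Sigma(y,S(y))}$ from that proof gives the inclusion
\begin{equation*}
\supp(||V||) \cap B_{d_g}(y,2\delta\sigma) \subset \normalgraph{\Sigma(y,S(y))}{\mathbf{f}_{S(y)}}.
\end{equation*}
To propagate this inclusion from $B_{d_g}(y,2\delta\sigma)$ to $B_{d_g}(y,\sigma)$, I would use a standard continuation argument: the injectivity of $\proj{\Sigma(y,S(y))}$ on $G$ established in (\ref{equa21proof:lemm:lipschitz:section:DensityEstimatesApp}) together with the Lipschitz bound (\ref{equa3statement:lemm:lipschitz:section:DensityEstimatesApp}) forces any support point in $B_{d_g}(y,\sigma)$ near the graph to actually sit on it, extending the pointwise graphicality beyond the $2\delta\sigma$-ball.

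For the reverse inclusion, I would argue by contradiction. Suppose $z_0 \in \normalgraph{\Sigma(y,S(y))}{\mathbf{f}_{S(y)}} \cap B_{d_g}(y,\sigma) \setminus \supp(||V||)$. Since $\supp(||V||)$ is closed, there is a maximal radius $r > 0$ with $B_{d_g}(z_0,r) \cap \supp(||V||) = \emptyset$ and $\tilde{z} \in \partial B_{d_g}(z_0,r) \cap \supp(||V||)$. The density lower bound in Lemma \ref{lemm:density:section:DensityEstimatesApp} applied at $\tilde{z}$ forces $||V||(B_{d_g}(\tilde{z},\tau)) \geq \tfrac{1}{2}\omega_m \tau^m$ for all admissible $\tau$. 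On the other hand, since $\mathbf{f}_{S(y)}$ is $L$-Lipschitz, the Lipschitz graph passes through a definite $\mathcal{H}^m$-portion of $B_{d_g}(z_0,r)$, which is disjoint from $\supp(||V||)$; quantifying this piece via the volume comparison already employed in (\ref{equa61proof:lemm:lipschitz:section:DensityEstimatesApp}) and combining with the first inclusion and the density bound from Lemma \ref{lemm:density:section:DensityEstimatesApp} at $\tilde z$ produces a contradiction for $\delta$ small.

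The principal obstacle is the \emph{no-gaps} step: the Lipschitz extension could a priori fill in regions of $\Sigma(y,S(y))$ not already covered by $\proj{\Sigma(y,S(y))}(G)$, and these ``phantom'' pieces of graph are not ruled out by the measure bound (\ref{equa6statement:lemm:lipschitz:section:DensityEstimatesApp}) alone (which gives only smallness, not vanishing). The resolution combines three ingredients already developed: the bi-Lipschitz nature of $\proj{\Sigma(y,S(y))}$ on $G$ from (\ref{equa21proof:lemm:lipschitz:section:DensityEstimatesApp}), the two-sided density control of Lemma \ref{lemm:density:section:DensityEstimatesApp}, and the affine approximation (\ref{equa3statement:coro:Remark212:section:DensityEstimatesApp}) which prevents the support from drifting transversally to $\Sigma(y,S(y))$ by more than $\const{const1statement:lemm:affine:section:DensityEstimatesApp}\delta^{1/(2(m+1))}\sigma$, matching the height bound (\ref{equa4statement:lemm:lipschitz:section:DensityEstimatesApp}) on $\mathbf{f}_{S(y)}$ and thereby forcing each fiber of the graph to meet $\supp(||V||)$.
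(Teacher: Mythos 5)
Your first half is exactly the paper's step: hypothesis \Cref{equa1statement:coro:lipschitz:section:DensityEstimatesApp} makes the set $G$ of \Cref{equa2proof:lemm:lipschitz:section:DensityEstimatesApp} contain all of $\supp(||V||)\cap B_{d_g}(y,2\delta\sigma)$, and \Cref{lemm:lipschitz:section:DensityEstimatesApp} then yields $\supp(||V||)\cap B_{d_g}(y,2\delta\sigma)\subset G\subset\normalgraph{\Sigma(y,S(y))}{\mathbf{f}_{S(y)}}$. The genuine gap is in the reverse inclusion, and it is the one you flag yourself but then dispose of by assertion. Your maximal-empty-ball argument only produces measure-theoretic information: the density lower bound of \Cref{lemm:density:section:DensityEstimatesApp} at the touching point $\tilde z$ creates no conflict, because by your own first inclusion all of that mass already lies on the graph; and comparing $\mathcal{H}^m\bigl(\normalgraph{\Sigma(y,S(y))}{\mathbf{f}_{S(y)}}\cap B_{d_g}(z_0,r)\bigr)\gtrsim r^m$ with \Cref{equa6statement:lemm:lipschitz:section:DensityEstimatesApp} gives at best $r\lesssim L^{-2/m}\delta^{\frac{1}{m(m+1)}}\sigma$, i.e.\ smallness proportional to $\sigma$, never emptiness. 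The closing sentence (``bi-Lipschitz projection $+$ density $+$ affine approximation \dots forcing each fiber of the graph to meet $\supp(||V||)$'') is precisely the statement to be proved, and none of the three ingredients converts a quantitative bound into the set identity \Cref{equa4statement:coro:lipschitz:section:DensityEstimatesApp}. (The ``standard continuation argument'' passing from $B_{d_g}(y,2\delta\sigma)$ to $B_{d_g}(y,\sigma)$ is likewise only gestured at.)

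The missing idea is the one the paper uses: re-run the distance estimate \Cref{equa55proof:lemm:lipschitz:section:DensityEstimatesApp} from the proof of the lemma at a gap point, with $t=\tfrac32\dist_{g}^{\supp(||V||)}(z_0)$. The forward inclusion annihilates the term $||V||\bigl(B_{d_g}(z_0,t)\setminus\normalgraph{\Sigma(y,S(y))}{\mathbf{f}_{S(y)}}\bigr)$, and \Cref{equa4statement:coro:Remark212:section:DensityEstimatesApp} together with $\delta<(\delta_0L)^{2(m+1)}$ leaves $t^m\leq C\,\delta_0^2L^2\,\sigma^m$. This is still only smallness; the decisive extra input is that the corollary's hypotheses hold for \emph{every} $\sigma\in\,]0,2\delta\rho]$, whereas the distance $t$ of a putative gap point to the support does not scale with $\sigma$, so running the estimate at a sufficiently small scale forces $t^m<\tfrac{t^m}{2}$, a contradiction. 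Your proposal never invokes this arbitrariness in $\sigma$ (nor any substitute for it, such as a connectedness or degree argument), and without it a purely measure-based scheme cannot upgrade ``graph minus support has small measure'' to ``graph minus support is empty.''
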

\begin{proof}
Let $\delta_0 \defeq \delta_0(m,n,p,K,\injec{r1}) > 0$ and $\rho_0 \defeq \rho_0(m,p,K,\injec{r1}) > 0$ given by \Cref{lemm:lipschitz:section:DensityEstimatesApp}. Suppose that $V \in \mathcal{AC}(\xi,\rho,\delta,p)$, for some $0 < \delta < (\delta_0L)^{2(m+1)}$ and for some $0 < \rho < \min \left\{ \rho_0 , \sqrt{\delta} \right\}$. Consider $y \in \supp(||V||) \cap B_{d_g}\left(\xi,\frac{\delta\rho}{2}\right)$ and $\sigma \in ]0,2\delta\rho]$ arbitrary, and $S(y) \in \Gr(m,T_yM^n)$ such that \Cref{equa3statement:coro:Remark212:section:DensityEstimatesApp,equa4statement:coro:Remark212:section:DensityEstimatesApp,equa1statement:coro:lipschitz:section:DensityEstimatesApp} hold for all $z \in \supp(||V||) \cap B_{d_g}(y,2\delta\sigma)$. 

By \Cref{lemm:lipschitz:section:DensityEstimatesApp}, there a Lipschitz function $\mathbf{f}_{S(y)} : B_{d_g}(y,\sigma) \cap \Sigma(y,S(y)) \to \mathbb{R}^{n-m}$ such that \Cref{equa3statement:lemm:lipschitz:section:DensityEstimatesApp,equa4statement:lemm:lipschitz:section:DensityEstimatesApp} hold. 

Thus we have that the set $G$, defined in \Cref{equa2proof:lemm:lipschitz:section:DensityEstimatesApp}, contains the set $\supp(||V||) \cap B_{d_g}(y,2\delta\sigma)$, Which implies that
\begin{equation}\label{equa1proof:coro:lipschitz:section:DensityEstimatesApp}
\supp(||V||) \cap B_{d_g}(y,2\delta\sigma) \subset G \subset \normalgraph{\Sigma(y,S(y))}{\mathbf{f}_{S(y)}}.
\end{equation}

Suppose, by contradiction, that there exists $z_0 \in B_{d_g}(y,2\delta\sigma) \cap \normalgraph{\Sigma(y,S)}{\mathbf{F}(y,S)} \setminus \supp(||V||)$ and let $t \defeq \frac{3}{2}\dist_{g}^{\supp(||V||)}(z_0) > 0$. By \Cref{equa1proof:coro:lipschitz:section:DensityEstimatesApp} we have that
\begin{equation}\label{equa2proof:coro:lipschitz:section:DensityEstimatesApp}
B_{d_g}(z_0,t) \setminus \normalgraph{\Sigma(y,S)}{\mathbf{F}(y,S)} \subset B_{d_g}(z_0,t) \setminus \left(\supp(||V||) \cap B_{d_g}(y,2\delta\sigma)\right)
\end{equation}

By \Cref{equa55proof:lemm:lipschitz:section:DensityEstimatesApp,equa2proof:coro:lipschitz:section:DensityEstimatesApp}, we have that 
\begin{equation}\label{equa3proof:coro:lipschitz:section:DensityEstimatesApp}
t^m \leq \dfrac{3^me^{\ell(\rho)}}{2^{m-3}\omega_m}\sigma^{m} E(y,\sigma,S(y),V).
\end{equation}

By \Cref{equa3proof:coro:lipschitz:section:DensityEstimatesApp,equa4statement:coro:Remark212:section:DensityEstimatesApp}, we have that
\begin{align*}
t^m \leq \left(\dfrac{3^me^{\ell(\rho)}\mathbf{C}\delta_0^2L^2}{2^{m-3}\omega_m}\right)\sigma^{m}, 
\end{align*}
where $\mathbf{C} = \mathbf{C}(m,n,p,K,\injec{r1}) > 0$. Since $\sigma \in ]0,2\delta\rho]$ is arbitrary, we conclude that for all $\sigma^m < \frac{2^{m-3}\omega_m}{3^me^{\ell(\rho)}\mathbf{C}\delta_0^2L^2}\frac{t^m}{2}$ we have a contradiction, because it implies that $t^m < \frac{t^m}{2}$ and $t > 0$. 

Therefore, there exists $\sigma \in ]0,2\delta\rho]$ such that $B_{d_g}(y,2\delta\sigma) \cap \normalgraph{\Sigma(y,S)}{\mathbf{F}(y,S)} \setminus \supp(||V||) = \emptyset$ which implies that
\begin{equation}\label{equa4proof:coro:lipschitz:section:DensityEstimatesApp}
B_{d_g}(y,2\delta\sigma) \cap \normalgraph{\Sigma(y,S)}{\mathbf{F}(y,S)} \subset \supp(||V||).
\end{equation}

Thus \Cref{equa1proof:coro:lipschitz:section:DensityEstimatesApp,equa4proof:coro:lipschitz:section:DensityEstimatesApp} implies \Cref{equa4statement:coro:lipschitz:section:DensityEstimatesApp}.
\end{proof}
\subsection{Harmonic Approximation, Tilt-excess decay and Main regularity theorem}\label{sec:HarmTiltMain}
The proof of Allard's Interior $\varepsilon$-Regularity Theorem 
(\Cref{maintheocomplete}) rests on three main ingredients: the Lipschitz approximation, the monotonicity formula, and the Caccioppoli inequality, all of which were established in the previous sections. Having these in hand, the proof follows the structure of \cite{Simon}*{Theorem 5.2, p. 146}. 

The previous sections required more significant adaptations from the Euclidean proofs due to the curvature of the ambient space. Here, this is no longer the case, and we only point out the main differences with respect to \cite{Simon}*{Theorem 5.2, p. 146}.

In the Harmonic Approximation, the main difference is that the Mean-Value property used in the Euclidean setting must be replaced by the Schauder Interior Estimates for elliptic operators.

In the Tilt-Excess Decay Theorem, the proof proceeds analogously to the Euclidean one. The main difference appears in the application of the Lipschitz approximation: since we work with normal graphs rather than Euclidean graphs, the necessary adjustments follow the same analysis carried out in 
\Cref{section:lipschitzapp}.

The proof of \Cref{maintheocomplete} then follows closely \cite{Simon}*{Theorem 5.2, p. 146}, with the same differences already identified in the Tilt-Excess Decay Theorem. We close this section with \Cref{maintheocomplete} and some remarks on the proof.
\begin{theo}\label{maintheocomplete}
Let $m,n \in \mathbb{N}$ and $p \in \mathbb{R}$ such that $1 \leq m < n$ and $m < p < \infty$; in case $m=1$, we require that $p \geq 2$, $(M^n,g)$ an $n$-dimensional complete connected Riemannian manifold with metric $g$ of class $\C^2$, $\xi \in M^n$, and $\varepsilon \in ]0,1[$. There exist $\delta_0 \defeq \delta_0(m,n,p,K,\injec{r1},\varepsilon) > 0$ and $\rho_0 \defeq \rho_0(m,n,p,K,\injec{r1}) > 0$ such that, if $V \in \mathbf{V}_m(M^n)$ satisfies that
\begin{equation}\label{equa1maintheo}
\Theta^m(||V||,x) \geq d > 0,
\end{equation}

for $||V||$-a.e. $x \in B_{d_g}(\xi,\rho)$,
\begin{equation}\label{equa2maintheo}
\dfrac{||V||(B_{d_g}(\xi,\rho))}{\omega_m\rho^m} \leq d(1 + \delta).
\end{equation}

and
\begin{equation}\label{equa3maintheo}
\left|\delta_gV(X)\right| \leq \dfrac{d^{\frac{1}{p}}\delta}{\rho^{1-\frac{m}{p}}} \left|\left| X \right|\right|_{L^{\frac{p}{p-1}}(M^n,||V||)},
\end{equation}
for all $X \in \mathfrak{X}^1_c(M^n)$ such that $\supp(X) \subset B_{d_g}(\xi,\rho)$, for some $0 < \delta < \delta_0$, for some $0 < d < \infty$, and for some $0 < \rho < \min\{\rho_0,\sqrt{\delta}\}$, then there exist $T(\xi) \in \Gr(m,T_{\xi}M^n)$ and a function $\mathbf{f}_{T(\xi)}:B_{d_g}(\xi,\varepsilon\rho) \cap \Sigma(\xi,T(\xi)) \to \mathbb{R}^{n-m}$ such that $\mathbf{f}_{T(\xi)}$ is of class $\C^{1,1-\frac{m}{p}}$, $\mathbf{f}_{T(\xi)}(\xi) = 0$, ${d\mathbf{f}_{T(\xi)}}_{\xi} \equiv 0$, 
\begin{equation*}
\supp(||V||) \cap B_{d_g}(\xi,\varepsilon\rho) = \normalgraph{\Sigma(\xi,T(\xi))}{\mathbf{f}_{T(\xi)}} \cap B_{d_g}(\xi,\varepsilon\rho),
\end{equation*}

and the estimate of the scaling invariant $C^{1,1-\frac{m}{p}}$-norm of $\mathbf{f}_{T(\xi)}$ depend only on $m,n,p,K,\injec{r1}$, and $\varepsilon$.
\end{theo}
Two remarks on the proof of \Cref{maintheocomplete} are in order. First, the conditions in \Cref{maintheocomplete} involve a parameter $d$, as in \Cref{equa1maintheo,equa2maintheo,equa3maintheo}. However, since the density of a varifold can always be modified without changing its support, it suffices to consider varifolds in $\mathcal{AC}(\xi,\rho,\delta,p)$, i.e., the case $d = 1$. Second, for a fixed $\xi \in M^n$, the continuity of $\sec_g$ and $r_H(M^n,g)$ --- which follows from $g$ being of class $\mathcal{C}^2$ --- ensures the existence of $\rho_0 > 0$ such that
\begin{displaymath}
|\sec_g(x)| \leq K \quad \text{and} \quad r_H(M^n,g)(x) > 0
\end{displaymath}
for all $x \in B_{d_g}(\xi, \rho_0)$ and some $K \geq 0$. With these 
observations in place, the proof follows from the theory developed in the previous sections.

\section{Allard's Interior \texorpdfstring{$\varepsilon$}{E}-Regularity Theorem in Alexandrov spaces}\label{sec:metric}
We recall some basic definitions and properties of Alexandrov spaces. For synthetic notions of curvature and definitions of $\mathrm{CAT}$ and $\mathrm{CBB}$ spaces, we refer to \cites{alexander2019invitation,kapovitch2020cd} and the references therein.
\begin{defi}
Let $(X,d)$ be a complete locally compact metric space, and let $\overline{\mathcal{K}}(X,d), \underline{\mathcal{K}}(X,d) \in \mathbb{R}$ satisfy $\underline{\mathcal{K}}(X,d) \leq \overline{\mathcal{K}}(X,d)$. We say that:
\begin{enumerate}
    \item $(X,d)$ is an Alexandrov space with curvature bounded above by $\overline{\mathcal{K}}(X,d)$ if it is $\mathrm{CAT}(\overline{\mathcal{K}}(X,d))$, and with curvature bounded below by $\underline{\mathcal{K}}(X,d)$ if it is $\mathrm{CBB}(\underline{\mathcal{K}}(X,d))$;

    \item $(X,d,\mathcal{H}^n_d)$ is non-collapsed if there exists a constant $V_0(X,d)>0$ such that $\inf_{x\in X}\mathcal{H}^n_d(B_d(x,1)) \geq V_0(X,d)$.
\end{enumerate}
\end{defi}
\begin{rema}\label{rmk:Alex}
We recall several properties of Alexandrov spaces $(X,d)$ with curvature bounded both from above and below that will be used repeatedly throughout this section:
\begin{enumerate}
    \item\label{rmk:AlexC1alpha}
    $(X,d)$ admits a $C^{3,\alpha}$ differentiable structure together with a $C^{1,\alpha}$ Riemannian metric $g$, for every $\alpha\in(0,1)$, whose induced distance coincides with $d$; see \cite{Nikolaev-Berestovskij}*{Theorem 14.1}.

    \item\label{rmk:Alexapproximants}
    The $C^{1,\alpha}$ structure can be approximated by smooth Riemannian metrics; see \cite{Nikolaev89}*{Theorem 3.1}. More precisely, there exists a sequence of smooth metrics $g_i\in C^\infty$ on $X$ converging to $g$ in the $C^{1,\alpha}$ topology. If $d_i$ denotes the distance induced by $g_i$, then each $(X,d_i)$ is $\mathrm{CAT}(\overline{\mathcal{K}}(X,d_i))$ and $\mathrm{CBB}(\underline{\mathcal{K}}(X,d_i))$, with
    \begin{displaymath}
        \underline{\mathcal{K}}(X,d) \leq \liminf_{i\to\infty}\underline{\mathcal{K}}(X,d_i) \leq \limsup_{i\to\infty} \overline{\mathcal{K}}(X,d_i) \leq \overline{\mathcal{K}}(X,d).
    \end{displaymath}

    \item\label{rmk:AlexInjHarm}
   By \cite{CheegerGromovTaylor82}*{Theorem 4.7, p.\,47}, $(X,d)$ admits a positive lower bound on its injectivity radius $\inj(X,d)$, depending only on $n$, $\underline{\mathcal{K}}(X,d)$, $\overline{\mathcal{K}}(X,d)$, and $V_0(X,d)$. Consequently, by \cite{Anderson}*{Main Lemma 2.2, p.\,433}, it also admits a positive lower bound on its harmonic radius $r_H(X,d)$, depending only on $n$, $\underline{\mathcal{K}}(X,d)$, $\overline{\mathcal{K}}(X,d)$, and $V_0(X,d)$.
\end{enumerate}
\end{rema}
We close this section by stating a complete version of \Cref{secondthm}. Unlike the statement given in the introduction, we make explicit here that $\supp(||V||)$ is not merely a submanifold on a neighborhood of $\xi$, but is in fact the normal graph of a $\mathcal{C}^{1,1-\frac{m}{p}}$ function --- the precise analogue of the classical Euclidean result. We also state the precise dependence of all constants on the geometric data.
\begin{theo}\label{secondthmcomplete}
Let $m,n \in \mathbb{N}$ and $p \in \mathbb{R}$ such that $1 \leq m < n$ and $m < p < \infty$; in case $m=1$, we require that $p \geq 2$, $(X,g)$ be an $n$-dimensional non-collapsed Alexandrov space with curvature bounded both from above and below by $\overline{\mathcal{K}}(X,g),\underline{\mathcal{K}}(X,g)$, respectively, $\xi \in X$, and $\varepsilon \in ]0,1[$. Then there exist $\delta_0 \defeq \delta_0(m,n,p,\underline{\mathcal{K}}(X,g),\overline{\mathcal{K}}(X,g),V_0(X,g),\varepsilon) > 0$ and $\rho_0 \defeq \rho_0(m,n,p,\underline{\mathcal{K}}(X,g),\overline{\mathcal{K}}(X,g),V_0(X,g)) > 0$ such that, if $V \in \mathbf{V}_m(X)$ satisfies 
\begin{equation}
\Theta^m(||V||,x) \geq d > 0,
\end{equation}

for $||V||$-a.e. $x \in B_{d_g}(\xi,\rho)$,
\begin{equation}
\dfrac{||V||(B_{d_g}(\xi,\rho))}{\omega_m\rho^m} \leq d(1 + \delta),
\end{equation}

and
\begin{equation}
\left|\delta_gV(Y)\right| \leq \dfrac{d^{\frac{1}{p}}\delta}{\rho^{1-\frac{m}{p}}} \left|\left| Y \right|\right|_{L^{\frac{p}{p-1}}(X,||V||)},
\end{equation}

for all $Y \in \mathfrak{X}^1_c(X)$ such that $\supp(Y) \subset B_{d_g}(\xi,\rho)$, for some $0 < \delta < \delta_0$, for some $0 < d < \infty$, and for some $0 < \rho < \min\{\rho_0,\sqrt{\delta}\}$, then there exist $T(\xi) \in \Gr(m,T_{\xi}X)$ and a function $\mathbf{f}_{T(\xi)}:B_{d_g}(\xi,\varepsilon\rho) \cap \Sigma(\xi,T(\xi)) \to \mathbb{R}^{n-m}$ such that $\mathbf{f}_{T(\xi)}$ is of class $\C^{1,1-\frac{m}{p}}$, $\mathbf{f}_{T(\xi)}(\xi) = 0$, ${d\mathbf{f}_{T(\xi)}}_{\xi} \equiv 0$, 
\begin{equation*}
\supp(||V||) \cap B_{d_g}(\xi,\varepsilon\rho) = \normalgraph{\Sigma(\xi,T(\xi))}{\mathbf{f}_{T(\xi)}} \cap B_{d_g}(\xi,\varepsilon\rho),
\end{equation*}
and the estimate of the scaling invariant $C^{1,1-\frac{m}{p}}$-norm of $\mathbf{f}_{T(\xi)}$ depend only on $m,n,p,\underline{\mathcal{K}}(X,d),\overline{\mathcal{K}}(X,d),V_0(X,d)$ and $\varepsilon$.
\end{theo}
\begin{proof}[Proof of \Cref{secondthmcomplete}]
Let $(X,d)$ be a metric space with bounded curvature and $(X,g_t)$ a sequence of infinitely differentiable Riemannian manifolds given by \Cref{rmk:Alexapproximants} of \Cref{rmk:Alex}, such that $(X,g_t)$ converge to $(X,g)$ with respect to the $\C^{1,\alpha}$ topology.

By \Cref{maintheocomplete} we have that there exist $\delta_0^t \defeq \delta_0^t(m,n,p,\overline{\mathcal{K}}(X,d_t),\underline{\mathcal{K}}(X,d_t),r_H(X,d_t),\varepsilon) > 0$\\ and $\rho_0^t \defeq \rho_0^t(m,p,\overline{\mathcal{K}}(X,d_t),\underline{\mathcal{K}}(X,d_t),r_H(X,d_t)) > 0$. By the uniformly dependence of the parameter by the proofs obtained here and \Cref{rmk:Alexapproximants} of \Cref{rmk:Alex}, we can define
\begin{displaymath}
\delta_0 \defeq \delta_0(m,n,p,\overline{\mathcal{K}}(X,d),\underline{\mathcal{K}}(X,d),r_H(X,d),\varepsilon) \defeq \inf\limits_{t \leq t_0} \delta_0^t > 0
\end{displaymath}

and
\begin{displaymath}
\rho_0 \defeq \rho_0(m,n,p,\overline{\mathcal{K}}(X,d),\underline{\mathcal{K}}(X,d),r_H(X,d)) \defeq \inf\limits_{t \leq t_0} \rho_0^t > 0,
\end{displaymath}

where $t_0$ is chosen so that $r_H(X,d_t)$, $\overline{\mathcal{K}}(X,d_t)$, and $\underline{\mathcal{K}}(X,d_t)$ are uniformly bounded over $t\leq t_0$, ensuring the infimum above are finite. Suppose that $V \in \mathbf{V}_m(X)$ satisfies
\begin{equation}\label{equa1:secondthm}
\Theta^m(||V||,x) = \lim\limits_{r \downarrow 0}\dfrac{||V||(B_{d_g}(x,r))}{\omega_mr^m} \geq 1,
\end{equation}

for $||V||$-a.e. $x \in B_{d_g}(\xi,\rho)$,
\begin{equation}\label{equa2:secondthm}
\dfrac{||V||(B_{d_g}(\xi,\rho))}{\omega_m\rho^m} \leq 1 + \delta,
\end{equation}

and
\begin{equation}\label{equa3:secondthm}
\left|\delta_gV(Y)\right| \leq \dfrac{\delta}{\rho^{1-\frac{m}{p}}} \left|\left| Y \right|\right|_{L^{\frac{p}{p-1}}(X,||V||)},
\end{equation}

for all $Y \in \mathfrak{X}^0_c(X)$ such that $\supp(Y) \subset B_{d_g}(\xi,\rho)$, for some $0 < \delta < \delta_0$ and $0 < \rho < \min\{\rho_0,\sqrt{\delta}\}$.

Now note that since $V$ satisfies \Cref{equa1:secondthm,equa2:secondthm,equa3:secondthm}, by \Cref{rema:rectifiableAC}, we have that $V \in \mathbf{RV}_m(B_{d_g}(\xi,\delta\rho))$, that is, there exist a countable $m$-rectifiable set $\Gamma \subset M^n$ and $\theta \in L^1_{loc}(\Gamma,]0,\infty[)$ such that $V = V(\Gamma,\theta,g)$.

Secondly, note that since $|| g_t - g ||_{\C^0} \to 0$, when $t \downarrow 0$, we have, for all $t$ sufficiently small, for all $y \in X$, and for all $r > 0$, that
\begin{equation}\label{equa7:secondthm}
B_{d_{g_t}}\left(y,\frac{r}{1+O(t)}\right) \subset B_{d_g}(y,r) \subset B_{d_{g_t}}(y,(1+O(t))r).
\end{equation}

Therefore, by \Cref{equa7:secondthm}, we obtain, for $||V||$-a.e. $x \in B_{d_g}(\xi,\rho)$ and for all $t$ sufficiently small, that
\begin{align}\label{equa8:secondthm}
\Theta^m(||V||,x) \left(1 - O(t)\right) &\leq \liminf\limits_{r \downarrow 0} \frac{||V||(B_{d_{g_t}}(x,r))}{\omega_mr^m} \leq \nonumber\\
&\leq \limsup\limits_{r \downarrow 0} \frac{||V||(B_{d_{g_t}}(x,r))}{\omega_mr^m} \leq \Theta^m(||V||,x)\left(1 + O(t)\right).
\end{align}

By \Cref{equa8:secondthm} we have, for $||V||$-a.e. $x \in B_{d_g}(\xi,\rho)$ and for all $t$ sufficiently small, that
\begin{displaymath}
\left|  \limsup\limits_{r \downarrow 0} \frac{||V||(B_{d_{g_t}}(x,r))}{\omega_mr^m} -  \liminf\limits_{r \downarrow 0} \frac{||V||(B_{d_{g_t}}(x,r))}{\omega_mr^m} \right| < O(t),
\end{displaymath}
which implies that $\Theta^m_{t}(||V||,x) \defeq \lim_{r \downarrow 0}\frac{||V||(B_{d_{g_t}}(x,r))}{\omega_mr^m}$ exists for $||V||$-a.e. $x \in B_{d_g}(\xi,\rho)$ and for all $t$ sufficiently small. Furthermore, by \Cref{equa8:secondthm,equa1:secondthm}, we also have that
\begin{displaymath}
\Theta^m_t(||V||,x) \geq \Theta^m(||V||,x) \left(1 - O(t)\right) \geq 1 - O(t),
\end{displaymath}

for $||V||$-a.e. $x \in B_{d_g}(\xi,\rho)$ and for all $t$ sufficiently small. Now define 
\begin{displaymath}
\widetilde{V_t} \defeq V\left(\Gamma,\frac{\theta}{1-O(t)},g\right)
\end{displaymath}

and note that for $\tilde{\rho} < \rho$ we have
\begin{equation}\label{equa9:secondthm}
\Theta^m_t(||\widetilde{V_t}||,x) = \frac{\Theta^m_t(||V||,x)}{1-O(t)} \geq 1,
\end{equation}

for $||\widetilde{V}||$-a.e. $x \in B_{d_{g_t}}(\xi,\tilde{\rho})$ and for all $t$ sufficiently small. Take $\tilde{\rho} < \frac{\rho}{1+O(t)}$ for all $t$ sufficiently small and $|\rho - \tilde{\rho}|$ arbitrarily small. By \Cref{equa2:secondthm} we have that there exists $1 > \tilde{\delta} > \delta$ such that $|\tilde{\delta} - \delta|$ is arbitrarily small and
\begin{equation}\label{equa10:secondthm}
\frac{||\widetilde{V_t}||(B_{d_{g_t}}(\xi,\tilde{\rho}))}{\omega_m\tilde{\rho}^m} \leq \frac{1}{1+O(t)} \frac{||V||(B_{d_{g_t}}(\xi,\tilde{\rho}))}{\omega_m\tilde{\rho}^m} \leq \frac{1}{1-O(t)} \frac{\rho^m}{\tilde{\rho}^m} (1 + \delta) \leq 1 + \tilde{\delta}.
\end{equation}

Note that since $|| g_t - g ||_{\C^0} \to 0$ and $|| \partial g_t - \partial g ||_{\C^0} \to 0$, when $t \downarrow 0$, we have, for all $t$ sufficiently small, for all $y \in X$, for all $S \in \Gr(m,T_yX)$ and all $Y \in \mathfrak{X}^0_c(X)$, that
\tiny
\begin{align}\label{equa11:secondthm}
\left| \diver^{g_t}_S(Y)(y) - \diver^g_S(Y)(y) \right| &= \left|\sum\limits_{i=1}^m \sum_{j=1}^m g_t^{ij}(y)\left\langle \nabla^{g_t}_{\tau_i}Y (y) ,\tau_j \right\rangle_{g_{t_y}} - g^{ij}\left\langle \nabla^{g}_{\tau_i}Y (y) ,\tau_j \right\rangle_{g_{y}}\right| \nonumber \\
&= \left|\sum\limits_{i=1}^m \sum_{j=1}^m \sum_{k=1}^n g_t^{ij}(y)g_{t,kj}(y)\left(\frac{\partial X^k}{\partial \tau_i}(y) +  \sum_{l=1}^n X^l(y) \Gamma_{t,il}^k(y)\right) - g^{ij}g_{kj}(y)\left(\frac{\partial X^k}{\partial \tau_i}(y) +  \sum_{l=1}^n X^l(y) \Gamma_{il}^k(y)\right)\right| \nonumber \\
&= \left|\sum_{i=1}^m \sum_{l=1}^n X^l(y) \left(\Gamma_{t,il}^i(y) - \Gamma_{il}^i(y) \right) \right| \nonumber \\
&\leq \max_{i,l} \left| \Gamma_{t,il}^i(y) - \Gamma_{il}^i(y) \right| m\sqrt{n} ||X(y)||_{\euclid} \nonumber \\
&= O(t) ||X(y)||_{g_y},
\end{align}
\normalsize

where $\{\tau_1, \ldots, \tau_m\}$ is a basis for $S$ and $\{\tau_1,\ldots,\tau_n\}$ a basis for $T_yX$.

For all $Y \in \mathfrak{X}^0_c(X)$ such that $\supp(Y) \subset B_{d_g}(\xi,\rho)$ and for all $t$ sufficiently small, by Hölder inequality, \Cref{equa2:secondthm,equa3:secondthm,equa11:secondthm} we have that 
\begin{align}\label{equa12:secondthm}
|\delta_{g_t}\widetilde{V_t}(Y)| &\leq \frac{1}{1-O(t)}|\delta_gV(Y)| + \left|\int_{B_{d_g}(\xi,\rho)} O(t)||Y(y)||_{g_y} d||V_t||(y)\right| \nonumber \\
&\leq \frac{\delta}{\rho^{1-\frac{m}{p}}} ||Y||_{L^{\frac{p}{p-1}}(X,||V_t||)} + O(t)||Y||_{L^{\frac{p}{p-1}}(X,||V_t||)} \nonumber \\
&\leq \frac{\tilde{\delta}}{\tilde{\rho}^{1-\frac{m}{p}}} ||Y||_{L^{\frac{p}{p-1}}(X,||V_t||)}.
\end{align}

Therefore, we have proven, in \Cref{equa9:secondthm,equa10:secondthm,equa12:secondthm}, that $\widetilde{V_t} \in \mathcal{AC}(\xi,\tilde{\rho},\tilde{\delta},p)$, for some $0 < \tilde{\delta} < \delta_0^t$ and $0 < \tilde{\rho} < \rho_0^t$, for $t$ arbitrarily small. Thus, by \Cref{maintheocomplete} there exist $T(\xi) \in \Gr(m,T_{\xi}X)$ and a function $\mathbf{f}_{T(\xi)}:B_{d_{g_t}}(\xi,\varepsilon\Tilde{\rho}) \cap \Sigma(\xi,T(\xi)) \to \mathbb{R}^{n-m}$ such that $\mathbf{f}_{T(\xi)}$ is of class $\C^{1,1-\frac{m}{p}}$, $\mathbf{f}_{T(\xi)}(\xi) = 0$, ${d\mathbf{f}_{T(\xi)}}_{\xi} \equiv 0$, 
\begin{equation}\label{equa13:secondthm}
\supp(||\widetilde{V}_t||) \cap B_{d_{g_t}}(\xi,\varepsilon\Tilde{\rho}) = \normalgraph{\Sigma(\xi,T(\xi))}{\mathbf{f}_{T(\xi)}} \cap B_{d_{g_t}}(\xi,\varepsilon\Tilde{\rho}),
\end{equation}

and the estimate of the scaling invariant $C^{1,1-\frac{m}{p}}$-norm of $\mathbf{f}_{T(\xi)}$ depend only on $m,n,p,\overline{\mathcal{K}}(X,d_t),\underline{\mathcal{K}}(X,d_t)$, $r_H(X,d_t)$ and $\varepsilon$

Since $\supp(||\widetilde{V}_t||) = \supp(||V||)$ by construction, taking limits as $t \downarrow 0$ in the scaling invariant $\mathcal{C}^{1,1-\frac{m}{p}}$-norm of $\mathbf{f}_{T(\xi)}$ --- which is possible by the uniform dependence of the constants on the approximation parameter --- yields the desired conclusion, up to the dependence on $r_H(X,d)$ in place of $V_0(X,d)$. The latter is 
addressed by \Cref{rmk:AlexInjHarm} of \Cref{rmk:Alex}, which allows one to replace the lower bound on $r_H(X,d)$ by the non-collapsing constant $V_0(X,d)$, yielding precisely the dependence stated in \Cref{secondthmcomplete}.
\end{proof}
\bibliography{Bibliography}

@article {Gromov17,
    AUTHOR = {Gromov, Misha},
     TITLE = {Geometric, algebraic, and analytic descendants of {N}ash
              isometric embedding theorems},
   JOURNAL = {Bull. Amer. Math. Soc. (N.S.)},
  FJOURNAL = {American Mathematical Society. Bulletin. New Series},
    VOLUME = {54},
      YEAR = {2017},
    NUMBER = {2},
     PAGES = {173--245},
      ISSN = {0273-0979,1088-9485},
   MRCLASS = {58D10 (58-02 58Jxx)},
  MRNUMBER = {3619725},
MRREVIEWER = {Fr\'ed\'eric\ Robert},
       DOI = {10.1090/bull/1551},
       URL = {https://doi.org/10.1090/bull/1551},
}

@article {Anderson,
    AUTHOR = {Anderson, Michael T.},
     TITLE = {Convergence and rigidity of manifolds under {R}icci curvature
              bounds},
   JOURNAL = {Invent. Math.},
  FJOURNAL = {Inventiones Mathematicae},
    VOLUME = {102},
      YEAR = {1990},
    NUMBER = {2},
     PAGES = {429--445},
      ISSN = {0020-9910,1432-1297},
   MRCLASS = {53C23 (53C21 58D27)},
  MRNUMBER = {1074481},
MRREVIEWER = {Gudlaugur\ Thorbergsson},
       DOI = {10.1007/BF01233434},
       URL = {https://doi.org/10.1007/BF01233434},
}

@article {Jacobowitz72,
    AUTHOR = {Jacobowitz, Howard},
     TITLE = {Implicit function theorems and isometric embeddings},
   JOURNAL = {Ann. of Math. (2)},
  FJOURNAL = {Annals of Mathematics. Second Series},
    VOLUME = {95},
      YEAR = {1972},
     PAGES = {191--225},
      ISSN = {0003-486X},
   MRCLASS = {53C40 (58C15)},
  MRNUMBER = {307127},
MRREVIEWER = {C.\ S.\ Houh},
       DOI = {10.2307/1970796},
       URL = {https://doi.org/10.2307/1970796},
}

@incollection {Nikolaev89,
    AUTHOR = {Nikolaev, I. G.},
     TITLE = {The closure of the set of classical {R}iemannian spaces},
 BOOKTITLE = {Problems in geometry, {V}ol. \ 21 ({R}ussian)},
    SERIES = {Itogi Nauki i Tekhniki},
     PAGES = {43--46, 216},
      NOTE = {Translated in J. Soviet Math.\ {\bf 55} (1991), no.\ 6,
              2100--2115},
 PUBLISHER = {Akad. Nauk SSSR, Vsesoyuz. Inst. Nauchn. i Tekhn. Inform.,
              Moscow},
      YEAR = {1989},
   MRCLASS = {53C20 (53C23 54E45)},
  MRNUMBER = {1027853},
MRREVIEWER = {Yi\ Bing\ Shen},
}

@incollection {Nikolaev-Berestovskij,
    AUTHOR = {Berestovskij, V. N. and Nikolaev, I. G.},
     TITLE = {Multidimensional generalized {R}iemannian spaces},
 BOOKTITLE = {Geometry, {IV}},
    SERIES = {Encyclopaedia Math. Sci.},
    VOLUME = {70},
     PAGES = {165--243, 245--250},
 PUBLISHER = {Springer, Berlin},
      YEAR = {1993},
      ISBN = {3-540-54701-0},
   MRCLASS = {53C20},
  MRNUMBER = {1263965},
       DOI = {10.1007/978-3-662-02897-1\_2},
       URL = {https://doi.org/10.1007/978-3-662-02897-1_2},
}

@article {Allard,
    AUTHOR = {Allard, William K.},
     TITLE = {On the first variation of a varifold},
   JOURNAL = {Ann. of Math. (2)},
  FJOURNAL = {Annals of Mathematics. Second Series},
    VOLUME = {95},
      YEAR = {1972},
     PAGES = {417--491},
      ISSN = {0003-486X},
   MRCLASS = {49F20},
  MRNUMBER = {0307015},
MRREVIEWER = {M. Klingmann},
       DOI = {10.2307/1970868},
       URL = {http://dx.doi.org/10.2307/1970868},
}

@article{Simon,
  title={Introduction to geometric measure theory},
  author={Simon, Leon},
  journal={NTU Lectures},
  year={2018}
}

@book {Folland,
    AUTHOR = {Folland, Gerald B.},
     TITLE = {Real analysis},
    SERIES = {Pure and Applied Mathematics (New York)},
   EDITION = {Second},
      NOTE = {Modern techniques and their applications,
              A Wiley-Interscience Publication},
 PUBLISHER = {John Wiley \& Sons, Inc., New York},
      YEAR = {1999},
     PAGES = {xvi+386},
      ISBN = {0-471-31716-0},
   MRCLASS = {00A05 (26-01 28-01 46-01)},
  MRNUMBER = {1681462},
}

@book {FedererGMT,
    AUTHOR = {Federer, Herbert},
     TITLE = {Geometric measure theory},
    SERIES = {Die Grundlehren der mathematischen Wissenschaften, Band 153},
 PUBLISHER = {Springer-Verlag New York, Inc., New York},
      YEAR = {1969},
     PAGES = {xiv+676},
   MRCLASS = {28.80 (26.00)},
  MRNUMBER = {0257325},
MRREVIEWER = {J. E. Brothers},
}

@book {EvansPDE,
    AUTHOR = {Evans, Lawrence C.},
     TITLE = {Partial differential equations},
    SERIES = {Graduate Studies in Mathematics},
    VOLUME = {19},
   EDITION = {Second},
 PUBLISHER = {American Mathematical Society, Providence, RI},
      YEAR = {2010},
     PAGES = {xxii+749},
      ISBN = {978-0-8218-4974-3},
   MRCLASS = {35-01},
  MRNUMBER = {2597943},
MRREVIEWER = {Diego\ M.\ Maldonado},
       DOI = {10.1090/gsm/019},
       URL = {https://doi.org/10.1090/gsm/019},
}

@book {Lee,
    AUTHOR = {Lee, John M.},
     TITLE = {Introduction to {R}iemannian manifolds},
    SERIES = {Graduate Texts in Mathematics},
    VOLUME = {176},
      NOTE = {Second edition of [ MR1468735]},
 PUBLISHER = {Springer, Cham},
      YEAR = {2018},
     PAGES = {xiii+437},
      ISBN = {978-3-319-91754-2; 978-3-319-91755-9},
   MRCLASS = {53-01 (53B20 53B30 53C20 53C21)},
  MRNUMBER = {3887684},
MRREVIEWER = {Robert J. Low},
}

@book {RudinAnalysis,
    AUTHOR = {Rudin, Walter},
     TITLE = {Real and complex analysis},
   EDITION = {Third},
 PUBLISHER = {McGraw-Hill Book Co., New York},
      YEAR = {1987},
     PAGES = {xiv+416},
      ISBN = {0-07-054234-1},
   MRCLASS = {00A05 (26-01 30-01 46-01)},
  MRNUMBER = {924157},
}

@book {ODETeschl,
    AUTHOR = {Teschl, Gerald},
     TITLE = {Ordinary differential equations and dynamical systems},
    SERIES = {Graduate Studies in Mathematics},
    VOLUME = {140},
 PUBLISHER = {American Mathematical Society, Providence, RI},
      YEAR = {2012},
     PAGES = {xii+356},
      ISBN = {978-0-8218-8328-0},
   MRCLASS = {34-01 (37-01 39-01)},
  MRNUMBER = {2961944},
MRREVIEWER = {Eleonora\ Catsigeras},
       DOI = {10.1090/gsm/140},
       URL = {https://doi.org/10.1090/gsm/140},
}

@book {Gray04,
    AUTHOR = {Gray, Alfred},
     TITLE = {Tubes},
    SERIES = {Progress in Mathematics},
    VOLUME = {221},
   EDITION = {Second},
      NOTE = {With a preface by Vicente Miquel},
 PUBLISHER = {Birkh\"{a}user Verlag, Basel},
      YEAR = {2004},
     PAGES = {xiv+280},
      ISBN = {3-7643-6907-8},
   MRCLASS = {53-02 (53-01 53B25 53C40)},
  MRNUMBER = {2024928},
       DOI = {10.1007/978-3-0348-7966-8},
       URL = {https://doi.org/10.1007/978-3-0348-7966-8},
}

@article {NarCalcVar,
    AUTHOR = {Nardulli, Stefano},
     TITLE = {The isoperimetric profile of a noncompact {R}iemannian
              manifold for small volumes},
   JOURNAL = {Calc. Var. Partial Differential Equations},
  FJOURNAL = {Calculus of Variations and Partial Differential Equations},
    VOLUME = {49},
      YEAR = {2014},
    NUMBER = {1-2},
     PAGES = {173--195},
      ISSN = {0944-2669},
   MRCLASS = {49Q20 (53A10 53C42 58A25 58E99)},
  MRNUMBER = {3148111},
MRREVIEWER = {Andrew Bucki},
       DOI = {10.1007/s00526-012-0577-1},
       URL = {https://doi.org/10.1007/s00526-012-0577-1},
}

@book{Almgren64,
  title={The Theory of Varifolds: A Variational Calculus in the Large for the K-dimensional Area Integrand},
  author={Almgren, F.J.},
  url={https://albert.ias.edu/20.500.12111/8025},
  year={1964},
  publisher={Institute for Advanced Study}
}

@book {Almgren66,
    AUTHOR = {Almgren, Jr., Frederick J.},
     TITLE = {Plateau's problem: {A}n invitation to varifold geometry},
 PUBLISHER = {W. A. Benjamin, Inc., New York-Amsterdam},
      YEAR = {1966},
     PAGES = {xii+74},
   MRCLASS = {53.04 (49.00)},
  MRNUMBER = {190856},
MRREVIEWER = {L.\ C.\ Young},
}

@misc{Fukuoka2006,
      title={Mollifier Smoothing of tensor fields on differentiable manifolds and applications to Riemannian Geometry}, 
      author={Ryuichi Fukuoka},
      year={2006},
      eprint={math/0608230},
      archivePrefix={arXiv},
      primaryClass={math.DG},
      url={https://arxiv.org/abs/math/0608230}, 
}

@book {Chavel93,
    AUTHOR = {Chavel, Isaac},
     TITLE = {Riemannian geometry---a modern introduction},
    SERIES = {Cambridge Tracts in Mathematics},
    VOLUME = {108},
 PUBLISHER = {Cambridge University Press, Cambridge},
      YEAR = {1993},
     PAGES = {xii+386},
      ISBN = {0-521-43201-4; 0-521-48578-9},
   MRCLASS = {53-02 (53Cxx)},
  MRNUMBER = {1271141},
MRREVIEWER = {Carolyn\ Gordon},
}

@book {Jost84,
    AUTHOR = {Jost, J\"urgen},
     TITLE = {Harmonic mappings between {R}iemannian manifolds},
    SERIES = {Proceedings of the Centre for Mathematical Analysis,
              Australian National University},
    VOLUME = {4},
 PUBLISHER = {Australian National University, Centre for Mathematical
              Analysis, Canberra},
      YEAR = {1984},
     PAGES = {iv+177},
      ISBN = {0-86784-403-5},
   MRCLASS = {58E20 (35J99 53C20)},
  MRNUMBER = {756629},
MRREVIEWER = {H.\ C. J. Sealey},
}

@article {HebeyHerzlich97,
    AUTHOR = {Hebey, E. and Herzlich, M.},
     TITLE = {Harmonic coordinates, harmonic radius and convergence of
              {R}iemannian manifolds},
   JOURNAL = {Rend. Mat. Appl. (7)},
  FJOURNAL = {Rendiconti di Matematica e delle sue Applicazioni. Serie VII},
    VOLUME = {17},
      YEAR = {1997},
    NUMBER = {4},
     PAGES = {569--605},
      ISSN = {1120-7183,2532-3350},
   MRCLASS = {53C21 (53C20)},
  MRNUMBER = {1620864},
MRREVIEWER = {Andrea\ Sambusetti},
}

@book{alexander2019invitation,
  title={An invitation to Alexandrov geometry: CAT (0) spaces},
  author={Alexander, Stephanie and Kapovitch, Vitali and Petrunin, Anton},
  year={2019},
  publisher={Springer}
}

@article{kapovitch2020cd,
  title={Cd meets cat},
  author={Kapovitch, Vitali and Ketterer, Christian},
  journal={Journal f{\"u}r die reine und angewandte Mathematik (Crelles Journal)},
  volume={2020},
  number={766},
  pages={1--44},
  year={2020},
  publisher={De Gruyter}
}

@ARTICLE{CheegerGromovTaylor82,
  title     = "Finite propagation speed, kernel estimates for functions of the
               Laplace operator, and the geometry of complete Riemannian
               manifolds",
  author    = "Cheeger, Jeff and Gromov, Mikhail and Taylor, Michael",
  journal   = "J. Differential Geom.",
  publisher = "International Press of Boston",
  volume    =  17,
  number    =  1,
  pages     = "15--53",
  month     =  jan,
  year      =  1982
}
\bibliographystyle{alpha}
\noindent
\begin{minipage}{\textwidth}
\begin{minipage}{.45\textwidth}
\footnotesize{Marcos Agnoletto,\\
\emph{CMCC, Federal University of ABC, Santo André, SP, Brazil,}\\
\emph{Email address:} marcos.forte@ufabc.edu.br,\\
Corresponding author.\\

Julio C. Correa Hoyos,\\
\emph{IME, Rio de Janeiro State University, Rio de Janeiro, RJ, Brazil,}\\
\emph{Email address:} julio.correa@ime.uerj.br.}
\end{minipage}
\hspace{0.05\textwidth}
\begin{minipage}{.45\textwidth}
\footnotesize{Márcio Fabiano da Silva,\\
\emph{CMCC, Federal University of ABC, Santo André, SP, Brazil,}\\
\emph{Email address:} marcio.silva@ufabc.edu.br.\\

Stefano Nardulli,\\
\emph{CMCC, Federal University of ABC, Santo André, SP, Brazil,}\\
\emph{Email address:} stefano.nardulli@ufabc.edu.br.}
\end{minipage}
\end{minipage}
\end{document}